\RequirePackage{rotating}
\documentclass[smallextended,envcountsect,envcountsame]{svjour3}
\usepackage[margin=1.0in]{geometry} 
\usepackage{algorithm}
\usepackage{comment}
\usepackage{graphicx}
\usepackage{mathtools}
\usepackage{mathrsfs}
\usepackage{fix-cm}
\usepackage{multirow}
\usepackage{amsmath}
\usepackage{amssymb}
\usepackage{latexsym}
\usepackage{dsfont}
\usepackage{xcolor}
\usepackage{cite}
\usepackage{dsfont}
\usepackage{enumitem}
\usepackage{todonotes}
\usepackage{hyperref}
\usepackage{booktabs}
\hypersetup{
colorlinks=true,
linkcolor=blue,
citecolor=darkgreen,
urlcolor=blue}
\definecolor{darkgreen}{RGB}{0,150,0}
\definecolor{C0}{RGB}{31,119,180}
\definecolor{C1}{RGB}{255, 127, 14}
\usepackage{algpseudocode}

\newtheorem{experiment}{Experiment}
\newtheorem{assumption}{Assumption}
\let\oldexperiment\experiment
\renewcommand{\experiment}{\oldexperiment\normalfont}

\def\tto{\rightrightarrows}

\def\hat{\widehat}

\def\R{\mathbb{R}}
\def\N{\mathbb{N}}

\def\dom{\mbox{\rm dom}\,}

\def\dist{\mbox{\rm dist}\,}

\def\tilde{\widetilde}

\def\N{\mathbb{N}}

\def\tt{\tilde t}

\newcounter{count}

\DeclareMathOperator{\1}{\mathds{1}}

\newcommand{\Rex}{\R\cup\{+\infty\}}
\let\epsilon\varepsilon
\DeclareMathAlphabet{\mathpzc}{OT1}{pzc}{m}{it}

\usepackage{framed}
\usepackage{mdframed}
\usepackage{pgfplots}
\pgfplotsset{compat = newest}
\usepackage{caption}
\usepackage{subcaption}

 \DeclareMathOperator{\prox}{prox}

 \DeclareMathOperator*{\argmin}{argmin}

    \newcommand{\rr}{{{r}}}
    \newcommand{\rA}{\ell}
    \newcommand{\rB}{{\hat{r}}}
    \newcommand{\rC}{{{r}_{1}}}
    \newcommand{\rD}{{{r}_{2}}}
    
    \newcommand{\lA}{\delta}

\newcommand{\expo}{\varrho}

\graphicspath{{../Figures/}}
\usepackage{bm}

\newcommand{\Intf}[1]{\mathbf{E}_{#1}}
\DeclareMathOperator{\vPhi}{\mathrm{\Phi}}

\newenvironment{proofof}[1]
{\par\noindent{\itshape Proof of Theorem~\ref{#1}.}\ }
{\hfill$\square$\par}

\newenvironment{proofoflemma}[1]
{\par\noindent{\itshape Proof of Lemma~\ref{#1}.}\ }
{\hfill$\square$\par}

\newcommand{\KLname}{Kurdyka--{\L}ojasiewicz~}
\newcommand{\KLshort}{KL}

\usepackage{lineno}

\begin{document}
\titlerunning{Proximal subgradient method for nonconvex stochastic optimization}
\title{A proximal subgradient method for nonconvex stochastic optimization under the \KLname condition
\thanks{Research of the first three authors  was supported by  Centro de Modelamiento Matem\'atico (CMM), ACE210010 and FB210005, BASAL
funds for center of excellence from ANID-Chile. The third author was supported by  ANID-Chile grant: Fondecyt Regular 1240335,  Fondecyt Regular 1240120, Fondecyt Regular 1261728. The fourth author was  partially supported by grant PID2022-136399NB-C21 funded by
ERDF/EU and by MICIU/AEI/10.13039/501100011033.}}
\subtitle{}
\author{\mbox{Felipe Atenas} \and \mbox{Alejandro Jofr\'e} \and \mbox{Pedro P\'erez-Aros}  \and  \mbox{David Torregrosa-Bel\'en}}
\institute{
Felipe Atenas \at Centro de Modelamiento Matem\'atico (CNRS IRL2807),
Universidad de Chile, Santiago, Chile\\
\email{fatenas@cmm.uchile.cl}
\and
Alejandro Jofré and Pedro Pérez-Aros \at Departamento de Ingenier\'ia Matem\'atica and Centro de Modelamiento Matem\'atico (CNRS IRL2807), Universidad de Chile, Santiago, Chile\\
\email{ajofre@uchile.cl, pperez@dim.uchile.cl}   \and
David Torregrosa-Bel\'en \at  Department of Mathematics, University of Alicante, Alicante, Spain\\
\email{david.torregrosa@ua.es}}

\date{\today}
\maketitle

\begin{abstract}
This work introduces a proximal stochastic subgradient method for minimizing the sum of an expected cost, whose integrand is potentially nonsmooth and nonconvex, and a lower semicontinuous, prox-bounded function. We target a broad class of integrands obeying a nonsmooth, localized variant of the descent lemma in the decision variable, a structural assumption that simultaneously covers smooth losses with Lipschitz gradient and differences of such losses with convex functions. At each iteration the expected cost is replaced by a sample average that is progressively refined, and the proximal-subgradient stepsize is selected by an Armijo-type line search enforcing a sufficient-decrease property up to stochastic errors induced by the sample-based approximation. This framework accommodates substantially more general problem formulations than existing methods, in particular, it requires neither (weak) convexity of the regularizer nor a uniform bound on the variance of the stochastic oracle, and our analysis yields convergence guarantees that are new even in the smooth setting. Specifically, we establish almost sure convergence of the sequence of function values and stationarity of every accumulation point of the trajectories under the relaxed requirement that the sample-size sequence be merely nondecreasing and unbounded, with no prescribed growth rate. Leveraging the \KLname (\KLshort) property, we further upgrade this subsequential guarantee to convergence of the whole trajectory to a single stationary point. Finally, for exponential-type \KLshort~desingularizing functions and polynomially growing sample sizes, we derive explicit polynomial convergence rates, up to a logarithmic factor,  for both the function values and the iterates.

\end{abstract}\vspace*{-0.05in}

\keywords{Stochastic programming \and proximal subgradient method \and  stochastic approximation \and nonconvex optimization   \and \KLname condition \and convergence rates} \vspace*{-0.05in}

\subclass{90C15, 49J53, 49J52, 	90C26, 65K05}\vspace*{-0.2in}

\section{Introduction}

A large class of optimization problems under uncertainty are modeled as minimizing an expected value written as an integral with respect to the underlying probability distribution. In many
applications arising in machine learning, statistics, and operations research, this expectation cannot be evaluated exactly, either because the
distribution is not available in closed form or because the resulting integral is
computationally intractable in high dimensions. To overcome this issue,  stochastic algorithms replace exact first-order information by sample-based
approximations, leading to methods that must balance computational effort,
statistical accuracy, and convergence guarantees; see, for instance,
\cite{NemirovskiJuditskyLanShapiro2009,BottouCurtisNocedal2018,
GhadimiLan2013,MR3459195,MR3439803}.

In this paper, we consider the following structured nonconvex stochastic
programming problem
\begin{equation}\label{ProblemP}\tag{$\mathcal{P}$}
    \min_{x\in \mathbb{R}^d}
    \vPhi(x):=\mathbb{E}_{\mu}\big[\varphi(\xi,x)\big]+\psi(x),
\end{equation}
where the expected cost is defined on a probability space
$(\Xi,\mathcal{A},\mu)$ by
\[
    \mathbb{E}_{\mu}\big[\varphi(\xi,x)\big]
    :=\int_{\Xi}\varphi(\xi,x)\,d\mu(\xi).
\]
Here, $\varphi:\Xi\times\mathbb{R}^d\to\mathbb{R}$ is assumed to be an
upper-$C^2$ normal integrand with respect to the decision variable in the sense
of Definition~\ref{def_UpperC2NormalIntegrand}. Roughly speaking, for all
$\xi\in\Xi$, the mapping $x\mapsto \varphi(\xi,x)$ may be nonsmooth and
nonconvex, satisfying a local generalized version of the so-called \emph{descent lemma}, 
suitable for
variational analysis. The term
$\psi:\mathbb{R}^d\to\Rex$ is assumed to be proper, lower
semicontinuous, and prox-bounded. This allows us to encode constraints,
nonsmooth regularization terms, and convex/nonconvex penalties. Models of the form
\eqref{ProblemP} emerge naturally in stochastic problems where nonsmooth losses and regularizers appear, such as optimal quantization, sparse and partial-label learning,
matrix factorization and statistical estimation, among others; see, e.g.,
\cite{MairalBachPonceSapiro2010,FanLi2001,Zhang2010MCP,LohWainwright2015,
MetelTakeda2021,JMLR:v12:cour11a,Cheng_Wang_Feng_Zhang_An_2023}.

 To tackle problem \eqref{ProblemP}, we propose a method of proximal subgradient type, in which stochastic first-order information is estimated by average approximations. A sufficient decrease condition of the sample-based estimations tracks the progress of the method, though it does not yield a descent method in the usual sense, but rather a method of approximate descent up to vanishing stochastic errors. The method is presented in Algorithm~\ref{alg:1}. 

 \paragraph{\textbf{Related work}} Stochastic optimization methods have been extensively studied in recent years
 (see, e.g., \cite{BottouCurtisNocedal2018,MR4436019,vanAckooij_Oliveira2025methods,bach2024learning}), motivated by their relevance in modern applied mathematics, and especially by
their prominent role in machine learning and related scientific fields. A substantial part of this literature has focused on convex and
strongly convex optimization, where significant advances have been achieved
through several complementary approaches, including refined stepsize rules
(see, e.g.,~\cite{NemirovskiJuditskyLanShapiro2009}), variance reduction techniques (see, e.g., \cite{MR3935081}), and central limit theorems
describing the asymptotic behavior of stochastic algorithms (see, e.g., \cite{MR4902793} and references therein).

Despite the tremendous progress made in convex stochastic programming, the
nonconvex and nonsmooth setting has received comparatively less attention. This
is due   
to the intrinsic difficulties
arising from the interaction of nonconvexity, nonsmoothness, and stochasticity.
Indeed, deterministic techniques do not transfer directly to stochastic
frameworks, where  
first-order information is typically
available through oracles only. Moreover, in nonconvex optimization, convergence
of function values does not necessarily characterize convergence to the optimal
value, and stationary points need not be global, or even local, minimizers.
Nevertheless, several important contributions have been made in this direction. For smooth and composite nonconvex stochastic programming, complexity guarantees
and accelerated variants have been obtained in
\cite{GhadimiLan2013,MR3459195,MR3439803}. More recent developments include
variance-reduced, recursive-gradient, mini-batch, and hybrid stochastic methods
for finite-sum and expected value composite problems; see, e.g.,
\cite{PhamNguyenPhanTranDinh2020,TranDinhPhamPhanNguyen2022,Li2022}. These
methods typically assume that the expected value term is smooth, or that the
nonsmooth component has a convex structure that can be handled by a proximal
operator. For nonsmooth expected value objectives, proximal stochastic
subgradient and stochastic prox-linear methods have been proposed for weakly
convex functions; see, e.g.,
\cite{DuchiRuan2018,MR3902455,MR3982682}. Stochastic
difference-of-convex algorithms have also been investigated in
\cite{LeThiHuynhPhamDinhLuu2022}.

\paragraph{\textbf{Sample-based stochastic optimization methods}} A classical approach to tackle the minimization of expected costs is the sample average approximation (SAA) method, which
replaces the expected value
function by an empirical average over a finite sample. This leads to a deterministic  optimization problem  solved by deterministic
optimization techniques. SAA methods have been extensively studied from the
viewpoints of consistency, asymptotic convergence, stability of optimal
solutions, and finite-sample error estimates; see, among others,
\cite{KingRockafellar1993,Shapiro1993,KleywegtShapiroHomemDeMello2002,
HomemDeMello2008}. This approach is especially useful when the sampled problem
preserves exploitable deterministic structure. However, solving a sequence of
large deterministic problems can become expensive when high accuracy requires
large sample sizes or when each sampled problem is itself nonsmooth and nonconvex. A different paradigm is provided by stochastic approximation (SA) methods. This line
of research goes back to the seminal works 
\cite{RobbinsMonro1951, KieferWolfowitz1952}, 
and has become one of the central algorithmic
frameworks for stochastic optimization. Unlike SAA, these methods are able to address the original stochastic problem by calling   stochastic oracles for approximating first-order information of the expected cost; see, e.g.,~\cite{MR1167814,NemirovskiJuditskyLanShapiro2009,MR2921104}. 

 Regarding  stepsize selection, we can mention two approaches. In the absence of dynamically increasing sample sizes,  stochastic approximation schemes require 
rapidly diminishing stepsizes to ensure convergence; see, e.g.,~\cite{MR3439803,NemirovskiJuditskyLanShapiro2009,geiersbach2021stochastic}. More precisely, in these works, the sequence of stepsizes $(\gamma_k)_{k\in\N}$ satisfies $\sum_{k=0}^{\infty}\gamma_k=+\infty$ and $\sum_{k=0}^{\infty}\gamma_k^2<+\infty$. Such conditions are standard in the convergence analysis of (proximal) stochastic gradient methods, both in convex and nonconvex settings. As an alternative, the stepsizes can be defined to adapt to the progress of the iteration process via backtracking line search procedures using function and (sub)gradient estimates; see, e.g., \cite{paquette2020stochastic,nguyen2025stochastic}.

\paragraph{\textbf{\KLname condition in nonconvex optimization}} The 
analysis of optimization algorithms dealing with nonconvex problems such as~\eqref{ProblemP} is significantly different from the convex case. In the nonconvex scenario,  the existence of multiple local minima and saddle points complicates establishing global convergence guarantees for the sequence of iterates. A fundamental tool to overcome this challenge of deterministic nonconvex methods is the
\KLname (\KLshort) inequality; 
see, e.g.,
\cite{absil2005convergence,AttouchBolte2009,AttouchBolteRedontSoubeyran2010,
AttouchBolteSvaiter2013,BolteSabachTeboulle2014}. The \KLshort~framework enables improving subsequential convergence results to convergence of the entire sequence of iterates to a single stationary point, as well as characterizing local convergence rates. 
Recall that the satisfaction of a sufficient decrease condition is pivotal  for the  convergence
principle of the seminal work \cite{AttouchBolteSvaiter2013}. This condition might be compromised in the stochastic setting, due to the presence of errors induced by stochastic approximations, preventing the straightforward extrapolation of \KLshort-based techniques for analyzing stochastic methods.

\paragraph{\textbf{Our contribution}} The purpose of this work is to 
analyze a new proximal stochastic
subgradient (PSS)  algorithm for solving~\eqref{ProblemP}. At every iteration, PSS constructs a surrogate model of~\eqref{ProblemP} where the expected cost is replaced by a sample average approximation refined along iterations.  Specifically, given an increasing sequence of sample sizes $(n_k)_{k\in\N}$, where $k\in\N$  represents the algorithm's current iteration, PSS draws a collection $\xi_{n_{k-1}+1},\xi_{n_{k-1}+2},\ldots,\xi_{n_k}$ of independent and identically distributed (i.i.d.) random variables to construct an \emph{empirical average}  approximation of the integral in \eqref{ProblemP}, that is, for all $x \in \R^d$, 
\begin{equation*}
\mathbb{E}_{\mu}\big[\varphi(\xi,x)\big] \approx \frac{1}{n_k}\sum_{j=1}^{n_k}  \varphi(\xi_j,x). 
\end{equation*}	
In this manner, PSS generates an iterative sequence of the form
\[
\left\{
\begin{aligned}
v^k &\in\frac{1}{n_k}\sum_{j=1}^{n_k}  \partial\varphi(\xi_j,x), \\ 
x^{k+1} & \in\prox_{\gamma\psi} (x^k - \gamma v^k),
\end{aligned}
\right.
\]thus combining stochastic (Clarke) subgradient information of $\varphi$ taken from the empirical approximation with a proximal evaluation of $\psi$. The stepsize $\gamma>0$ for this proximal subgradient update is computed by an Armijo-type  line search~\cite{MR191071} to ensure a sufficient decrease 
of the sample model of the objective function $\vPhi$ in \eqref{ProblemP}. For more details, we refer to Algorithm~\ref{alg:1}.  
The sample errors lead to a nonmonotone stochastic scheme with respect to the original objective $\vPhi$, hindering the direct  application of standard techniques for the convergence analysis of descent methods, and particularly the \KLshort~framework. 
Nevertheless, we show that our method can be viewed as a descent scheme perturbed by stochastic errors that vanish along the
iteration process, as long as the sequence of iterates remains bounded. In this regard, and in contrast to standard approaches, we do not impose any bounded variance condition, but rather assume local Lipschitzianity of $x\mapsto \varphi(\xi,\cdot)$; see Remark~\ref{remark:bv}. This allows us to combine a stopping time based analysis together with new results on uniform error bounds for average approximations on compact sets; see Section~\ref{sect:aae}.

 From the perspective of the proximal operation, our analysis only relies on the ability to compute at least one element of the proximal mapping of $\psi$, without requiring an additional geometric structure such as (weak) convexity, different from most existing works on stochastic approximation schemes with proximal steps; see, e.g., \cite{MR3982682,zhang2022stochastic,pougkakiotis2023zeroth,fatkhullin2025stochastic,jia2025first}. Thus, our setting naturally covers the class of proper, lower semicontinuous, prox-bounded functions for which such a proximal point is available. From a numerical standpoint, 
this allows us to cover many relevant classes of constraints and nonconvex regularizers; see, e.g.,~\cite{FanLi2001,Zhang2010MCP,LohWainwright2015,MetelTakeda2021, bohm2021variable, atenas2025understanding}.

Our main results can be summarized as follows:
\begin{itemize}
\item[$\bullet$]  In Theorem~\ref{The01}, we prove that almost surely every accumulation point of bounded sequences generated by PSS is a stationary point of~\eqref{ProblemP}.   
We do not impose any assumption on the sequence of sample sizes $(n_k)_{k\in\N}$ besides being nondecreasing and unbounded. To the best of our knowledge, this feature remains novel even within the smooth  setting; see Remark~\ref{r:The01} for details.
\item[$\bullet$]  In Theorem~\ref{Main_PKL_conv}, we upgrade the previous subsequential guarantee to global convergence of the whole sequence of iterates generated by PSS to a single stationary point of~\eqref{ProblemP}. This trajectory-level result is obtained for  non-differentiable objectives by leveraging the \KLshort~condition (see~\eqref{PLK_cond}), provided the approximation error is controlled at a summable rate.

\item[$\bullet$]  In Theorem~\ref{th:rates}, we establish explicit local convergence rates when the desingularizing function of the \KLshort~condition belongs to the exponential family (see Remark~\ref{remark_kindoftheta}) and the sample sizes grow at least polynomially. We show that both the function values and the iterates converge at a rate of the order $\mathcal{O}\bigl(k^{-p}\ln(k)\bigr)$, where the exponent $p>0$ is given in closed form as a function of the \KLshort~exponent $\beta$ and of the polynomial decay rate $\gamma$ of the inverse sample-size sequence $(n_k^{-1})_{k\in\mathbb{N}}$; see Figure~\ref{fig:rates_fill} for a visualization of the attainable exponents. The underlying recursion estimate (Lemma~\ref{lemma:sequences}), which tracks a contraction perturbed by polynomially decaying errors, is of independent interest and, to our knowledge, has not previously appeared in this form.

\end{itemize}

In particular, our results
show that deterministic \KLshort-type ideas can still be used in a stochastic
nonmonotone setting, provided that the sample errors are controlled with
sufficient accuracy alongside iterations.

\paragraph{\textbf{Organization of the paper}}  In Section~\ref{s:prelim}, we introduce the notation and concepts used throughout this work and show some preliminary, technical results. In particular, we introduce the notion of upper-$C^2$ normal integrands (Definition~\ref{def_UpperC2NormalIntegrand}), pivotal for our analysis. Section~\ref{sec:3} presents our proposed  PSS method (Algorithm~\ref{alg:1}) to solve problem \eqref{ProblemP} and its analysis of almost surely subsequential convergence. In Section~\ref{s:PLK}, under the assumptions of the \KLshort~condition, we  deduce global convergence results and estimates of the rate of convergence of the PSS method. In Section~\ref{s:numerics} we illustrate the proposed method in numerical experiments on two tasks: optimal quantization and partial-label linear regression. Section~\ref{s:conclusion} concludes with some final remarks and future research directions.

\section{Preliminaries of variational and stochastic analysis} \label{s:prelim}

\subsection{Elements from 
nonsmooth analysis}
For a function $f: \R^d \to \Rex$, Lipschitz continuous around $x \in \R^d$, the  \emph{Clarke subdifferential} is defined as
\begin{align*}
	 {\partial} f(x) = \{v\in\R^d :   \langle v , d\rangle \leq  f^\circ (x;d) \textrm{ for all }  d\in\R^d\},
\end{align*}
where  $	f^\circ(x;d)$ stands for the  Clarke subderivative at $x$ in the direction $d\in\R^d$, which  is defined as
 \begin{align*}
	 f^\circ (x;d) = \limsup_{\substack{y\to x\\ t\to 0^+}}\left(  \frac{f(y+td)-f(y)}{t} \right).
\end{align*}A function $f : \mathbb{R}^d \to \Rex$ is said to be \emph{prox-bounded} if there exists some $\gamma > 0$ such that $x\mapsto f(x) + \frac{1}{2\gamma} \|x  \|^2$ is bounded from below. The supremum of the set of all such   $\gamma>0$ is called the \emph{threshold of prox-boundedness} for  $f$, and is denoted by $\gamma^f$. Given a proper, lower semicontinuous (lsc) function $f : \mathbb{R}^d \to \Rex$ and a constant $\gamma \in{( 0, +\infty]}$, the \emph{proximal mapping} is the multifunction $\prox_{\gamma f} : \mathbb{R}^d \rightrightarrows \mathbb{R}^d$ defined as the solution set of the optimization problem
\[
\prox_{\gamma f}(x) := \argmin_{u \in \mathbb{R}^d} \left\{ f(u) + \frac{1}{2\gamma} \|u - x\|^2 \right\},
\]
with the convention $\frac{1}{2\gamma} =0$ for $\gamma =+\infty$. If $f$ is prox-bounded, then  the set $\prox_{\gamma f}(x)$ is nonempty for every  $\gamma\in{(0, \gamma^f)}$. We say that $\bar x$ is a stationary point of \eqref{ProblemP} if there exists $\bar v \in \partial_x  \mathbb{E}_{\mu}\big[\varphi(\xi,\bar x)\big]$ and $\bar \gamma >0$ such that 
\begin{align}\label{def_stpoint}
	\bar x \in \prox_{\bar \gamma \psi }(\bar x - \bar \gamma \bar v). 
\end{align}

The following lemma 
shows a type of upper-semicontinuity property of the proximal mapping for prox-bounded functions. Its proof follows the same arguments in~\cite[Lemma~2.3]{perezaros2025randomizedblockproximalmethod}, and it is included for completeness.
 
 \begin{lemma}\label{upper_cont_prox}
 	Let $\psi: \mathbb{R}^d \to \Rex$ be a proper, lsc and prox-bounded function with threshold $\gamma^{\psi}$.  Consider  a point $\bar x \in \dom \psi$,  a bounded sequence    $(v^k)_{k \in  \N }$ in $\R^d$,   and, for all $k \in \N$,  $\hat{x}^{k}   \in \prox_{\gamma_k \psi} \left(x^{k} - \gamma_k v^k \right)$ for some sequence $x^k \to \bar x$ and $\gamma_k \to 0^+$ with $\gamma_k\in{(0,\gamma^{\psi})}$. Then $\hat{x}^k \to \bar x$.
 \end{lemma}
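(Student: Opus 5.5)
The plan is to compare the optimality of $\hat x^k$ in the proximal subproblem against the competitor $u=\bar x$, and then use prox-boundedness to turn the resulting inequality into $\|\hat x^k-\bar x\|\to 0$. By definition of $\hat x^k\in\prox_{\gamma_k\psi}(x^k-\gamma_k v^k)$,
\[
\psi(\hat x^k)+\frac{1}{2\gamma_k}\|\hat x^k-x^k+\gamma_k v^k\|^2\le \psi(\bar x)+\frac{1}{2\gamma_k}\|\bar x-x^k+\gamma_k v^k\|^2 .
\]
Multiplying by $\gamma_k$, the right-hand side becomes $\gamma_k\psi(\bar x)+\tfrac12\|\bar x-x^k+\gamma_kv^k\|^2$. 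This tends to $0$ because $\bar x\in\dom\psi$, $x^k\to\bar x$, $\gamma_k\to0$ and $(v^k)$ is bounded.

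The key step is a lower bound for $\gamma_k\psi(\hat x^k)$ that is not destroyed if $\hat x^k$ is unbounded. Fix $\lambda\in(0,\gamma^\psi)$. Prox-boundedness gives a constant $c\in\R$ with $\psi(u)\ge c-\frac{1}{2\lambda}\|u\|^2$ for all $u$. Put $y^k:=x^k-\gamma_kv^k$, which is bounded and converges to $\bar x$. Then
\[
\frac{1}{2}\|\hat x^k-y^k\|^2-\frac{\gamma_k}{2\lambda}\|\hat x^k\|^2+\gamma_k c\le \gamma_k\psi(\bar x)+\tfrac12\|\bar x-y^k\|^2=:\epsilon_k\to0 .
\]
Use $\|\hat x^k\|^2\le 2\|\hat x^k-y^k\|^2+2\|y^k\|^2$. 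Once $k$ is large enough that $\gamma_k/\lambda\le 1/2$, this yields
\[
\Bigl(\tfrac12-\tfrac{\gamma_k}{\lambda}\Bigr)\|\hat x^k-y^k\|^2\le \epsilon_k+\frac{\gamma_k}{\lambda}\|y^k\|^2-\gamma_k c ,
\]
with the coefficient on the left at least $0$. If we take $\gamma_k/\lambda\le 1/4$, the coefficient is bounded below by $1/4$. The right-hand side tends to $0$ since $(y^k)$ is bounded and $\gamma_k\to0$. Hence $\|\hat x^k-y^k\|\to0$, and because $y^k\to\bar x$ we conclude $\hat x^k\to\bar x$.

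The main obstacle is this step: preventing $\psi(\hat x^k)$ from tending to $-\infty$ fast enough to offset the quadratic term, which is exactly what a possibly unbounded $\hat x^k$ would require. Prox-boundedness with a fixed $\lambda<\gamma^\psi$ handles it, since the negative quadratic it permits is multiplied by $\gamma_k\to0$ and is absorbed by the $\tfrac12\|\hat x^k-y^k\|^2$ term. Only minor care is needed elsewhere: $\psi(\bar x)$ is finite because $\bar x\in\dom\psi$, and the prox set is nonempty because $\gamma_k<\gamma^\psi$, so $\hat x^k$ is well defined.
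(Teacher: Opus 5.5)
Your proof is correct and follows essentially the same route as the paper: test the prox inequality with $u=\bar x$, bound $\psi(\hat x^k)$ from below by a quadratic via prox-boundedness, multiply by $\gamma_k$, and let $k\to\infty$. The only difference is bookkeeping. The paper centers the quadratic minorant at $x^k$ and handles the cross term $\langle v^k,\hat x^k-x^k\rangle$ with Young's inequality to get $\|\hat x^k-x^k\|\to 0$. You instead keep the shifted point $y^k=x^k-\gamma_k v^k$ and absorb a minorant centered at $0$ via $\|\hat x^k\|^2\le 2\|\hat x^k-y^k\|^2+2\|y^k\|^2$.
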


\begin{proof}
    Let $\gamma\in{(0,\gamma^\psi)}$. Since $\psi$ is prox-bounded and $(x^k)_{k\in\N}$ is bounded, using Young's inequality 
    we have that  there exists  $\alpha \in \R$ such that
 	\begin{align*}
 		\psi(x) \geq -\frac{1}{2\gamma} \| x-    x^k \|^2  + \alpha, \quad \text{ for all } x \in \mathbb{R}^d.
 	\end{align*}
 	Now, using the definition of the proximal mapping we get that 
    \begin{equation*}
 	\begin{aligned}
 		\psi(\hat x^k)   +  &  \frac{1}{2\gamma_k}\|\hat x^k -x^k\|^2   +  \langle  v^k,  \hat x^k -x^k\rangle  + \frac{\gamma_k}{2}\| v^k \|^2   \leq  \psi( \bar x ) + \frac{1}{2\gamma_k} \|\bar{x}- x^k\|^2 + \langle v^k, \bar{x}-x^k\rangle  + \frac{\gamma_k}{2}\|v^k\|^2.
 	\end{aligned}
    \end{equation*}
 	Observe that $\langle  v^k, \hat x^k -x^k\rangle \geq -  \frac{1}{2}\| \hat x^k -x^k\|^2  -  \frac{1}{2}\| v^k \|^2$. Hence, by using the two above inequalities one gets
    \begin{equation*}
 	\begin{aligned}
 		\biggl( \frac{1}{2\gamma_k}- & \frac{1}{2\gamma}- \frac{1}{2}\biggr)  \|\hat x^k -x^k \|^2  \leq 	\psi(\bar x )+\frac{1}{2\gamma_k}\|\bar{x}-x^k\|^2 + \langle v^k,\bar{x}-x^k\rangle- \alpha  +  \frac{1}{2}\| v^k \|^2.
 	\end{aligned}
    \end{equation*}
 	It suffices to multiply the above by $\gamma_k$ and pass to the limit as $k\to\infty$  to conclude that  $ \|\hat x^k -x^k\| \to 0$, and consequently $\hat x^k  \to \bar x$.
\end{proof}

\subsection{Elements from measure theory}

In what follows,  $(\Xi,  \mathcal{A}, \mu)$ will be a measure space, and we  use the notation $L^p_\mu$ for denoting the space of all $p$-integrable functions. Moreover, we  consider a (sample) probability space $(\Omega, \mathcal{F}, \mathbb{P})$. 

Consider a $\sigma$-algebra $\mathcal{G} \subseteq \mathcal{F}$ and an integrable function $f: \Omega \to \mathbb{R}$. We denote by $\mathbb{E}[f \mid \mathcal{G}]$ the conditional expectation of $f$ with respect to $\mathcal{G}$. Given a measurable set $F \in \mathcal{F}$, the indicator of the set $F$ is given by:
\[
\1_{F}(\omega) = 
\begin{cases} 
	1, & \text{if } \omega \in F, \\
	0, & \text{if } \omega \notin F.
\end{cases}
\] A set-valued mapping $M \colon \Xi \tto \mathbb{R}^d$ is said to be {measurable} if $M^{-1}(U) \in \mathcal{A}$ for every open set $U \subset \mathbb{R}^d$, where  
$M^{-1}(U) := \{ t \in \Xi \mid M(t) \cap U \neq \emptyset \}$.
Additionally, a function $\varphi \colon \Xi \times \mathbb{R}^d \to \Rex$ is called a \emph{normal integrand} if the multifunction $\xi \in \Xi \mapsto \operatorname{epi} \varphi_\xi$ is measurable with closed values, where the function $\varphi_\xi:\R^d\to\Rex$ is given by\begin{equation}\label{eq:notation-sub-xi}
    \varphi_\xi(x) := \varphi(\xi,x), \qquad \text{for all } x\in\R^d.
\end{equation} Let $(\Omega,\mathcal F,\mathbb P)$ be a probability space and let
$(\mathcal F_n)_{n\in\N}$ be a filtration, namely, a family of sub-$\sigma$-algebras such that
$\mathcal F_0 \subseteq \mathcal F_1 \subseteq \mathcal F_2 \subseteq \cdots \subseteq \mathcal F $. A random variable $\tau:\Omega\to \mathbb N\cup\{+\infty\}$ is called a
\emph{stopping time} with respect to $(\mathcal F_n)_{n\in\N}$ if for every
$n\in\N$,
$\{\tau \le n\}\in \mathcal F_n$. Equivalently, $\tau$ is a stopping time if and only if $\{\tau = n\}\in \mathcal F_n, \text{~for all } n\geq 0$.

Following standard notation in probability theory, we omit the dependence on 
$\omega\in\Omega$ when writing random variables defined on the probability space 
$(\Omega,\mathcal F,\mathbb P)$, in order to avoid overloading the notation. 
Whenever needed, this dependence will be made explicit to prevent ambiguity.

The following result, known as the \emph{freezing lemma}, provides a formula for computing conditional expectations of compositions involving integrands and independent random variables, extending the classical conditional-expectation formula under independence; see, e.g., \cite[Lemma~10.10.2]{bogachev}. The proof follows from standard approximation techniques based on simple functions and can be found in \cite[Lemma~4.11]{MR4703976}.
\begin{lemma}\label{lemma_measurability}
	Let $\varphi: \Xi \times \mathbb{R}^d \to \R$ be a measurable mapping. Let $\mathcal{G}\subseteq \mathcal{F}$ be a $\sigma$-algebra on $\Omega$, and let $ \xi: \Omega \to \Xi$ be independent of $\mathcal{G}$ and $y: \Omega \to \mathbb{R}^d$ a $\mathcal{G}$-measurable function. Then, for every $\omega\in\Omega$,\begin{equation*}
		\mathbb{E}\left[ \varphi( \xi( \cdot ), y(\cdot)  ) \, | \,\mathcal{G}  \right](\omega) = \int_{\Omega } \varphi( \xi(\omega_1 ) , y(\omega))\, d\mathbb{P}(\omega_1), \quad \text{a.s.},
	\end{equation*}
	provided that  $\omega \mapsto \varphi(\xi(\omega), y(\omega))$ is integrable.   
\end{lemma}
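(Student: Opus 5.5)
The plan is to reduce the claim to the defining property of conditional expectation. We show that the function
\[
h(\omega) := \int_{\Omega} \varphi(\xi(\omega_1), y(\omega))\, d\mathbb{P}(\omega_1) = H(y(\omega)), \qquad H(z) := \int_\Omega \varphi(\xi(\omega_1),z)\, d\mathbb{P}(\omega_1),
\]
is $\mathcal{G}$-measurable. We then show that $\mathbb{E}[\varphi(\xi,y)\1_G] = \mathbb{E}[h\1_G]$ for every $G\in\mathcal{G}$. Throughout, we work with the joint law of $(\xi,y)$ on $\Xi\times\R^d$, which by independence of $\xi$ from $\mathcal{G}$ factorizes as $\mathbb{P}_\xi\otimes\mathbb{P}_{y}$ when restricted appropriately. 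The standard route is a monotone class argument on the integrand $\varphi$.

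\textbf{Step 1 (rectangles).} Take $\varphi(\zeta,z)=\1_A(\zeta)\1_B(z)$ with $A\in\mathcal{A}$ and $B$ Borel. Then $H(z)=\mathbb{P}(\xi\in A)\1_B(z)$, which is Borel, so $h=H\circ y$ is $\mathcal{G}$-measurable. For $G\in\mathcal{G}$, the event $\{y\in B\}\cap G$ lies in $\mathcal{G}$ and is independent of $\{\xi\in A\}$. Hence
\[
\mathbb{E}[\1_A(\xi)\1_B(y)\1_G]=\mathbb{P}(\xi\in A)\,\mathbb{E}[\1_B(y)\1_G]=\mathbb{E}[h\1_G].
\]

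\textbf{Step 2 (all bounded measurable integrands).} Let $\mathcal{H}$ be the class of bounded $\mathcal{A}\otimes\mathcal{B}(\R^d)$-measurable $\varphi$ for which $H$ is Borel and the identity above holds for all $G\in\mathcal{G}$. This class is a vector space and contains constants. It is closed under bounded monotone limits: Borel measurability of $H$ is preserved under pointwise limits, and the identity passes to the limit by monotone convergence applied both inside $H$ and in the outer expectation. Rectangles form a $\pi$-system generating the product $\sigma$-algebra, so the functional monotone class theorem gives that $\mathcal{H}$ contains all bounded measurable $\varphi$. This is also where measurability of $H$ in $z$ is obtained, as in Fubini–Tonelli.

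\textbf{Step 3 (integrable case).} For nonnegative $\varphi$, apply Step 2 to $\min(\varphi,n)$ and let $n\to\infty$ by monotone convergence. This yields the identity in $[0,+\infty]$. For general $\varphi$, split $\varphi=\varphi^+-\varphi^-$. The integrability hypothesis on $\omega\mapsto\varphi(\xi(\omega),y(\omega))$, together with the nonnegative case taken with $G=\Omega$, shows that $\mathbb{E}[H^\pm(y)]<\infty$. Consequently $H^\pm(y(\omega))$ is finite for almost every $\omega$, so $h$ is a.s. well defined and integrable, and subtracting the two identities concludes the argument.

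The main obstacle is the bookkeeping in Step 3. The inner integral may be infinite, or may fail to be defined, on a null set of $\omega$. For this reason the formula can only hold almost surely, which matches the ``a.s.'' in the statement despite the wording ``for every $\omega$''. We therefore define $h$ arbitrarily, say as $0$, on that null set.
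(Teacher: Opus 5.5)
Your proposal is correct and is essentially the standard argument the paper defers to: the paper gives no proof of its own, only citing Baldi's freezing lemma and ``standard approximation techniques based on simple functions''. Like that argument, you check the identity on product indicators, extend by a monotone-class argument, and then pass to nonnegative and integrable integrands by truncation and by splitting into positive and negative parts. You are also right to read the ``for every $\omega$ \dots a.s.'' wording as an almost-sure identity, with $h$ redefined on the $\mathcal{G}$-measurable null set where $H^{\pm}(y)=+\infty$.
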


Next, let us establish a  notion of local Lipschitz continuity for normal integrands.
    \begin{definition}[locally Lipschitz normal integrand]\label{defin_locally_Lips} A normal integrand   $\varphi: \Xi \times\mathbb{R}^d \to \R^m$ is said to be   locally Lipschitz around $\bar x$ with square integrable modulus  if $\varphi(\cdot,\bar x)  \in L^2_{\mu}$ and there exist  $\kappa \in L^2_{\mu}$ and  a neighborhood $V$ of $\bar x$ such that\begin{align*}
 	\| \varphi_\xi( y) - \varphi_\xi(z)\| \leq \kappa(\xi) \| y- z\|, \text{ for all }y,z \in V, \, \xi\in\Xi.
 	\end{align*}
    \end{definition}

 The following   proposition   shows that the local Lipschitz continuity stated above is equivalent to what appears to be a stronger condition: a uniform Lipschitz condition over bounded sets. More precisely, we have the following result, whose proof follows from a classical compactness argument.
 	
 \begin{proposition}\label{Prop_eq_Lipsc_cont_Unif}
 	Let us consider an integrand $\varphi: \Xi \times  \mathbb{R}^d \to \R$ and let $U  \subseteq \R^d$ be a convex 
    compact set. Then the following are equivalent:
 	\begin{enumerate}[label=(\alph*)]
 		\item for every $\bar x \in U$, the integrand    $\varphi$  is locally Lipschitz around $\bar x$ with square integrable modulus.
 		\item there exist $\kappa \in L^2_{\mu}$ and a neighborhood $V$ of $U$ such that 
 			\begin{align}\label{eq_Lipsc_cont_Unif}
 		\varphi(\cdot, y )  \in L^2_{\mu} \text{ and }	| \varphi_\xi (y) - \varphi_\xi( z)| \leq \kappa(\xi) \| y- z\|, \text{ for all }y,z \in V {, \, \xi\in\Xi}.
 		\end{align}
 	\end{enumerate}
 	\end{proposition}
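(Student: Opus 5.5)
The implication (b)$\Rightarrow$(a) is immediate. Given $\bar x\in U$, the set $V$ in (b) is a neighborhood of $U$, hence of $\bar x$. Moreover, $\varphi(\cdot,\bar x)\in L^2_\mu$, and the same $\kappa$ serves as the local modulus.

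For (a)$\Rightarrow$(b) I will use a compactness argument.

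\textbf{Step 1: finite cover.} For every $\bar x\in U$, (a) gives $\kappa_{\bar x}\in L^2_\mu$ and an open neighborhood $V_{\bar x}$ of $\bar x$ on which $\varphi_\xi$ is $\kappa_{\bar x}(\xi)$-Lipschitz for all $\xi$. Shrinking if necessary, I take $V_{\bar x}$ to be an open ball $B(\bar x,r_{\bar x})$; it is convex and the Lipschitz estimate is preserved. By compactness, $U\subseteq \bigcup_{i=1}^m B(x_i,r_{x_i}/2)$ for finitely many points $x_1,\dots,x_m\in U$. I set
\[
V:=\bigcup_{i=1}^m B(x_i,r_{x_i}/2),
\]
which is an open neighborhood of $U$.

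\textbf{Step 2: square integrability of $\varphi(\cdot,y)$.} Take $y\in V$, so $y\in B(x_i,r_{x_i}/2)$ for some $i$. Then
\[
|\varphi_\xi(y)|\le |\varphi_\xi(x_i)|+\kappa_{x_i}(\xi)\|y-x_i\|,
\]
and the right-hand side lies in $L^2_\mu$.

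\textbf{Step 3: a global modulus on $V$.} The natural candidate is
\[
\kappa:=\sum_{i=1}^m\kappa_{x_i}\in L^2_\mu,
\]
or $\max_i\kappa_{x_i}$, up to a constant. The main obstacle is pairs $y,z\in V$ that are far apart, so that no single ball contains both. I will handle this with a Lebesgue number argument, and this is where convexity of $U$ enters.

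Let $\delta>0$ be a Lebesgue number for the cover $\{B(x_i,r_{x_i})\}$ of the compact set $\overline{V}'$, where I first shrink $V$ so that $U\subset V'\subset\overline{V}'\subset\bigcup_i B(x_i,r_{x_i})$. Concretely, I take $V'$ to be an open $\eta$-enlargement $U+\eta\mathbb B$ with $\eta>0$ small. This enlargement is convex because $U$ is convex, so segments between points of $V'$ stay in $V'$.

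Now take $y,z\in V'$. I split the segment $[y,z]$ into $N$ consecutive points $y=w_0,\dots,w_N=z$, each consecutive gap smaller than $\delta$, and distributed evenly, so that $\sum_j\|w_{j+1}-w_j\|=\|y-z\|$. Each pair $w_j,w_{j+1}$ lies in a common ball $B(x_{i_j},r_{x_{i_j}})$. Hence
\[
|\varphi_\xi(w_{j+1})-\varphi_\xi(w_j)|\le \Big(\max_i\kappa_{x_i}(\xi)\Big)\|w_{j+1}-w_j\|.
\]
Summing over $j$ via the triangle inequality gives
\[
|\varphi_\xi(y)-\varphi_\xi(z)|\le \kappa(\xi)\|y-z\|,\qquad \kappa:=\max_i\kappa_{x_i}\le\sum_i\kappa_{x_i}\in L^2_\mu.
\]
Taking $V'$ as the neighborhood in (b) completes the proof.
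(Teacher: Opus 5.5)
Your proof is correct and is essentially the ``classical compactness argument'' the paper invokes without giving details: a finite subcover by balls, a Lebesgue number on the closure of the convex enlargement $U+\eta\mathbb{B}$, and chaining along the segment $[y,z]$, with convexity of $U$ used exactly where you say, to keep the segment inside the enlargement. As a side remark, convexity could even be dropped by bounding pairs with $\|y-z\|\geq\delta$ via $|\varphi_\xi(y)-\varphi_\xi(z)|\leq 2\delta^{-1}\max_i\bigl(|\varphi_\xi(x_i)|+r_{x_i}\kappa_{x_i}(\xi)\bigr)\|y-z\|$, but your chaining route gives the cleaner modulus $\max_i\kappa_{x_i}\in L^2_\mu$.
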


%
%
 \subsection{Upper-$C^2$ normal integrands}

 We now introduce the definition of  upper-$C^2$ integrands 
 anticipated in~\eqref{ProblemP}, and that will constitute a fundamental structural assumption for this problem throughout the manuscript.

\begin{definition}[upper-$C^2$ normal integrand]\label{def_UpperC2NormalIntegrand}
	Let $\varphi :\Xi \times \R^d \to \Rex$  be a normal integrand. We say that $\varphi$ is upper-$C^2$ around $\bar x$ provided that   $\varphi$  is locally Lipschitz around $\bar x$ with square integrable modulus  and there exist a neighborhood $V$ of $\bar x$ and (a nonnegative) $\rA \in L^1_{\mu}$ such that  for all $\xi \in \Xi$,  $x\in V$ and   $v \in \partial \varphi_\xi(x)$, it holds
	\begin{align}\label{integral_equpperdesc}
		\varphi_\xi(y) \leq \varphi_\xi( x) + \langle v , y-x\rangle + \rA(\xi) \|y -x\|^2,\quad\text{for all } y \in V.
	\end{align}
 Furthermore, we simply say that $\varphi$ is an upper-$C^2$  normal integrand provided that $\varphi$  is upper-$C^2$ around every point $\bar x \in \R^d$.
	\end{definition}

\begin{remark}
    Definition~\ref{def_UpperC2NormalIntegrand} extends the notion of upper-$C^2$ functions for integrands. The classical definition of upper-$C^2$ functions, namely, 
    the minimum over a family of twice-continuously differentiable functions (see, e.g., \cite{MR1491362}) 
    is equivalent, as established in \cite[Proposition 2.2]{aragonartacho2023boosted} (see also \cite[Theorem 5.1]{MR1363364}), to \eqref{integral_equpperdesc} when $\Xi$ is a singleton. This equivalence (and further characterizations) can be extended to general sets $\Xi$,  this is the object of the working paper ``A note on upper and lower-$C^2$ normal integrands''. Nonetheless, we adopt this alternative definition because it highlights that this class of functions satisfies a nonsmooth version of the descent lemma, a property well-recognized in smooth optimization (see, e.g., \cite[Lemma A.11]{MR3289054}). Notably, it is well-known that  smooth functions with Lipschitz continuous gradients satisfy an inequality of the kind of \eqref{integral_equpperdesc}. Furthermore, it has been shown that the class of upper-$C^2$ functions is broader, encompassing structured functions formed as the difference between a smooth function with a Lipschitz continuous gradient and a (potentially nonsmooth) convex function (see, e.g., \cite{aragonartacho2023boosted,aragonartacho2025nonmonotonesubgradientmethodsbased,kanzow2026nonmonotonedescentmethodoptimization} for further details). 
\end{remark}

Similarly to Proposition \ref{Prop_eq_Lipsc_cont_Unif}, using compactness arguments, we can demonstrate that the integrable function $\rA$ in \eqref{integral_equpperdesc} can be chosen uniformly on bounded sets.

\begin{proposition}\label{upperC2unif}
	Let $\varphi$ be an upper-$C^2$ normal integrand. Then for every  convex compact set $B$ there exists $\rA \in L^1_\mu$ such that for all $x\in B$ there exists a neighborhood $V$ of $x$ such that \eqref{integral_equpperdesc} holds. \end{proposition}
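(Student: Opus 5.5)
The proof is a finite-subcover argument. I take the local constants supplied by Definition~\ref{def_UpperC2NormalIntegrand} at each point of $B$, extract a finite subfamily, and glue their moduli together.

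\textbf{Step 1 (local data).} For each $\bar x\in B$, Definition~\ref{def_UpperC2NormalIntegrand} gives an open neighborhood $V_{\bar x}$ of $\bar x$ and a nonnegative $\rA_{\bar x}\in L^1_\mu$. For all $\xi\in\Xi$, $x\in V_{\bar x}$, $v\in\partial\varphi_\xi(x)$ and $y\in V_{\bar x}$, they satisfy
\[
\varphi_\xi(y)\le \varphi_\xi(x)+\langle v,y-x\rangle+\rA_{\bar x}(\xi)\|y-x\|^2 .
\]
Shrinking $V_{\bar x}$ if necessary, I may take it to be an open ball $\mathbb{B}(\bar x,r_{\bar x})$. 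Shrinking is harmless, since both the local Lipschitz property and the inequality are inherited by subsets.

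\textbf{Step 2 (compactness).} The family of half-radius balls $\{\mathbb{B}(\bar x,r_{\bar x}/2)\}_{\bar x\in B}$ is an open cover of the compact set $B$. I extract a finite subcover indexed by $\bar x_1,\dots,\bar x_m$ and define
\[
\rA:=\max_{i=1,\dots,m}\rA_{\bar x_i}.
\]
This function is measurable, nonnegative, and bounded above by $\sum_i\rA_{\bar x_i}$, so $\rA\in L^1_\mu$.

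\textbf{Step 3 (verification).} Fix $x\in B$ and choose $i$ with $x\in\mathbb{B}(\bar x_i,r_{\bar x_i}/2)$. Set $V:=\mathbb{B}(\bar x_i,r_{\bar x_i})$, which is a neighborhood of $x$. Since $x\in V$ and $V=V_{\bar x_i}$, the local inequality holds for all $y\in V$ and $v\in\partial\varphi_\xi(x)$ with modulus $\rA_{\bar x_i}$. Because $\rA_{\bar x_i}\le\rA$ pointwise, it also holds with $\rA$, which is exactly \eqref{integral_equpperdesc}.

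If one wants $V$ centered at $x$, the ball $\mathbb{B}(x,r_{\bar x_i}/2)\subseteq V_{\bar x_i}$ also works. In that case one can additionally take the uniform radius $\min_i r_{\bar x_i}/2$ as a Lebesgue number. The pointwise inequality on a subset is still valid because $x$ itself lies in $V_{\bar x_i}$.

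The only delicate point is this last verification. The inequality in Definition~\ref{def_UpperC2NormalIntegrand} requires both the base point $x$ and the test point $y$ to lie in the same neighborhood attached to some center. The half-radius cover is what guarantees this for every $x\in B$. Convexity of $B$ is not needed in this argument; it is presumably assumed only for consistency with Proposition~\ref{Prop_eq_Lipsc_cont_Unif}.
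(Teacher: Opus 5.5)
Your finite-subcover argument with $\rA:=\max_i \rA_{\bar x_i}$ is correct and is the compactness argument the paper invokes; the paper gives no details beyond the analogy with Proposition~\ref{Prop_eq_Lipsc_cont_Unif}, and your version even yields \eqref{integral_equpperdesc} for all base points in $V$, which is the form later used in Proposition~\ref{LS-terminates}. One minor remark: the half-radius refinement is only needed for your centered-ball variant, since a finite subcover by the open sets $V_{\bar x}$ themselves already suffices for Step~3.
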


\subsection{Average approximation errors}\label{sect:aae}

We now introduce the notation for the expected cost in problem~\eqref{ProblemP} and its sample-based approximations that we will use throughout this paper. 
For ease of notation, we let $\Intf{\varphi} : \mathbb{R}^d \to \mathbb{R}$ be the expected value function  given by
\begin{align*}
\Intf{\varphi}(x)
:= \mathbb{E}_{\mu}\left[ \varphi(\xi,x) \right]
= \int_{\Xi} \varphi(\xi,x)\, d\mu(\xi), \mbox{~for all~} x \in \R^d.
\end{align*} Since the exact evaluation of $\Intf{\varphi}$ may be unavailable or computationally expensive, we approximate it by empirical averages constructed from samples. More precisely, for $k \in \N$ representing an iteration index,  the \emph{empirical average} of $\Intf{\varphi}$ at a point $x\in\mathbb{R}^d$ is given by
\begin{equation}\label{defEk}
\Intf{\varphi}^{k}(x) :=	\frac{1}{n_k}\sum_{j=1}^{n_k}  \varphi(\xi_j,x), \text{ for all } x\in\mathbb{R}^d,
\end{equation}and where $n_k\in\N$ is the sample size, and for $j=1,\dots,n_k$, the random variables $\xi_j$ are i.i.d. 
In this manner, the sample \emph{average approximation} of $\vPhi$ in \eqref{ProblemP} at iteration $k$ is given by
\begin{equation}\label{defPhik}
		\vPhi_{k} (x):=   \Intf{\varphi}^k(x) + \psi(x), \text{ for all } x\in\mathbb{R}^d.
\end{equation}	

In the context of the approximation in \eqref{defEk}-\eqref{defPhik}, we make the following standing assumption on the sample $(\xi_i)_{i\geq1}$ used throughout the paper.

\begin{assumption}[Sampling framework]\label{Assumption00}
Let $(\Omega,\mathcal F,\mathbb P)$ be a probability space, and let
$\xi_i:\Omega\to\Xi$, $i\geq 1$, be a sequence of i.i.d. random variables with common law $\mu$, and a divergent and nondecreasing  sequence of sample
sizes $(n_k)_{k\in\N}\subseteq\mathbb N\backslash\{0\}$. 
\end{assumption}

For notational convenience, we omit the symbol $\omega  \in \Omega$ for the functions~$\xi_i$. In particular,  using the notation already  introduced in \eqref{eq:notation-sub-xi} for a normal integrand $\varphi:\Xi\times \R^d\to \R$, in what follows we shall write
\begin{align*}
\varphi_{\xi_i}(x) :=  \varphi(\xi_i(\omega), x).
\end{align*}

To establish convergence of the PSS method, we need to control the error associated with replacing the expected value with the empirical average in \eqref{defPhik} during the iteration process. The following lemma furnishes a nonasymptotic bound for the uniform error of the average approximation generated by the sampling.

\begin{lemma}\label{lemma_logn}
Let $(\xi_i)_{i\geq 1}$ be a sampling with sample size $(n_k)_{k \in \N} $ satisfying Assumption~\ref{Assumption00}, and let $F: \Xi \times V \to \R^m$ be a measurable mapping with $V \subset \R^d$ a closed and bounded set. Suppose that there exist $\kappa \in L^2_{\mu}$ and  $\bar x \in V$ such that
 		\begin{align}\label{eq_Lipsc_cont_2}
 	 F(\cdot,\bar x)  \in L^2_{\mu} \text{ and }	\| F(\xi,  y) - F(\xi,  z)\| \leq \kappa(\xi) \| y- z\|, \text{ for all }y,z \in V , \, \xi\in\Xi.
 	\end{align} Then, the following hold.
    \begin{enumerate}[label=(\roman*), ref=(\roman*)] 
        \item \label{lemma_logn-i} There exists $a>0$ such that 
    \begin{align}\label{ineq_01lemma_logn}
	\mathbb{E}\left(  \sup_{x \in V }  \left\|	 \int_{ \Xi } F(\xi ,x ) d \mu(\xi)   -   \frac{1}{k} \sum_{i=1}^k F( \xi_i,x)   \right\| \right) \leq \frac{a }{\sqrt{k}}, \quad  \text{for  all  $k \geq 1$}.
    \end{align}
    \item \label{lemma_logn-ii}  In particular, let $\varphi: \Xi \times \R^d \to \R$ be a normal integrand  which is  locally Lipschitz around any point $  x \in \R^d$ with square integrable modulus. Then, for all $\rr >0$, there exists a constant $a_\rr$ such that  
\begin{align}\label{ineq_02lemma_logn}
	\mathbb{E}\left(  \sup_{\|x\| \leq \rr }  \left|	\Intf{\varphi}(x) -   \Intf{\varphi}^k  (x)	  \right| \right) \leq \frac{a_{\rr} }{\sqrt{n_k}}, \quad  \text{for  all  $k\in \N$}.
\end{align}
\item \label{lemma_logn-iii} There exists a measurable set $\hat\Omega $ with $\mathbb{P}(\Omega \backslash \hat \Omega) =0$ such that for all $\omega \in \hat\Omega$ and every sequence   $(y^k)_{ k \in  \N } $ in $\mathbb{R}^d$ with $y^k \to y$ we have that 
$\Intf{\varphi}^k  (y^k) \to \Intf{\varphi}(y) $.
    \end{enumerate}

\end{lemma}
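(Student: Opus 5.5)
The plan is to prove \ref{lemma_logn-i} by symmetrization followed by a conditional chaining bound, to obtain \ref{lemma_logn-ii} as a direct specialization, and to prove \ref{lemma_logn-iii} separately by a uniform strong law of large numbers. This is needed because an $L^1$ bound like \eqref{ineq_02lemma_logn} only gives convergence in probability, not almost sure convergence.

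\emph{Part \ref{lemma_logn-i}.} Working componentwise, it suffices to treat $m=1$ at the cost of a factor $m$. Let $(\epsilon_i)$ be i.i.d.\ Rademacher signs independent of $(\xi_i)$. The standard symmetrization inequality gives
\[
\mathbb{E}\sup_{x\in V}\Bigl|\int_\Xi F(\xi,x)\,d\mu(\xi)-\tfrac1k\textstyle\sum_{i=1}^k F(\xi_i,x)\Bigr|\le 2\,\mathbb{E}\sup_{x\in V}\Bigl|\tfrac1k\textstyle\sum_{i=1}^k\epsilon_iF(\xi_i,x)\Bigr| .
\]
Measurability of the supremum is not an issue, since $F(\xi,\cdot)$ is continuous and the supremum may be taken over a countable dense subset of $V$. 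We split $F(\xi_i,x)=F(\xi_i,\bar x)+\bigl(F(\xi_i,x)-F(\xi_i,\bar x)\bigr)$.
\begin{itemize}
\item The first term contributes at most $\|F(\cdot,\bar x)\|_{L^2_\mu}/\sqrt k$, by Cauchy--Schwarz and orthogonality of the signs.
\item For the second term, condition on $(\xi_i)$. By Hoeffding, the process $x\mapsto \frac1k\sum_i\epsilon_i(F(\xi_i,x)-F(\xi_i,\bar x))$ is subgaussian with respect to the metric $\rho(x,y)=k^{-1/2}K_k\|x-y\|$, where $K_k:=\bigl(\frac1k\sum_i\kappa(\xi_i)^2\bigr)^{1/2}$; this uses \eqref{eq_Lipsc_cont_2}.
\item Since $V$ is bounded in $\R^d$, its covering numbers satisfy $N(V,\|\cdot\|,\delta)\le (3\,\mathrm{diam}V/\delta)^d$. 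Dudley's entropy integral then yields a conditional bound $C\,k^{-1/2}K_k\,\mathrm{diam}(V)\sqrt d$ for an absolute constant $C$.
\end{itemize}
Taking expectations and applying Jensen's inequality gives $\mathbb{E}K_k\le\|\kappa\|_{L^2_\mu}$. This yields \eqref{ineq_01lemma_logn} with $a=2m\bigl(\|F(\cdot,\bar x)\|_{L^2_\mu}+C\sqrt d\,\mathrm{diam}(V)\|\kappa\|_{L^2_\mu}\bigr)$.

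I expect this to be the main obstacle. No boundedness or subgaussian assumption is available on $F$, so the crude discretization plus union bound argument would only give a rate of order $\sqrt{\log k/k}$. Symmetrization is what makes the increments conditionally subgaussian and removes the logarithm.

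\emph{Part \ref{lemma_logn-ii}.} Take $V=\{\|x\|\le \rr\}$ and $F=\varphi$. The ball is convex and compact, so Proposition~\ref{Prop_eq_Lipsc_cont_Unif} supplies a single $\kappa\in L^2_\mu$ satisfying \eqref{eq_Lipsc_cont_Unif} on a neighborhood of the ball. Applying \ref{lemma_logn-i} with $k=n_k$ gives \eqref{ineq_02lemma_logn}.

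\emph{Part \ref{lemma_logn-iii}.} For each $\rr\in\N$ fix the $\kappa_\rr$ above and a countable dense set $D_\rr$ of the ball of radius $\rr$. Let $\hat\Omega$ be the event on which both of the following hold:
\begin{itemize}
\item $\frac1n\sum_{i\le n}\varphi(\xi_i,z)\to\Intf{\varphi}(z)$ for all $z\in\bigcup_\rr D_\rr$;
\item $\frac1n\sum_{i\le n}\kappa_\rr(\xi_i)\to\mathbb{E}_\mu\kappa_\rr$ for all $\rr$.
\end{itemize}
This is a countable intersection of strong-law events, so $\mathbb{P}(\Omega\setminus\hat\Omega)=0$. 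Fix $\omega\in\hat\Omega$ and $\delta>0$, and pick a finite $\delta$-net of the ball inside $D_\rr$. The Lipschitz bounds for $\Intf{\varphi}^n$ (modulus $\frac1n\sum\kappa_\rr(\xi_i)$) and for $\Intf{\varphi}$ (modulus $\mathbb{E}_\mu\kappa_\rr$) give
\[
\limsup_n\sup_{\|x\|\le \rr}\bigl|\Intf{\varphi}^n(x)-\Intf{\varphi}(x)\bigr|\le 2\delta\,\mathbb{E}_\mu\kappa_\rr .
\]
Since $\delta$ is arbitrary, the convergence is uniform on each ball. Because $n_k\to\infty$ is nondecreasing, the same holds along $\Intf{\varphi}^k$.

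Finally, if $y^k\to y$, choose $\rr$ with $\sup_k\|y^k\|\le \rr$. Then
\[
|\Intf{\varphi}^k(y^k)-\Intf{\varphi}(y)|\le\sup_{\|x\|\le \rr}|\Intf{\varphi}^k-\Intf{\varphi}|+|\Intf{\varphi}(y^k)-\Intf{\varphi}(y)|\to0,
\]
where the second term vanishes by continuity of $\Intf{\varphi}$.
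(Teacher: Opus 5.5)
Your proof is correct, but it follows a different route from the paper, which does not argue from first principles here. In the paper, item (i) is quoted from Kr\"atschmer's nonasymptotic SAA estimates \cite[Theorem~2.1 and Proposition~2.6]{MR4726003}, applied to the scalarized family $G(\xi,x^\ast,x)=\langle x^\ast,F(\xi,x)\rangle$ on $\mathbb{B}_1[0]\times V$. Item (ii) is deduced from (i) exactly as you do. Item (iii) is quoted from the uniform law of large numbers \cite[Theorem~9.60]{MR4362585}, which needs only continuity on a compact set and an integrable envelope.

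What your route gains is self-containment. Your symmetrization--Dudley argument gives an explicit constant $a$ that shows the dependence on $d$, $m$, $\mathrm{diam}(V)$, $\|\kappa\|_{L^2_\mu}$ and $\|F(\cdot,\bar x)\|_{L^2_\mu}$. Your $\delta$-net argument for (iii) is an elementary version of the cited uniform strong law, specialized to the Lipschitz case.

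The paper's scalarization has some structural advantages:
\begin{itemize}
\item It avoids your componentwise factor $m$, because the parameter set simply becomes a compact subset of $\R^{m+d}$.
\item Since $G(\cdot,0,x)\equiv 0$, the family is anchored at zero. Its envelope is then controlled by its own square-integrable Lipschitz modulus, which absorbs $\|F(\cdot,\bar x)\|$. You instead handle the anchor term with a separate second-moment bound.
\item Most importantly, the paper reuses the internals of Kr\"atschmer's proof in Lemma~\ref{lemmarateconver}. There it obtains an exponential deviation bound on the event $B_k$, and from it the almost sure $\mathcal{O}(\ln(k)/\sqrt{k})$ rate. Your chaining bound controls only the expectation, so it would need to be upgraded to a tail inequality to serve that purpose.
\end{itemize}

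A minor correction to a side remark: you say naive discretization with a union bound would lose only a $\sqrt{\log k}$ factor, but that presupposes subgaussian tails. With only second moments, that route gives a polynomially worse rate that depends on the dimension, which strengthens your case for symmetrization.
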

 
\begin{proof}
The first assertion follows directly from \cite[Theorem~2.1 and
Proposition~2.6]{MR4726003}, applied to the family of real-valued functions
\begin{equation}\label{def_G}
    G(\xi,x^\ast,x):=\langle x^\ast,F(\xi,x)\rangle,
    \qquad
    \xi\in \Xi,\quad (x^\ast,x)\in \mathbb{B}_1[0]\times V .
\end{equation}
Indeed, this application yields precisely estimate~\eqref{ineq_01lemma_logn}. Moreover, since $\varphi$ is Lipschitz continuous with respect to $x$ on the compact set
$\mathbb{B}_{\rr }[0]$, with a square-integrable Lipschitz modulus,
the same argument applies to $\varphi$. Therefore,
\eqref{ineq_02lemma_logn} follows from \ref{lemma_logn-i}. Finally, the pointwise convergence follows from
\cite[Theorem~9.60]{MR4362585}.

\end{proof}
The following lemma establishes an almost sure convergence rate for the
uniform gap between the empirical expectation and its exact counterpart.
Its proof relies on the technique of \emph{normalized convergence}; see
\cite{MR1099305,MR3129765} and the references therein for further details.
\begin{lemma}\label{lemmarateconver}
	Under the assumptions of Lemma~\ref{lemma_logn}, we have a.s. that
	\begin{align}\label{eq00_lemmarateconver}
	   \sup_{x \in V }  \left\|	 \int_{ \Xi } F(\xi ,x ) d \mu(\xi)   -   \frac{1}{k} \sum_{i=1}^k F( \xi_i,x)   \right\| =\mathcal{O}\left( \frac{\ln(k)}{\sqrt{k}}  \right).
	\end{align}

	\end{lemma}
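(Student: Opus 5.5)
Set $S_k := \sup_{x\in V}\big\|\int_\Xi F(\xi,x)\,d\mu(\xi) - \frac1k\sum_{i=1}^k F(\xi_i,x)\big\|$. The plan is to combine the nonasymptotic bound of Lemma~\ref{lemma_logn}\ref{lemma_logn-i}, namely $\mathbb{E}(S_k)\le a/\sqrt{k}$, with a dyadic blocking argument and Borel--Cantelli. This is the normalized-convergence technique, and it will produce an almost sure rate with at most a logarithmic loss.

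\textbf{Step 1: reduce to the worst partial sum on a dyadic block.} Write $T_k := \sup_{x\in V}\big\|\sum_{i=1}^k (F(\xi_i,x)-\mathbb{E}_\mu F(\cdot,x))\big\|$, so that $S_k = T_k/k$. I will first show a maximal version of \eqref{ineq_01lemma_logn}:
\[
\mathbb{E}\Big(\max_{k\le 2^{m+1}} T_k\Big) \le C\sqrt{2^{m+1}}.
\]
The natural route is to note that, for each fixed $(x^*,x)$ in the index set of $G$ from \eqref{def_G}, the partial sums $k\mapsto \sum_{i\le k}\big(G(\xi_i,x^*,x)-\mathbb{E}G\big)$ are martingales. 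Then $T_k$ is a supremum of martingales, hence a nonnegative submartingale, because $V$ is separable and the Lipschitz condition \eqref{eq_Lipsc_cont_2} makes the supremum measurable over a countable dense set. Doob's $L^2$ maximal inequality would then give the bound, provided I have an $L^2$ version of \eqref{ineq_01lemma_logn}. Since $\kappa\in L^2_\mu$, the chaining bounds of \cite{MR4726003} should hold in $L^2$ as well. If only the $L^1$ bound is available, I would instead use Doob's weak-type $L^1$ inequality, $\mathbb{P}(\max_{k\le N}T_k>\lambda)\le \mathbb{E}(T_N)/\lambda$, which is all that Step 2 needs.

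\textbf{Step 2: Borel--Cantelli on the blocks.} Fix $c>0$ and let $\lambda_m := c\, m\,\sqrt{2^{m}}$. Markov's inequality, or the Doob weak-type bound from Step 1, gives
\[
\mathbb{P}\Big(\max_{k\le 2^{m+1}} T_k>\lambda_m\Big)\le \frac{a\sqrt{2^{m+1}}}{c\,m\sqrt{2^m}} = O(1/m).
\]
This is not summable, so the threshold must be sharpened. I will first try $\lambda_m := m^{1+\epsilon}\sqrt{2^m}$, which makes the probabilities $O(m^{-1-\epsilon})$ and hence summable. Borel--Cantelli then yields, almost surely and for all large $m$,
\[
\max_{2^m\le k<2^{m+1}} S_k \le \frac{m^{1+\epsilon}\sqrt{2^m}}{2^m} \lesssim \frac{(\ln k)^{1+\epsilon}}{\sqrt{k}}.
\]
This falls short of the target $\ln k/\sqrt{k}$ by a factor $(\ln k)^{\epsilon}$.

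\textbf{Step 3: recover the exact $\ln k$ factor.} This is the main obstacle. I would first try a higher-moment bound $\mathbb{E}(S_k^p)\le a_p k^{-p/2}$ for some $p>1$; with Markov's inequality at threshold $m\sqrt{2^m}$ the block probabilities become $O(m^{-p})$, which is summable and gives exactly $\mathcal{O}(\ln k/\sqrt{k})$. For $p=2$ this is available because $\kappa\in L^2_\mu$ and $F(\cdot,\bar x)\in L^2_\mu$. Concretely, I would apply \cite[Theorem~2.1, Proposition~2.6]{MR4726003} in its $L^2$ form, or symmetrize and use a Hoffmann--J{\o}rgensen type inequality to pass from the first to the second moment of the supremum.

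Alternatively, I would use the normalized-convergence results of \cite{MR1099305,MR3129765} directly. These state that if $\mathbb{E}\max_{k\le n}T_k\le C\sqrt{n}$, then $T_n/(\sqrt{n}\,b_n)\to0$ almost surely for any $b_n$ with $\sum 1/(n b_n)<\infty$-type conditions. Choosing $b_n=\ln n$ in their Kronecker-type criterion, or an equivalent summability condition, gives \eqref{eq00_lemmarateconver}.
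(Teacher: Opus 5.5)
Your main line is correct, but it takes a genuinely different route from the paper. The paper uses neither a maximal inequality nor blocking. It extracts from the proof of Kr\"atschmer's Theorem~2.2 an exponential deviation bound $\mathbb{P}(\{S_k>\epsilon\}\cap B_k)\le \exp(-a\sqrt{k}\,\epsilon)$ for $\epsilon>b/\sqrt{k}$. This bound holds only on the event $B_k=\{\frac1k\sum_{j\le k}\kappa(\xi_j)^2\le 2\mathbb{E}[\kappa^2]\}$, so the paper uses the strong law and Egorov's theorem to get $\Omega_\varsigma\subseteq B_k$ for all large $k$, with $\mathbb{P}(\Omega\setminus\Omega_\varsigma)\le\varsigma$. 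It then applies Borel--Cantelli index by index with $\epsilon_k=\delta\ln(k)/\sqrt{k}$ and $a\delta>1$, so the probabilities are $k^{-a\delta}$. There, the factor $\ln k$ is exactly what per-index Borel--Cantelli requires against a tail $e^{-a\sqrt{k}\epsilon}$.

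You work only with moments: Doob's inequality for the nonnegative submartingale $T_k$ on dyadic blocks, plus a second-moment bound $\mathbb{E}[T_N^2]\le CN$. At level $m\sqrt{2^m}$ this gives block probabilities $O(m^{-2})$. Your route avoids relying on the internals of the cited proof and on the Egorov localization. It also gives more: any level $g(m)\sqrt{2^m}$ with $\sum_m g(m)^{-2}<\infty$ works, so you get $\mathcal{O}\bigl(\ln^{(1+\epsilon)/2}(k)/\sqrt{k}\bigr)$ almost surely.

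The price is the second-moment bound, which you assert rather than prove. It does hold, and the route you mention last is the clean one. Regard $X_i:=F(\xi_i,\cdot)-\int_\Xi F(\xi,\cdot)\,d\mu(\xi)$ as independent, mean-zero random elements of the separable Banach space $C(V;\mathbb{R}^m)$; continuity in $x$ comes from \eqref{eq_Lipsc_cont_2}. Then use the Hoffmann--J{\o}rgensen-type moment inequality (see, e.g., Ledoux--Talagrand, \emph{Probability in Banach Spaces}, Ch.~6) with $p=2$ and a universal constant $K$:
\[
\Bigl(\mathbb{E}\Bigl\|\sum_{i\le N}X_i\Bigr\|_\infty^2\Bigr)^{1/2}\le K\Bigl(\mathbb{E}\Bigl\|\sum_{i\le N}X_i\Bigr\|_\infty+\Bigl(\mathbb{E}\max_{i\le N}\|X_i\|_\infty^2\Bigr)^{1/2}\Bigr).
\]
The first term is at most $a\sqrt N$ by Lemma~\ref{lemma_logn}\ref{lemma_logn-i}. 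The second is at most $\sqrt{N}\,(\mathbb{E}\|X_1\|_\infty^2)^{1/2}<\infty$, thanks to the square-integrable envelope $\|F(\xi,x)\|\le\|F(\xi,\bar x)\|+\kappa(\xi)\,\mathrm{diam}(V)$. Combined with Doob's $L^2$ inequality from your Step~1, this closes Step~3.

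Drop your last alternative. The choice $b_n=\ln n$ does not satisfy $\sum_n 1/(nb_n)<\infty$. With only a first-moment maximal bound, the blocking argument reproduces exactly the divergent series $\sum_m 1/m$ you ran into in Step~2.
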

   \begin{proof}
A closer inspection of the proof of \cite[Theorem~2.2]{MR4726003} (see the first inequality in \cite[p.~1283]{MR4726003}, equation (5.8) therein, and the first equation in \cite[p.~1284]{MR4726003}) applied to
the function $G$ defined in \eqref{def_G}, shows that there exist constants
$a,b>0$ and $k_0\in\mathbb{N}$ such that, for every $k\geq k_0$ and
every $\epsilon>b/\sqrt{k}$,
\begin{align}\label{eq01_lemmarateconver}
    \mathbb{P}\Bigg(
    \left\{
        \sup_{x\in V}
        \left\|
            \int_{\Xi} F(\xi,x)\,d\mu(\xi)
            -
            \frac{1}{k}\sum_{i=1}^k F(\xi_i,x)
        \right\|
        >\epsilon
    \right\}
    \cap B_k
    \Bigg)
    \leq \exp\big(-a\sqrt{k}\,\epsilon\big),
\end{align}
where $B_k:=
    \left\{
        \frac{1}{k}\sum_{j=1}^k \kappa(\xi_j)^2
        \leq
        2\mathbb{E}\big[\kappa(\xi)^2\big]
    \right\}$,
and $\kappa$ denotes the Lipschitz modulus appearing in \eqref{eq_Lipsc_cont_2}. Now, without loss of generality we may assume that $\mathbb{E}\big[\kappa(\xi)^2\big]>0$. By the strong law of large numbers (see, e.g., \cite[Theorem~8.3.5]{MR982264}), $\frac{1}{k}\sum_{j=1}^k \kappa(\xi_j)^2
    \longrightarrow \mathbb{E}\big[\kappa(\xi)^2\big]$, a.s. Hence, by Egorov's theorem \cite[Theorem 2.2.1]{bogachev}, for every $\varsigma>0$ there exists a measurable set
$\Omega_\varsigma\subseteq \Omega$ such that $\mathbb{P}(\Omega\setminus \Omega_\varsigma)\leq \varsigma$, and the convergence above is uniform on $\Omega_\varsigma$. Consequently, there exists
an integer $k_1\geq k_0$ such that $\Omega_\varsigma\subseteq B_k$ for all $k\geq k_1$. Now fix $\delta>a^{-1}$, and choose $k_2\geq k_1$ such that $\delta\ln(k)>b$ for all $k\geq k_2$. For such $k$, we may apply \eqref{eq01_lemmarateconver} with $\epsilon_k:=\delta\frac{\ln(k)}{\sqrt{k}}$. Define
\[
    A_k^\varsigma:=
    \left\{\omega \in \Omega : 
        \frac{\sqrt{k}}{\ln(k)}
        \sup_{x\in V}
        \left\|
            \int_{\Xi} F(\xi,x)\,d\mu(\xi)
            -
            \frac{1}{k}\sum_{i=1}^k F(\xi_i,x)
        \right\|
        >\delta
    \right\}
    \cap \Omega_\varsigma .
\]
Since $\Omega_\varsigma\subseteq B_k$ for every $k\geq k_2$, we have
\[
    A_k^\varsigma
    \subseteq
    \left\{
        \sup_{x\in V}
        \left\|
            \int_{\Xi} F(\xi,x)\,d\mu(\xi)
            -
            \frac{1}{k}\sum_{i=1}^k F(\xi_i,x)
        \right\|
        >\epsilon_k
    \right\}
    \cap B_k .
\]
Therefore, by \eqref{eq01_lemmarateconver},
\begin{align*}
    \mathbb{P}(A_k^\varsigma)
    \leq
    \exp\big(-a\sqrt{k}\epsilon_k\big)
    =
    \exp\big(-a\delta\ln(k)\big)
    =
    \frac{1}{k^{a\delta}},
    \qquad \text{for all } k\geq k_2 .
\end{align*}
Since $a\delta>1$, the  Borel--Cantelli lemma \cite[Exercise 1.12.89 (i)]{bogachev} yields
\[
    \sum_{k=1}^{\infty}\mathbb{P}(A_k^\varsigma)<+\infty  \; \Longrightarrow \;
\mathbb{P}\left(\limsup_{k\to\infty} A_k^\varsigma\right)=0.
\]
Consequently, for almost every $\omega\in\Omega_\varsigma$, there exists
$\hat{k}(\omega)\in\mathbb{N}$ such that, for every $k\geq \hat{k}(\omega)$,
\[
    \sup_{x\in V}
    \left\|
        \int_{\Xi} F(\xi,x)\,d\mu(\xi)
        -
        \frac{1}{k}\sum_{i=1}^k F(\xi_i,x)
    \right\|
    \leq
    \delta\frac{\ln(k)}{\sqrt{k}}.
\]
Since $\varsigma>0$ was arbitrary, we conclude that
\[
    \sup_{x\in V}
    \left\|
        \int_{\Xi} F(\xi,x)\,d\mu(\xi)
        -
        \frac{1}{k}\sum_{i=1}^k F(\xi_i,x)
    \right\|
    =
    \mathcal{O}\left(\frac{\ln(k)}{\sqrt{k}}\right),
    \qquad \text{a.s.}
\]
This proves \eqref{eq00_lemmarateconver}.
\end{proof}

We  establish in the next lemma  the  convergence of the Clarke sample-based subgradients. Formally, we have the following result.

\begin{lemma}\label{LemmaClarke}
Let $(\xi_i)_{i\geq1}$ be a sampling with sample size $(n_k)_{k \in \N} $ satisfying Assumption~\ref{Assumption00}, and let $\varphi: \Xi \times \mathbb{R}^d \to \Rex$ be an upper-$C^2$ normal integrand. Then there exists a measurable set $\hat{\Omega} \subset \Omega$ with $\mathbb{P}(\Omega \setminus \hat{\Omega}) = 0$ such that,  for all $\omega \in \hat{\Omega}$, every sequence $x^k \to x$ and every $v^k \in \partial \Intf{\varphi}^k(x^k)$, all the accumulation points of $(v^k)_{k\in\N}$ belong to $\partial \Intf{\varphi}(x)$.
\end{lemma}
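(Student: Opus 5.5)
The plan is to use the upper-$C^2$ inequality \eqref{integral_equpperdesc} to turn Clarke subgradients of the empirical average into a one-sided descent-type inequality. That inequality survives averaging and passes to the limit by the uniform law of large numbers in Lemma~\ref{lemma_logn}\ref{lemma_logn-iii}. The limit inequality then identifies the accumulation point as a Clarke subgradient of $\Intf{\varphi}$.

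\textbf{Step 1: reduction to a compact set.} Fix $\omega$ in the full-measure set $\hat\Omega$ given by Lemma~\ref{lemma_logn}\ref{lemma_logn-iii}. Since $x^k\to x$, all $x^k$ eventually lie in a closed ball $B$ around $x$. On a slightly larger convex compact set, Proposition~\ref{upperC2unif} provides a single $\rA\in L^1_\mu$ and, for each point of $B$, a neighborhood on which \eqref{integral_equpperdesc} holds. By a compactness (Lebesgue number) argument we may take a common radius $\rho>0$: for every $z\in B$, every $\xi$, every $w\in\partial\varphi_\xi(z)$ and every $y$ with $\|y-z\|\le\rho$,
\[
\varphi_\xi(y)\le\varphi_\xi(z)+\langle w,y-z\rangle+\rA(\xi)\|y-z\|^2 .
\]
We shrink $\hat\Omega$ by a null set so that, by the strong law of large numbers, $\frac1{n_k}\sum_{j\le n_k}\rA(\xi_j)\to\Intf{\rA}$.

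\textbf{Step 2: pass from $\partial\Intf{\varphi}^k$ to the sum of the $\partial\varphi_{\xi_j}$.} The Clarke sum rule for locally Lipschitz functions gives $\partial\Intf{\varphi}^k(x^k)\subseteq\frac1{n_k}\sum_j\partial\varphi_{\xi_j}(x^k)$. Hence $v^k=\frac1{n_k}\sum_j w_j^k$ with $w_j^k\in\partial\varphi_{\xi_j}(x^k)$. Averaging the inequality of Step~1 yields, for $\|y-x^k\|\le\rho$,
\[
\Intf{\varphi}^k(y)\le \Intf{\varphi}^k(x^k)+\langle v^k,y-x^k\rangle+\Big(\tfrac1{n_k}\textstyle\sum_j \rA(\xi_j)\Big)\|y-x^k\|^2 .
\]
The vectors $v^k$ are bounded, since $\|w^k_j\|\le\kappa(\xi_j)$ and the SLLN applies to $\kappa$. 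So accumulation points exist, though this is not needed.

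\textbf{Step 3: limit inequality.} Let $\bar v$ be the limit of a subsequence $v^{k_l}$. Fix $y$ with $\|y-x\|<\rho$. Lemma~\ref{lemma_logn}\ref{lemma_logn-iii}, applied to the constant sequence $y$ and to $x^{k}\to x$, gives
\[
\Intf{\varphi}(y)\le\Intf{\varphi}(x)+\langle\bar v,y-x\rangle+\Intf{\rA}\|y-x\|^2 .
\]
For a direction $d$ and small $t>0$, take $y=x+td$. This gives $\Intf{\varphi}(x+td)-\Intf{\varphi}(x)\le t\langle\bar v,d\rangle+O(t^2)$.

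\textbf{Step 4: identification of $\bar v$ (main obstacle).} Step~3 only shows that $-\bar v$ lies in the Fréchet subdifferential of $-\Intf{\varphi}$, i.e.\ that $\bar v$ is an upper (super-)gradient. To conclude $\bar v\in\partial\Intf{\varphi}(x)$, I would use that $\Intf{\varphi}$ is itself upper-$C^2$ near $x$. Indeed, it is Lipschitz, by Proposition~\ref{Prop_eq_Lipsc_cont_Unif} and integration, and for $u\in\partial\Intf{\varphi}(z)\subseteq\int\partial\varphi_\xi(z)\,d\mu$ (Clarke's integral rule) the integrated inequality holds. Hence $\Intf{\varphi}$ is Clarke regular in the sense that $-\Intf{\varphi}$ is lower-$C^2$, hence regular. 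Then the Fréchet supergradient condition forces, for every $d$,
\[
\Intf{\varphi}'(x;d)\le\langle\bar v,d\rangle,
\]
so $-\bar v\in\hat\partial(-\Intf{\varphi})(x)=\partial(-\Intf{\varphi})(x)=-\partial\Intf{\varphi}(x)$, using the regularity of $-\Intf{\varphi}$ and $\partial(-f)=-\partial f$. Hence $\bar v\in\partial\Intf{\varphi}(x)$.

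The delicate points are the uniform radius $\rho$ in Step~1 and the justification of the regularity of $-\Intf{\varphi}$, for which the standard lower-$C^2$ characterization in the case where $\Xi$ is a singleton can be invoked. Alternatively, I would fall back on the upper semicontinuity of the Clarke subdifferential together with a direct argument through $\Intf{\varphi}^\circ(x;d)$.
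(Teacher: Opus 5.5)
Your argument is correct, but it takes a different route from the paper.

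\textbf{The paper's route.} The paper works with Clarke directional derivatives. Using Clarke regularity of $-\varphi_\xi$ and Clarke's interchange theorem, it writes $(\Intf{\varphi}^k)^\circ(x;h)=\frac{1}{n_k}\sum_{j=1}^{n_k}\varphi_{\xi_j}^\circ(x;h)$ and $(\Intf{\varphi})^\circ(x;h)=\int_\Xi\varphi_\xi^\circ(x;h)\,d\mu(\xi)$. It then invokes the Artstein--Wets strong law for hypographical convergence of these random functions of $(x,h)$. This lets $\langle v^{\nu_k},h\rangle\le(\Intf{\varphi}^{\nu_k})^\circ(x^{\nu_k};h)$ pass directly to $\langle v,h\rangle\le(\Intf{\varphi})^\circ(x;h)$.

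\textbf{Your route.} You instead average the quadratic upper model and pass to the limit using only Lemma~\ref{lemma_logn}\ref{lemma_logn-iii} and the scalar strong law for $\rA$. This is more elementary and stays within the paper's own toolkit. It also shows that $\bar v$ is a proximal supergradient of $\Intf{\varphi}$ with modulus $\mathbb{E}[\rA]$. The paper's route, by contrast, needs only the qualitative regularity of $-\varphi_\xi$ (plus the Lipschitz bounds) rather than an integrable quadratic modulus, at the price of the heavier hypo-convergence result.

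\textbf{Step 4 is not an obstacle.} No regularity of $-\Intf{\varphi}$ is needed. Taking $y=x-td$ in your limit inequality gives $\langle\bar v,d\rangle\le t^{-1}\bigl(\Intf{\varphi}(x)-\Intf{\varphi}(x-td)\bigr)+\mathbb{E}[\rA]\,t\|d\|^2$. Since the base points $x-td\to x$, letting $t\downarrow 0$ yields $\langle\bar v,d\rangle\le(\Intf{\varphi})^\circ(x;d)$ for every $d$. Equivalently, a Fr\'echet subgradient of a locally Lipschitz function is always a Clarke subgradient, and $\partial(-f)=-\partial f$; you only need this inclusion, not equality.

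\textbf{The exceptional null set must not depend on $x$.} As written, you fix $\omega$, then choose $B$ and $\rA$ from $x$, and only then discard a null set for the strong law of $\rA$. Since there are uncountably many $x$, this is not allowed. Fix it as in the proof of Proposition~\ref{LS-terminates}: take $\rA_\rr$ from Proposition~\ref{upperC2unif} on $\mathbb{B}_\rr[0]$ for each $\rr\in\N$, and intersect the countably many full-measure events. With this, the uniform radius $\rho$ and the Lebesgue-number argument are also unnecessary. The neighborhood $V$ of $x$ provided by Proposition~\ref{upperC2unif} eventually contains all $x^k$, so you may simply take $y\in V$.
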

\begin{proof}
For all  $x, h\in\R^d$, $k \in \N$, define     $  
    f(x,h):=\big(\Intf{\varphi}\big)^\circ(x;h)$
     and $f^k(x,h):=\big(\Intf{\varphi}^k\big)^\circ(x;h)$. By the Clarke interchange formula for subdifferentiation under the integral \cite[Theorem 2.7.2]{MR1058436},
together with the Clarke regularity of $-\varphi_\xi$ for every $x\in\R^d$, we
have
\begin{equation*}
    \partial \Intf{\varphi}(x)
    =
    \int_{\Xi}\partial\varphi_\xi(x)\,d\mu(\xi),
    \qquad \qquad
    f(x,h) 
    =
    \int_{\Xi}\varphi_\xi^\circ(x;h)\,d\mu(\xi),
\end{equation*}
for all $h\in\R^d$. Similarly, by the sum rule for the Clarke
subdifferential and the Clarke regularity of each function $\varphi_\xi$ \cite[Proposition 2.3.3 and Corollary 3]{MR1058436}, we
obtain 
\begin{equation}\label{eqsubd02}
    \partial \Intf{\varphi}^k(x)
    =
    \frac{1}{n_k}\sum_{j=1}^{n_k}\partial\varphi_{\xi_j}(x),
    \qquad \qquad
    f^k(x,h)
    =
    \frac{1}{n_k}\sum_{j=1}^{n_k}\varphi_{\xi_j}^\circ(x;h),
\end{equation}
for all $x,h\in\R^d$ and all $k\in\mathbb N$. 
Therefore, by \cite[Theorem 2.3]{MR1363357}, there exists a measurable set
$\hat\Omega\subset\Omega$, with $\mathbb P(\Omega\setminus\hat\Omega)=0$, such
that, for every $\omega\in\hat\Omega$, the sequence $(f^k)_{k\in\mathbb N}$
hypographically converges to $f$ on $\R^d\times\R^d$.

Next, fix $\omega\in\hat\Omega$. Let $x^{\nu_k}\to x$ and $v^{\nu_k}\to v$, with
    $v^{\nu_k}\in\partial \Intf{\varphi}^{\nu_k}(x^{\nu_k})$ for all $k\in\mathbb N$.
We prove that $v\in\partial \Intf{\varphi}(x)$. Indeed, for every $h\in\R^d$,
the definition of the Clarke subdifferential gives
\[
    \langle v^{\nu_k},h\rangle
    \leq
    \big(\Intf{\varphi}^{\nu_k}\big)^\circ(x^{\nu_k};h)
    =
    f^{\nu_k}(x^{\nu_k},h),
    \qquad \text{for all } k\in\mathbb N.
\]
Since $(x^{\nu_k},h)\to(x,h)$ and $f^k$ hypographically converges to $f$, \cite[Eq. 7(9)]{MR1491362} implies
\[
    \langle v,h\rangle
    =
    \lim_{k\to\infty}\langle v^{\nu_k},h\rangle
    \leq
    \limsup_{k\to\infty} f^{\nu_k}(x^{\nu_k},h)
    \leq
    f(x,h)
    =
    \big(\Intf{\varphi}\big)^\circ(x;h).
\]
Since $h\in\R^d$ was arbitrary, it follows from the characterization of the
Clarke subdifferential that
   $ v\in\partial \Intf{\varphi}(x)$, 
which proves the desired inclusion.
\end{proof}

\subsection{Technical lemmas on real sequences}

To conclude this section, we state two technical lemmas involving sequences that appear in the subsequent convergence analysis. We defer the proof of these results to Appendix~\ref{s:appendix}. 

The first technical lemma shows the convergence of a series involving the sequence of sample sizes used to approximate the expected value in \eqref{ProblemP}, which is instrumental for proving subsequential convergence in Theorem~\ref{The01}. The proof of the following result can be found in Appendix~\ref{proof-l:bound_sample01}.
\begin{lemma}\label{l:bound_sample01} Let $(n_k)_{k \in \N}$ be a divergent, nondecreasing sequence of natural numbers. Then
     \begin{align*}
	\sum_{k=1}^\infty	\frac{n_{k+1} -n_{k}}{\sqrt{n_k} n_{k+1}}  <+\infty.
\end{align*}  
 \end{lemma}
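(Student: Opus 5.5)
The plan is to bound each summand by the integral of a decreasing function of $t$ over the interval $[n_k, n_{k+1}]$; the resulting integrals then telescope into a finite quantity. Terms with $n_{k+1}=n_k$ vanish, so only indices with $n_{k+1}>n_k$ matter. The key pointwise inequality is that, for every $t\in[n_k,n_{k+1}]$, we have $\sqrt{n_k}\,n_{k+1}\ge \sqrt{t}\cdot t$ fails in the wrong direction ($\sqrt{n_k}\le\sqrt t$). We therefore split the factors differently. Since $n_{k+1}\ge t$ and $\sqrt{n_k}\ge$ something, we write
\[
\frac{n_{k+1}-n_k}{\sqrt{n_k}\,n_{k+1}} = \frac{1}{\sqrt{n_k}}\Bigl(1-\frac{n_k}{n_{k+1}}\Bigr),
\]
and the question reduces to controlling $\sqrt{n_k}$ in the denominator against a possibly large jump.

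The cleanest route is to compare with $\int_{n_k}^{n_{k+1}} t^{-3/2}\,dt$ up to a constant, by distinguishing two cases according to the size of the jump.

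In the first case, $n_{k+1}\le 4n_k$. For $t\in[n_k,n_{k+1}]$ we have $t^{3/2}\le n_{k+1}\sqrt{n_{k+1}}\le 2\sqrt{n_k}\,n_{k+1}$. Hence the summand is at most $2\int_{n_k}^{n_{k+1}}t^{-3/2}\,dt = 4\bigl(n_k^{-1/2}-n_{k+1}^{-1/2}\bigr)$.

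In the second case, $n_{k+1}>4n_k$. Here we simply bound the summand by $n_k^{-1/2}$. At the same time,
\[
n_k^{-1/2}-n_{k+1}^{-1/2}\ge n_k^{-1/2}-\tfrac12 n_k^{-1/2}=\tfrac12 n_k^{-1/2},
\]
so the summand is at most $2\bigl(n_k^{-1/2}-n_{k+1}^{-1/2}\bigr)$.

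In both cases the summand is bounded by $4\bigl(n_k^{-1/2}-n_{k+1}^{-1/2}\bigr)$, which is nonnegative because $(n_k)$ is nondecreasing. Summing from $k=1$ to $K$ telescopes to at most $4n_1^{-1/2}\le 4$, using $n_1\ge1$. The series has nonnegative terms and bounded partial sums, so it converges. Divergence of $(n_k)$ is not even needed; it only makes the tail $n_{K+1}^{-1/2}\to0$.

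The only genuine obstacle is that large jumps in $n_k$ prevent a direct integral comparison, since $\sqrt{n_k}$ could be much smaller than $\sqrt{t}$ across the interval. The case split above resolves this: a large jump forces the telescoping increment itself to be comparable to $n_k^{-1/2}$.
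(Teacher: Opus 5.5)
Your argument is correct, but it takes a different route from the paper. You compare each summand with $\int_{n_k}^{n_{k+1}} t^{-3/2}\,dt$. Because that pointwise comparison breaks down for large jumps, you need a case split on $n_{k+1}/n_k$, and you end up with the bound $4\bigl(n_k^{-1/2}-n_{k+1}^{-1/2}\bigr)$.

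The paper avoids cases altogether with an exact factorization. Writing $n_{k+1}-n_k=(\sqrt{n_{k+1}}-\sqrt{n_k})(\sqrt{n_{k+1}}+\sqrt{n_k})$ gives
\[
\frac{n_{k+1}-n_k}{\sqrt{n_k}\,n_{k+1}}
=\Bigl(\frac{1}{\sqrt{n_k}}-\frac{1}{\sqrt{n_{k+1}}}\Bigr)\frac{\sqrt{n_{k+1}}+\sqrt{n_k}}{\sqrt{n_{k+1}}}
\le 2\Bigl(\frac{1}{\sqrt{n_k}}-\frac{1}{\sqrt{n_{k+1}}}\Bigr).
\]
In the paper's notation this reads $q_k\le p_k\le 2q_k$ with $q_k=n_k^{-1/2}-n_{k+1}^{-1/2}$. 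It telescopes with the better constant $2$ and shows that large jumps are not actually an obstacle.

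Your integral viewpoint shows more transparently why the series behaves like $\int^\infty t^{-3/2}\,dt$, and it adapts to other comparison functions. The paper's identity is shorter and sharper.

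Your remark that divergence of $(n_k)$ is unnecessary for convergence is right, and it applies equally to the paper's proof. There, divergence is used only to evaluate the telescoped sum exactly as $n_1^{-1/2}$.

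Finally, delete the false start in your first paragraph: ``we have $\sqrt{n_k}\,n_{k+1}\ge\sqrt t\cdot t$ fails in the wrong direction'' and ``$\sqrt{n_k}\ge$ something''. It asserts an inequality, retracts it mid-sentence, and plays no role in the argument.
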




The second lemma of this section characterizes the speed of convergence of a sequence satisfying a recursion up to errors with polynomial decay. Therein, the error term $w_k$ naturally determines the rate of convergence. Its proof can be found in Appendix~\ref{appendix:sequences}. To the best of our knowledge, this result has not been shown in the literature. Typically, recursions of the type \eqref{eq:recursion-lemma} below appear with no error terms, see e.g. \cite[Lemma 1]{aragon2018accelerating} and \cite[Lemma 2]{bento2025convergence}. We refer to~\cite{MR64365} and \cite[Lemmas 4 \& 5, Chap.2, p.45]{PolyakBook} for recursion formulas with decaying errors. 

    \begin{lemma} \label{lemma:sequences}
        Let $(\alpha_k)_{k \in \N}$ be a nonnegative sequence converging to zero, and $(w_k)_{k \in \N}$ be a nonnegative sequence such that for some $p_1,p_2 >0$, \[w_k = \mathcal{O}\bigl(k^{-p_2} \ln^{p_1}(k)\bigr) .\] Suppose that there exist $c >0$ and $q \in (0,2)$, such that
        \begin{equation}\label{eq:recursion-lemma}
            \alpha_{k+1}^{q} \leq c(\alpha_k - \alpha_{k+1}) + w_k,\quad  \mbox{~for all~} k \in \N.      \end{equation} Then the following hold.
            \begin{enumerate}[label=(\roman*)]
                \item[(i)] If $q\in(0,1)$, then  $\alpha_k = o\bigl(k^{-p_2} \ln^{p_1}(k)\bigr) $;
                \item[(ii)] If $q=1$, then $\alpha_k = \mathcal{O}\bigl(k^{-p_2} \ln^{p_1}(k)\bigr) $;
                \item[(iii)] If  $q \in (1, 2)$, $p_1 =q$ and $p_2 > 1$, then $\alpha_k=  \mathcal{ O  } \left(     k^{-\min\{ p_2, \frac{q}{q-1} \} +1} \ln(k) \right)   $.
            \end{enumerate}
    \end{lemma}

\section{Proximal stochastic   subgradient algorithm}\label{sec:3}\vspace*{-0.1in}

In this section,  we  formulate and analyze a PSS  algorithm for solving the nonconvex problem \eqref{ProblemP}, 
corresponding to Algorithm~\ref{alg:1}. Throughout this section and thereafter, we have the following blanket assumptions. 

\begin{assumption}[On problem~\eqref{ProblemP}]\label{Assumption01}
    Let $\vPhi: \mathbb{R}^d \to \Rex$ be the objective function of problem~\eqref{ProblemP}. We assume the following:
    \begin{enumerate}[label=(\roman*),ref=(\roman*)]
        \item \label{Assumption01-i} $\psi: \mathbb{R}^d \to \Rex$ is a proper, lsc, prox-bounded function;
        \item \label{Assumption01-ii} $\varphi: \Xi \times \mathbb{R}^d \to \R$ is an upper-$C^2$ normal integrand;
        \item The function $\xi \mapsto \inf_{x\in \R^d} \left( \varphi(\xi, x) +\psi(x) \right)$ is integrable.
    \end{enumerate}
\end{assumption}

\begin{remark}\label{remark:bv} A few comments on Assumption~\ref{Assumption01} are in order.
  \begin{enumerate}[label=(\roman*)]
     \item It is common in the literature to impose smoothness assumptions on the integrand 
$\varphi$ and require the stochastic gradient to be unbiased, that is, $\mathbb{E}\big[\nabla_x\varphi(\xi_i,x)\big]
    =
    \nabla \Intf{\varphi}(x)$ 
for a suitable class of points $x\in\mathbb{R}^d$. However, this property is inapplicable in our setting as the integrand is nonsmooth. Instead, one can show that any measurable selection $v_i^k\in \partial_x \varphi(\xi_i,x)$ satisfies $\mathbb{E}[v_i^k]\in \partial \Intf{\varphi}(x)$. This condition is closely related to the one used in \cite{MR3902455}, where the authors consider 
a measurable selection of subgradients 
$g:\Xi\times\mathbb{R}^d\to\mathbb{R}^d$ such that $\mathbb{E}_\mu[g(\xi,x)]\in \partial \Intf{\varphi}(x)$ for all  $x\in U$, where $U$ is an open set containing the constraint set. Furthermore, the latter work imposes a global second-moment bound on the stochastic subgradient 
oracle $g:\Xi\times\mathbb{R}^d\to\mathbb{R}^d$, namely, $\sup_{x\in U} \mathbb{E}_\mu \bigl[\|g(\xi,x)\|^2\bigr] <+\infty$. In contrast, since we assume only local Lipschitz continuity of $\varphi$; see 
Definition~\ref{defin_locally_Lips}; such a global moment bound may fail in our framework.

\item Let us emphasize that, in the definition of an upper-$C^2$ normal integrand, we 
implicitly assume that the integrand is locally Lipschitz in the sense of 
Definition~\ref{defin_locally_Lips}. This assumption is closely related to the 
standard uniform bounded-variance condition
\[
    \sup_{x\in \R^d}
    \mathbb{E}_\mu\left[
        \left\|
        \nabla_x \varphi_\xi(x)
        -
        \nabla \Intf{\varphi}(x)
        \right\|^2
    \right] <+\infty,
\]
or to its corresponding version on a prescribed subset of $\R^d$; see, e.g.,
\cite{MR3439803,MR3459195} and the references therein. More recently, this condition has been weakened by allowing multiplicative 
noise~\cite{MR3935081}. Particularly,  the stochastic oracle is allowed to have a variance that grows with the 
distance between points, rather than being uniformly bounded on the whole set. Moreover, when $\varphi$ is convex with respect to the decision variable 
$x\in\R^d$, the multiplicative-noise assumption 
implies local Lipschitz continuity of the 
integrand in the sense of Definition~\ref{defin_locally_Lips} provided that $C$ is an open convex set and that there exists $\hat{x}\in\R^d$ such that $\varphi(\cdot,\hat{x})\in L^2_\mu$. Indeed, for convex normal integrands, suitable boundedness of the values is 
equivalent to Lipschitz continuity of the integrand; see, e.g.,
\cite[Theorem~2]{MR4261271}.

  \end{enumerate}
\end{remark}

 The  method proposed in Algorithm~\ref{alg:1} approximates the expected value in \eqref{ProblemP} by means of an empirical average that is calculated in each iteration by adding new terms to the sum coming from new samples of the underlying random variable of the model. Then, a proximal subgradient step is performed on the approximation, where the stepsize is defined via line search. Observe that, in principle, the line search can restart the initial candidate stepsize in each (outer) iteration, thus allowing a nonmonotone sequence of stepsizes. In addition, note that only one stochastic subgradient is computed every time the line search is conducted, in contrast to other proposed line searches for stochastic methods that require new gradient computations at every trial; see, e.g.,~\cite{paquette2020stochastic,nguyen2025stochastic}.

  \begin{algorithm}[H]
  	\caption{  Proximal Stochastic Subgradient (PSS) algorithm for problem~\eqref{ProblemP}}\label{alg:1}
  	\begin{algorithmic}[1]
  		\Require{$x^0 \in\dom\psi $, $\rho\in{(0,1)}$, $\sigma>0$, $\gamma_{\min} >0$ and sample size $ (n_k)_{k \in \N} $ with $0<n_k \leq  n_{k+1}$ and $n_k \to \infty $ as $k \to \infty$. Set $n_{-1}:=0$ and $k:=0$.}
  		\State{If $n_k > n_{k-1}$, draw  $\xi_j\sim \mu$ i.i.d. for $j=n_{k-1}+1, \ldots,  n_{k}$.}
  		\State{\label{step:subgradient}Choose $v_i^{k}\in\partial \varphi_{\xi_i}( x^k)$ for $i=1, \dots, n_k$.}
  		\State{Define $v^k := \frac{1}{n_k}\sum_{i=1}^{n_k} v_i^k$  and set $\mathrm{\vPhi}_k$ using \eqref{defPhik}.}\Comment{Subgradient estimate}
  		\State{Take   $\overline{\gamma}_k \in [\gamma_{\min}, \gamma^\psi [  $. Set $\gamma_k:=\overline{\gamma}_k$ . }
  		\State{Compute \Comment{Proximal line search}
  			\begin{equation}\label{eq:algx}
  				\begin{aligned}
  					\hat{x}^{k} & \in \prox_{\gamma_k \psi} \left(x^k -\gamma_k v^k\right) .
  				\end{aligned}
  			\end{equation}
  		}
  			\State{ \label{step:ls}\textbf{while}   $\mathrm{\vPhi}_k(\hat{x}^k) >\vPhi_k(x^{k}) -\sigma  \|   \hat{x}^{k}- x^{k}\|^2 $\Comment{Sample-based descent test}
  			 
  			\textbf{do }{ $\gamma_k\leftarrow \rho \gamma_k$ and recompute $\hat{x}_k$ according to \eqref{eq:algx}.}}

  		\State{\label{step:newiterate} Set $x^{k+1}:=\hat{x}^{k}$,  update $k \leftarrow k+1$,      and go back to Step~1.}
  	\end{algorithmic}
  \end{algorithm}

\subsection{Well-posedness of the iterative scheme in Algorithm~\ref{alg:1}}

We start the analysis of Algorithm~\ref{alg:1} by showing it is well-defined, which accounts for proving that the line search terminates after finitely many trials in each iteration. We first show a technical proposition from  which the desired result will follow.
 
\begin{proposition}\label{LS-terminates}
Let $\varphi$ and $\psi$ be functions satisfying Assumption~\ref{Assumption01}\ref{Assumption01-i}--\ref{Assumption01-ii}, and suppose that Assumption~\ref{Assumption00} holds for the sampling $\xi_i$, with $i \geq 1$. Let $\bar x\in\dom\psi$ and  $\sigma >0$. Then, there exists a measurable set $\hat{\Omega}$ with $\mathbb{P}(\Omega \setminus \hat{\Omega}) = 0$ such that, for all $\omega \in \hat{\Omega}$,  there exist a neighborhood $W$ of $\bar x$ and a constant $\vartheta>0$ such that for all $x\in W$,   $k \in \N $,  and   $v\in \partial \Intf{\varphi}^k(x)$, one has 
\[
\mathrm{\vPhi}_{k}(w) \leq \vPhi_{k}(x) -\sigma  \| w- x\|^2,
\]
provided that  
\begin{equation}\label{eq:lswd-w}
w \in \prox_{\gamma \psi} \left(x -\gamma v\right), \quad \text{with } \gamma\in{(0,\vartheta).}
\end{equation}

\end{proposition}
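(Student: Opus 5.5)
The plan is to combine the sample-average descent inequality from the upper-$C^2$ structure with the defining minimality of the proximal point, and then to make all constants uniform in $k$ through the strong law of large numbers.

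\textbf{Step 1: uniform constants.} Fix a closed ball $B$ around $\bar x$. By Proposition~\ref{upperC2unif} and Proposition~\ref{Prop_eq_Lipsc_cont_Unif}, there are $\rA\in L^1_\mu$, $\kappa\in L^2_\mu$ and a neighborhood $V\supset B$ with two properties. First, for every $\xi$, the inequality \eqref{integral_equpperdesc} holds for $x,y$ in a common small ball, say $\mathbb{B}_{2\delta}[\bar x]\subset V$, after shrinking via a Lebesgue-number argument on the compact set. Second, $\varphi_\xi$ is $\kappa(\xi)$-Lipschitz there.

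By the strong law of large numbers, there is a full-measure set $\hat\Omega$ on which $\frac1{n}\sum_{j\le n}\rA(\xi_j)\to\mathbb{E}\rA$ and $\frac1n\sum_{j\le n}\kappa(\xi_j)\to\mathbb{E}\kappa$. Since convergent sequences are bounded, for each $\omega\in\hat\Omega$ we get
\[
L:=\sup_k \frac1{n_k}\sum_{j\le n_k}\rA(\xi_j)<\infty, \qquad K:=\sup_k\frac1{n_k}\sum_{j\le n_k}\kappa(\xi_j)<\infty.
\]

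Averaging \eqref{integral_equpperdesc}, together with \eqref{eqsubd02}, so that $v\in\partial\Intf{\varphi}^k(x)$ is an average of $v_j\in\partial\varphi_{\xi_j}(x)$, gives
\[
\Intf{\varphi}^k(w)\le \Intf{\varphi}^k(x)+\langle v,w-x\rangle+L\|w-x\|^2
\]
for $x,w\in\mathbb{B}_{2\delta}[\bar x]$. We also get $\|v\|\le K$.

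\textbf{Step 2: prox inequality.} Comparing the proximal objective at $w$ and at $x$ yields
\[
\psi(w)+\langle v,w-x\rangle+\frac1{2\gamma}\|w-x\|^2\le\psi(x).
\]
Adding this to the Step~1 inequality gives
\[
\vPhi_k(w)\le\vPhi_k(x)-\Bigl(\frac1{2\gamma}-L\Bigr)\|w-x\|^2,
\]
which is at most $\vPhi_k(x)-\sigma\|w-x\|^2$ once $\gamma<1/(2(L+\sigma))$. This requires $x\in\dom\psi$; if $\psi(x)=+\infty$, the claimed inequality is trivial with the convention $\infty\le\infty$.

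\textbf{Step 3: localization (main obstacle).} We must guarantee that $w\in\mathbb{B}_{2\delta}[\bar x]$, uniformly over $x\in W$, $k$, $v$ and $\gamma<\vartheta$. I would redo the estimate from the proof of Lemma~\ref{upper_cont_prox} quantitatively, rather than in the limit. Prox-boundedness gives a quadratic minorant of $\psi$, uniform for $x$ in a bounded set. Comparing with the candidate $u=\bar x$ then yields
\[
\Bigl(\frac1{2\gamma}-c\Bigr)\|w-x\|^2\le \psi(\bar x)-\alpha+\frac1{2\gamma}\|\bar x-x\|^2+K\|\bar x-x\|+\frac{K^2}{2}.
\]
The difficulty is the term $\frac1{2\gamma}\|\bar x-x\|^2$, which does not vanish as $\gamma\to0$. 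Multiplying by $\gamma$ gives $\|w-x\|^2\lesssim \|\bar x-x\|^2+\gamma C$. Taking $W=\mathbb{B}_{\eta}[\bar x]$ and $\vartheta$ small, both terms are at most a fraction of $\delta^2$, so $\|w-\bar x\|\le 2\delta$.

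The comparison uses $\psi(\bar x)<\infty$, since $\bar x\in\dom\psi$, and the bound $\|v\|\le K$ is uniform in $k$ by Step~1. Finally, set $\vartheta=\min\{1/(2(L+\sigma)),\gamma\text{-threshold from Step 3},\gamma^\psi\}$. The constants depend on $\omega$ only through $L$ and $K$, which is consistent with the statement's order of quantifiers.
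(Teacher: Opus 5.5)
Your proposal is correct and follows essentially the same route as the paper: SLLN-based uniform bounds $L,K$ on the averaged moduli, the averaged upper-$C^2$ inequality added to the proximal inequality tested at $u=x$, and $\vartheta$ taken as the minimum of a localization threshold and $(2(\sigma+L))^{-1}$. Your Step~3 simply carries out quantitatively the estimate that the paper obtains by invoking Lemma~\ref{upper_cont_prox}. One difference is worth noting: the paper builds $\hat\Omega$ as an intersection over all balls $\mathbb{B}_r[0]$, $r\in\N$, so the null set does not depend on $\bar x$. The statement as written does not require this, but it is needed when the proposition is applied in Theorem~\ref{The01}\ref{itema} at an $\omega$-dependent accumulation point, and your construction adapts to it immediately.
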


\begin{proof}
First, by Proposition~\ref{Prop_eq_Lipsc_cont_Unif} and~Proposition~\ref{upperC2unif},  for every $\rr \in \N$, we can consider $\kappa_\rr\in L^2_\mu$ and $\rA_\rr \in L^1_\mu$ such that~\eqref{eq_Lipsc_cont_Unif} and~\eqref{integral_equpperdesc} hold on $\mathbb{B}_{\rr}[0]$. Now, by the strong law of large numbers, we can assume that there exists a  measurable set $\Omega_\rr$ with $\mathbb{P}(\Omega_\rr) =1$ such that  for all $\omega  \in  \Omega_\rr$
$$ \varsigma_\rr := \sup_{k \in \N  }  \frac{1}{n_k}\sum_{i=1}^{n_k} \rA_\rr(\xi_i) <+\infty \quad  \text{and} \quad  \tilde{\varsigma}_\rr:=\sup_{k \in \N  }\frac{1}{n_k}\sum_{i=1}^{n_k}\kappa_\rr(\xi_i) < +\infty.$$ Now, we set $\hat{\Omega}:= \bigcap_{\rr \in \N} \Omega_\rr$, a set of full measure.  Fix $\omega \in \hat{\Omega} $,  $\bar x \in \dom \psi$ and $\sigma>0$, and take $\rr_0 \in \N $ such that $\|\bar x\| \leq \rr_0-1$. Let $V$ be a neighborhood of $\bar x$ such that  $V \subset \mathbb{B}_{\rr_0}[0]$. 
 In addition, for any $x \in V$ and $v \in \partial \Intf{\varphi}^k(x)$, in view of \eqref{eqsubd02} and \cite[Proposition 2.1.2]{MR1058436}, there exist $v_{i} \in \partial\varphi_{\xi_i}(x)$, $j = 1, \dots, n_k$, such that \[\|v\| \leq \frac{1}{n_k}\sum_{i=1}^{n_k}\|v_{i}\| \leq \frac{1}{n_k}\sum_{i=1}^{n_k}\kappa_{\rr_0}(\xi_i) \leq \tilde{\varsigma}_{\rr_0}.\] Hence, the set $\{v\in\partial \Intf{\varphi}^k(x) : x \in V, \:k \in \N\} $ is bounded. Then, by Lemma~\ref{upper_cont_prox}, we can consider a neighborhood $W \subseteq V$ of $\bar x$, a positive constant $\vartheta_0$ so that any vector $w$ given by~\eqref{eq:lswd-w} with $x \in W$ and $\gamma \in{(0,\vartheta_0)}$ belongs to $V$. By resorting again to~\eqref{integral_equpperdesc}, we have that, for all $k \in\N$ and all  $\xi_i$, $i=1,\ldots,n_k$, the following inequality holds
\[
\varphi_{\xi_i}(w)- \varphi_{\xi_i}(x) - \rA_{\rr_0}(\xi_i)\|w-x\|^2 \leq \langle v_i,w -x\rangle.
\]
By summing the above inequalities for $i=1,\ldots,n_k$ and dividing by $n_k$, we get
\begin{equation}\label{eq:lswd-upper}
\Intf{\varphi}^k (w)- \Intf{\varphi}^k(x) -  \frac{1}{n_k}\sum_{i=1}^{n_k} \rA_{\rr_0}(\xi_i)\|w-x\|^2 \leq \langle v,w -x\rangle, \quad \text{for all } k\in\N.
\end{equation} By~\eqref{eq:lswd-w}, the definition of the proximity operator and rearranging terms we get
\begin{equation}\label{eq:lswd-prox}
\psi(w) + \langle v,w-x\rangle + \frac{1}{2\gamma} \|w-x\|^2 \leq \psi(x).
\end{equation}
Summing~\eqref{eq:lswd-upper} and \eqref{eq:lswd-prox} yields
\begin{equation*}
\vPhi_k (w) +  \left( \frac{1}{2\gamma} - \frac{1}{n_k}\sum_{i=1}^{n_k} \rA_{\rr_0}(\xi_i) \right) \|w-x\|^2 \leq \vPhi_k(x), \quad \text{for all } k\in\N.
\end{equation*}
Therefore, it suffices to take $\vartheta:= \min\{\vartheta_0, \bigl(2(\sigma + \varsigma_{\rr_0})\bigr)^{-1}\}$ to obtain the desired result for all $k\in\N$.
\end{proof}

A direct consequence of Proposition~\ref{LS-terminates} is that the line search procedure in Algorithm~\ref{alg:1} is well-defined, as shown in the next result.
\begin{corollary}[Finite termination of line search]
    In Algorithm~\ref{alg:1}, under Assumption~\ref{Assumption01}\ref{Assumption01-i}--\ref{Assumption01-ii}, for each iteration $k \in \N$, the line search procedure described in Step~\ref{step:ls} terminates after a finite number of (inner) steps.
\end{corollary}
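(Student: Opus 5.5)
The plan is to apply Proposition~\ref{LS-terminates} at the current iterate, with $\bar x := x^k$, for each fixed outer iteration $k$. First I check that $x^k\in\dom\psi$ for all $k$, by induction. The base case is $x^0\in\dom\psi$, which holds by the initialization. For the inductive step, note that every $\hat x^k$ produced by \eqref{eq:algx} lies in $\dom\psi$: it minimizes $\psi(u)+\frac{1}{2\gamma_k}\|u-(x^k-\gamma_k v^k)\|^2$, and $\gamma_k<\gamma^\psi$ makes this problem well posed with a finite value, since $\psi$ is proper. Equivalently, any accepted $\hat x^k$ satisfies the descent test with $\vPhi_k(x^k)<+\infty$, which forces $\vPhi_k(\hat x^k)<+\infty$.

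Next, fix $\omega$ in the full-measure set $\hat\Omega$ given by Proposition~\ref{LS-terminates}. One point needs care: that proposition is stated for a fixed $\bar x$, and its proof builds $\hat\Omega$ as $\bigcap_{\rr\in\N}\Omega_\rr$, which does not depend on $\bar x$. Only the neighborhood $W$ and the constant $\vartheta$ depend on $\bar x$ and $\omega$. It is therefore legitimate to apply it to the random point $\bar x=x^k(\omega)$ after $\omega\in\hat\Omega$ has been fixed. This is the main subtlety, and I would spell it out by pointing to that construction in the proof.

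With $\bar x=x^k(\omega)$, the proposition yields $\vartheta>0$ such that, for $x=x^k\in W$, $v=v^k$ and any $\gamma\in(0,\vartheta)$, every $w\in\prox_{\gamma\psi}(x^k-\gamma v^k)$ satisfies $\vPhi_k(w)\le\vPhi_k(x^k)-\sigma\|w-x^k\|^2$. To use this I need $v^k\in\partial\Intf{\varphi}^k(x^k)$. This follows from Step~\ref{step:subgradient} and the sum rule \eqref{eqsubd02}, because $v^k=\frac1{n_k}\sum_i v_i^k$ with $v_i^k\in\partial\varphi_{\xi_i}(x^k)$.

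Finally, the trial stepsizes in the line search are $\overline\gamma_k\rho^m$ for $m=0,1,2,\dots$. These are positive and tend to $0$, so there is a smallest $m$ with $\overline\gamma_k\rho^m<\vartheta$. Since $\overline\gamma_k\rho^m<\gamma^\psi$, the proximal set is nonempty, and any choice of $\hat x^k$ in it passes the test. The while loop therefore stops after at most $\lceil\ln(\vartheta/\overline\gamma_k)/\ln\rho\rceil^+ +1$ trials, and the conclusion holds almost surely.
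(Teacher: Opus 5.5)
Your proposal is correct and follows the paper's proof. The paper likewise applies Proposition~\ref{LS-terminates} with $\bar x = x = x^k$ and $v = v^k \in \partial \Intf{\varphi}^k(x^k)$ via \eqref{eqsubd02}, and notes that after finitely many reductions the trial stepsize lies in $(0,\vartheta)$. Your two additional checks are valid and fill in details the paper's one-line proof leaves implicit: that $x^k\in\dom\psi$ by induction, and that the full-measure set $\hat\Omega$ built in that proof does not depend on $\bar x$, so it may be used at the random point $x^k(\omega)$, which makes the conclusion hold almost surely.
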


\begin{proof}
   Fix $k \in \N$. Apply Proposition~\ref{LS-terminates} with $\bar x= x = x^k$ and $v = v^k \in \partial \Intf{\varphi}^k(x^k)$, where the inclusion holds from \eqref{eqsubd02}. Then, after finitely many trials of the line search, $\gamma_k \in (0,\vartheta)$, thus the line search terminates with $\mathrm{\vPhi}_k(\hat{x}^k) \leq \vPhi_k(x^{k}) -\sigma  \|   \hat{x}^{k}- x^{k}\|^2 $.
\end{proof}

\subsection{Subsequential convergence analysis of PSS}

We now proceed to show subsequential convergence of the sequence generated by Algorithm~\ref{alg:1}, for which we need the following assumption.

\begin{assumption}[On Algorithm~\ref{alg:1}]\label{Assumption02}   
Let $(n_k)_{k\in\mathbb N}\subseteq\mathbb N{\setminus}\{0\}$ be a nondecreasing sequence of
sample sizes such that $n_k\to+\infty$, and let $(x^k)_{k\in\mathbb N}$ be
the sequence generated by Algorithm~\ref{alg:1}. We consider a filtration
$\mathbb F:=(\mathcal F_k)_{k\in\N}$, representing all the information used
by Algorithm~\ref{alg:1} up to iteration $k$, such that $\xi_j$ is
$\mathcal F_k$-measurable for every $j \leq n_k$, and $\xi_j$ is
independent of $\mathcal F_k$ whenever $j>n_k$.  
 
\end{assumption}

\begin{remark}[Measurability of the sequence generated by Algorithm \ref{alg:1}] 
	It is important to note that there is no guarantee that the sequence produced by Algorithm \ref{alg:1} will be a sequence of  measurable functions, even more adapted to the filtration $\mathbb{F}$ in Assumption~\ref{Assumption00}. This issue arises primarily from the fact that the first order information considered in the algorithm comes from set-valued mappings (Step~\ref{step:subgradient}), or from the proximal subproblem~\eqref{eq:algx} possibly admitting multiple solutions. A common approach to overcome this issue is to assume the existence of a selector for the subdifferential and for the proximal mapping in \eqref{eq:algx}. Under these assumptions, the sequence $(x^{k})_{k\in\N}$ becomes an adapted process with respect to the filtration, which is sufficient for analyzing the asymptotic behavior of the sequence. However, we prefer to state this condition as an assumption rather than impose restrictions on the potential applications of Algorithm \ref{alg:1}.
\end{remark}

We first state the main theorem of this section.  Then, in Section~\ref{s:lemmas-The01}, we present some technical lemmas instrumental for the proof of the theorem, which will be developed in Section~\ref{s:proof-The01}. 

\begin{theorem}[Subsequential convergence]\label{The01}
Let $\vPhi:\mathbb{R}^d\to\Rex$ denote the objective function of problem~\eqref{ProblemP}, and assume that Assumption~\ref{Assumption01} holds. Given an initial point $x^0\in\dom \vPhi$, consider the sequence $(x^k)_{k\in\mathbb{N}}$ generated by Algorithm~\ref{alg:1} under Assumptions~\ref{Assumption00} and~\ref{Assumption02}. Then, there exists a measurable set $\hat{\Omega}$ with $\mathbb{P}(\Omega \setminus \hat{\Omega}) = 0$, such that for all $\omega \in \hat{\Omega}$, if the sequence   $(x^k)_{k \in \mathbb{N}}$ is bounded, then  both $\bigl(\vPhi_k(x^k)\bigr)_{k\in\N}$ and $\bigl(\vPhi(x^k)\bigr)_{k\in\N}$ converge to the same limit,  
\begin{equation}\label{sumsquare_as}
	\sum_{k \in \mathbb{N}} \|x^{k+1} - x^k\|^2 < +\infty ,
\end{equation} 
and the following assertions hold:

 \begin{enumerate}[label=(\roman*),ref=(\roman*)]
\item\label{itema} $\inf_{k \in \N} \gamma_k>0 $; 
 
\item\label{item2} if $x^{k_j}\to \bar{x}$ as $j\to\infty$, then $\bar{x}$ is a stationary  point of problem \eqref{ProblemP} in the sense of \eqref{def_stpoint}, and $\vPhi(\bar x) = \lim\limits_{k \to \infty }  \vPhi_k (x^{k})= \lim\limits_{k \to \infty }  \vPhi (x^{k})$;
\item\label{item3} the set of accumulation points of $(x^{k})_{k\in\N}$ is nonempty, closed, and connected;
\item\label{item4}  if $(x^{k})_{k\in\N }$ has an isolated accumulation point $\bar{x}$, then the entire sequence $(x^{k})_{k\in\N }$  converges to the stationary point $\bar{x}$ as $k\to\infty$.
 \end{enumerate}
\end{theorem}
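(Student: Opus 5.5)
The strategy is to treat Algorithm~\ref{alg:1} as a descent method on $\vPhi_k$ whose only defect is the change of the surrogate from $\vPhi_k$ to $\vPhi_{k+1}$. Each such change is controlled by the uniform sample-average estimates of Section~\ref{sect:aae} together with Lemma~\ref{l:bound_sample01}. Since boundedness of $(x^k)$ is only known $\omega$-wise, I would localize. For each $\rr\in\N$, define the stopping time $\tau_\rr:=\inf\{k:\|x^k\|>\rr\}$, which is adapted to $\mathbb F$ by Assumption~\ref{Assumption02}. I would then prove the claims on $\{\tau_\rr=\infty\}$ and take the countable intersection over $\rr$ of the full-measure sets obtained, together with those of Lemma~\ref{lemma_logn}\ref{lemma_logn-iii}, Lemma~\ref{LemmaClarke} and Proposition~\ref{LS-terminates}.

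\textbf{Step 1 (perturbed descent).} The line search gives $\vPhi_k(x^{k+1})\le \vPhi_k(x^k)-\sigma\|x^{k+1}-x^k\|^2$, so
\[
\vPhi_{k+1}(x^{k+1})\le \vPhi_k(x^k)-\sigma\|x^{k+1}-x^k\|^2+e_k,\qquad e_k:=\Intf{\varphi}^{k+1}(x^{k+1})-\Intf{\varphi}^k(x^{k+1}).
\]
The error term can be written exactly as
\[
e_k=\frac{n_{k+1}-n_k}{n_{k+1}}\Bigl(\frac{1}{n_{k+1}-n_k}\sum_{j=n_k+1}^{n_{k+1}}\varphi_{\xi_j}(x^{k+1})-\Intf{\varphi}^k(x^{k+1})\Bigr).
\]
Adding and subtracting $\Intf{\varphi}(x^{k+1})$ splits $e_k$ into two pieces. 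The first is $\frac{n_{k+1}-n_k}{n_{k+1}}(\Intf{\varphi}-\Intf{\varphi}^k)(x^{k+1})$; on $\{k<\tau_\rr\}$ its absolute value is bounded by $\frac{n_{k+1}-n_k}{n_{k+1}}\sup_{\|x\|\le\rr}|\Intf{\varphi}-\Intf{\varphi}^k|$. By Lemma~\ref{lemma_logn}\ref{lemma_logn-ii}, its expectation is at most $a_\rr\frac{n_{k+1}-n_k}{\sqrt{n_k}n_{k+1}}$, which is summable by Lemma~\ref{l:bound_sample01}. Hence this piece is a.s.\ absolutely summable on $\{\tau_\rr=\infty\}$. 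The second piece involves the fresh samples, which are independent of $\mathcal F_k$, while $x^{k+1}$ is not $\mathcal F_k$-measurable. To avoid conditioning on $x^{k+1}$, I would apply the uniform bound of Lemma~\ref{lemma_logn}\ref{lemma_logn-i} to the block average of the new samples over $\mathbb B_\rr[0]$. This gives a bound in expectation of order $\frac{n_{k+1}-n_k}{n_{k+1}}\cdot\frac{1}{\sqrt{n_{k+1}-n_k}}\le \frac{n_{k+1}-n_k}{\sqrt{n_k}\,n_{k+1}}$, again summable by Lemma~\ref{l:bound_sample01}.

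With $\sum_k|e_k|<\infty$ a.s.\ on $\{\tau_\rr=\infty\}$, and with $\vPhi_k(x^k)$ bounded below by the integrability in Assumption~\ref{Assumption01} (via the SLLN on $\inf_x(\varphi_\xi+\psi)$), the standard quasi-Fejér argument gives convergence of $\vPhi_k(x^k)$ and \eqref{sumsquare_as}. Lemma~\ref{lemma_logn}\ref{lemma_logn-ii} together with a Borel--Cantelli/Lemma~\ref{lemmarateconver} argument shows $\sup_{\|x\|\le\rr}|\Intf{\varphi}-\Intf{\varphi}^k|\to0$ a.s., so $\vPhi(x^k)$ has the same limit.

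\textbf{Step 2 (stepsizes and stationarity).} For (i), I would argue by contradiction. Suppose $\gamma_{k_j}\to0$ along a subsequence with $x^{k_j}\to\bar x$. Then the rejected trial stepsize $\gamma_{k_j}/\rho\to0$ also violates the test. This contradicts Proposition~\ref{LS-terminates} at $\bar x$, which uses the bounded subgradients $\|v^k\|\le\tilde\varsigma_\rr$ and Lemma~\ref{upper_cont_prox}; the point $\bar x\in\dom\psi$ because the values stay bounded and $\psi$ is lsc. For (ii), take $x^{k_j}\to\bar x$. Then $x^{k_j+1}\to\bar x$ by \eqref{sumsquare_as}, and after passing to a further subsequence, $v^{k_j}\to\bar v\in\partial\Intf{\varphi}(\bar x)$ by Lemma~\ref{LemmaClarke} and $\gamma_{k_j}\to\bar\gamma\in[\inf\gamma_k,\gamma^\psi]$. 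To pass to the limit in the prox inclusion, I would compare the prox objective at $x^{k_j+1}$ with that at an arbitrary $u$. Taking $u=\bar x$ first gives $\limsup\psi(x^{k_j+1})\le\psi(\bar x)$, hence $\psi(x^{k_j+1})\to\psi(\bar x)$ by lsc. Then, for general $u$, this yields $\bar x\in\prox_{\bar\gamma\psi}(\bar x-\bar\gamma\bar v)$ (if $\bar\gamma=\gamma^\psi$, I would use a smaller $\bar\gamma$ instead). The value identity follows from continuity of $\psi$ along the subsequence and Lemma~\ref{lemma_logn}\ref{lemma_logn-iii}.

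\textbf{Step 3 (structure of the limit set).} Items (iii) and (iv) follow from $\|x^{k+1}-x^k\|\to0$ and boundedness via the classical Ostrowski argument.

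I expect the main obstacle to be the summability of the error $e_k$ in Step 1 under a mere nondecreasing, unbounded $(n_k)$. The delicate points there are the dependence between $x^{k+1}$ and the samples, which the uniform-over-the-ball bounds combined with the stopping time are meant to handle, and the need to obtain the bound with exactly the weight $\frac{n_{k+1}-n_k}{\sqrt{n_k}n_{k+1}}$ so that Lemma~\ref{l:bound_sample01} applies.
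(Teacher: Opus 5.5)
There is a genuine gap in Step~1, in your treatment of the fresh-sample piece
\[
D_k:=\frac{1}{n_{k+1}}\sum_{j=n_k+1}^{n_{k+1}}\bigl(\varphi_{\xi_j}(x^{k+1})-\Intf{\varphi}(x^{k+1})\bigr).
\]
When $n_{k+1}>n_k$, your inequality $\frac{n_{k+1}-n_k}{n_{k+1}}\cdot\frac{1}{\sqrt{n_{k+1}-n_k}}\le\frac{n_{k+1}-n_k}{\sqrt{n_k}\,n_{k+1}}$ is equivalent to $n_{k+1}\ge 2n_k$. So it only holds under geometric growth. For $n_k=k+1$, the schedule used in Section~\ref{s:numerics}, your bound sums to $\sum_k 1/(k+2)=+\infty$.

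No sharper estimate can rescue this route, because $\sum_k|e_k|<\infty$ is false in general. Take $\varphi(\xi,x)=g(x)+\xi$ with $\xi$ real, centered, square integrable and nondegenerate, and $n_k=k+1$. Then $e_k=\frac{1}{k+2}\bigl(\xi_{k+2}-\frac{1}{k+1}\sum_{j\le k+1}\xi_j\bigr)$, regardless of the iterates, and even $\sum_k e_k^+=+\infty$ a.s. Hence a pathwise quasi-Fej\'er argument with absolutely summable errors cannot prove the theorem for merely nondecreasing unbounded $(n_k)$. You have to use the fact that $D_k$ has zero conditional mean.

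Your measurability worry is what blocks this, and it is unfounded. The iterate $x^{k+1}$ is computed at iteration $k$ from $x^k$ and $\xi_1,\dots,\xi_{n_k}$, so it is $\mathcal F_k$-measurable (Lemma~\ref{l-subseq:tech-1}). The new samples $\xi_j$, $n_k<j\le n_{k+1}$, are independent of $\mathcal F_k$ (Assumption~\ref{Assumption02}). By the freezing Lemma~\ref{lemma_measurability}, the conditional expectation of the stopped error is therefore exactly your first piece. Its $\mathbb E|\cdot|$ is summable by Lemma~\ref{l:bound_sample01}; this is Lemma~\ref{l-subseq:tech-3}. The paper then applies the Robbins--Siegmund theorem to the nonnegative shifted values $\vPhi^r_k+s^r_k$.

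If you want to keep a pathwise argument, show instead that the stopped $D_k$ are martingale differences with
\[
\mathbb E[D_k^2]\le C_r\tfrac{n_{k+1}-n_k}{n_{k+1}^2}\le C_r\bigl(\tfrac{1}{n_k}-\tfrac{1}{n_{k+1}}\bigr),
\]
where $C_r$ bounds the second moment of $\varphi_\xi$ on $\mathbb{B}_r[0]$. Then $\sum_k D_k$ converges a.s., though not absolutely. Subtract its partial sums from $\vPhi_k(x^k)$. The result is bounded below and decreases up to your absolutely summable first piece, which gives convergence of $\vPhi_k(x^k)$ and \eqref{sumsquare_as}.

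The remaining steps agree with the paper and are fine: the SLLN lower bound via $\inf_x(\varphi_\xi+\psi)$ and items (i)--(iv). Your check that $\bar x\in\dom\psi$ before invoking Proposition~\ref{LS-terminates} is a welcome addition.
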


\subsubsection{Technical results for Theorem~\ref{The01}} \label{s:lemmas-The01} 

We start the analysis by defining a stopping time related to the boundedness of the sequence $(x^{k})_{k\in\N}$.  Its proof is direct by definition.

\begin{lemma}\label{l-subseq:tech-1}
Suppose that Assumption~\ref{Assumption02} holds. Given $\rr>0$ such that
$\|x^0\|\leq \rr$, define
\[
    \tau_{\rr}:=\inf\big\{k\in\mathbb N:\|x^{k+1}\|>\rr\big\},
\]
with the convention $\inf\emptyset=+\infty$, and consider the stopped
process
\[
    x_{\rr}^k:=x^{k\wedge \tau_{\rr}},
    \qquad 
    k\wedge \tau_{\rr}:=\min\{k,\tau_{\rr}\},
    \qquad \text{for every } k\in\mathbb N.
\]
Then $\tau_{\rr}$ is a stopping time with respect to the filtration
$(\mathcal F_k)_{k\in\N}$ in Assumption~\ref{Assumption02}. Furthermore,
$x_{\rr}^k$ is $\mathcal F_{k-1}$-measurable for every $k\in\mathbb N\backslash\{0\}$, and
the stopped process $(x_{\rr}^k)_{k\in\mathbb N}$ satisfies
\[
    \tau_{\rr}\geq p
    \quad \Longrightarrow \quad
    \|x^p\|\leq \rr,
    \qquad \text{for every } p\in\mathbb N.
\]
In particular,
\[
    \|x_{\rr}^k\|\leq \rr,
    \qquad \text{for every } k\in\mathbb N.
\]
\end{lemma}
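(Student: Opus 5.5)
The plan is to verify the three claims directly from the definitions, using that $(x^k)$ is adapted in the sense implicit in Assumption~\ref{Assumption02}. That is, $x^{k}$ is $\mathcal F_{k-1}$-measurable for $k\geq 1$ and $x^0$ is deterministic. This holds because $x^{k+1}$ is built from $x^k$, the samples $\xi_1,\dots,\xi_{n_k}$ and the algorithm's selections, all of which are $\mathcal F_k$-measurable. In particular $x^{k+1}$ is $\mathcal F_k$-measurable.

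\emph{Stopping time.} For $n\in\N$ I will write
\[
\{\tau_\rr\le n\}=\bigcup_{m=0}^{n}\{\|x^{m+1}\|>\rr\}.
\]
Each set $\{\|x^{m+1}\|>\rr\}$ lies in $\mathcal F_m\subseteq\mathcal F_n$, since $x^{m+1}$ is $\mathcal F_m$-measurable and the norm is continuous. Hence $\{\tau_\rr\le n\}\in\mathcal F_n$, so $\tau_\rr$ is a stopping time.

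\emph{Measurability of the stopped process.} For $k\ge1$ I will decompose
\[
x^k_\rr=\sum_{m=0}^{k-1}x^m\1_{\{\tau_\rr=m\}}+x^k\1_{\{\tau_\rr\ge k\}}.
\]
For $m\le k-1$, the event $\{\tau_\rr=m\}$ lies in $\mathcal F_m\subseteq\mathcal F_{k-1}$, and $x^m$ is $\mathcal F_{m-1}$-measurable (or deterministic when $m=0$), hence $\mathcal F_{k-1}$-measurable. Also $\{\tau_\rr\ge k\}=\{\tau_\rr\le k-1\}^c\in\mathcal F_{k-1}$, and $x^k$ is $\mathcal F_{k-1}$-measurable. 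Each term is therefore $\mathcal F_{k-1}$-measurable, and so is the sum.

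\emph{Bound.} Suppose $\tau_\rr\ge p$. If $p=0$, then $\|x^0\|\le\rr$ by hypothesis. If $p\ge1$, then $p-1<\tau_\rr$, so $p-1$ is not in the set whose infimum defines $\tau_\rr$. This means $\|x^{p}\|\le\rr$. Finally, $x^k_\rr=x^{p}$ with $p=k\wedge\tau_\rr$ finite and $\tau_\rr\ge p$, so $\|x_\rr^k\|\le\rr$.

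The only delicate point is the adaptedness of $(x^k)$. It is not automatic, as the preceding remark notes, and I would state it as part of the content of Assumption~\ref{Assumption02}: the filtration encodes all information the algorithm uses, including $x^{k+1}$ at step $k$.
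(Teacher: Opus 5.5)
Your proposal is correct and is the direct verification from the definitions that the paper has in mind; the paper gives no written proof and only says it is ``direct by definition.'' You also rightly point out that the adaptedness of $(x^k)_{k\in\N}$, i.e.\ that $x^{k+1}$ is $\mathcal F_k$-measurable, has to be read into Assumption~\ref{Assumption02}, which matches the paper's remark on measurability.
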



In the next lemma, we show that the stopped sequence defined in Lemma~\ref{l-subseq:tech-1} satisfies a sufficient decrease condition up to errors.

\begin{lemma} \label{l-subseq:tech-2}
     Suppose that the assumptions of Theorem~\ref{The01} hold. In the setting of Lemma~\ref{l-subseq:tech-1} define the sequence of  random variables
\begin{align*}
	\vPhi_k^{\rr} := \vPhi_{  k \wedge \tau_{\rr}}(x^{k \wedge \tau_{\rr}}), \quad \text{for all } k\in\N.
\end{align*}  Then, for all $k\in\N$, it holds that
\begin{align}\label{eqclaim2}
		\sigma  \| x^{k+1}_{\rr} - x^{k}_{\rr}\|^2 \leq \vPhi_k^{\rr} - \vPhi_{k+1}^{\rr}  + W_k^{\rr} , 
\end{align}
where the random variable $W^{\rr}_k$     is defined by  \begin{align}\label{defWk} 
	W^{\rr}_k := \left(\Intf{\varphi}^{k+1}(x^{k+1}) - \Intf{\varphi}^{k}(x^{k+1}) \right) \1_{\{  \tau_{\rr}\geq k+1 \}}.
\end{align}

\end{lemma}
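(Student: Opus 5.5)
The plan is a case distinction on the event $\{\tau_{\rr}\geq k+1\}$ versus its complement $\{\tau_{\rr}\leq k\}$, working pathwise for each fixed $\omega$.

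\emph{Case $\tau_{\rr}\leq k$.} Here $k\wedge\tau_{\rr}=(k+1)\wedge\tau_{\rr}=\tau_{\rr}$. Hence $x^{k+1}_{\rr}=x^{k}_{\rr}=x^{\tau_{\rr}}$ and $\vPhi_{k+1}^{\rr}=\vPhi_k^{\rr}=\vPhi_{\tau_{\rr}}(x^{\tau_{\rr}})$. In addition $W_k^{\rr}=0$ because the indicator vanishes. Both sides of \eqref{eqclaim2} are then zero, so the inequality holds trivially. The one point to check is that $\vPhi_{\tau_{\rr}}(x^{\tau_{\rr}})$ is finite, so that no expression of the form $\infty-\infty$ appears. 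This follows from $x^0\in\dom\vPhi$ together with the line-search decrease: every iterate stays in $\dom\psi$, and $\varphi$ is real-valued.

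\emph{Case $\tau_{\rr}\geq k+1$.} Now $k\wedge\tau_{\rr}=k$ and $(k+1)\wedge\tau_{\rr}=k+1$. Therefore $x^k_{\rr}=x^k$, $x^{k+1}_{\rr}=x^{k+1}$, $\vPhi_k^{\rr}=\vPhi_k(x^k)$ and $\vPhi_{k+1}^{\rr}=\vPhi_{k+1}(x^{k+1})$.

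1. The accepted step of the line search in Step~\ref{step:ls} (which terminates finitely by the Corollary on finite termination) together with Step~\ref{step:newiterate} gives
\[
\sigma\|x^{k+1}-x^k\|^2\leq \vPhi_k(x^k)-\vPhi_k(x^{k+1}).
\]

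2. Since the regularizer $\psi$ is the same in both approximations, definition \eqref{defPhik} gives
\[
\vPhi_k(x^{k+1})=\vPhi_{k+1}(x^{k+1})-\bigl(\Intf{\varphi}^{k+1}(x^{k+1})-\Intf{\varphi}^{k}(x^{k+1})\bigr).
\]

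3. Substituting step 2 into step 1 yields
\[
\sigma\|x^{k+1}_{\rr}-x^k_{\rr}\|^2\leq \vPhi_k^{\rr}-\vPhi_{k+1}^{\rr}+W_k^{\rr},
\]
since the indicator in $W_k^{\rr}$ equals one on this event.

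There is no substantial obstacle here; the argument is pure bookkeeping. The only care needed is the index alignment of the stopped process: $\tau_{\rr}\geq k+1$ must give exactly $(k+1)\wedge\tau_{\rr}=k+1$, and $\tau_{\rr}\leq k$ must freeze both time indices. One must also confirm that $\vPhi_k(x^{k+1})$ is finite so the rearrangement in step 2 is legitimate, which holds because $x^{k+1}\in\dom\psi$ by the descent test.
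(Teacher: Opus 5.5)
Your proof is correct and takes essentially the same route as the paper. Both arguments combine the accepted line-search inequality, rewritten via $\vPhi_{k+1}(x^{k+1})-\vPhi_k(x^{k+1})=\Intf{\varphi}^{k+1}(x^{k+1})-\Intf{\varphi}^{k}(x^{k+1})$, with the case split on $\{\tau_{\rr}\ge k+1\}$ versus $\{\tau_{\rr}\le k\}$; your remark that the iterates stay in $\dom\psi$, so no $\infty-\infty$ arises, is a harmless refinement the paper leaves implicit.
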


\begin{proof} From the definition of $x^{k+1}$ in Step~\ref{step:newiterate} of Algorithm~\ref{alg:1} it follows that 
	\begin{equation} \label{eq:descent}
	    \vPhi_k(x^{k+1}) \leq  \vPhi_k(x^k) -  \sigma   \| x^{k+1} - x^k \|^2, \quad  \text{for all } k\in\N.
	\end{equation}
	By~\eqref{defEk}-\eqref{defPhik}, then
	\begin{align}\label{01ineq_est}
		\begin{aligned}
			\sigma \| x^{k+1} - x^k \|^2 &\leq \vPhi_k(x^k) - \vPhi_k(x^{k+1}) = \Delta \vPhi_k + \Intf{\varphi}^{k+1}(x^{k+1}) - \Intf{\varphi}^{k}(x^{k+1}), 
		\end{aligned}
	\end{align}
	where 
	\[
	\Delta \vPhi_k := \vPhi_k(x^k) - \vPhi_{k+1}(x^{k+1}), \quad  \text{for all } k\in\N.
	\] Given a fixed $k \in \N$, note that 
\begin{equation}\label{eqvaluesXa}
	\| x^{k+1}_{\rr} - x^{k}_{\rr}\| = \begin{cases}
		0, &\text{ if  }  \tau_{\rr} \leq k, \\
			\| x^{k+1} - x^{k}\|, &\text{ if  }  \tau_{\rr} \geq k+1,
	\end{cases}
\end{equation}
and 
\begin{equation}\label{eqvaluesXa2}
	\vPhi_k^{\rr} = \begin{cases}
\vPhi_{\tau_{\rr}}(x^{\tau_{\rr}}), 	 &\text{ if  }  \tau_{\rr} \leq k, \\
		\vPhi_k(x^k),  &\text{ if  }  \tau_{\rr} \geq k+1.
	\end{cases}
\end{equation}
 Hence, on the one hand,  using \eqref{01ineq_est} and \eqref{eqvaluesXa} on  $\{ \tau_{\rr} \geq k+1\} $,  we have  that
 \begin{align*}
 		\sigma  \| x^{k+1}_{\rr} - x^{k}_{\rr}\|^2 \leq  \Delta \vPhi_k + \Intf{\varphi}^{k+1}(x^{k+1}) - \Intf{\varphi}^{k}(x^{k+1}) = \vPhi_k^{\rr} - \vPhi_{k+1}^{\rr}  + W_k^{\rr}.
 \end{align*}
On the other hand, on $\{ \tau_{\rr} \leq k\} $  we simply have  
 \begin{align*}
	\sigma  \| x^{k+1}_{\rr} - x^{k}_{\rr}\|^2=0 \quad \text{and} \quad  \vPhi_k^{\rr} - \vPhi_{k+1}^{\rr}  + W_k^{\rr}=0,
\end{align*} 
so we conclude that \eqref{eqclaim2} holds.
\end{proof}

The estimate shown in Lemma~\ref{l-subseq:tech-2} is not a descent condition in the traditional sense due to the random variables  $(W^{\rr}_k)_{k\in\N}$, defined in \eqref{defWk}, appearing on the right-hand side of \eqref{eqclaim2}. Nevertheless, as shown in the next result, (the conditional expectation of) this sequence is vanishing. 

\begin{lemma} \label{l-subseq:tech-3}
   Let us assume the setting of Lemma~\ref{l-subseq:tech-2}. Then, for all $k\in\N$ and $r>0$ such that $\|x^0\|\leq r$, the random variable $W_k^{\rr}$ given in \eqref{defWk} is integrable. Furthermore, let  $Z_k^{\rr}:= \mathbb{E}\bigl[  W_k^{\rr} | \mathcal{F}_k \bigr]$. Then 
$ \sum_{k \in \N } \mathbb{E}\bigl[ |Z_k^{\rr}| \bigr]< +\infty$.  
\end{lemma}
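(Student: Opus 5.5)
The plan is to compute the conditional expectation $Z_k^{\rr}$ explicitly, using the independence of the freshly drawn samples from $\mathcal F_k$. This reduces $|Z_k^{\rr}|$ to a uniform average-approximation error on the ball $\mathbb{B}_{\rr}[0]$, multiplied by a deterministic weight. Lemma~\ref{lemma_logn}\ref{lemma_logn-ii} controls the error and Lemma~\ref{l:bound_sample01} makes the weights summable.

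\emph{Algebraic decomposition.} For any $y$,
\[
\Intf{\varphi}^{k+1}(y)-\Intf{\varphi}^{k}(y)=\frac{n_{k+1}-n_k}{n_{k+1}}\Bigl(\frac{1}{n_{k+1}-n_k}\sum_{j=n_k+1}^{n_{k+1}}\varphi_{\xi_j}(y)-\Intf{\varphi}^k(y)\Bigr),
\]
where the whole expression is interpreted as $0$ when $n_{k+1}=n_k$. On $\{\tau_{\rr}\geq k+1\}$ we have $x^{k+1}=x^{k+1}_{\rr}$, which is $\mathcal F_k$-measurable by Lemma~\ref{l-subseq:tech-1}. The set $\{\tau_{\rr}\geq k+1\}=\Omega\setminus\{\tau_{\rr}\leq k\}$ belongs to $\mathcal F_k$. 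We may therefore replace $x^{k+1}$ by $x^{k+1}_{\rr}$ in \eqref{defWk}, which gives
\[
W_k^{\rr}=\bigl(\Intf{\varphi}^{k+1}(x^{k+1}_{\rr})-\Intf{\varphi}^{k}(x^{k+1}_{\rr})\bigr)\1_{\{\tau_{\rr}\geq k+1\}},
\qquad \|x^{k+1}_{\rr}\|\leq \rr .
\]

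\emph{Integrability.} By Proposition~\ref{Prop_eq_Lipsc_cont_Unif}, applied on a ball containing $\mathbb{B}_{\rr}[0]$, there is $\kappa_{\rr}\in L^2_\mu$ such that $|\varphi_\xi(y)|\leq|\varphi_\xi(0)|+\rr\,\kappa_{\rr}(\xi)$ for $\|y\|\leq \rr$, with $\varphi(\cdot,0)\in L^2_\mu$. Hence each $|\varphi_{\xi_j}(x^{k+1}_{\rr})|$ is dominated by an integrable random variable. The same then holds for $W_k^{\rr}$, which is a finite average of such terms times an indicator.

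\emph{Conditioning.} Each $\xi_j$ with $j>n_k$ is independent of $\mathcal F_k$, while $x^{k+1}_{\rr}$ and the indicator are $\mathcal F_k$-measurable. The freezing lemma (Lemma~\ref{lemma_measurability}) therefore gives
\[
\mathbb{E}\bigl[\varphi_{\xi_j}(x^{k+1}_{\rr})\mid\mathcal F_k\bigr]=\Intf{\varphi}(x^{k+1}_{\rr})\quad\text{a.s.}
\]
The terms involving $\Intf{\varphi}^k$ are $\mathcal F_k$-measurable and pass through the conditional expectation unchanged. It follows that
\[
Z_k^{\rr}=\frac{n_{k+1}-n_k}{n_{k+1}}\bigl(\Intf{\varphi}(x^{k+1}_{\rr})-\Intf{\varphi}^k(x^{k+1}_{\rr})\bigr)\1_{\{\tau_{\rr}\geq k+1\}},
\]
and so
\[
|Z_k^{\rr}|\leq \frac{n_{k+1}-n_k}{n_{k+1}}\sup_{\|x\|\leq \rr}\bigl|\Intf{\varphi}(x)-\Intf{\varphi}^k(x)\bigr| .
\]

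\emph{Summation.} Taking expectations and applying \eqref{ineq_02lemma_logn} yields
\[
\mathbb{E}|Z_k^{\rr}|\leq a_{\rr}\,\frac{n_{k+1}-n_k}{n_{k+1}\sqrt{n_k}},
\]
and this bound is summable by Lemma~\ref{l:bound_sample01}.

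The main obstacle I expect is the conditioning step. One has to justify rigorously that the random point $x^{k+1}$ can legitimately be frozen, and this is exactly why the stopped process is used. Replacing $x^{k+1}$ by $x^{k+1}_{\rr}$ on $\{\tau_{\rr}\geq k+1\}$ supplies both $\mathcal F_k$-measurability and confinement to the compact ball. That confinement is what provides the integrable domination needed by Lemma~\ref{lemma_measurability} and the uniform bound on $\mathbb{B}_{\rr}[0]$ from Lemma~\ref{lemma_logn}.
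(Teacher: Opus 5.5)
Your proposal is correct and follows essentially the same route as the paper. Both proofs rewrite $W_k^{r}$ through the stopped iterate $x_{r}^{k+1}$ and the fresh samples, dominate it via Proposition~\ref{Prop_eq_Lipsc_cont_Unif} on $\mathbb{B}_{r}[0]$, apply the freezing lemma to obtain the explicit formula for $Z_k^{r}$, and conclude with Lemma~\ref{lemma_logn}\ref{lemma_logn-ii} and Lemma~\ref{l:bound_sample01}; the only cosmetic difference is that you factor out $\frac{n_{k+1}-n_k}{n_{k+1}}$ before conditioning (hence your convention for $n_{k+1}=n_k$), whereas the paper keeps the unnormalized form $\frac{1}{n_{k+1}}\sum_{j=n_k+1}^{n_{k+1}}\varphi_{\xi_j}(x_{r}^{k+1})+\frac{n_k-n_{k+1}}{n_{k+1}}\Intf{\varphi}^{k}(x_{r}^{k+1})$, which needs no such convention.
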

\begin{proof}
    Let us consider an arbitrary $r>0$ such that $\|x^0\|\leq r$. From Lemma~\ref{l-subseq:tech-1}, $\| x_{\rr}^k\| \leq \rr$ for all $k\in \N$. Furthermore,  by Proposition \ref{Prop_eq_Lipsc_cont_Unif}, there exist  square integrable functions $\kappa$ and $\lA$  such that for all $\xi \in \Xi$, 
\begin{align}\label{inq}
	|  \varphi_\xi ( u) | \leq \kappa(\xi)\| u\| + \lA(\xi), \quad \text{for all } u\in \mathbb{B}_{\rr}[0].
\end{align}
Therefore, for all  $k\in\N$,
\begin{align*}
	|W_k^{\rr}|  \leq     \frac{1}{n_{k+1}}  \sum_{i=1}^{n_{k+1}}  \left( \kappa(\xi_i)\rr + \lA(\xi_i)  \right) +   \frac{1}{n_{k}} \sum_{i=1}^{n_k}\left(  \kappa(\xi_i)\rr  + \lA(\xi_i) \right).
\end{align*}      
 Hence,  $\mathbb{E}\bigl[	|W_k^{\rr}| \bigr] \leq 2 \mathbb{E} [\kappa ] \rr + 2\mathbb{E} [ \lA ]$. Moreover, observe that 
\[
W_k^{\rr}= \left(\frac{1}{n_{k+1}} \sum_{j=n_k + 1}^{n_{k+1}} \varphi_{\xi_j}(x^{k+1}_{\rr}) + \frac{n_k - n_{k+1}}{n_{k+1}} \Intf{\varphi}^{k}(x^{k+1}_{\rr})\right)  \1_{\{  \tau_{\rr}\geq k+1 \}}.
\]
Now,  using that  $\{  \tau_{\rr}\geq k+1 \} \in \mathcal{F}_k$, because $\tau_{\rr}$ is a stopping time (Lemma~\ref{l-subseq:tech-1}), we get that \begin{equation*}
    Z_k^{\rr} =  \mathbb{E}\left[\frac{1}{n_{k+1}} \sum_{j=n_k + 1}^{n_{k+1}} \varphi_{\xi_j}( x^{k+1}_{\rr})  + \frac{n_k - n_{k+1}}{n_{k+1}} \Intf{\varphi}^{k}(x^{k+1}_{\rr}) \; \Biggl| \; \mathcal{F}_k \right] \1_{\{  \tau_{\rr}\geq k+1 \}}.
\end{equation*}
Recall from Lemma~\ref{l-subseq:tech-1} that the function $x^{k+1}_{\rr}$ is $\mathcal{F}_{k}$-measurable, and by \eqref{inq} the function $\omega \mapsto\varphi_{\xi_j(\omega)}(x_{\rr}^{k+1}(\omega)) $ is integrable,  so using Lemma~\ref{lemma_measurability} we get that 
\[
\mathbb{E}\left[ \varphi_{\xi_j}(x_{\rr}^{k+1}) \mid \mathcal{F}_k \right](\omega) = \Intf{\varphi}(x_{\rr}^{k+1}(\omega))   \text{ a.s.,} 
\]
for all $j = n_k + 1, \dots, n_{k+1}$. This shows that 
\begin{align*}
	Z^{\rr}_k= \frac{n_{k+1} - n_k}{n_{k+1}} \left( \Intf{\varphi}(x_{\rr}^{k+1}) - \Intf{\varphi}^{k}(x_{\rr}^{k+1}) \right)  \1_{\{  \tau_{\rr}\geq k+1 \}}  \text{ a.s.}
\end{align*}   Since $  \| x^k_{\rr} \| \leq \rr$ for all $k \in \N$, it follows from Lemma~\ref{lemma_logn}\ref{lemma_logn-ii} that there exists a constant ${a}_{\rr} > 0$, independent of $k$, such that 
\begin{equation*}
	\mathbb{E}\bigl[|Z^\rr_k|\bigr] \leq a_{\rr} \frac{n_{k+1} - n_k}{n_{k+1}} \frac{1}{\sqrt{n_k}}.
\end{equation*}
By Lemma~\ref{l:bound_sample01}, this yields  $\sum_{k\in \N} \mathbb{E}\big[|Z_k^\rr|\bigr]  < +\infty$, concluding the proof.
\end{proof}

We now prove that the sequence generated by Algorithm~\ref{alg:1} has square-summable increments. In addition, we show that the sequence of value approximations 
along the iterates is 
convergent. 
This is stated in the following lemma.

\begin{lemma} \label{l-subseq:tech-4}
   Suppose the assumptions of Lemma~\ref{l-subseq:tech-3} hold. Then, $(\vPhi^{\rr}_k)_{k \in \N }$    converges a.s. 
     and   	\begin{align}\label{eqclaim5} 		 \sum_{k =0}^\infty  \| x_{\rr}^{k+1} - x_{\rr}^{k}\|^2 < +\infty \text{~a.s}. 	
     \end{align}
\end{lemma}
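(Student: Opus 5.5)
The plan is to apply the Robbins--Siegmund almost-supermartingale convergence theorem to the stopped process. Before that, $(\vPhi_k^{\rr})$ must be shifted to a nonnegative quantity. By Lemma~\ref{l-subseq:tech-1}, $\|x_{\rr}^k\|\le \rr$. Assumption~\ref{Assumption01}(iii) gives the integrable function $m(\xi):=\inf_x(\varphi(\xi,x)+\psi(x))$. Since $\vPhi_k = \frac1{n_k}\sum_{i=1}^{n_k}\bigl(\varphi_{\xi_i}+\psi\bigr)$, we have $\vPhi_k^{\rr}\ge \frac1{n_{k\wedge\tau_\rr}}\sum_{i\le n_{k\wedge\tau_\rr}} m(\xi_i)$, which is bounded below but random and not monotone in $k$.

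A cleaner lower bound uses \eqref{inq} together with the lower semicontinuity of $\psi$ on $\mathbb{B}_\rr[0]$. The function $\psi$ is bounded below on this compact set by some constant $-c_\rr$, so
\[
\vPhi_k^{\rr}\ge -\frac1{n_{k\wedge\tau_\rr}}\sum_{i\le n_{k\wedge\tau_\rr}}\bigl(\kappa(\xi_i)\rr+\lA(\xi_i)\bigr)-c_\rr .
\]
By the strong law of large numbers, the right-hand side is a.s. bounded below uniformly in $k$, but the bound is random. To keep the argument in expectation, I would instead take conditional expectations in \eqref{eqclaim2} given $\mathcal F_k$. Since $\vPhi_k^{\rr}$ and $\|x_\rr^{k+1}-x_\rr^k\|$ are $\mathcal F_k$-measurable (by Lemma~\ref{l-subseq:tech-1} and the stopping-time structure), this gives
\[
\mathbb{E}[\vPhi_{k+1}^{\rr}\mid\mathcal F_k]\le \vPhi_k^{\rr}-\sigma\|x_\rr^{k+1}-x_\rr^k\|^2+Z_k^{\rr}\le \vPhi_k^{\rr}-\sigma\|x_\rr^{k+1}-x_\rr^k\|^2+|Z_k^{\rr}|.
\]

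Next I would verify integrability of $\vPhi_k^\rr$. The iterates stay in $\dom\psi$ and the sufficient-decrease test holds, so $\psi(x^{k+1})\le \vPhi_k(x^k)-\Intf{\varphi}^k(x^{k+1})$. Together with $\vPhi_0(x^0)$ being integrable and \eqref{inq}, an induction gives an integrable upper bound for $\vPhi_k^\rr$; the lower bound is integrable by the display above. Then I would handle the lower-boundedness needed by Robbins--Siegmund. Setting $U_k:=\vPhi_k^\rr+M_k$ is awkward because $M_k$ is random, so I would prefer the version of the supermartingale convergence theorem for processes bounded below by an $L^1$-bounded sequence. Concretely, with $S_k:=\sum_{j<k}|Z_j^\rr|$, the process $Y_k:=\vPhi_k^\rr - S_k+\sigma\sum_{j<k}\|x_\rr^{j+1}-x_\rr^j\|^2$ is a supermartingale. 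Its negative part satisfies $\sup_k\mathbb{E}[Y_k^-]<\infty$, because $\mathbb{E}[(\vPhi_k^\rr)^-]\le \rr\mathbb{E}[\kappa]+\mathbb{E}[\lA]+c_\rr$ and $\mathbb{E}[S_\infty]<\infty$ by Lemma~\ref{l-subseq:tech-3}. Doob's supermartingale convergence theorem then yields that $Y_k$ converges a.s. to a finite limit.

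Finally, $S_k\to S_\infty<\infty$ a.s. by monotone convergence, since its expectation is finite. From this I would conclude that $\vPhi_k^\rr+\sigma\sum_{j<k}\|x_\rr^{j+1}-x_\rr^j\|^2$ converges a.s. The sum is nondecreasing, and $\vPhi_k^\rr$ is a.s. bounded below: by the random-but-finite strong-law bound above, $\liminf_k \vPhi_k^\rr>-\infty$. Hence the partial sums are bounded, which gives \eqref{eqclaim5}, and therefore $\vPhi_k^\rr$ converges a.s. The main obstacle is the integrability and lower-boundedness bookkeeping needed to make the supermartingale theorem applicable, since $\psi$ may take large values and the lower bound on $\vPhi_k$ is only random. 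I expect the compact-set lower bound on $\psi$ combined with \eqref{inq} to resolve it.
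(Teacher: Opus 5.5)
Your route differs from the paper's, and its skeleton is sound, but one estimate is wrong as stated and needs repair.

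\textbf{Comparison.} The paper shifts by the stopped lower bound $s_k^{\rr}:=s_{k\wedge\tau_\rr}$, where $s_k:=\frac{1}{n_k}\sum_{i\le n_k}\bigl(\rr\kappa(\xi_i)+\lA(\xi_i)\bigr)-\inf_{\mathbb{B}_\rr[0]}\psi$. This makes $\vPhi_k^{\rr}+s_k^{\rr}\ge 0$. The paper pays for the shift with an extra perturbation $|Y_k^{\rr}|$, where $Y_k^{\rr}:=\mathbb{E}[s_{k+1}^{\rr}-s_k^{\rr}\mid\mathcal F_k]$, and shows $\sum_k\mathbb{E}|Y_k^{\rr}|<\infty$ via Lemma~\ref{lemma_logn} and Lemma~\ref{l:bound_sample01}. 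It then applies Robbins--Siegmund to the nonnegative process and uses the strong law to show that $s_k^{\rr}$ converges.

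You instead keep $\vPhi_k^{\rr}$ unshifted and build the compensated supermartingale $Y_k$ by hand, which is essentially the additive case of Robbins--Siegmund. You apply Doob's theorem and invoke the pathwise strong-law lower bound only at the end, to force the partial sums to be bounded. Your $Y_k$ is indeed adapted, integrable and a supermartingale, and the closing pathwise argument is correct.

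\textbf{The gap.} The problem is the estimate $\mathbb{E}[(\vPhi_k^{\rr})^-]\le \rr\mathbb{E}[\kappa]+\mathbb{E}[\lA]+c_\rr$, which is exactly what Doob's theorem needs. Your lower bound involves the \emph{stopped} average $\frac{1}{n_{k\wedge\tau_\rr}}\sum_{i\le n_{k\wedge\tau_\rr}}g(\xi_i)$, with $g:=\rr\kappa+\lA$. Since $\tau_\rr$ is determined by the samples, the mean of this average is in general not $\mathbb{E}[g]$. The forward sequence $\bigl(\frac{1}{n_k}\sum_{i\le n_k}g(\xi_i)\bigr)_k$ is not a martingale, so optional stopping is unavailable. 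For instance, if the iterates leave $\mathbb{B}_\rr[0]$ precisely when the early sample average is large, the stopped mean is biased upward.

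\textbf{The fix.} The conclusion you actually need, $\sup_k\mathbb{E}[(\vPhi_k^{\rr})^-]<\infty$, is still true and easy to restore in either of two ways.

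First, the stopped average is dominated by $\sup_{n\ge 1}\frac1n\sum_{i\le n}g(\xi_i)$. Its expectation is at most $2\|g\|_{L^2_\mu}$ by Doob's $L^2$ maximal inequality for the reverse martingale of sample means, using $\kappa,\lA\in L^2_\mu$.

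Second, you can argue as in the paper. Use $\{\tau_\rr\ge k+1\}\in\mathcal F_k$ and the independence of $\xi_j$, $j>n_k$, from $\mathcal F_k$ to get
\[
\bigl|\mathbb{E}[s_{k+1}^{\rr}-s_k^{\rr}]\bigr|\le \frac{n_{k+1}-n_k}{n_{k+1}}\,\mathbb{E}\Bigl|\mathbb{E}[g]-\frac{1}{n_k}\sum_{i=1}^{n_k}g(\xi_i)\Bigr|\le \frac{n_{k+1}-n_k}{n_{k+1}}\,\frac{\|g\|_{L^2_\mu}}{\sqrt{n_k}}.
\]
This bound is summable by Lemma~\ref{l:bound_sample01}, so $\sup_k\mathbb{E}[s_k^{\rr}]<\infty$.

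So the drift of the stopped sample average, which the paper carries explicitly as $Y_k^{\rr}$, does not disappear in your approach. It resurfaces in the $L^1$ bound on the negative part and must be controlled by one of these estimates. With that repair your argument is complete.
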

\begin{proof}
Let $\kappa$ and $\lA$ be the functions in~\eqref{inq} (which exist in view of Proposition \ref{Prop_eq_Lipsc_cont_Unif}). Let us define
\[
s_k := \frac{1}{n_k}\sum_{i=1}^{n_k}\bigl( \rr \kappa(\xi_i)+\lA(\xi_i) \bigr) - \mathfrak{i}_{\psi}^{\rr} \quad \text{and}
\quad 
s_k^{\rr} := s_{k\wedge \tau_{\rr}}, \qquad \text{for all~} k \in \N,
\]
where $\mathfrak{i}_{\psi}^{\rr} :=  \inf_{ x\in \mathbb{B}_{\rr }[0] } \psi(x)$. It is clear that $s_k^{ \rr }$ is (square) integrable. Moreover, by   \eqref{eqclaim2} and  \eqref{inq}, 
\begin{align*}
 - s_{k+1}^{ \rr }   \leq \vPhi_{k+1}^{\rr} \leq	\vPhi_0^{\rr}   + \sum_{i=0}^k W_i^{\rr},
\end{align*}
which particularly implies that $\vPhi_{k+1}^{\rr}$ is integrable in view of Lemma~\ref{l-subseq:tech-3}.  Next, we introduce the process
\[
Y_k^{\rr}
:= \mathbb{E}\!\left[\, s_{k+1}^{\rr} - s_k^{\rr} \,\middle|\, \mathcal{F}_k \right], \qquad \mbox{for all~} k\in\N.
\] 
Notice that \eqref{eqclaim2} yields,  for all $k \in \N$,
\begin{align}\label{inequality_supermartingale}
\mathbb{E} \bigl[ 	\vPhi_{k+1}^\rr + s_{k+1}^{\rr} \mid \mathcal{F}_{k}\bigr] \leq 	\vPhi_{k}^\rr + s_{k}^{\rr}-  \sigma  \| x^{k+1}_{\rr} - x^{k}_{\rr}\|^2  +  |Y_k^\rr| + |Z_k^\rr|.
\end{align} A direct computation yields
\[
Y_k^{\rr} = \frac{n_{k+1}-n_k}{n_{k+1}} \left( \mathbb{E} \bigr[ \rr \kappa(\xi) + \lA(\xi)  \bigr] - \frac{1}{n_k} \sum_{i=1}^{n_k} \rr \kappa(\xi_i) + \lA(\xi_i) \right).
\]
Hence, by Lemma~\ref{lemma_logn}, we conclude the existence of a constant $b_{\rr}>0$ such that  
\begin{align*}
	\mathbb{E}\bigl[|Y^\rr_k|\bigr] \leq b_{\rr} \frac{n_{k+1} - n_k}{n_{k+1}} \frac{1}{\sqrt{n_k}},
\end{align*}
where $b_{\rr}$ is the variance of the square integrable function $\xi \mapsto \rr \kappa(\xi) + \lA(\xi)$.
In this manner, Lemma~\ref{l:bound_sample01} implies  $\sum_{k\in \N} \mathbb{E}\big[|Y_k^\rr|\bigr]  < +\infty$. Therefore, by  Lemma~\ref{l-subseq:tech-3},  the nonnegativity of the sequence $	\bigl(\vPhi_{k}^\rr + s_{k}^{\rr}\bigr)_{k\in\N}$, and \eqref{inequality_supermartingale}, 
it follows from the well-known Robbins–Siegmund supermartingale convergence theorem (see \cite[Theorem 1]{MR343355}) that $	\bigl(\vPhi_{k}^\rr + s_{k}^{\rr}\bigr)_{k\in\N}$ converges  and \eqref{eqclaim5} holds.
Moreover, by the  strong law of large numbers 
we have, a.s., that
\begin{align*}
\lim_{k \to \infty }s_k^{\rr} = \begin{cases}
    \mathbb{E} (\rr   \kappa + \lA) - \mathfrak{i}_{\psi}^{\rr}, & \text{ if } \tau_\rr =+\infty,\\
    s_{\tau_\rr} - \mathfrak{i}_{\psi}^{\rr}, & \text{ if } \tau_\rr <+\infty.\\
\end{cases}  
\end{align*}
 Consequently, $\bigl(\vPhi_k^\rr\bigr)_{k\in\N}$ converges a.s.

\end{proof}

\subsubsection{Proof of Theorem~\ref{The01}} \label{s:proof-The01} 

We are now in a position to prove each assertion of Theorem~\ref{The01}.

\medskip

\begin{proofof}{The01}  We consider  a measurable set  $\hat{\Omega}$ of full measure such that for all $\omega \in \hat{\Omega}$, the conclusions of Proposition~\ref{LS-terminates} and  the following conditions hold:  for all $\rr \in \N $ with $\| x^0\|\leq\rr$,  
 \begin{enumerate}[label=\alph*$_\omega$), ref=\alph*$_\omega$),leftmargin=2.0em,
  itemsep=1em]
	\item 
    from Lemma~\ref{l-subseq:tech-4},
	\begin{align}\label{inqalmostsurely}
		\sum_{k \in \mathbb{N}} \|x_r^{k+1}(\omega) - x_r^k(\omega)\|^2 < +\infty
	\end{align}
and $\vPhi_k^\rr$ converges to some random variable;
    
	\item\label{comega} by Lemma~\ref{lemmarateconver} we get that 
\begin{align*}
	\left|	\Intf{\varphi}(x_{\rr}^k) -   \Intf{\varphi}^k  (x_{\rr}^k)	  \right|  \to 0;
\end{align*} 
	\item \label{bomega} $\frac{1}{n_k} \sum_{j=1}^{n_k} \kappa_\rr(\xi_j(\omega)) \to \mathbb{E}\left[\kappa_\rr\right]$, where $\kappa_\rr$ is chosen according to \eqref{eq_Lipsc_cont_Unif} in Proposition \ref{Prop_eq_Lipsc_cont_Unif}  for the bounded set $\mathbb{B}_{\rr}[0]$ with $\rr \in \N$. 
	
	
\end{enumerate}
Let $ \omega \in \hat{\Omega}$ such that $(x^k(\omega))_{k \in \N}$ is bounded. Let us consider $\rr \geq \sup_{  k\in \N} \| x^k(\omega)\| +1$. So, due to the equations~\eqref{eqvaluesXa} and~\eqref{eqvaluesXa2}, we get that $\vPhi_k(x^k) = \vPhi_k^{\rr}   $  and $\| x^{k+1} - x^{k}\| = \| x_{\rr}^{k+1} - x_{\rr}^{k}\|$ for all $k \in \N$. Therefore, $\bigl(\vPhi_k(x^k) \bigr)_{k\in\N} $ converges  and \eqref{sumsquare_as} holds due to \eqref{inqalmostsurely}.

 Moreover, by \ref{comega},   \eqref{eqvaluesXa}  and the fact  that $\tau_{\rr}=+\infty$, we get that   	$$ \left|	\Intf{\varphi}(x^k) -   \Intf{\varphi}^k  (x^k)	  \right|  \to 0,$$ and since $\vPhi(x^k) - \vPhi_k(x^k)    =  	\Intf{\varphi}(x^k) -   \Intf{\varphi}^k  (x^k)$,
we deduce that $\bigl( \vPhi(x^k)\bigr)_{k\in\N}$ converges a.s. to the same limit as $\bigl(\vPhi_k(x^k) \bigr)_{k\in\N} $.

Now, let us continue with item~\ref{itema}, showing that $\inf_{k\in \N} \gamma_k(\omega)>0$. We proceed by way of contradiction,  assuming that there exists a subsequence $(\gamma_{\nu_i})_{i \in \mathbb{N}}$ of $(\gamma_k)_{k \in \mathbb{N}}$ such that $\gamma_{\nu_i} \to 0^+$ as $i \to \infty$. Since $(x^k)_{k\in\N}$ is bounded, we may assume that  $x^{\nu_i} \to \bar{x}$. Take $\lambda_{\nu_i} := \rho^{-1}\gamma_{\nu_i}$ 
and $w^{\nu_i} \in \prox_{\lambda_{\nu_i} \psi} \left(x^{\nu_i} - \lambda_{\nu_i} v^{\nu_i}\right)$, where $v^{\nu_i} = \frac{1}{n_{\nu_i}}\sum_{j=1}^{n_{\nu_i}}v^{\nu_i}_j$ and $v^{\nu_i}_j \in \partial \varphi_{\xi_i}(x^{\nu_i})$ for all $j=1,\dots,n_{\nu_i}$. In particular, $\lambda_{\nu_i}$ is the stepsize of one trial prior to passing the test in Step~\ref{step:ls} of Algorithm~\ref{alg:1} in iteration $\nu_i$. Hence, 
for all $i \in \N$, \[\vPhi_{\nu_i}(w^{\nu_i}) > \vPhi_{\nu_i}(x^{\nu_i}) - \sigma \|w^{\nu_i} - x^{\nu_i}\|^2.\] Furthermore, since $\gamma_{\nu_i} \to 0^+$, then for all $i_0 \in \N$ sufficiently large, in view of  Proposition~\ref{LS-terminates},  applied to $\bar x$, $x =x^{\nu_{i_0}}$ and $w=w^{\nu_{i_0}}$, we  get that
\[\mathrm{\vPhi}_{k}(w^{\nu_{i_0}}) \leq \vPhi_{k}(x^{\nu_{i_0}}) -\sigma  \| w^{\nu_{i_0}}- x^{\nu_{i_0}}\|^2, \quad \text{for all } k\in\mathbb{N},\] 
yielding a contradiction. Hence item~\ref{itema} holds true.

To prove item~\ref{item2}, let $x^{k_j}(\omega) \to \bar{x}$ as $j\to\infty$. We first prove that $\bar{x}$ is a stationary point of \eqref{ProblemP}. 
First, from item~\ref{itema}, we can assume that $\gamma_{\nu_i} \to \bar{\gamma} \in (0,+\infty]$. By  \eqref{inqalmostsurely}, it follows that $ {x}^{k_j+1} \to \bar{x}$. Moreover, by the definition of $\rr$, we conclude that $\bar{x} \in \mathbb{B}_{\rr'}(0)$, where $\rr' :=\rr
 - \frac{1}{2}$. 
Since $\varphi_{\xi_i}$ is Lipschitz on $\mathbb{B}_\rr(0)$ with modulus $\kappa_r(\xi_i)$, we have 
\[
\|v^{k_j}\| \leq \frac{1}{n_{k_j}} \sum_{i=1}^{n_{k_j}} \kappa_r(\xi_i), \text{ whenever } x^{k_j} \in \mathbb{B}_{\rr}(0),
\]
which shows that $v^{k_j}$ is bounded in view of \ref{bomega}. Therefore, up to a subsequence and without relabeling, we may assume that $v^{k_j} \to \bar{v}$. Now, by 
Lemma \ref{LemmaClarke}, we have that  $\bar{v} \in \partial \Intf{\varphi}(\bar{x})$. In addition, by \eqref{eq:algx} and the definition of the proximal operator, for all $u \in \mathbb{R}^d$, we have
\begin{equation}\label{ineqProx}
\begin{aligned}
	\psi( {x}^{k_j+1}) + \langle v^{k_j}, {x}^{k_j+1}  \rangle \,+ \,& \frac{1}{2\gamma_{k_j}} \|{x}^{k_j+1} - x^{k_j}\|^2 \leq \psi(u) + \langle v^{k_j}, u \rangle + \frac{1}{2\gamma_{k_j}} \|u - x^{k_j}\|^2.
\end{aligned}
\end{equation}
Now, using the above inequality with $u= \bar x$, we get that $\limsup_{j \to \infty} \psi(  x^{k_j+1}) \leq \psi(\bar x)$, and from the lower semicontinuity of $\psi$ we obtain $\psi(\bar x) \leq \liminf_{ j\to  \infty }   \psi(  x^{k_j +1}) $, which allows to conclude that $\lim_{j \to \infty} \psi( x^{k_j +1}) = \psi(\bar x)$. 
Note that Lemma~\ref{lemma_logn}\ref{lemma_logn-iii} particularly implies
\begin{align}\label{eqlimit}
	\vPhi_{k_j+1} ({x}^{k_j+1}) \to \vPhi (\bar x).
\end{align} In this manner, fixing $u \in \R^d$ in the right-hand side of \eqref{ineqProx}, letting $j \to \infty $ and recalling that $\gamma_{k_j} \to \bar \gamma $, we deduce that \begin{align*}
	\psi(\bar x )  
	\leq \psi(u) + \langle \bar v , u-\bar x \rangle + \frac{1}{2 \bar \gamma } \|u - \bar x\|^2,
\end{align*}
with the convention $\frac{1}{2 \bar \gamma } =0$ for $\bar \gamma =\infty$. Therefore, $\bar{x}$ is a stationary point of problem \eqref{ProblemP} in the sense of \eqref{def_stpoint}. This proves the first claim in item~\ref{item2}. For the second claim, let us show that $\vPhi(\bar x) = \lim_{ k \to \infty } \vPhi_k (x^{k})= \lim_{ j \to \infty  } \vPhi_{k_j} (x^{k_j})$. We already know from the beginning of this proof that $ \lim_{ k \to \infty } \vPhi_k (x^{k})  = \lim_{k\to\infty} \vPhi(x^k)$ exists,  and thus combined with \eqref{eqlimit} yields \[\vPhi (\bar x) = \lim_{ j \to \infty } \vPhi_{k_j+1} ({x}^{k_j+1}) = \lim_{ k \to \infty } \vPhi_{k+1}(x^{k+1}) = \lim_{ k \to \infty } \vPhi_k(x^k).  \]

To conclude the proof, we now proceed to show that 
items~\ref{item3} and~\ref{item4} hold.  First, by \eqref{inqalmostsurely}, it follows that $ \|x^{k+1} - x^k\| \to  0$ as $k \to \infty$ a.s. Hence, the sequence $(x^k)_{k\in\N}$ satisfies the  \emph{Ostrowski
condition}, which implies directly our last claim 
(see, e.g., \cite[Theorem~8.3.9 and Proposition~8.3.10]{MR1955649}).

\end{proofof}

\begin{remark} \label{r:The01}

A few comments about Theorem~\ref{The01} are in order.

\begin{enumerate}[label=(\roman*)]
    \item It is worth mentioning that if  one requires  that (a.s.)  $\| x^k \| \leq \rr  $ for all $k \in\N $ for some $\rr >0$, then it is not difficult to show that
	$\tau_{\rr} =+\infty$ a.s.,  for every $\rr >  \sup_{  k\in \N} \| x^k \|$. Therefore, taking expectation in \eqref{inequality_supermartingale} we get that  
	\begin{align*}
		\sum_{k \in \mathbb{N}} \mathbb{E}\|x^{k+1} - x^k\|^2 < +\infty.
	\end{align*}
	and 
     that  $ \bigl(\vPhi_k(x^k) \bigr)_{k \in \N } $ converges to an integrable random variable.

    \item \label{r:stabilize} Due to Theorem~\ref{The01}\ref{itema}, if we construct the sequence of stepsizes $(\gamma_k)_{k\in\N}$ to be nonincreasing (i.e. setting $\bar{\gamma}_k = \gamma_{k-1}$ for all $k\in\N$  in Step~4 of Algorithm~\ref{alg:1}), then this sequence needs to stabilize. Indeed, by way of  contradiction, suppose that the sequence of stepsizes does not stabilize. Then there are infinitely many iterations where the inner loop of the line search shrinks the stepsize at least a factor $\rho\in (0,1)$, contradicting the fact that $\inf_{k \in\N} \gamma_k >0$. In other words, there exists an index $\bar{k}$ such that at iteration $k \geq \bar{k}$ the line search procedure terminates upon the first trial.

\item A distinctive feature of Theorem~\ref{The01} is that it does not impose any prescribed growth rate on the sample sizes~\(n_k\); it is sufficient to assume that~\((n_k)_{k\in\N}\) is a nondecreasing sequence satisfying~\(n_k\to\infty\). To the best of our knowledge, this level of flexibility is novel. Many classical and recent frameworks in the literature mandate 
pre-scheduled growth conditions on the sample sizes to establish subsequential convergence; see, e.g.,~\cite{geiersbach2021stochastic,MR4486508,LeThiHuynhPhamDinhLuu2022,MR3935081,MR4902793}. Specifically, within the smooth, convex setting, the reduced-variable stochastic algorithm in~\cite{MR3935081} requires a sample-size sequence growing at a rate of at least~$\mathcal{O}(k^3\ln k)$. In nonconvex scenarios, the mini-batch sizes in~\cite{MR3439803} are assumed to scale at least linearly, and no trajectory-level convergence guarantees are obtained. Central limit theorems and asymptotic properties studied in~\cite{MR4902793} similarly rely on polynomial sample-size growth schedules of the form~$n_k^{-1} = \mathcal{O}(k^\nu)$ with~$\nu \ge 1$ to enforce noise stabilization. For nonconvex settings, even more demanding polynomial configurations are typically assumed; for instance, the stochastic difference-of-convex framework in~\cite{LeThiHuynhPhamDinhLuu2022} requires~$n_k^{-1} = \mathcal{O}(k^\nu)$ with a power of at least~$\nu \ge 2$, while related nonconvex schemes~\cite{MR4486508} obtain convergence guarantees with 
strictly increasing polynomial or geometric sample paths. In the infinite-dimensional Hilbert space setting, the convergence result in~\cite{geiersbach2021stochastic} needs the summability condition~$\sum_{k\in\N} 1/n_k < +\infty$.  


\end{enumerate}

\end{remark}
	
\section{Convergence under the \KLname condition} \label{s:PLK}

    From the previous section, we see that the PSS is not a descent method, although the descent-type condition \eqref{eq:descent} is satisfied. In light of this, we now turn our attention to the convergence analysis of the (random) sequence produced by Algorithm~\ref{alg:1} under the well-known \KLshort~property~\cite{MR1644089,lojasiewicz1965EnsemblesS,POLYAK1963864}. Firstly, let us recall this property. 
	
A function $\vPhi\colon \mathbb{R}^d \to \Rex$ is said to satisfy the \KLshort~property at $\bar{x}$ if there exist a constant $\eta>0$ and a continuous, concave function $\theta:[0,\eta]\to[0,+\infty[$, called the \emph{desingularizing function}, such that $\theta(0)=0$, $\theta$ is continuously differentiable on $]0,\eta[$ with $\theta'>0$, and the inequality 
\begin{equation}\label{PLK_cond}
	\theta'\big(\vPhi(x)-\vPhi(\bar{x})\big)\mathrm{dist}\!\big(0;\partial \vPhi(x)\big) \ge 1
\end{equation} holds for all $x\in\mathbb{B}_\eta(\bar{x})$ satisfying $\vPhi(\bar{x})<\vPhi(x)<\vPhi(\bar{x})+\eta$.  Here, $\mathrm{dist}(\,\cdot\,;C)$ denotes the distance to the set $C$. It is known  that the \KLshort~property holds at critical points of semi-algebraic and functions definable in o-minimal structures~\cite{bolte2007lojasiewicz,bolte2006nonsmooth,bolte2007clarke}. For functions defined in expected value form as the objective function in \eqref{ProblemP}, it is clear that the \KLshort~property holds when $\varphi_\xi$ is, for instance, an analytic function for all $\xi$, and $\psi$ is semi-algebraic.

Next, we impose an additional structural condition on the desingularizing function $\theta$ that has proven useful for establishing convergence rates; see, e.g.,~\cite{MR4605214,khanh2024fundamental}. Its validity for a broad class of desingularizing functions is discussed in detail in \cite[Remark G.1]{khanh2024fundamental}.

\begin{assumption}[Quasi-additivity type property]\label{assu_QAP}
	Let $\theta$ be the desingularizing function in the \KLshort~condition \eqref{PLK_cond} for which there exists a constant $C \in {(0,1]}$ such that for all $t_1, t_2 \in {(0,\eta)}$ with $t_1 + t_2 < \eta$, we have
	\begin{align}\label{des_g_inq}
		C \, [\theta'(t_1 + t_2)]^{-1}
		\;\leq\;
		[\theta'(t_1)]^{-1} + [\theta'(t_2)]^{-1}.
	\end{align}
 
\end{assumption}

\begin{remark}[Exponential \KLshort~condition]\label{remark_kindoftheta}
Consider the exponential version of the \KLshort~condition, where the desingularizing function is given by $\theta(t)=M t^{1-\beta}$ with $M$ a positive constant and $\beta \in {(0,1)}$. Since the map $t \mapsto t^\beta$ is concave on $[0,+\infty[$, one obtains the inequality
\begin{equation*}
(t_1+t_2)^\beta \le t_1^\beta + t_2^\beta
\qquad \forall\, t_1,t_2 \ge 0.
\end{equation*}
Therefore, the corresponding desingularizing function $\theta(t)=M t^{1-\beta}$ satisfies Assumption~\ref{assu_QAP} with constant $C=1$ (see the discussion after \cite[eq. (2.3)]{MR4605214}). We refer to \cite{li2018calculus} for a treatment of the exponential version of the \KLshort~property.
\end{remark}

In what follows, we say that a  normal integrand $\varphi$ is $C^{1,1}$ 
around a point $\bar x $ provided that $\varphi_\xi$ is differentiable for all $\xi \in \Xi$, $\nabla_x \varphi (\cdot, \bar{x})  \in L^2_\mu$,  and  there exist $\kappa \in L^2_{\mu}$ and $\epsilon>0$ such that
	\begin{align*} 
	 	\| \nabla_x \varphi(\xi,  y) -  \nabla_x \varphi(\xi,  z) \| \leq \kappa(\xi) \| y- z\|, \quad \forall \, y,z \in  \mathbb{B}_{\epsilon}[\bar x], \,  \forall \,\xi \in \Xi.
	\end{align*} Moreover, we say that  $\varphi$ is $C^{1,1}$ around a nonempty set $S$, provided that it is  $C^{1,1}$ around every $\bar x \in S$.  When $\varphi$ is differentiable at $x\in\mathbb{R}^d$,  it is clear that the gradient of its empirical average \eqref{defEk}  is given by
\begin{equation*}
    \nabla_x\Intf{\varphi}^{k}(x) = \frac{1}{n_k}\sum_{j=1}^{n_k}  \nabla_x\varphi_{\xi_j}(x).
\end{equation*}

\subsection{Global convergence of PSS}
The following analysis of the global convergence of the sequence generated by Algorithm~\ref{alg:1} extends the arguments introduced in the seminal paper~\cite{AttouchBolteSvaiter2013} to the stochastic framework considered here. We establish a global convergence result for Algorithm~\ref{alg:1} under suitable assumptions on the sample sizes, which allow us to control the approximation error induced by the use of empirical averages. Recall that, due to this error, our scheme does not belong to the class of descent methods, thereby preventing a direct application of the classical \KLshort~convergence framework; see, e.g.,~\cite{AttouchBolteSvaiter2013,bento2025convergence} and the references therein. We prove that, provided the approximation error vanishes at an appropriate rate, the use of sample averages does not alter the anticipated convergence properties of the generated sequence. More precisely, we obtain the following global convergence result.

 	\begin{theorem}[Global convergence]\label{Main_PKL_conv}
		Suppose that the assumptions of Theorem~\ref{The01} hold. Let $\mathcal{S}$  be the set of stationary points of $\vPhi$. Suppose   that  $\varphi$ is $C^{1,1}$ around $\mathcal
		{S}$.   Furthermore,  let us assume that the sequence of sample sizes $(n_k)_{ k \in \N}$ satisfies 
		\begin{align}\label{eq:n_k-series-condition}
			\sum_{k =1}^\infty \frac{\ln(n_k)}{ {\sqrt{n_k}}} < +\infty.
		\end{align}  
		Let $(x^k)_{k\in\N}$ be the sequence generated by Algorithm~\ref{alg:1}.  Then, there exists a measurable set $\hat{\Omega}$ with $\mathbb{P}(\Omega\setminus\hat{\Omega})=0$ such that, for every $\omega\in\hat{\Omega}$, whenever the sequence $\bigl(x^k(\omega)\bigr)_{k\in\mathbb{N}}$ is bounded and has an accumulation point $\bar{x}$ where the \KLshort~condition holds with a desingularizing function $\theta $ satisfying Assumption~\ref{assu_QAP}  such that \begin{align}\label{desing_condi}
			\sum_{k=p_0}^\infty \left(  \theta'\left( {\sum_{t=k}^\infty \frac{\ln(n_t)}{ {\sqrt{n_t} }}     } \right)\right)^{-1}<+\infty, \quad \text{for some } p_0 \in\N,
		\end{align}
       and that $\vPhi({x}^k) >\vPhi (\bar{x})$ for all  $k \geq p_0$, then  the whole sequence $\bigl(x^k(\omega)\bigr)_{k\in \N} $  converges  to $\bar x$ as $k\rightarrow\infty$. 
		\end{theorem}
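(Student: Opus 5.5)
The plan is to adapt the Attouch--Bolte--Svaiter argument to a descent inequality perturbed by summable errors, working for a fixed $\omega$ in a full-measure set $\hat\Omega$. The set $\hat\Omega$ is the intersection of the set from Theorem~\ref{The01} with the sets on which Lemma~\ref{lemmarateconver} holds, applied both to $\varphi$ and to $F=\nabla_x\varphi$ on suitable compact balls (countably many radii). Since $\hat x\mapsto$ empirical averages use $n_k$ samples, Lemma~\ref{lemmarateconver} evaluated along $k\mapsto n_k$ gives
\[
e_k:=\sup_{\|x\|\le \rr}\bigl|\Intf{\varphi}(x)-\Intf{\varphi}^k(x)\bigr|+\sup_{x\in \mathbb{B}_\epsilon[\bar x]}\bigl\|\nabla\Intf{\varphi}(x)-\nabla\Intf{\varphi}^k(x)\bigr\|\le c\,\frac{\ln(n_k)}{\sqrt{n_k}} .
\]
By \eqref{eq:n_k-series-condition}, $(e_k)$ is summable. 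From Theorem~\ref{The01} we already know that $\bar x\in\mathcal S$, that $\vPhi(x^k)\to\vPhi(\bar x)$, that $\|x^{k+1}-x^k\|\to0$, and that $\gamma_k\ge\underline\gamma>0$.

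\textbf{Step 1: two basic inequalities.} First, combining \eqref{eq:descent} with the uniform error gives the perturbed descent
\[
\vPhi(x^{k+1})+\sigma\|x^{k+1}-x^k\|^2\le \vPhi(x^k)+2e_k .
\]
Second, I need a relative error bound. The prox optimality condition in \eqref{eq:algx} gives $-\gamma_k^{-1}(x^{k+1}-x^k)-v^k\in\partial\psi(x^{k+1})$ (limiting subdifferential). Near $\bar x$ the function $\Intf{\varphi}$ is $C^{1,1}$ with modulus $L=\mathbb{E}\kappa$, and there $v^k=\nabla\Intf{\varphi}^k(x^k)$. Hence, by the sum rule,
\[
\mathrm{dist}\bigl(0,\partial\vPhi(x^{k+1})\bigr)\le\bigl(\underline\gamma^{-1}+L\bigr)\|x^{k+1}-x^k\|+e_k .
\]

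\textbf{Step 2: a shifted energy.} Define
\[
\varepsilon_k:=2\sum_{t\ge k}e_t,\qquad r_k:=\vPhi(x^k)-\vPhi(\bar x)+\varepsilon_k .
\]
By Step~1, $r_{k+1}+\sigma\|x^{k+1}-x^k\|^2\le r_k$, so $r_k$ is nonincreasing and positive (since $\vPhi(x^k)>\vPhi(\bar x)$ for $k\ge p_0$), and $r_k\to0$. Concavity of $\theta$ gives
\[
\theta(r_k)-\theta(r_{k+1})\ge\theta'(r_k)\,\sigma\|x^{k+1}-x^k\|^2 .
\]
Assumption~\ref{assu_QAP} then yields
\[
\theta'(r_k)^{-1}\le C^{-1}\Bigl[\theta'\bigl(\vPhi(x^k)-\vPhi(\bar x)\bigr)^{-1}+\theta'(\varepsilon_k)^{-1}\Bigr].
\]
Applying the \KLshort\ inequality \eqref{PLK_cond} at $x^k$, together with Step~1 (for index $k-1$), gives
\[
\theta'(r_k)^{-1}\le C^{-1}\bigl[a\|x^k-x^{k-1}\|+e_{k-1}+\theta'(\varepsilon_k)^{-1}\bigr].
\]
Inserting this, taking square roots, and using $2\sqrt{uv}\le u+v$ gives
\[
2\|x^{k+1}-x^k\|\le\|x^k-x^{k-1}\|+K\bigl(\theta(r_k)-\theta(r_{k+1})\bigr)+e_{k-1}+\theta'(\varepsilon_k)^{-1},
\]
possibly after rescaling the constant. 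Summing from some large $k_1$ telescopes the $\theta$ terms. The $e_k$ sum is finite by \eqref{eq:n_k-series-condition}, and $\sum\theta'(\varepsilon_k)^{-1}<\infty$ is exactly \eqref{desing_condi}, since $\varepsilon_k$ is a constant multiple of the tail there (absorbing the constant into $\theta$, or noting that $\theta'$ is nonincreasing). Hence $\sum\|x^{k+1}-x^k\|<\infty$, so $(x^k)$ is Cauchy and converges to its accumulation point $\bar x$.

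\textbf{Main obstacle: staying in the neighborhood.} The inequalities of Step~1 and the \KLshort\ inequality are only valid while $x^k$ lies in $\mathbb{B}_\eta(\bar x)$, intersected with the $C^{1,1}$ ball, and while $r_k<\eta$. I will handle this by the standard induction. Choose $k_1$ along the subsequence converging to $\bar x$ so that $\|x^{k_1}-\bar x\|$, $\|x^{k_1}-x^{k_1-1}\|$, $K\theta(r_{k_1})$, $\sum_{t\ge k_1}e_t$, $\sum_{t\ge k_1}\theta'(\varepsilon_t)^{-1}$ and $\varepsilon_{k_1}$ are all smaller than a fixed fraction of $\eta$. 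The partial-sum version of the estimate then bounds $\|x^{k+1}-\bar x\|$ by that small total, which keeps all later iterates inside the ball. One delicacy I expect is that the \KLshort\ inequality is applied to $\vPhi(x^k)-\vPhi(\bar x)$ rather than to $r_k$. Quasi-additivity is precisely what transfers between the two, and positivity of $\vPhi(x^k)-\vPhi(\bar x)$ is guaranteed by hypothesis.
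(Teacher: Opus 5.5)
Your proposal is the paper's argument. Your shifted energy $\vPhi(x^k)-\vPhi(\bar x)+\varepsilon_k$ is the paper's $\vPhi(x^k)-\vPhi(\bar x)+u_k$. You use quasi-additivity to split $[\theta'(r_k)]^{-1}$ into a \KLshort\ part and an error part, the relative error bound picks up the gradient error $b_k$, and AM--GM, telescoping and induction finish the proof. Applying \KLshort\ at $x^k$ instead of $x^{k+1}$, and using the Lipschitz constant of $\nabla\Intf{\varphi}$ instead of a uniform one for $\nabla\Intf{\varphi}^k$, are cosmetic differences.

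One step is not justified as written: the passage from \eqref{desing_condi} to $\sum_k[\theta'(\varepsilon_k)]^{-1}<\infty$. You only know $\varepsilon_k\le m\,T_k$, where $T_k:=\sum_{t\ge k}\ln(n_t)/\sqrt{n_t}$ and $m=2c$ comes from the almost-sure rate in Lemma~\ref{lemmarateconver}, so you cannot take $m\le 1$. Monotonicity of $\theta'$ gives $[\theta'(\varepsilon_k)]^{-1}\le[\theta'(mT_k)]^{-1}$. For $m>1$ the same monotonicity makes $[\theta'(mT_k)]^{-1}\ge[\theta'(T_k)]^{-1}$, which is the wrong direction, so \eqref{desing_condi} does not apply directly.

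The paper fixes this by iterating \eqref{des_g_inq} with monotonicity of $\theta'$ to get $[\theta'(mt)]^{-1}\le(2/C)^{m-1}[\theta'(t)]^{-1}$ for integer $m$. Your alternative of absorbing the constant into $\theta$ also works, but only with the specific rescaling $\tilde\theta(t):=m\,\theta(t/m)$:
\begin{itemize}
\item The \KLshort\ inequality still holds, because $\tilde\theta'(t)=\theta'(t/m)\ge\theta'(t)$.
\item Inequality \eqref{des_g_inq} holds for $\tilde\theta$ with the same $C$.
\item The summability follows from $[\tilde\theta'(\varepsilon_k)]^{-1}\le[\tilde\theta'(mT_k)]^{-1}=[\theta'(T_k)]^{-1}$.
\end{itemize}
Running your whole argument with $\tilde\theta$ is then legitimate, and slightly cleaner than the paper's bound. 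You should state this explicitly.

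A smaller point concerns the full-measure set: the balls on which you apply Lemma~\ref{lemmarateconver} to $\nabla_x\varphi$ must be fixed before $\omega$ is chosen. They cannot be centered at the random $\bar x$, nor be origin-centered balls of countably many radii, since $\nabla_x\varphi$ is only controlled near $\mathcal S$. A countable cover of $\mathcal S$ by balls carrying square-integrable gradient Lipschitz moduli does the job; the paper uses finite covers of $\mathcal S_{\rr}$ in Lemma~\ref{Proposition4}.
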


We shall present the proof of Theorem~\ref{Main_PKL_conv} in Section~\ref{s:proof-Main_PKL_conv} below. For this purpose, we first establish a set of preliminary results in the following section.

\subsubsection{Technical results for Theorem~\ref{Main_PKL_conv}}   

Let us start formally stating the following result, which is a simple consequence of the compactness of the set $  
\mathcal{S}_{\rr}:=\{x\in\mathcal{S}:\|x\|\leq \rr\}
$.

 \begin{lemma}\label{Proposition4}
    Assume that $\varphi$ is $C^{1,1}$ around $\mathcal
{S}$. Then for each $\rr \in \N$,  there exist $\epsilon_{\rr}>0$, a function $\kappa_{\rr}\in L^2_{\mu}$, and finitely many points $
\bar{x}^{\rr}_{1},\ldots,\bar{x}^{\rr}_{m_{\rr}} \in \mathcal
{S}
$ such that
\[
\mathcal{S}_{\rr}
\subseteq
\bigcup_{i=1}^{m_{\rr}}
\mathbb{B}_{\epsilon_{\rr}/2}(\bar{x}^{\rr}_{i}),
\]
and, for every $\xi\in\Xi$, every $i=1,\ldots,m_{\rr}$, and all
$y,z\in \mathbb{B}_{\epsilon_{\rr}}[\bar{x}^{\rr}_{i}]$, one has
\begin{align}\label{eq_Lipsc_cont_grad22}
\big\|
\nabla_x\varphi(\xi,y)-\nabla_x\varphi(\xi,z)
\big\|
\leq
\kappa_{\rr}(\xi)\|y-z\|.
\end{align}
 \end{lemma}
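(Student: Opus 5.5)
The plan is a standard compactness (Lebesgue-number) argument that combines the pointwise $C^{1,1}$ data at each point of $\mathcal{S}_{\rr}$ into one radius and one modulus. Fix $\rr\in\N$. If $\mathcal{S}_{\rr}=\emptyset$ there is nothing to prove: take $m_{\rr}=0$, any $\epsilon_{\rr}>0$ and $\kappa_{\rr}=0$. Otherwise, first check that $\mathcal{S}_{\rr}$ is compact. Boundedness is by definition, so it remains to show that $\mathcal{S}$ is closed. Let $x^j\in\mathcal{S}$ with $x^j\to\bar x$. Choose $v^j\in\partial\Intf{\varphi}(x^j)$ and $\gamma_j>0$ such that $x^j\in\prox_{\gamma_j\psi}(x^j-\gamma_j v^j)$. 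By Proposition~\ref{Prop_eq_Lipsc_cont_Unif}, the $v^j$ are bounded, so a subsequence converges to some $\bar v$. Since the Clarke subdifferential of the locally Lipschitz $\Intf{\varphi}$ has closed graph, $\bar v\in\partial\Intf{\varphi}(\bar x)$.

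To pass to the limit in the proximal inclusion we need a uniform stepsize. The prox inclusion at $\gamma$ is equivalent to the inequality
\[
\psi(x)\le\psi(u)+\langle v,u-x\rangle+\frac{1}{2\gamma}\|u-x\|^2 \quad\text{for all } u.
\]
This inequality is monotone in $\gamma$, so it persists for every smaller stepsize. We may therefore replace each $\gamma_j$ by $\min\{\gamma_j,\gamma_1\}$ and pass to the limit in the inequality, exactly as at the end of the proof of Theorem~\ref{The01}\ref{item2}, using $\psi(x^j)\to\psi(\bar x)$. This last convergence follows from taking $u=\bar x$ together with lower semicontinuity. This stationarity-limit step is the one place needing care: if $\gamma_j\to0$ the limiting inequality can degenerate. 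I would handle it by fixing the stepsize through the monotonicity remark, or, if the paper's convention allows any $\bar\gamma$, simply take the limit along a subsequence with $\gamma_j$ bounded below.

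Next comes the covering. For each $\bar x\in\mathcal{S}_{\rr}$, the $C^{1,1}$ hypothesis gives $\epsilon_{\bar x}>0$ and $\kappa_{\bar x}\in L^2_\mu$ with the gradient Lipschitz estimate on $\mathbb{B}_{\epsilon_{\bar x}}[\bar x]$. The open balls $\mathbb{B}_{\epsilon_{\bar x}/2}(\bar x)$ cover $\mathcal{S}_{\rr}$, so by compactness finitely many suffice, centred at $\bar y_1,\dots,\bar y_p$. Let $\delta>0$ be a Lebesgue number of this finite cover relative to $\mathcal{S}_{\rr}$, and set
\[
\epsilon_{\rr}:=\min\{\delta,\epsilon_{\bar y_1}/2,\dots,\epsilon_{\bar y_p}/2\},\qquad \kappa_{\rr}:=\sum_{l=1}^p\kappa_{\bar y_l}\in L^2_\mu .
\]

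Cover $\mathcal{S}_{\rr}$ again by finitely many open balls $\mathbb{B}_{\epsilon_{\rr}/2}(\bar x_i^{\rr})$ with centres $\bar x_i^{\rr}\in\mathcal{S}_{\rr}$, which is possible by compactness. For each $i$, since $\epsilon_{\rr}\le\delta$, the Lebesgue number property gives an index $l$ with $\|\bar x_i^{\rr}-\bar y_l\|<\epsilon_{\bar y_l}/2$. Then the triangle inequality yields
\[
\mathbb{B}_{\epsilon_{\rr}}[\bar x_i^{\rr}]\subseteq\mathbb{B}_{\epsilon_{\bar y_l}/2+\epsilon_{\bar y_l}/2}[\bar y_l]=\mathbb{B}_{\epsilon_{\bar y_l}}[\bar y_l].
\]
It is simpler to avoid the Lebesgue number altogether: $\bar x_i^{\rr}$ lies in some $\mathbb{B}_{\epsilon_{\bar y_l}/2}(\bar y_l)$ directly because those balls cover $\mathcal{S}_{\rr}$, and the inclusion above then holds since $\epsilon_{\rr}\le\epsilon_{\bar y_l}/2$. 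On this ball the Lipschitz estimate holds with $\kappa_{\bar y_l}\le\kappa_{\rr}$, which gives \eqref{eq_Lipsc_cont_grad22}.
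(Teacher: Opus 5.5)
Your covering argument is correct, and it is the ``simple compactness'' argument the paper alludes to; the paper gives no proof beyond asserting that $\mathcal{S}_{\rr}$ is compact. Concretely: take a finite subcover by balls $\mathbb{B}_{\epsilon_{\bar y_l}/2}(\bar y_l)$, set $\epsilon_{\rr}=\min_l\epsilon_{\bar y_l}/2$ and $\kappa_{\rr}=\sum_l\kappa_{\bar y_l}$ with nonnegative moduli, take a second cover by $\epsilon_{\rr}/2$-balls centred in $\mathcal{S}_{\rr}$, and conclude with the triangle inequality.

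The genuine gap is the compactness input, namely your proof that $\mathcal{S}$ is closed. Monotonicity of the prox inequality in $\gamma$ only lets you \emph{decrease} a stepsize. So replacing $\gamma_j$ by $\min\{\gamma_j,\gamma_1\}$ bounds the stepsizes from above and does nothing precisely when $\gamma_j\to0$. A subsequence with $\gamma_j$ bounded below need not exist. Deducing $\psi(x^j)\to\psi(\bar x)$ from $u=\bar x$ also requires $\|x^j-\bar x\|^2/\gamma_j\to0$. In Theorem~\ref{The01} the missing lower bound is $\inf_k\gamma_k>0$, which comes from the line search. For an arbitrary sequence in $\mathcal{S}$ there is no such bound: \eqref{def_stpoint} only says that $-\bar v$ is a proximal subgradient of $\psi$ at $\bar x$ with some constant $1/\bar\gamma$, and proximal subgradients are not stable under limits.

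This cannot be patched under the stated hypotheses: $\mathcal{S}$ need not be closed, and then the lemma itself can fail. Take $d=1$ and $\varphi(\xi,x)=-|x|^{3/2}$. It is concave, so \eqref{integral_equpperdesc} holds with $\rA=0$, and it is differentiable everywhere, with Lipschitz gradient near every $x\neq0$ but not near $0$. Take $\psi(x)=|x|^{3/2}+s(x)$, where $s(0)=0$, $s(x)=-2^{-k}$ for $4^{-k}\le|x|<4^{1-k}$ ($k\ge1$), and $s(x)=-1$ for $|x|\ge1$. Then $\psi$ is lsc and $\psi\ge-1$, so Assumption~\ref{Assumption01} holds; moreover $s(x)\le-\tfrac{1}{2}\sqrt{|x|}$ for $0<|x|<1$.

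By convexity of $|x|^{3/2}$, with $\bar v=\nabla_x\varphi(\xi,x)$ the point $x$ is stationary as soon as $s(u)\ge s(x)-\frac{1}{2\gamma}|u-x|^2$ for all $u$. At the step midpoints $x_k=\tfrac{5}{2}\,4^{-k}$ this holds for $\gamma\le\tfrac{9}{8}\,16^{-k}$. At $0$ one must have $\bar v=0$, and $|u|^{3/2}+s(u)\ge-\frac{1}{2\gamma}u^2$ fails for small $u$, whatever $\gamma>0$. Thus $x_k\to0\notin\mathcal{S}$, and $\varphi$ is $C^{1,1}$ around $\mathcal{S}$. Yet any finite family of balls $\mathbb{B}_{\epsilon/2}(\bar x_i)$ covering $\mathcal{S}_1$ has one containing infinitely many $x_k$. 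Then $\|\bar x_i\|\le\epsilon/2$ and $\mathbb{B}_{\epsilon/2}(0)\subseteq\mathbb{B}_{\epsilon}[\bar x_i]$, where \eqref{eq_Lipsc_cont_grad22} fails.

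The paper's one-line justification takes compactness of $\mathcal{S}_{\rr}$ for granted, so it has the same problem. What is needed is a stronger hypothesis, e.g.\ that $\varphi$ is $C^{1,1}$ around $\overline{\mathcal{S}}$, or that $\mathcal{S}$ is closed (as when $\psi$ is convex). Under that hypothesis your argument works verbatim if the first cover is taken over the compact set $\overline{\mathcal{S}_{\rr}}$. The second cover of $\mathcal{S}_{\rr}$ by $\epsilon_{\rr}/2$-balls centred in $\mathcal{S}_{\rr}$ needs only boundedness.
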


  Let us recall that we already showed  sufficient decrease conditions, up to errors, in \eqref{eqclaim2} and \eqref{eq:descent}, that play the role of \cite[(H1)]{AttouchBolteSvaiter2013}. We next establish a relative error condition, akin to \cite[(H2)]{AttouchBolteSvaiter2013},  which, in combination with the sufficient decrease property, will lead to the convergence guarantees in Theorem~\ref{Main_PKL_conv}.

\begin{lemma}\label{Tlemma}
Under the assumptions of Theorem~\ref{The01}, let $(x^k)_{k\in\N}$ be the sequence generated by Algorithm~\ref{alg:1}. Let $\mathcal{S}$  be the set of stationary points of $\vPhi$. Assume that $\varphi$ is $C^{1,1}$ around $\mathcal
{S}$. Then, there exists a measurable set $\hat{\Omega}$ with $\mathbb{P}(\Omega \setminus \hat{\Omega}) = 0$, such that for every $\omega\in\hat{\Omega}$, if $\bar{x}$ is an accumulation point of $\bigl(x^k(\omega)\bigr)_{k\in\mathbb{N}}$, there exist constants $\rB,\zeta>0$   and an index $\hat{k}\in\mathbb{N}$ such that
\begin{equation}\label{eq:Tlemma}
\mathrm{dist}\!\big(0;\,\partial \vPhi_{k}({x}^{k+1})\big)
\;\le\;
\zeta\,\|x^{k+1}-x^{k}\|, \quad \text{for all } k\geq \hat{k} \text{ whenever } x^{k}\in\mathbb{B}_{\rB}[\bar{x}].
\end{equation}

\end{lemma}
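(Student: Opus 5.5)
The plan is to exhibit an explicit element of $\partial \vPhi_k(x^{k+1})$ by reading off the optimality condition of the proximal step, and then to bound it by the step length through Lipschitz continuity of the sampled gradients near the stationary set.

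\textbf{Step 1: the full-measure set.} I take $\hat\Omega$ to be the intersection of the sets from Theorem~\ref{The01} with the strong-law sets on which $\frac{1}{n_k}\sum_{j=1}^{n_k}\kappa_\rr(\xi_j)\to\mathbb{E}[\kappa_\rr]$ for every $\rr\in\N$. Here $\kappa_\rr\in L^2_\mu$ is the gradient Lipschitz modulus from Lemma~\ref{Proposition4}. Since this is a countable intersection, $\hat\Omega$ still has full measure. In particular $\sup_k \frac{1}{n_k}\sum_{j=1}^{n_k}\kappa_\rr(\xi_j)=:L_\rr<+\infty$ on $\hat\Omega$.

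\textbf{Step 2: localize.} Fix $\omega\in\hat\Omega$ and an accumulation point $\bar x$ of $(x^k)$. Boundedness is implicit here, and in Theorem~\ref{Main_PKL_conv} it is assumed. By Theorem~\ref{The01}\ref{item2}, $\bar x\in\mathcal S$. Pick $\rr\in\N$ with $\|\bar x\|\le \rr-1$, so that $\bar x\in\mathbb{B}_{\epsilon_\rr/2}(\bar x_i^\rr)$ for some $i$. Set $\rB:=\epsilon_\rr/4$. Then $\mathbb{B}_{2\rB}[\bar x]\subseteq \mathbb{B}_{\epsilon_\rr}[\bar x_i^\rr]$, and on this ball every $\varphi_{\xi_j}$ is differentiable with gradient Lipschitz with modulus $\kappa_\rr(\xi_j)$. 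By \eqref{sumsquare_as}, $\|x^{k+1}-x^k\|\to0$, so there is $\hat k$ such that $\|x^{k+1}-x^k\|\le \rB$ for all $k\ge\hat k$. Hence $x^k\in\mathbb{B}_{\rB}[\bar x]$ forces $x^k,x^{k+1}\in\mathbb{B}_{2\rB}[\bar x]$.

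\textbf{Step 3: optimality condition and estimate.} Since $x^{k+1}$ minimizes $u\mapsto\psi(u)+\frac{1}{2\gamma_k}\|u-x^k+\gamma_k v^k\|^2$, Fermat's rule for the limiting subdifferential gives
\[
\frac{1}{\gamma_k}(x^k-x^{k+1})-v^k\in\partial\psi(x^{k+1}).
\]
Here $v^k=\nabla\Intf{\varphi}^k(x^k)$, because Clarke subdifferentials of differentiable functions are singletons near $\bar x$. The function $\Intf{\varphi}^k$ is $C^1$ near $x^{k+1}$, so the smooth sum rule gives
\[
w^k:=\nabla\Intf{\varphi}^k(x^{k+1})-\nabla\Intf{\varphi}^k(x^k)+\tfrac{1}{\gamma_k}(x^k-x^{k+1})\in\partial\vPhi_k(x^{k+1}).
\]
I read $\partial\vPhi_k$ in the limiting sense, consistent with the KL usage. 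Then
\[
\|w^k\|\le \Big(L_\rr+\frac{1}{\inf_j\gamma_j}\Big)\|x^{k+1}-x^k\|,
\]
which uses Theorem~\ref{The01}\ref{itema}, namely $\inf_j\gamma_j>0$. So I take $\zeta:=L_\rr+(\inf_j\gamma_j)^{-1}$.

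\textbf{Main obstacle.} The delicate points are two. First, I must ensure that both $x^k$ and $x^{k+1}$ lie in a single ball where the gradient Lipschitz bound holds; this is handled by Lemma~\ref{Proposition4} and the vanishing steps. Second, the constant $\zeta$ must be uniform in $k$. The sample averages of $\kappa_\rr$ are random, so I rely on the strong law to make $L_\rr$ finite. Both $L_\rr$ and $\inf_j\gamma_j$ depend on $\omega$, which is allowed since the statement only asks for constants depending on $\omega$.
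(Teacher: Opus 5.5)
Your proposal is correct and follows the paper's proof essentially step for step. Both arguments combine:
\begin{itemize}
\item the optimality condition of the proximal step;
\item the smooth sum rule, which yields an explicit element of $\partial\vPhi_k(x^{k+1})$;
\item a uniform Lipschitz bound on $\nabla\Intf{\varphi}^k$ near $\bar{x}$, obtained from Lemma~\ref{Proposition4} and the strong law of large numbers;
\item vanishing steps and $\inf_k\gamma_k>0$ from Theorem~\ref{The01}.
\end{itemize}
Your explicit choice $\rB=\epsilon_{\rr}/4$ guarantees $\mathbb{B}_{2\rB}[\bar{x}]\subseteq\mathbb{B}_{\epsilon_{\rr}}[\bar{x}^{\rr}_i]$, and you note that boundedness of the trajectory is implicitly required; both points are slightly more careful than the paper's write-up.
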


\begin{proof}
Let $\hat{\Omega}$  be a measurable set of full probability such that the conclusions of Theorem~\ref{The01} hold. Furthermore,   by Assumption~\ref{Assumption00} and the strong law of large numbers 
we can assume (shrinking the set if necessary) that for all $\omega\in\hat{\Omega}$ and $\rr \in \N$, 
\begin{align}\label{eq_conv01}
\frac{1}{n_k}\sum_{j=1}^{n_k}   \kappa_\rr(\xi_j) \to \mathbb{E} \left[ \kappa_\rr \right], 
\end{align}
as $k\to+\infty$, where $ \kappa_\rr$ are the (square-integrable) Lipschitz functions given in Lemma~\ref{Proposition4}. Now, let $\omega \in \hat{\Omega}$ and  $\bar x$ be an accumulation point of $\bigl(x^k(\omega)\bigr)_{k\in\mathbb{N}}$. It follows by Theorem~\ref{The01}  that $\bar x \in \mathcal{S}$. Now, let us consider $\rr_0 \in \N$  such that $\|\bar x\| \leq \rr_0$. Due to \eqref{eq_Lipsc_cont_grad22} and \eqref{eq_conv01}, we can assume that there exist  $L_1>0$ and $\rB \in ( 0, \epsilon_{\rr}/2)$  such that $\Intf{\varphi}^{k}(\,\cdot\,)$ is continuously differentiable with $L_1$-Lipschitz gradient on $\mathbb{B}_{2\rB}(\bar{x})$ for every $k \in \N$.  
In view of \eqref{sumsquare_as}, let $\hat{k}\in\mathbb{N}$ be such that 
\[
\|x^{k+1}-x^{k}\|\le \rB,
\quad\text{for all } k\ge \hat{k}.
\]
Hence, for every $x^k\in\mathbb{B}_{\rB}(\bar{x})$ with $k\ge \hat{k}$, its next iterate $x^{k+1}$  belongs to $\mathbb{B}_{2\rB}(\bar{x})$. 
Using the optimality condition of the problem  in~\eqref{eq:algx}, we obtain \begin{equation*}
\frac{x^{k}-{x}^{k+1}}{\gamma_k} - \nabla_x\Intf{\varphi}^{k}   (x^{k})
\;\in\;
\partial \psi({x}^{k+1}).
\end{equation*} This in turn implies that
\[
d^{k}
:=
\frac{x^{k}-{x}^{k+1}}{\gamma_k}
\;+\;
\nabla_x  \Intf{\varphi}^{k} ({x}^{k+1})-  \nabla_x\Intf{\varphi}^{k} (x^{k}) \in \partial \vPhi_k({x}^{k+1}).
\] 
The $L_1$-Lipschitz continuity of $\nabla \Intf{\varphi}^{k} $ on $\mathbb{B}_{2\rB}(\bar{x})$ yields
\[
\|d^{k}\|
\;\le\;
\left(\frac{1}{\gamma_k}+L_1\right)\|x^{k}-{x}^{k+1}\|.
\]
Finally, by Theorem~\ref{The01}(i), there exists a sufficiently large constant $\zeta>0$ such that
\[ \mathrm{dist}\!\big(0;\,\partial \vPhi_k({x}^{k+1})\big)
\;\le\;
\|d^{k}\|
\;\le\;
\zeta\,\|x^{k+1}-x^{k}\|,
\]
which concludes the proof.
\end{proof}

We continue with a technical result that establishes the setting to apply the \KLshort~property.

\begin{lemma}\label{l-conv:tech-1} Suppose that the assumptions of Theorem~\ref{Main_PKL_conv} hold. Then, relative to a measurable set  $\hat{\Omega}$ with $\mathbb{P}(\Omega \setminus \hat{\Omega}) = 0$, the following assertions hold. Let $\bar{x}$ be an accumulation point of the sequence $\bigr(x^k(\omega)\bigl)_{k\in\N}$, for $\omega\in\hat{\Omega}$, where the \KLshort~condition \eqref{PLK_cond} is satisfied for a given $\eta>0$. There exist $\rC,\rD>0$ such that $ \Intf{\varphi}^k$, $ \Intf{\varphi}$ are $C^{1,1}$ on $\mathbb{B}_{\rC}[\bar{x}]$, and $\mathbb{B}_\eta[\bar{x}] \subset \mathbb{B}_{\rD}[0]$. Moreover,  for all $k\in\mathbb{N}$, define the (random) sequences 
		\begin{align}
			a_k &:= 2 \sup \left\{ 
			\left| \Intf{\varphi}(x) - \Intf{\varphi}^k(x) \right| : {\|x\| \leq \rD }   \right\}, \label{eq:ak} \\
		u_k & := \sum_{t=k }^\infty a_t, \label{eq:uk}
		\end{align} and
		\begin{equation} \label{eq:bk}
        b_k := \sup\left\{  
			\left\| \nabla_x\Intf{\varphi}(x) 
			- \nabla_x\Intf{\varphi}^{k}(x) \right\| :   x \in  \mathbb{B}_{\rC}[\bar{x}]      \right\}.
		\end{equation} Then, for any $\varepsilon\in{(0,\min\{\rB,\eta/2,5\rC/6\})}$, there exists $k_0 \in \N$ such that for all $k \geq k_0$, the following conditions hold:\begin{align}
            &\| {x}^{k+1}-{x}^{k}\|\le\varepsilon/5; \label{P1}\tag{P1}\\
            &\vPhi( {x}^{k}) -\vPhi(\bar{x})+u_k  < \eta;  \label{P2}\tag{P2}\\
            &\vPhi(\bar{x})<\vPhi( {x}^{k})<\vPhi(\bar{x})+\eta; \label{P3}\tag{P3}\\
            &\max\left\{ 	\| {x}^{k_0}-\bar{x}\| ,	\frac{s_{k_0+1}}{\mathfrak{C}     },   \frac{1}{\zeta} \sum_{k=k_0}^{\infty} \left(    b_k +  \left[\theta'(u_{k+1} )\right]^{-1} \right)\right\}  \leq \epsilon/5;\label{P4}\tag{P4}
        \end{align}where  	  $ \mathfrak{C}   := \zeta^{-1} C \sigma  $, $C$ is the constant in the quasi-additive property  \eqref{des_g_inq}, and the constants $\rB$ and $\zeta$ are such that they satisfy~\eqref{eq:Tlemma}, and
        \begin{align*}
			s_k := \theta\big( \vPhi(x^k) - \vPhi(\bar{x}) + u_k \big) \geq 0.
		\end{align*}
\end{lemma}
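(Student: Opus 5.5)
The plan is to fix the full-measure set $\hat\Omega$ as the intersection of the sets given by Theorem~\ref{The01}, Lemma~\ref{Tlemma}, Lemma~\ref{lemmarateconver} and the strong law of large numbers. Lemma~\ref{lemmarateconver} is applied twice on compact sets. First to $F=\varphi$ on $\mathbb{B}_{\rD}[0]$, which gives $a_k=\mathcal{O}(\ln(n_k)/\sqrt{n_k})$ a.s.; the bound for sample size $n$ transfers to $n_k$ since the averages only depend on the first $n_k$ samples. Second to $F=\nabla_x\varphi$ on $\mathbb{B}_{\rC}[\bar x]$, which gives $b_k=\mathcal{O}(\ln(n_k)/\sqrt{n_k})$ a.s. To make the null set independent of the random point $\bar x$, I would use the finite covers from Lemma~\ref{Proposition4} for every $\rr\in\N$ and a countable family of radii. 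Then $\rC$ is chosen so that $\mathbb{B}_{\rC}[\bar x]$ lies in one of the balls $\mathbb{B}_{\epsilon_\rr}[\bar x_i^\rr]$. On such a ball $\nabla_x\varphi$ is Lipschitz with a square-integrable modulus, and $\nabla_x\varphi(\cdot,\bar x_i^\rr)\in L^2_\mu$. Together these give the $C^{1,1}$ property of $\Intf{\varphi}^k$ and $\Intf{\varphi}$ on $\mathbb{B}_{\rC}[\bar x]$. The radius $\rD$ is simply any integer with $\|\bar x\|+\eta\le\rD$.

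With these rates, \eqref{eq:n_k-series-condition} gives $\sum_k a_k<\infty$ and $\sum_k b_k<\infty$, so $u_k$ is finite and $u_k\to0$. Moreover, $u_{k+1}\le c\sum_{t\ge k+1}\ln(n_t)/\sqrt{n_t}$ for large $k$. Since $\theta$ is concave, $\theta'$ is nonincreasing, so $[\theta'(\cdot)]^{-1}$ is nondecreasing. I would like to conclude $\sum_k[\theta'(u_{k+1})]^{-1}<\infty$ from \eqref{desing_condi}. The constant $c$ is the \textbf{main obstacle}. To handle it I would use Assumption~\ref{assu_QAP} iteratively: from $[\theta'(2t)]^{-1}\le 2C^{-1}[\theta'(t)]^{-1}$ one gets $[\theta'(ct)]^{-1}\le C'[\theta'(t)]^{-1}$ with $C'$ depending only on $c$ and $C$. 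The intended order is to compare first, then enlarge via quasi-additivity, and then sum using \eqref{desing_condi}; this forces the tails of both series in \eqref{P4} to vanish.

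The remaining properties then follow for large $k$. \eqref{P1} holds because \eqref{sumsquare_as} gives $\|x^{k+1}-x^k\|\to0$. \eqref{P2} holds because $\vPhi(x^k)\to\vPhi(\bar x)$ by Theorem~\ref{The01}\ref{item2} and $u_k\to0$. For \eqref{P3}, the strict lower inequality is the hypothesis $\vPhi(x^k)>\vPhi(\bar x)$ for $k\ge p_0$, and the upper inequality again uses convergence of values. For \eqref{P4}, each of the three quantities must be made small. First, $s_{k}\to\theta(0)=0$ by continuity of $\theta$, since its argument tends to $0$. Second, the tail sums of $b_k$ and of $[\theta'(u_{k+1})]^{-1}$ tend to zero by the previous paragraph. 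Third, because $\bar x$ is an accumulation point, there are infinitely many indices with $\|x^k-\bar x\|\le\varepsilon/5$. So I would first pick $K$ beyond which \eqref{P1}--\eqref{P3} hold and the two tail quantities are at most $\varepsilon/5$. I would then choose $k_0\ge K$ along the subsequence converging to $\bar x$. This yields \eqref{P4}, while \eqref{P1}--\eqref{P3} continue to hold for all $k\ge k_0$.
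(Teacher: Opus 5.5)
Your proposal is correct and follows essentially the same route as the paper: the same full-measure set built from Lemma~\ref{lemmarateconver} applied to $\varphi$ on balls $\mathbb{B}_{\rr}[0]$ and to $\nabla_x\varphi$ on the finite covers of Lemma~\ref{Proposition4}, and the same absorption of the random constant in $a_k=\mathcal{O}(\ln(n_k)/\sqrt{n_k})$ through iterated quasi-additivity (you double repeatedly; the paper uses an integer multiple $m$ with factor $(2/C)^{m-1}$), followed by the same choice of $k_0$. You are slightly more explicit than the paper on two points: the strict lower bound in \eqref{P3} comes from the hypothesis $\vPhi(x^k)>\vPhi(\bar x)$, and $k_0$ must be chosen along the subsequence converging to $\bar x$.
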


\begin{proof}
Let $\hat{\Omega}$ be a measurable set with $\mathbb{P}(\Omega \setminus \hat{\Omega}) = 0$, on which the conclusions of
Theorem~\ref{The01} and Lemma~\ref{Tlemma} hold. Moreover, applying
Lemma~\ref{lemmarateconver} with $F=\varphi$, we obtain that, for every
$\rr>0$,
\begin{align}\label{eq01PLK0}
a^{\rr}_k
:=
\sup_{\|x\|\leq \rr}
\left|
\Intf{\varphi}(x)-\Intf{\varphi}^k(x)
\right|
=
\mathcal{O}\left(
\frac{\ln(n_k)}{\sqrt{n_k}}
\right)
\quad \text{a.s.}
\end{align} Furthermore, let $\rr\in\mathbb{N}$, and consider the points
$\bar{x}^{\rr}_i$, $i=1,\ldots,m_{\rr}$, together with the function
$\kappa_{\rr}$, provided by Lemma~\ref{Proposition4}. Applying
Lemma~\ref{lemmarateconver} on each ball
$\mathbb{B}_{\epsilon_{\rr}}[\bar{x}^{\rr}_i]$, we obtain that, for every
$\rr\in\mathbb{N}$ and every $i=1,\ldots,m_{\rr}$,
\begin{align}\label{eq01PLK00}
\sup_{x\in \mathbb{B}_{\epsilon_{\rr}}[\bar{x}^{\rr}_i]}
\left\|
\nabla_x\Intf{\varphi}(x)
-
\nabla_x\Intf{\varphi}^{k}(x)
\right\|
=
\mathcal{O}\left(
\frac{\ln(n_k)}{\sqrt{n_k}}
\right)
\quad \text{a.s.}
\end{align}
Thus, after possibly replacing $\hat{\Omega}$ by another measurable subset of full
probability, we may assume that, in addition, both estimates \eqref{eq01PLK0} and
\eqref{eq01PLK00} hold for every $\omega\in\hat{\Omega}$.

Fix now $\omega\in\hat{\Omega}$, and let $\bar{x}$ be an accumulation point of
the sequence $(x^k(\omega))_{k\in\mathbb{N}}$. Choose $\rD>0$ such that
$\mathbb{B}_{\eta}[\bar{x}]\subseteq \mathbb{B}_{\rD}[0]$. By \eqref{eq01PLK0}, it directly follows that,
\begin{align}\label{eq01PLK}
a_k=
a^{\rD}_k
=
\mathcal{O}\left(
\frac{\ln(n_k)}{\sqrt{n_k}}
\right).
\end{align} Next, take $\rr\in\mathbb{N}$ such that $\|\bar{x}\|\leq \rr$. Since
$\mathcal{S}_{\rr}\subseteq \bigcup_{i=1}^{m_{\rr}}
\mathbb{B}_{\epsilon_{\rr}/2}(\bar{x}^{\rr}_i)$, there exists
$i\in\{1,\ldots,m_{\rr}\}$ such that $\bar{x}\in \mathbb{B}_{\epsilon_{\rr}/2}(\bar{x}^{\rr}_i)$. Consequently, for every $\rC\in(0,\epsilon_{\rr}/2)$ sufficiently small, we have $\mathbb{B}_{\rC}[\bar{x}]
\subseteq
\mathbb{B}_{\epsilon_{\rr}}[\bar{x}^{\rr}_i]$, with
$\Intf{\varphi}^k$ and $\Intf{\varphi}$ being $C^{1,1}$ on
$\mathbb{B}_{\rC}[\bar{x}]$.
Therefore, by \eqref{eq01PLK00},
\begin{align}\label{conv:gradients}
b_k
=
\sup_{x\in \mathbb{B}_{\rC}[\bar{x}]}
\left\|
\nabla_x\Intf{\varphi}(x)
-
\nabla_x\Intf{\varphi}^{k}(x)
\right\|  \leq
\sup_{x\in \mathbb{B}_{\epsilon_{\rr}}[\bar{x}^{\rr}_i]}
\left\|
\nabla_x\Intf{\varphi}(x)
-
\nabla_x\Intf{\varphi}^{k}(x)
\right\| =
\mathcal{O}\left(
\frac{\ln(n_k)}{\sqrt{n_k}}
\right).
\end{align} Moreover, by \eqref{conv:gradients} and our sampling-size assumption \eqref{eq:n_k-series-condition}, we have $\sum_{k \in \N} b_k < +\infty$. Using  \eqref{eq01PLK} and \eqref{eq:n_k-series-condition}, there are $m,\hat{p} \in \mathbb{N}{\setminus}\{0\}$ such that
		\begin{align*}
			a_k \leq  m \cdot  \frac{\ln(n_k)}{\sqrt{n_k} }  \quad \mbox{and} \quad \sum_{t=k}^{\infty} \frac{\ln(n_t)}{\sqrt{n_t} }  < \frac{\eta}{2 m},  
			\quad \text{for all } k \geq \hat{p},
		\end{align*} so that $(u_k)_{k\in\N}$ is well-defined, and
		\begin{align} \label{eq:uk-bound}
			u_k  \leq  m \cdot \sum_{t=k}^{\infty} \frac{\ln(n_t)}{\sqrt{n_t} }  <  \frac{\eta}{2},
			\quad \text{for all } k \geq \hat{p}.
		\end{align} By using the quasi-additivity property of the desingularizing function $\theta$ followed by the fact that $\theta'$ is
nonincreasing (as $\theta$ is concave),  a simple induction argument shows that
for every $m\in\mathbb{N}{\setminus}\{0\}$ and every $t>0$ satisfying $mt\in(0,\eta)$, one has
\begin{align*}
    \left[\theta'(mt)\right]^{-1}
    \leq
    \left(\frac{2}{C}\right)^{m-1}
    \left[\theta'(t)\right]^{-1},
\end{align*}
from where we obtain for all $k\geq \hat{p}$,
\begin{equation*}
\begin{aligned} 
    \big[\theta'(u_k)\big]^{-1}
    &\leq
    \left[
        \theta'\!\left(
            m \sum_{t=k}^{\infty}
            \frac{\ln(n_t)}{\sqrt{n_t}}
        \right)
    \right]^{-1}  \leq
    \left(\frac{2}{C}\right)^{m-1}
    \left[
        \theta'\!\left(
            \sum_{t=k}^{\infty}
            \frac{\ln(n_t)}{\sqrt{n_t}}
        \right)
    \right]^{-1}.
\end{aligned}\end{equation*}
		Consequently, by~\eqref{desing_condi}, we have
		\begin{align}\label{eq:theta(u)-estimate} 
			\sum_{k=\hat{p}}^{\infty} \big[\theta'(u_k)\big]^{-1} < +\infty.
		\end{align}
        
        Altogether, the above implies that we can take an $\varepsilon\in{(0,\min\{\rB,\eta/2,5\rC/6\})}$ and choose $k_0\ge\max\{ \hat{k},\hat{p}\}$, where $\hat{k}$ is from Lemma~\ref{Tlemma}, so that for all $k\geq k_0$, \eqref{P1} follows from \eqref{sumsquare_as}, while \eqref{P2} and \eqref{P3} follow from Theorem~\ref{The01}\ref{item2} and \eqref{eq:uk-bound}. In particular, in view of \eqref{P2}, the sequence $(s_k)_{k\in\N}$ is well-defined. Regarding \eqref{P4}, if necessary, increase $k_0$ so that  $\| {x}^{k_0}-\bar{x}\| \leq \varepsilon/5$ and $s_{k_0+1}  \leq \varepsilon\mathfrak{C}/5$, where the latter is possible due to the continuity of $\theta^\prime$. The estimate of the remaining term follows from~\eqref{eq:n_k-series-condition}, \eqref{conv:gradients} and \eqref{eq:theta(u)-estimate}, yielding \eqref{P4}.
 \end{proof}

  The remainder of the analysis uses Lemma~\ref{l-conv:tech-1} to bound the distance from $(x^k)_{k\in\N}$ to the accumulation point $\bar{x}$. First, we bound the distance between two consecutive iterates $x^{k+1}$ and $x^{k+2}$, whenever $x^k$ is close enough to $\bar{x}$. 

  \begin{lemma} \label{l-conv:tech-2} 
      Under the assumptions of Lemma~\ref{l-conv:tech-1}, for every $k\ge k_0$ with $ {x}^{k}\in\mathbb{B}_\varepsilon[\bar{x}]$ it holds
		\begin{equation*}
			\| x^{k+2} - x^{k+1} \| \leq \frac{ \left( 	s_{k+1}-s_{k+2} \right) }{2  \mathfrak{C}  }  +\frac{    b_{k}  }{ 2 \zeta}  + \frac{ 1   }{ 2 }  \| x^{k+1} - x^{k} \|     + \frac{ 1}{ 2\zeta }  \left[ \theta'(u_{k+1} )\right]^{-1} . 
		\end{equation*}
\end{lemma}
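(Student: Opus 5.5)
We will follow the usual Attouch--Bolte--Svaiter step: the \KLshort{} inequality is applied to a perturbed value that absorbs the sampling errors. Concavity of $\theta$ then turns it into a bound on consecutive steps, and AM--GM splits that bound into the stated form.

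Fix $k\ge k_0$ with $x^k\in\mathbb{B}_\varepsilon[\bar x]$. By \eqref{P1}, $x^{k+1}\in\mathbb{B}_{6\varepsilon/5}[\bar x]\subset\mathbb{B}_{\rC}[\bar x]\cap\mathbb{B}_\eta(\bar x)$, since $\varepsilon<5\rC/6$ and $\varepsilon<\eta/2$. By \eqref{P3} for the index $k+1$, the \KLshort{} inequality \eqref{PLK_cond} applies at $x^{k+1}$. We relate $\partial\vPhi$ to $\partial\vPhi_k$: on $\mathbb{B}_{\rC}[\bar x]$ both expectations are $C^1$, so $\partial\vPhi(x^{k+1})=\nabla\Intf{\varphi}(x^{k+1})+\partial\psi(x^{k+1})$, and similarly for $\vPhi_k$. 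Combining this with Lemma~\ref{Tlemma} (note $\varepsilon<\rB$) and \eqref{eq:bk} gives
\[
\mathrm{dist}\big(0;\partial\vPhi(x^{k+1})\big)\le \zeta\|x^{k+1}-x^k\|+b_k .
\]

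Next we build the decrease. Set $r_j:=\vPhi(x^j)-\vPhi(\bar x)+u_j$, so that $s_j=\theta(r_j)$. Apply \eqref{eq:descent} at index $k+1$ and pass from $\vPhi_{k+1}$ to $\vPhi$ at both points; the error is at most $a_{k+1}$ (this is why $a_k$ carries the factor $2$). This gives $r_{k+1}-r_{k+2}\ge \sigma\|x^{k+2}-x^{k+1}\|^2$, since $u_{k+1}-u_{k+2}=a_{k+1}$. Concavity of $\theta$ gives $s_{k+1}-s_{k+2}\ge\theta'(r_{k+1})(r_{k+1}-r_{k+2})$.

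The main obstacle is bounding $\theta'(r_{k+1})$ from below, since \eqref{PLK_cond} controls $\theta'$ only at the unperturbed value $\vPhi(x^{k+1})-\vPhi(\bar x)$. We would use Assumption~\ref{assu_QAP} with $t_1=\vPhi(x^{k+1})-\vPhi(\bar x)$ and $t_2=u_{k+1}$; \eqref{P2} and \eqref{P3} guarantee $t_1+t_2<\eta$. This yields
\[
C[\theta'(r_{k+1})]^{-1}\le \mathrm{dist}\big(0;\partial\vPhi(x^{k+1})\big)+[\theta'(u_{k+1})]^{-1}\le \zeta\|x^{k+1}-x^k\|+b_k+[\theta'(u_{k+1})]^{-1}.
\]
Substituting into the decrease estimate, with $\mathfrak C=\zeta^{-1}C\sigma$, gives
\[
\|x^{k+2}-x^{k+1}\|^2\le \frac{s_{k+1}-s_{k+2}}{\mathfrak C}\Big(\|x^{k+1}-x^k\|+\tfrac{b_k}{\zeta}+\tfrac{1}{\zeta}[\theta'(u_{k+1})]^{-1}\Big).
\]

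Finally, take square roots and apply $2\sqrt{\alpha\beta}\le\alpha+\beta$ with $\alpha=(s_{k+1}-s_{k+2})/\mathfrak C$ and $\beta$ equal to the bracket. Dividing by $2$ yields exactly the claimed inequality.
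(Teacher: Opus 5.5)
Your proposal is correct and follows essentially the same route as the paper's proof. Both arguments use the descent inequality at index $k+1$ with the sampling errors absorbed into $u_{k+1}-u_{k+2}=a_{k+1}$, then concavity of $\theta$, quasi-additivity combined with the \KLshort{} inequality at $x^{k+1}$, the $b_k$-comparison of $\partial\vPhi$ with $\partial\vPhi_k$ together with Lemma~\ref{Tlemma}, and finally AM--GM. The one detail you should state explicitly is that bounding the error at $x^{k+2}$ by $a_{k+1}/2$ requires $x^{k+2}\in\mathbb{B}_\eta[\bar x]\subset\mathbb{B}_{\rD}[0]$. This follows from \eqref{P1} at index $k+1$, which gives $\|x^{k+2}-\bar x\|\le 7\varepsilon/5<\eta$.
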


\begin{proof}
  Let us consider $ {x}^{k}\in\mathbb{B}_\varepsilon[\bar{x}]$. As in  Lemma~\ref{l-conv:tech-1}, let $\rD>0$ such that $\mathbb{B}_\eta[\bar{x}] \subset \mathbb{B}_{\rD}[0]$. It follows from \eqref{P1} and the triangle inequality that  $x^{k+1} \in \mathbb{B}_{\rC}[\bar{x}]$ and $x^{k+1}, {x}^{k+2} \in \mathbb{B}_{\eta}[\bar{x}] \subset \mathbb{B}_{\rD}[0]$. Then, by adding and subtracting  both $\Intf{\varphi}(x^{k+1})$ and  $\Intf{\varphi}(x^{k+2})$ in the right-hand side of~\eqref{01ineq_est}, we get
		\begin{align}\label{01ineq_est_2}
			\sigma \| x^{k+2} - x^{k+1} \|^2 &\leq \vPhi(x^{k+1}) + u_{k+1}   - \left(  \vPhi(x^{k+2})  +  u_{k+2} 	\right) .  
		\end{align}
		Now, in view of \eqref{P2}, we can apply the concavity of $\theta$ and then \eqref{01ineq_est_2} yielding
		\begin{align*}
			s_{k+1}-s_{k+2}
			&\ge \theta'(\vPhi({x}^{k+1})-\vPhi(\bar x)+ u_{k+1})\bigl(\vPhi({x}^{k+1})+u_{k+1}-\vPhi({x}^{k+2})-u_{k+2}\bigr)\\
			&\ge \sigma \theta'(\vPhi(x^{k+1})-\vPhi(\bar x)+{u}_{k+1})   \| x^{k+2} - x^{k+1} \|^2.
		\end{align*}
		Using the quasi-additive property \eqref{des_g_inq}, \eqref{P3} and the \KLshort~property  \eqref{PLK_cond}, we get that
		\begin{equation}\label{eq:skq-sk20}
        		\begin{aligned}
			s_{k+1}-s_{k+2}
			&\geq \frac{C \sigma }{ \left[ \theta'( \vPhi({x}^{k+1})-\vPhi(\bar x)  )\right]^{-1} +  \left[ \theta'(u_{k+1}  )\right]^{-1}  }     \| x^{k+2} - x^{k+1} \|^2\\
			& \geq \frac{C \sigma}{  \dist ( 0 ; \partial\vPhi(  x^{k+1} )) +  \left[ \theta'(u_{k+1}  )\right]^{-1}  }      \| x^{k+2} - x^{k+1} \|^2.
		\end{aligned}
        \end{equation} From the triangle inequality, \eqref{eq:bk} and the fact that 
          we also have 
        \begin{equation} \label{eq:pre-KL-estimate}
            \begin{aligned}
                \dist ( 0 ; \partial\vPhi(  x^{k+1} ))  & \leq \dist ( 0 ; \partial\vPhi_{k}(  x^{k+1} )) + \dist ( \partial\vPhi(  x^{k+1} ) ; \partial\vPhi_{k}(  x^{k+1} )) \\
            & \leq \dist ( 0 ; \partial\vPhi_{k}(  x^{k+1} )) + \left\| \nabla\Intf{\varphi}(x^{k+1}) 
			- \nabla\Intf{\varphi}^{k}(x^{k+1}) \right\| \\
            & \leq \dist ( 0 ; \partial\vPhi_{k}(  x^{k+1} )) + b_{k}.
            \end{aligned}
        \end{equation}
        Since $\varepsilon <\eta < \rB$, then $x^{k} \in \mathbb{B}_{\varepsilon}[\bar{x}] \subseteq \mathbb{B}_{\rB}[\bar{x}]$, and thus the estimate in Lemma~\ref{Tlemma} holds. Therefore, combining~\eqref{eq:skq-sk20} and~\eqref{eq:pre-KL-estimate} yields
		\begin{align*}
			\left( 	s_{k+1}-s_{k+2} \right)  \left(    \| x^{k+1} - x^{k} \|  + \zeta^{-1}b_{k}   +  \zeta^{-1} \left[ \theta'(u_{k+1} )\right]^{-1}        \right) 	\geq &  \mathfrak{C}   \| x^{k+2} - x^{k+1} \|^2.
		\end{align*}
		Then it follows that 
        {
		\begin{align*}
			\mathfrak{C}   \| x^{k+2} - x^{k+1} \|
			 & \leq  \sqrt{   \mathfrak{C}              } \sqrt{      \left( 	s_{k+1}-s_{k+2} \right)  \left(       \| x^{k+1} - x^{k} \|   +\zeta^{-1} b_{k}   + \zeta^{-1} \left[ \theta'(u_{k+1} )\right]^{-1}        \right)        } \\
			& \leq \frac{ \left( 	s_{k+1}-s_{k+2} \right) }{2 }   +\frac{\mathfrak{C}        }{ 2 }  \| x^{k+1} - x^{k} \| +\frac{ \mathfrak{C}          }{ 2 }   \zeta^{-1}b_{k}    + \frac{ \mathfrak{C}          }{ 2 } \zeta^{-1}  \left[ \theta'(u_{k+1} )\right]^{-1},  
		\end{align*}}%
		which proves the claim.
\end{proof}

Finally, we bound the partial sums of the tail of the series defined by the step length of the sequence $(x_k)_{k \in \N}$, from where we will obtain global convergence in Section~\ref{s:proof-Main_PKL_conv}.

\begin{lemma} \label{l-conv:tech-lemma-2}
   Under the assumptions of Lemma~\ref{l-conv:tech-1}, let $k_1 > k_0$. If $x^k\in \mathbb{B}_\varepsilon[\bar{x}]$ for all $k =k_0, \dots, k_1$, then  
   \begin{equation}\label{eq:KL-tele}\sum_{k=k_0}^{k_1}\|{x}^{k+1}-{x}^{k}\|
			 \leq 2\|{x}^{k_0+1}-{x}^{k_0}\|+\frac{s_{k_0+1}}{\mathfrak{C}     }     +\frac{1}{\zeta}  \sum_{j=k_0+1}^{\infty} \left(    b_{j-1} +  \left[\theta'(u_{j} )\right]^{-1} \right)  
            .\end{equation}
\end{lemma}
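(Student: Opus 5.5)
The plan is to sum the one-step estimate of Lemma~\ref{l-conv:tech-2} over $k=k_0,\dots,k_1$ and absorb the terms $\frac12\|x^{k+1}-x^k\|$ into the left-hand side. Since $x^k\in\mathbb{B}_\varepsilon[\bar x]$ for every $k=k_0,\dots,k_1$, Lemma~\ref{l-conv:tech-2} applies at each such $k$ and gives
\[
\|x^{k+2}-x^{k+1}\|\le \frac{s_{k+1}-s_{k+2}}{2\mathfrak{C}}+\frac{b_k}{2\zeta}+\frac12\|x^{k+1}-x^k\|+\frac{1}{2\zeta}\bigl[\theta'(u_{k+1})\bigr]^{-1}.
\]
Summing from $k=k_0$ to $k=k_1-1$ yields, on the left, $\sum_{k=k_0+1}^{k_1}\|x^{k+1}-x^k\|$. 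On the right, the first terms telescope to $\frac{1}{2\mathfrak C}(s_{k_0+1}-s_{k_1+1})\le \frac{s_{k_0+1}}{2\mathfrak C}$, using $s_k\ge 0$. The middle term is bounded by $\frac12\sum_{k=k_0}^{k_1}\|x^{k+1}-x^k\|$. The error terms are bounded by $\frac{1}{2\zeta}\sum_{j=k_0+1}^{\infty}\bigl(b_{j-1}+[\theta'(u_j)]^{-1}\bigr)$, since all summands are nonnegative and the series converges by the choice of $k_0$ in Lemma~\ref{l-conv:tech-1} via \eqref{P4}.

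Next, add $\|x^{k_0+1}-x^{k_0}\|$ to both sides so that the left becomes $S:=\sum_{k=k_0}^{k_1}\|x^{k+1}-x^k\|$. This gives
\[
S\le \|x^{k_0+1}-x^{k_0}\|+\tfrac12 S+\frac{s_{k_0+1}}{2\mathfrak C}+\frac{1}{2\zeta}\sum_{j=k_0+1}^\infty\bigl(b_{j-1}+[\theta'(u_j)]^{-1}\bigr).
\]
Since $S$ is a finite sum, we may subtract $\tfrac12 S$ and multiply by $2$, which yields exactly \eqref{eq:KL-tele}.

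The only delicate point is the use of Lemma~\ref{l-conv:tech-2} at $k=k_1-1$, which requires $x^{k_1-1}\in\mathbb{B}_\varepsilon[\bar x]$; this is covered by the hypothesis because $k_1>k_0$. We also need $s_{k_1+1}$ to be well defined and nonnegative. Its definition requires the argument of $\theta$ to lie in $[0,\eta)$, and this follows from \eqref{P2} and \eqref{P3} for all indices at least $k_0$. So the bookkeeping of indices is the main thing to check. Everything else is a direct telescoping argument.
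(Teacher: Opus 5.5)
Your argument is correct and yields exactly \eqref{eq:KL-tele}, with the same constants. It takes a different route from the paper, though.

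The paper first unrolls the one-step inequality of Lemma~\ref{l-conv:tech-2} by recursive substitution. This gives, for each $k$, a bound on $\|x^{k+2}-x^{k+1}\|$ in which past terms carry geometric weights $2^{-(i+1)}$, so the initial increment enters with weight $2^{-(k-k_0+1)}$. The paper then sums these bounds over $k$, interchanges the resulting double sums, telescopes the $s$-differences within each geometric layer, and uses $\sum_{j\ge1}2^{-j}=1$.

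You instead sum the one-step inequality directly and absorb $\tfrac12\sum_{k=k_0}^{k_1-1}\|x^{k+1}-x^k\|\le\tfrac12 S$ into the left-hand side. This is legitimate because $S$ is a finite sum, and it is the classical Attouch--Bolte--Svaiter device. Your version is shorter and avoids the double-sum reindexing. The paper's version produces, as a by-product, an explicit per-step estimate showing that the initial increment is forgotten geometrically, but that estimate is not used later.

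Both arguments rely on the same ingredients: $s_k\ge 0$ for $k\ge k_0$, which holds because \eqref{P2}--\eqref{P3} place the argument of $\theta$ in $(0,\eta)$; and the nonnegativity of $b_{j}$ and $[\theta'(u_j)]^{-1}$, which lets the partial sums be bounded by the infinite tail.
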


\begin{proof}
 Resorting to  Lemma~\ref{l-conv:tech-2}, 
 by recursive substitution we get for all $k=k_0, \dots, k_1$,\begin{equation}\label{Inq_Claim2_ind}
			\begin{aligned}
				\|{x}^{k+2}-{x}^{k+1}\| &\le       \frac{1}{2^{k-k_0+1}}   \| x^{k_0+1} - x^{k_0}\|  +\frac{1}{  \mathfrak{C}} \sum_{i=0}^{ k -k_0} \left(  \frac{1}{2}\right)^{i+1} \left(  s_{k+1-i} -s_{k+2-i}  \right) \\ & \quad +   \zeta^{-1} \sum_{ i=0}^{ k-k_0}  \left(  \frac{1}{2}\right)^{i+1} \left( b_{k-i} + \left[      \theta'(u_{k+1-i} )\right]^{-1}\right).
			\end{aligned}
        \end{equation}
		By adding \eqref{Inq_Claim2_ind} for $k=k_0,\dots,k_1-1$, 
\begin{equation*}
\begin{aligned}
			\sum_{k=k_0}^{k_1 }\|{x}^{k+1}-{x}^{k}\|
			&=\|{x}^{k_0+1}-{x}^{k_0}\|+\sum_{k=k_0}^{k_1-1}\|{x}^{k+2}-{x}^{k+1}\|\\
			&\le 2\|{x}^{k_0+1}-{x}^{k_0}\|+\frac{1}{\mathfrak{C}     }\sum_{k=k_0}^{k_1-1}  
			\sum_{i=0}^{ k-k_0} \left(  \frac{1}{2}\right)^{i+1} \left(  s_{k+1-i} -s_{k+2-i}  \right) \\
			& \quad + \zeta^{-1} \sum_{k=k_0}^{k_1-1}   \sum_{ i=0}^{ k-k_0}  \left(  \frac{1}{2}\right)^{i+1}   \left( b_{k-i} + \left[ \theta'(u_{k+1-i} )\right]^{-1} \right)
			\\
			&=2\|{x}^{k_0+1}-{x}^{k_0}\|+\frac{1}{\mathfrak{C}     }\sum_{j=0}^{k_1-k_0-1}\frac{1}{2^{j+1}}\sum_{k=k_0+1}^{k_1-j}(s_k-s_{k+1}) \\&\quad +  \zeta^{-1}\sum_{j=0}^{k_1-k_0-1}\frac{1}{2^{j+1}}\sum_{k=k_0+1}^{k_1-j}\left( b_{k-1} +  \left[ \theta'(u_{k} )\right]^{-1} \right)  \\
			&\le 2\|{x}^{k_0+1}-{x}^{k_0}\|+\frac{1}{\mathfrak{C}     } s_{k_0+1} +   \frac{1}{\zeta} \sum_{k=k_0+1}^{\infty} \left(    b_{k-1} +  \left[\theta'(u_{k} )\right]^{-1} \right),
            \end{aligned}
		\end{equation*} where to get the second inequality we apply the telescopic sum identity, the fact that $s_{k_1-j+1}\geq0$ for all $j \leq k_1-k_0-1$, and $\sum_{j\ge1}2^{-j}=1$. This concludes the proof.

\end{proof}

\subsubsection{Proof of Theorem~\ref{Main_PKL_conv}} \label{s:proof-Main_PKL_conv}

We are now in a position to prove the main result of this section.

\medskip

\begin{proofof}{Main_PKL_conv}
It suffices to show that for all $\varepsilon >0$ defined in Lemma~\ref{l-conv:tech-1}, it holds that $x^k\in \mathbb{B}_\varepsilon[\bar{x}]$ for all $k \geq k_0$. We argue by induction. 
By assumption, the statement holds for $k=k_0$, implied by \eqref{P4}, as well as for $k=k_0+1$, since due to \eqref{P1} and \eqref{P4}, \[\|x^{k_0+1}-\bar{x}\| \leq  \|x^{k_0+1}-x^{k_0}\| + \|x^{k_0}-\bar{x}\| \leq \frac{2\epsilon}{5}.\] To proceed with our induction step, we assume that the claim holds for all $k\in\{k_0,\ldots,k_1\}$, with $k_1>k_0$. Let us prove that $x^{k_1+1}\in \mathbb{B}_\varepsilon[\bar{x}]$. Indeed, in view of Lemma~\ref{l-conv:tech-lemma-2}, \eqref{P1} and \eqref{P4}, \begin{equation*}
    \sum_{k=k_0}^{k_1}\|{x}^{k+1}-{x}^{k}\| \leq  \frac{2\varepsilon}{5} + \frac{\varepsilon}{5} + \frac{\varepsilon}{5} = \frac{4\varepsilon}{5}.
\end{equation*} 
 Calling \eqref{P4} once again and the triangle inequality, we deduce
        \[
        \|x^{k_1+1} - \bar{x}\| \leq \|x^{k_0} - \bar{x}\| + \sum_{k=k_0}^{k_1}\|x^{k+1}-x^k\| \leq \varepsilon,
        \] from where we deduce the desired result. 
        \end{proofof}

    \begin{remark}\label{r:Th4.2}

    We next comment on Theorem~\ref{Main_PKL_conv}.

    \begin{enumerate}[label=(\roman*)] 
        \item  The assumption $\vPhi({x}^k) >\vPhi (\bar{x})$ for all large enough $k \in \N$ in Theorem~\ref{Main_PKL_conv} holds trivially whenever $\bar{x}$ is a local minimizer (unless $x^k$ is a local minimizer itself). Alternatively, a slightly stronger variant of the \KLshort~condition \cite[Definition 2.1]{MR4605214} can be used, namely, requiring \begin{equation*}
	\theta'\big(|\vPhi(x)-\vPhi(\bar{x})|\big)\mathrm{dist}\!\big(0;\partial \vPhi(x)\big) \ge 1
\end{equation*} whenever \[
x\in\mathbb{B}_\eta(\bar{x}) \mbox{~and~}0<|\vPhi(x) - \vPhi(\bar{x})|<\eta.
\] Instead, we assume the condition $\vPhi({x}^k) >\vPhi (\bar{x})$ for ease of presentation.

\item \label{r:Th4.2-ii} The proof of Theorem~\ref{Main_PKL_conv} uses the bound in Lemma~\ref{Tlemma}, which is expected to hold for methods of \emph{implicit} nature, such as proximal-type methods (see, e.g. \cite{AttouchBolteSvaiter2013,frankel2015splitting,bento2025convergence}).  Methods of \emph{explicit} nature, such as the gradient method, are expected to satisfy Lemma~\ref{Tlemma} by evaluating the left-hand side of the estimate therein at $x^k$, namely, \begin{equation} \label{eq:remark}
    \mathrm{dist}\!\big(0;\,\partial \vPhi_{k}({x}^k)\big)
\;\le\;
\zeta\,\|x^{k+1}-x^{k}\|.
\end{equation} For instance, if $\psi \equiv 0$ and $\varphi$ is smooth in \eqref{ProblemP}, then the update \eqref{eq:algx} in Algorithm~\ref{alg:1}, after passing the line search test, becomes \[x^{k+1} = x^k - \gamma_k v^k, \mbox{~where~} v^k = \frac{1}{n_k}\sum_{i=1}^{n_k} \nabla \varphi_{\xi_i}(x^k) = \nabla_x \Intf{\varphi}^{k}(x^k).\] Then \eqref{eq:remark} holds for $\zeta = \left(\inf_{k\in\N}\gamma_k\right)^{-1}$, which is strictly positive a.s. due to Theorem~\ref{The01}\ref{itema}. Therefore, similar convergence results to Theorem~\ref{Main_PKL_conv}, 
\emph{mutatis mutandis}, follow for explicit methods satisfying a sufficient descent condition up to errors (Lemma~\ref{l-subseq:tech-2}), for nonconvex stochastic optimization problems. We refer to \cite{atenas2023unified} for a unified treatment of implicit and explicit methods of descent in the nonconvex deterministic setting.
    \end{enumerate}
\end{remark}

\subsection{Local convergence rates of PSS}\label{s:local-rates}

In this section we describe the rate of convergence of Algorithm~\ref{alg:1} when the \KLshort~condition is satisfied in the exponential case.  We first state the main theorem.  Observe that in order to get  convergence rates, on top of the \KLshort~condition, we assume that the sample sizes grow at least at a polynomial rate. 

    \begin{theorem}[Convergence rates] \label{th:rates} In the setting of Theorem~\ref{Main_PKL_conv}, let $\bar x$ be the limit of the sequence $(x^k)_{k\in\N}$, and suppose that the 
\KLshort~condition holds at $\bar{x}$ with 
desingularizing function
\( 
\theta(t)=Mt^{1-\beta},
\)
for some $M>0$ and $\beta\in(0,1)$.
    Furthermore,  let the sequence of sample sizes $(n_k)_{ k \in \N}$ satisfy
      \begin{equation} \label{eq:condition_series_conv}
		    n_k^{-1} = \mathcal{O}(k^{-\expo}) \mbox{, where~}  \expo \in \left( \frac{2 + 2 \beta}{\beta} , +\infty\right).
		\end{equation} 
        Then,  the following convergence rates hold. 

        \begin{enumerate}[label=(\roman*)] 
            \item If $\beta \in \bigl(0,\frac{1}{2}\bigr]$, then \[\vPhi(x^k) - \vPhi(\bar{x}) = \mathcal{O} \left(    \frac{\ln(k)}{k^{\expo/2-1}} \right) \mbox{, and~}\|x^k - \bar{x}\| =\mathcal{O}\left(\frac{\ln(k)}{k^{1/2(\expo/2-1)-1}}\right).\]
            \item If $\beta \in \bigl(\frac{1}{2},1\bigr)$, then 
            \begin{align*}
                \vPhi(x^k) - \vPhi(\bar{x}) &=  \left\{\begin{array}{ll}
                \mathcal{O}\bigl(k^{-2\beta(\expo/2-1)+1}\ln(k)\bigr), & \text{~if } \tfrac{2+2\beta}{\beta} < \expo \leq \tfrac{4\beta}{2\beta-1},\\ \mathcal{O}\bigl(k^{-2\beta/(2\beta-1)+1}\ln(k)\bigr), & \text{~if~}  \expo > \tfrac{4\beta}{2\beta-1},
                \end{array}\right.
                \end{align*}
                and
                \begin{align*}
                \|x^k - \bar{x}\| &= \left\{\begin{array}{ll}
                \mathcal{O}\left(k^{-\beta(\expo/2-1)+1} \ln(k)\right), & \text{~if } \tfrac{2+2\beta}{\beta} < \expo \leq \tfrac{4\beta}{2\beta-1},\\ \mathcal{O}\bigl(k^{-(1-\beta)/(2\beta-1)}\ln^{1-\beta}(k)\bigr), & \text{~if }  \expo > \tfrac{4\beta}{2\beta-1}.
                \end{array}\right. \end{align*}
        \end{enumerate}
    
    \end{theorem}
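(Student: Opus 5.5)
The plan is to reduce both rates to the recursion of Lemma~\ref{lemma:sequences}. I would apply it to the perturbed Lyapunov sequence
\[
\alpha_k := \vPhi(x^k)-\vPhi(\bar x)+u_k,
\]
with $u_k$ as in \eqref{eq:uk}. Under \eqref{eq:condition_series_conv} and \eqref{eq01PLK}, we have $a_k=\mathcal{O}(k^{-\expo/2}\ln k)$, and hence $u_k=\mathcal{O}(k^{-(\expo/2-1)}\ln k)$. The same bound holds for $b_k$ by \eqref{conv:gradients}. Since Theorem~\ref{Main_PKL_conv} gives $x^k\to\bar x$, for $k\ge k_0$ every iterate lies in the neighborhoods of Lemma~\ref{l-conv:tech-1}, so \eqref{P1}--\eqref{P4}, Lemma~\ref{Tlemma} and the KL inequality are available for all large $k$. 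Also $\alpha_k\ge 0$ converges to $0$.

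\textbf{Step 1 (the recursion).} From \eqref{01ineq_est_2} we get the decrease
\[
\alpha_{k+1}-\alpha_{k+2}\ge\sigma\|x^{k+2}-x^{k+1}\|^2 .
\]
By the KL inequality with $\theta'(t)=M(1-\beta)t^{-\beta}$, together with \eqref{eq:pre-KL-estimate} and Lemma~\ref{Tlemma},
\[
(\vPhi(x^{k+1})-\vPhi(\bar x))^{\beta}\le M(1-\beta)\bigl(\zeta\|x^{k+1}-x^k\|+b_k\bigr).
\]
Squaring, adding $u_{k+1}^{2\beta}$, and using $(s+t)^{2\beta}\lesssim s^{2\beta}+t^{2\beta}$ gives
\[
\alpha_{k+1}^{2\beta}\lesssim \|x^{k+1}-x^k\|^2+b_k^2+u_{k+1}^{2\beta}\lesssim (\alpha_k-\alpha_{k+1})+w_k ,
\]
where $w_k:=b_k^2+u_{k+1}^{2\beta}$. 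The index shift between $\alpha_k-\alpha_{k+1}$ and $\alpha_{k+1}$ is harmless, and would be absorbed either by reindexing or by applying the decrease at index $k$. Then $w_k=\mathcal{O}\bigl(k^{-2\beta(\expo/2-1)}\ln^{2\beta}k\bigr)$, since $u^{2\beta}$ dominates $b^2$ because $2\beta<2$. This is exactly \eqref{eq:recursion-lemma} with $q=2\beta$, $p_2=2\beta(\expo/2-1)$ and $p_1=2\beta$.

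\textbf{Step 2 (apply Lemma~\ref{lemma:sequences}).} If $\beta<1/2$, so that $q<1$, or $\beta=1/2$, so that $q=1$, parts (i)/(ii) give
\[
\alpha_k=\mathcal{O}\bigl(k^{-\beta(\expo-2)}\ln^{2\beta}k\bigr).
\]
For function values, I then use $\vPhi(x^k)-\vPhi(\bar x)\le\alpha_k$, which is at least as good as the claimed $\mathcal{O}(\ln k\,k^{-(\expo/2-1)})$ rate. Precisely, the stated rate equals $u_k$; I would simply bound by $\max\{\alpha_k,u_k\}$, noting that the claimed rate is what remains after the $u_k$ term. If $\beta\in(1/2,1)$, part (iii) applies with $q=2\beta$, $p_1=q$, and $p_2=2\beta(\expo/2-1)>1$. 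The last inequality is guaranteed by $\expo>(2+2\beta)/\beta$, which gives $\beta(\expo-2)>2$. This yields
\[
\alpha_k=\mathcal{O}\bigl(k^{-\min\{2\beta(\expo/2-1),\,2\beta/(2\beta-1)\}+1}\ln k\bigr),
\]
and the threshold $2\beta(\expo/2-1)\le 2\beta/(2\beta-1)$ is exactly $\expo\le 4\beta/(2\beta-1)$, matching the two cases.

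\textbf{Step 3 (iterates).} I would bound $\|x^k-\bar x\|\le\sum_{j\ge k}\|x^{j+1}-x^j\|$ using the telescoped estimate of Lemma~\ref{l-conv:tech-lemma-2} started at $k$ instead of $k_0$. This gives
\[
\|x^k-\bar x\|\lesssim \|x^{k+1}-x^k\|+s_{k+1}+\sum_{j\ge k}\bigl(b_j+[\theta'(u_{j+1})]^{-1}\bigr).
\]
Here $s_{k+1}=M\alpha_{k+1}^{1-\beta}$, $[\theta'(u_j)]^{-1}\sim u_j^{\beta}$, and $\|x^{k+1}-x^k\|\le\sqrt{\alpha_k/\sigma}$. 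Summing $u_j^\beta=\mathcal{O}(j^{-\beta(\expo/2-1)}\ln^\beta j)$ gives the term $k^{-\beta(\expo/2-1)+1}\ln k$, which is summable precisely because $\beta(\expo/2-1)>1$; this is where \eqref{eq:condition_series_conv} is needed. Plugging in the rate of $\alpha_k$ into $\alpha_k^{1-\beta}$ and comparing exponents then gives each case. For example, for $\beta>1/2$ and large $\expo$ the term $\alpha_k^{1-\beta}\sim k^{-(1-\beta)/(2\beta-1)}$ dominates.

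\textbf{Main obstacle.} I expect the main difficulty to be the bookkeeping of Step 1: producing an honest recursion of the exact form \eqref{eq:recursion-lemma} despite the index shift, with the $\ln$-power exactly $p_1=q$ as required in (iii). The comparison of exponents in Step 3 is the second delicate point: I must verify which term dominates in each regime and that the stated exponents (for instance $\tfrac12(\expo/2-1)-1$ when $\beta\le1/2$) are indeed upper bounds. If needed, I would weaken to the stated, cruder rates by crude majorization.
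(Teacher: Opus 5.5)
\textbf{Gap in the case $\beta\in(0,\tfrac12)$.} For $\beta\in[\tfrac12,1)$ your argument is essentially the paper's: the recursion of Lemma~\ref{p-rates:tech-result-2}, then Lemma~\ref{lemma:sequences}, then the iterate bound of Lemma~\ref{p-rates:tech-result-3} with $\max\{\theta,\sqrt{\cdot}\}$. Your exponent comparisons there are correct. The case $\beta\in(0,\tfrac12)$ fails as written. Your recursion has $q=2\beta<1$ and error $w_k=\mathcal{O}(k^{-2\beta(\expo/2-1)}\ln^{2\beta}k)$. So Lemma~\ref{lemma:sequences}(i) only gives $\alpha_k=o(k^{-2\beta(\expo/2-1)}\ln^{2\beta}k)$. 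Since $2\beta(\expo/2-1)<\expo/2-1$, this is strictly \emph{slower} than the claimed $\mathcal{O}(k^{-(\expo/2-1)}\ln k)$, so your assertion that it is ``at least as good'' is false. The remark about $\max\{\alpha_k,u_k\}$ does not repair this: your only bound is $\vPhi(x^k)-\vPhi(\bar x)\le\alpha_k$, and $\alpha_k\ge u_k$ anyway. The same loss appears in Step~3. There the tail $\sum_{j\ge k}u_j^{\beta}$ gives $k^{-\beta(\expo/2-1)+1}\ln k$, which is worse than the claimed $k^{-\frac12(\expo/2-1)+1}\ln k$. The culprit is that the sampling error enters through $[\theta'(u_{k+1})]^{-1}\propto u_{k+1}^{\beta}$, and for $\beta<\tfrac12$ this is much larger than $u_{k+1}^{1/2}$.

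\textbf{The missing step: lower the KL exponent to $\tfrac12$.} Shrink the KL neighborhood so that $\eta\le 1$. Since $t^{\beta}\ge t^{1/2}$ for $t\in(0,1)$, the KL inequality then also holds with $\theta(t)=2M(1-\beta)t^{1/2}$, i.e.\ with exponent $\tfrac12$. Moreover $\expo>\tfrac{2+2\beta}{\beta}>6$, and $6$ is the threshold for $\beta=\tfrac12$. Hence all hypotheses remain valid with $\beta=\tfrac12$, in particular \eqref{desing_condi} via Lemma~\ref{B-ii}. Rerun the whole machinery with $\beta=\tfrac12$, including the telescoped estimate used for the iterates. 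This gives $q=p_1=1$ and $p_2=\expo/2-1$, and Lemma~\ref{lemma:sequences}(ii) yields $\alpha_k=\mathcal{O}(k^{-(\expo/2-1)}\ln k)$. The iterate bound becomes $\mathcal{O}(\sqrt{\alpha_k})+\mathcal{O}(k^{-\frac12(\expo/2-1)+1}\ln k)$, which is exactly item~(i). This reduction is how the paper proves~(i).
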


Figure~\ref{fig:rates_fill} shows a heat map that illustrates the convergence rates obtained in Theorem~\ref{th:rates}. The proof of Theorem~\ref{th:rates} will be developed in Section~\ref{s:proof-rates} . We first present in the following section some results instrumental to it.

\begin{figure}[ht!]
\begin{center}
\includegraphics[width=\textwidth]{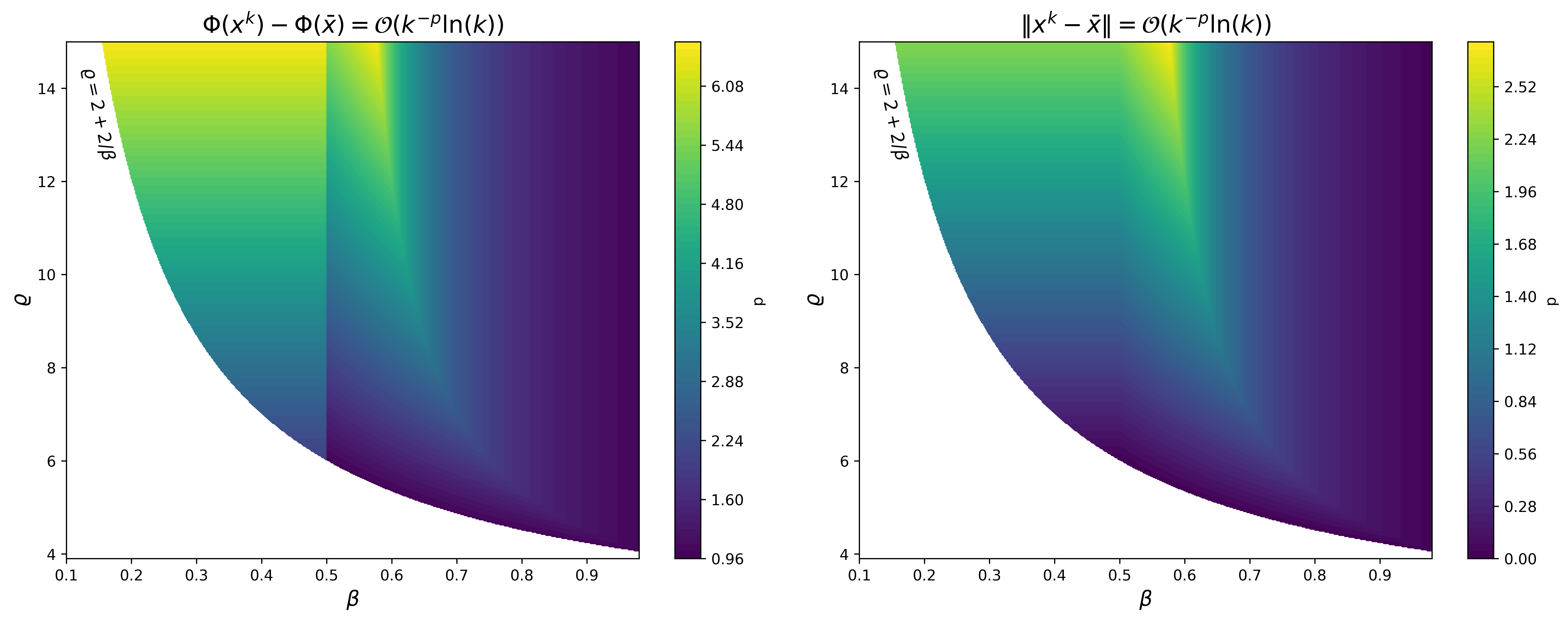}
\end{center}
\caption{Heat map of the exponent $p$ in the convergence rate $\mathcal{O}\bigl(k^{-p}\ln(k)\bigr)$ obtained in Theorem~\ref{th:rates} for the function values (left) and the iterates (right), displayed as a function of the \KLshort~exponent $\beta\in{(0,1})$ and the sample-size polynomial growth exponent $\expo > 2 + 2/\beta$. }\label{fig:rates_fill}
\end{figure}

\subsubsection{Preliminaries of Theorem~\ref{th:rates}}

Before delving into the analysis, we note that  a sequence of sample sizes $(n_k)_{k\in\N}$ verifying \eqref{eq:condition_series_conv}, satisfies  conditions \eqref{eq:n_k-series-condition} and \eqref{desing_condi} required in Theorem~\ref{Main_PKL_conv}. We defer the proof of this claim to Section~\ref{proof-l-rates:tech-result-1} in the Appendix, as it is merely a technicality. 

\begin{lemma} \label{B-ii}
Let $\beta \in (0,1)$, $\expo >0$ and $(n_k)_{k\in\N}$ such that \eqref{eq:condition_series_conv} holds. Then  $$\left[ \theta'\left( u_{k+1}\right)\right]^{-1} = \mathcal{O}\left(\frac{\ln^\beta(k)}{k^{\beta(\expo/2-1)}}\right), \quad b_k = \mathcal{O}\left(\frac{\ln(k)}{k^{\expo/2}}\right)  \mbox{~and~} \sum_{j=k}^\infty(b_{j} + \theta^\prime(u_{j+1})^{-1}) =  \mathcal{O}\left( \frac{\ln(k)}{k^{\beta(\expo/2-1)-1}}  \right)  .$$ Furthermore, 
 \eqref{eq:n_k-series-condition} and \eqref{desing_condi} hold. 
\end{lemma}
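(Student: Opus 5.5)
We will argue directly from the a.s. estimates already established in the proof of Lemma~\ref{l-conv:tech-1}, namely \eqref{eq01PLK} and \eqref{conv:gradients}: $a_k=\mathcal{O}\bigl(\ln(n_k)/\sqrt{n_k}\bigr)$ and $b_k=\mathcal{O}\bigl(\ln(n_k)/\sqrt{n_k}\bigr)$. These hold regardless of the growth of $(n_k)$, since they come from Lemma~\ref{lemmarateconver}. The first step is to convert them into rates in $k$ using \eqref{eq:condition_series_conv}. Because $t\mapsto \ln(t)/\sqrt{t}$ is decreasing for $t\ge e^2$ and $n_k\ge c\,k^{\expo}$ for large $k$, we get
\[
\frac{\ln(n_k)}{\sqrt{n_k}}\le \frac{\ln(c k^{\expo})}{\sqrt{c}\,k^{\expo/2}}=\mathcal{O}\Bigl(\frac{\ln(k)}{k^{\expo/2}}\Bigr).
\]
This gives the bound on $b_k$ immediately, and also $a_k=\mathcal{O}\bigl(\ln(k)k^{-\expo/2}\bigr)$.

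Next we bound the tail $u_{k+1}=\sum_{t\ge k+1}a_t$. We compare with an integral: since $t\mapsto \ln(t)t^{-\expo/2}$ is eventually decreasing and $\expo/2>1$ (indeed $\expo>2+2/\beta>2$), we have
\[
\sum_{t\ge k+1}\ln(t)t^{-\expo/2}\le \int_k^\infty \ln(s)s^{-\expo/2}\,ds=\mathcal{O}\bigl(\ln(k)k^{1-\expo/2}\bigr),
\]
for example by one integration by parts. The same computation bounds $\sum_{t\ge k}\ln(n_t)/\sqrt{n_t}$, which already yields \eqref{eq:n_k-series-condition}. For $\theta(t)=Mt^{1-\beta}$ we have $[\theta'(t)]^{-1}=t^{\beta}/(M(1-\beta))$, which is increasing in $t$. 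Hence
\[
[\theta'(u_{k+1})]^{-1}=\mathcal{O}\bigl(u_{k+1}^\beta\bigr)=\mathcal{O}\bigl(\ln^\beta(k)\,k^{-\beta(\expo/2-1)}\bigr).
\]
Applying the same monotonicity with $\sum_{t\ge k}\ln(n_t)/\sqrt{n_t}$ in place of $u_{k+1}$ gives summands in \eqref{desing_condi} of order $\ln^\beta(k)k^{-\beta(\expo/2-1)}$.

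Summability is where the hypothesis on $\expo$ enters, and it is the only delicate point. We need $\beta(\expo/2-1)>1$, that is, $\expo>2+2/\beta=(2+2\beta)/\beta$, which is exactly the assumed range. Under this condition the series in \eqref{desing_condi} converges, because powers of $\ln$ do not affect convergence of $\sum k^{-q}$ when $q>1$.

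Finally we bound the combined tail $\sum_{j\ge k}\bigl(b_j+[\theta'(u_{j+1})]^{-1}\bigr)$ by the same integral comparison. The $b_j$ part gives $\mathcal{O}(\ln(k)k^{1-\expo/2})$. The $\theta'$ part gives $\mathcal{O}\bigl(\ln^\beta(k)k^{1-\beta(\expo/2-1)}\bigr)$; here we bound $\ln^\beta\le\ln$ for $k\ge e$ and integrate $\ln(s)s^{-q}$ with $q=\beta(\expo/2-1)>1$. Since $\beta<1$, we have $\beta(\expo/2-1)-1\le\expo/2-1$, so the $\theta'$ term dominates. This yields $\mathcal{O}\bigl(\ln(k)k^{-(\beta(\expo/2-1)-1)}\bigr)$, as claimed.
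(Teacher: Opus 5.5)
Your proposal is correct and follows essentially the same route as the paper. You convert the a.s.\ rates $\mathcal{O}\bigl(\ln(n_k)/\sqrt{n_k}\bigr)$ for $a_k,b_k$ into $\mathcal{O}\bigl(\ln(k)/k^{\varrho/2}\bigr)$ via $n_k\geq C_1k^{\varrho}$, bound the tails by integral comparison, use that $[\theta'(t)]^{-1}=t^{\beta}/(M(1-\beta))$ is increasing, and obtain \eqref{desing_condi} from $\beta(\varrho/2-1)>1$; your monotonicity of $s\mapsto \ln(s)/\sqrt{s}$ on $[e^2,+\infty)$ is in fact the precise property needed for that substitution.
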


Capitalizing on Lemma~\ref{lemma:sequences}, we proceed to deduce   convergence rates for Algorithm~\ref{alg:1} for both the function values and the iterates. The arguments adapt those from the classical monotone setting to our nonmonotone case.
    For a different nonmonotone method, the authors in \cite[Theorem 3.10]{MR4605214} obtain similar convergence  rates for the iterates alone. Observe that the rate at which the approximation error vanishes, which was given by Lemma~\ref{lemmarateconver}, dominates over any possible faster linear rates, such as the classical linear \cite[Theorem 2]{AttouchBolte2009} or superlinear \cite[Theorem 2]{bento2025convergence}.

In order to obtain the convergence rates in Theorem~\ref{th:rates}, we first show a recursion 
for the sequence of function values that complies with Lemma~\ref{lemma:sequences}. 

\begin{lemma} \label{p-rates:tech-result-2}
    Suppose that the assumptions of Theorem~\ref{th:rates} are satisfied. Set $\alpha_k := \vPhi(x^k) - \vPhi(\bar{x}) + u_k \geq 0$ for all $k \geq k_0$, where $k_0$ is defined in Lemma~\ref{l-conv:tech-1}. Then, 
    there exist constants  $\hat{C}_1,\hat{C}_2 >0$ such that
\begin{equation*}
            \alpha_{k+1}^{2\beta} \leq \hat{C}_1(\alpha_k - \alpha_{k+1}) + \hat{C}_2\frac{\ln^{2\beta}(k)}{k^{2\beta(\expo/2-1)}},
    \end{equation*}
for all sufficiently large $k$.
\end{lemma}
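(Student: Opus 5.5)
The plan is to combine three ingredients already available for $k$ large: the perturbed descent inequality \eqref{01ineq_est_2} (shifted by one index), the \KLshort~inequality \eqref{PLK_cond} at $x^{k+1}$, and the relative error bound of Lemma~\ref{Tlemma} together with \eqref{eq:pre-KL-estimate}. By Theorem~\ref{Main_PKL_conv}, $x^k\to\bar x$, so for all $k\geq k_0$ (enlarging $k_0$ if needed) the iterates stay in $\mathbb{B}_\varepsilon[\bar x]$ and \eqref{P1}--\eqref{P4} hold. Moreover, since $u_k\geq u_{k+1}$, \eqref{01ineq_est_2} written at index $k$ reads
\[
\sigma\|x^{k+1}-x^k\|^2\leq \alpha_k-\alpha_{k+1},
\]
so $(\alpha_k)$ is nonincreasing; together with \eqref{P3} and $u_k\geq0$ this gives $\alpha_k\geq 0$ and $\alpha_k\to0$.

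For the \KLshort~step, with $\theta(t)=Mt^{1-\beta}$ the inequality \eqref{PLK_cond} at $x^{k+1}$ (valid by \eqref{P3}) gives $(\vPhi(x^{k+1})-\vPhi(\bar x))^\beta\leq M(1-\beta)\,\dist(0;\partial\vPhi(x^{k+1}))$. By \eqref{eq:pre-KL-estimate} and Lemma~\ref{Tlemma},
\[
\dist(0;\partial\vPhi(x^{k+1}))\leq \zeta\|x^{k+1}-x^k\|+b_k .
\]
Subadditivity of $t\mapsto t^\beta$ then yields
\[
\alpha_{k+1}^\beta\leq(\vPhi(x^{k+1})-\vPhi(\bar x))^\beta+u_{k+1}^\beta\leq M(1-\beta)\bigl(\zeta\|x^{k+1}-x^k\|+b_k\bigr)+u_{k+1}^\beta .
\]

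Next we square this bound, using $(a+b+c)^2\leq 3(a^2+b^2+c^2)$, and substitute the descent estimate $\|x^{k+1}-x^k\|^2\leq\sigma^{-1}(\alpha_k-\alpha_{k+1})$. This gives
\[
\alpha_{k+1}^{2\beta}\leq \hat C_1(\alpha_k-\alpha_{k+1})+3M^2(1-\beta)^2b_k^2+3u_{k+1}^{2\beta},
\qquad \hat C_1:=3M^2(1-\beta)^2\zeta^2\sigma^{-1}.
\]
The remaining task is to bound the error terms via Lemma~\ref{B-ii}.
\begin{itemize}
\item We have $b_k^2=\mathcal{O}(\ln^2(k)k^{-\expo})$.
\item From \eqref{eq01PLK} and $n_t^{-1}=\mathcal{O}(t^{-\expo})$, $u_{k+1}=\mathcal{O}\bigl(\sum_{t>k}\ln(t)t^{-\expo/2}\bigr)=\mathcal{O}(\ln(k)k^{-(\expo/2-1)})$, since $\expo>2$ and $\ln(n_t)=\mathcal{O}(\ln t)$ when $n_k$ grows polynomially. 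If $n_k$ grows faster, one should use that $\ln(n)/\sqrt n$ is eventually decreasing. Hence $u_{k+1}^{2\beta}=\mathcal{O}(\ln^{2\beta}(k)k^{-2\beta(\expo/2-1)})$.
\end{itemize}
This is the same estimate as in Lemma~\ref{B-ii}, since $[\theta'(u)]^{-1}\propto u^\beta$.

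Finally, $b_k^2$ is dominated by this term. Indeed $\expo>2\beta(\expo/2-1)$, and $\ln^2(k)k^{-\expo}\leq \ln^{2\beta}(k)k^{-2\beta(\expo/2-1)}$ for large $k$ because the polynomial gap $\expo(1-\beta)+2\beta>0$ absorbs the extra $\ln^{2-2\beta}(k)$ factor. Merging the two error terms into $\hat C_2\ln^{2\beta}(k)k^{-2\beta(\expo/2-1)}$ gives the claim.

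The main obstacle I expect is the bookkeeping at the \KLshort~step. One must make sure the \KLshort~inequality is applied at $x^{k+1}$ and the relative error bound with the pair $(x^k,x^{k+1})$, so that the squared step is exactly controlled by $\alpha_k-\alpha_{k+1}$ and the indices align. One must also check that $b_k$ and $u_{k+1}$ are evaluated where \eqref{eq:bk} applies, namely $x^{k+1}\in\mathbb{B}_{\rC}[\bar x]$, which follows from \eqref{P1} since $\varepsilon<5\rC/6$.
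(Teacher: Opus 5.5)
Your proposal is correct and follows essentially the same route as the paper's proof. Both arguments apply the KL inequality at $x^{k+1}$, use subadditivity of $t\mapsto t^\beta$ (which is exactly the quasi-additivity with $C=1$), invoke the relative-error bound from Lemma~\ref{Tlemma} and \eqref{eq:pre-KL-estimate}, square the result, insert the perturbed descent inequality $\sigma\|x^{k+1}-x^k\|^2\le\alpha_k-\alpha_{k+1}$, and bound the error terms with Lemma~\ref{B-ii}. The differences are cosmetic. You use a three-term rather than a two-term Young split, and you check explicitly that $b_k^2$ is dominated by the $u_{k+1}^{2\beta}$ term, which the paper leaves implicit. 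Also, the remark $u_k\ge u_{k+1}$ is not actually needed for the descent step, since it is the identity $u_k-u_{k+1}=a_k$ that absorbs the sampling error.
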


\begin{proof}
     From \eqref{P2} and \eqref{P3}, we know that $\alpha_k, u_k, \vPhi( {x}^{k}) - \vPhi(\bar{x}) $ belong to $(0,\eta)$ for all $k \geq k_0$, where $\eta >0$ comes from the neighborhood of the \KLshort~condition.  Furthermore, taking $\rB>0$ for which  Lemma~\ref{Tlemma} holds and $\rC>0$ such that $\varphi$ is $C^{1,1}$ on $\mathbb{B}_{\rC}[\bar{x}]$ (as done in  Lemma~\ref{l-conv:tech-1}), Theorem~\ref{Main_PKL_conv} implies that for all $\hat{\epsilon} \in (0, \min\{\eta, \rC, \rB\})$, increasing $k_0$ if necessary, for all $k \geq k_0$, $x^k \in \mathbb{B}_{\hat{\epsilon}}[\bar{x}]$. Then
     \begin{align*}
            \alpha_{k+1}^{2\beta}  &= M^2(1-\beta)^2\theta^\prime(\vPhi(x^{k+1}) - \vPhi(\bar{x}) + u_{k+1})^{-2} \\
            & \leq M^2(1-\beta)^2 \big(  \theta^\prime(\vPhi(x^{k+1}) - \vPhi(\bar{x}))^{-1} +  [\theta^\prime( u_{k+1})]^{-1}\big)^2 \\
            & \leq M^2(1-\beta)^2 \big(  \mathrm{dist}\!\big(0;\partial \vPhi(x^{k+1})\big) +  [\theta^\prime( u_{k+1})]^{-1}\big)^2 \\
            & \leq M^2(1-\beta)^2 \big(  \zeta\|x^{k+1} - x^k\| + b_k +  [\theta^\prime( u_{k+1})]^{-1}\big)^2\\
            & \leq 2M^2(1-\beta)^2 \big[  \zeta^2\|x^{k+1} - x^k\|^2 + \big(b_k +  [\theta^\prime( u_{k+1})]^{-1}\big)^2\big]\\
            & \leq 2M^2(1-\beta)^2 \big[  \frac{\zeta^2}{\sigma}(\vPhi_k(x^k)-\vPhi_k(x^{k+1})) + \big(b_k +  [\theta^\prime( u_{k+1})]^{-1}\big)^2\big]\\
            & \leq 2M^2(1-\beta)^2 \big[  \frac{\zeta^2}{\sigma}(\alpha_k - \alpha_{k+1})+   \big(b_k +  [\theta^\prime( u_{k+1})]^{-1}\big)^2\big],
        \end{align*} where in the first line we use $\theta^\prime(t) = M(1-\beta)t^{-\beta}$, in the second line we apply the quasi-subadditivity property with $C=1$ (see Remark \ref{remark_kindoftheta}), in the third line we use the \KLshort~property \eqref{PLK_cond},
        in the fourth line we apply \eqref{eq:pre-KL-estimate} and Lemma~\ref{Tlemma}, 
         in the fifth line we employ Young's inequality, in the sixth line we apply \eqref{eq:descent}, and in the last line we combine \eqref{defPhik}, \eqref{eq:ak} and \eqref{eq:uk}. Hence, for $\hat{C}_1 = 2M^2(1-\beta)^2\frac{\zeta^2}{\sigma}$ and $C_2 = \hat{C}_1 \frac{\sigma}{\zeta^2}$, \begin{equation*}
            \alpha_{k+1}^{2\beta} \leq \hat{C}_1(\alpha_k - \alpha_{k+1}) + C_2\big(b_k +  [\theta^\prime( u_{k+1})]^{-1}\big)^2.
        \end{equation*} Finally,  the  second term on the right-hand side of the above equation is estimated by Lemma~\ref{B-ii}, which yields the existence of a constant $\hat{C}_2 >0$ such that
\begin{equation*}
            \alpha_{k+1}^{2\beta} \leq \hat{C}_1(\alpha_k - \alpha_{k+1}) + \hat{C}_2\frac{\ln^{2\beta}(k)}{k^{2\beta(\expo/2-1)}},
        \end{equation*} 
        concluding the proof.
\end{proof}

We proceed to show a technical lemma that relates the rate of convergence of the function values and the error associated with the sample average approximation to the rate of convergence of the iterates (cf. \cite[Theorem 3.3]{frankel2015splitting}).

\begin{lemma} \label{p-rates:tech-result-3}
Suppose that the assumptions of Theorem~\ref{th:rates} are satisfied.
    Let $\tilde{\theta}(t) = \max\{\theta(t),\sqrt{t}\}$, and $(\alpha_k)_{k\in\N}$ given as in Lemma~\ref{p-rates:tech-result-2}. Then, for all $k$ sufficiently large, \[\|x^k - \bar{x}\| = \mathcal{O}(\tilde{\theta}(\alpha_{k})) + \mathcal{O}\left(k^{-\beta(\expo/2-1)+1} \ln(k)\right).\]
\end{lemma}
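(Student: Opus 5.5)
Since $x^k\to\bar x$ by Theorem~\ref{Main_PKL_conv}, we have $\|x^k-\bar x\|\le\sum_{j\ge k}\|x^{j+1}-x^j\|$. The plan is therefore to bound the tail of the step lengths by a telescoping KL argument, essentially Lemma~\ref{l-conv:tech-lemma-2} restarted at a generic large index $k$ instead of $k_0$. Fix $k\ge k_0$ large enough that $x^j\in\mathbb{B}_\varepsilon[\bar x]$ for all $j\ge k$; this is possible because Theorem~\ref{Main_PKL_conv} gives convergence. Lemma~\ref{l-conv:tech-2} then holds for every $j\ge k$. Repeating the summation in the proof of Lemma~\ref{l-conv:tech-lemma-2} from $k$ to $k_1$ and letting $k_1\to\infty$ should give
\[
\sum_{j\ge k}\|x^{j+1}-x^j\|\le 2\|x^{k+1}-x^k\|+\frac{s_{k+1}}{\mathfrak{C}}+\frac1\zeta\sum_{j\ge k}\bigl(b_j+[\theta'(u_{j+1})]^{-1}\bigr).
\]

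Each term is then estimated separately.
\begin{enumerate}[label=(\alph*)]
\item The last sum is $\mathcal{O}\bigl(k^{-\beta(\expo/2-1)+1}\ln(k)\bigr)$ directly by Lemma~\ref{B-ii}.
\item For the second term, $s_{k+1}=\theta(\alpha_{k+1})$. Since the proof of Lemma~\ref{p-rates:tech-result-2} shows $\alpha_{k+1}\le\alpha_k+(\text{error})$, and $\theta$ is nondecreasing and concave with $\theta(0)=0$ (hence subadditive), we get $\theta(\alpha_{k+1})\le\theta(\alpha_k)+\theta(\text{error})$. A simpler route is to shift the telescoping start by one and write $s_k=\theta(\alpha_k)\le\tilde\theta(\alpha_k)$ directly.
\item For the first term, the sufficient decrease \eqref{eq:descent} together with the definitions \eqref{eq:ak}--\eqref{eq:uk} gives $\sigma\|x^{k+1}-x^k\|^2\le\vPhi_k(x^k)-\vPhi_k(x^{k+1})\le\alpha_k-\alpha_{k+1}$, exactly as in the last line of the chain in Lemma~\ref{p-rates:tech-result-2}. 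Since $\alpha_{k+1}\ge0$, this yields $\|x^{k+1}-x^k\|\le\sqrt{\alpha_k/\sigma}=\mathcal{O}(\sqrt{\alpha_k})$. This is the reason for the $\sqrt t$ part of $\tilde\theta$.
\end{enumerate}
Summing (a)--(c) gives $\|x^k-\bar x\|=\mathcal{O}(\tilde\theta(\alpha_k))+\mathcal{O}\bigl(k^{-\beta(\expo/2-1)+1}\ln(k)\bigr)$.

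The main obstacle is the index bookkeeping in restarting the telescoping at $k$. One must check that Lemma~\ref{l-conv:tech-2} applies for all $j\ge k$, which needs $x^j$ in the ball and $j\ge k_0$. One must also check that the resulting $s$-term is indexed at $k$ (or at $k+1$, then controlled by $\theta(\alpha_k)$ via monotonicity of $\theta$ plus the error term). If the index shift is awkward, I would start the telescoping at $k-1$, so that $s_k=\theta(\alpha_k)$ appears directly. The step term $\|x^k-x^{k-1}\|$ is then bounded by $\sqrt{\alpha_{k-1}/\sigma}$, and $\alpha_{k-1}\le\alpha_k+\sigma^{-1}\ldots$ is not directly available, so I would instead bound $\|x^{k+1}-x^k\|$ as in (c) and keep the start at $k$ with $s_{k+1}\le s_k+\theta(\text{error})$. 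The alternative is simply to absorb everything into $\mathcal{O}(\tilde\theta(\alpha_{k-1}))$, which has the same order up to constants once $\alpha_k$ is known to follow the polynomial rates of Lemma~\ref{lemma:sequences}.
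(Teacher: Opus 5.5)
Your skeleton matches the paper's proof: restart \eqref{eq:KL-tele} at a generic large index $k$, bound $2\|x^{k+1}-x^k\|$ by $\tfrac{2}{\sqrt{\sigma}}\sqrt{\alpha_k}$ via the descent inequality, and control the tail series by Lemma~\ref{B-ii}. Steps (a) and (c) are correct as written. The gap is step (b): you never show $s_{k+1}=\theta(\alpha_{k+1})=\mathcal{O}(\tilde\theta(\alpha_k))$. You posit $\alpha_{k+1}\le\alpha_k+(\text{error})$ with an unspecified error and leave $\theta(\text{error})$ unbounded. Your index-shifting fallbacks do not repair this. One produces $\sqrt{\alpha_{k-1}}$, as you notice yourself. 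The other gives a bound in terms of $\tilde\theta(\alpha_{k-1})$, which is not the statement of the lemma, even if it would later suffice for Theorem~\ref{th:rates}.

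The missing observation is already in your step (c). The chain $\sigma\|x^{k+1}-x^k\|^2\le\vPhi_k(x^k)-\vPhi_k(x^{k+1})\le\alpha_k-\alpha_{k+1}$ shows that $(\alpha_k)$ is \emph{nonincreasing} for all large $k$, with no error term. This is why $u_k$ is built into $\alpha_k=\vPhi(x^k)-\vPhi(\bar x)+u_k$. The increment $u_k-u_{k+1}=a_k$ absorbs the sampling discrepancy, so the perturbed descent \eqref{eq:descent} becomes an exact descent for $\alpha_k$; this is \eqref{eq:aux-descent}. Since $\theta'>0$, it follows that $s_{k+1}=\theta(\alpha_{k+1})\le\theta(\alpha_k)\le\tilde\theta(\alpha_k)$. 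This is exactly how the paper closes the argument, and with this one line inserted in (b) your proof coincides with the paper's.
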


\begin{proof}
    By rewriting~\eqref{01ineq_est_2}  with $k-1$ instead of $k$, we have, for all sufficiently large $k$, \begin{align} \label{eq:aux-descent}
     \sigma   \| x^{k+1} - x^k \|^2 &\leq     \vPhi(x^k) + u_k   - (\vPhi(x^{k+1})  +u_{k+1})
     = \alpha_{k} - \alpha_{k+1} \leq \alpha_{k}.
\end{align} Hence,\begin{align*}
    \|\bar{x} - x^k\| & \leq \sum_{j=k}^\infty \|x^{j+1} - x^j\| \\
    & \leq 2 \|x^{k+1}-x^k\| + \frac{1}{\mathfrak{C}}\theta(\alpha_{k+1}) + \frac{1}{\zeta}\sum_{j=k+1}^\infty(b_{j-1} + [\theta^\prime(u_j)]^{-1}) \\
    & \leq \frac{2}{\sqrt{\sigma}}\sqrt{\alpha_k}  + \frac{1}{\mathfrak{C}}\theta(\alpha_{k}) + \frac{1}{\zeta}\sum_{j=k}^\infty(b_{j} + [\theta^\prime(u_{j+1})]^{-1})\\
    & \leq \mathcal{O}(\tilde{\theta}(\alpha_{k})) + \mathcal{O}\left( k^{-\beta(\expo/2-1)+1} \ln(k)\right), 
\end{align*} where in the second line we use \eqref{eq:KL-tele},  in the third line we apply \eqref{eq:aux-descent} and the fact that $\theta$ is nondecreasing (since $\theta'>0$), and the last line follows from the definition of $\tilde{\theta}$ and Lemma~\ref{B-ii}. 
\end{proof}

\subsubsection{Proof of Theorem~\ref{th:rates}}\label{s:proof-rates}

We are now ready to deduce local convergence rates for both the sequence of function values and the iterates.

\medskip 
\begin{proofof}{th:rates}
     We first address the convergence rates for the function values. Observe that if the \KLshort~condition \eqref{PLK_cond} holds in the exponential case with $\beta \in (0,\frac{1}{2})$, then it also holds for $\beta = \frac{1}{2}$, so we concentrate on the latter case.  For $\beta = \frac{1}{2}$,   Lemma~\ref{p-rates:tech-result-2}  yields a  recursion of the type~\eqref{eq:recursion-lemma} with $q=p_1=1$ and $p_2=(\frac{\expo}{2} -1)$. Then  Lemma~\ref{lemma:sequences}(ii) implies that  $\alpha_k = \mathcal{O}\bigl(k^{-(\expo/2-1)}\ln(k)\bigr)$, from where the first claim for the function values in item (i) follows as $u_k \geq 0$. Similarly, for $\beta \in (\frac{1}{2},1)$, combine  Lemma~\ref{p-rates:tech-result-2} and  Lemma~\ref{lemma:sequences}(iii) to obtain $\alpha_k = \mathcal{O}\bigl(k^{-2\beta\min\{\frac{\expo}{2}-1,\frac{1}{2\beta-1}\}+1}\ln(k)\bigr)$, from where the first estimate in (ii) follows as well.  
    
   As for the convergence rate of the iterates, if $\beta \in (0, \frac{1}{2}]$, for all $k$ sufficiently large,  $\tilde{\theta}(\alpha_{k}) = \mathcal{O}(\sqrt{\alpha_{k}}) $ and $k^{-1/2(\expo/2-1)}\sqrt{\ln(k)} \leq k^{-1/2(\expo/2-1)+1}\ln(k)$, then 
    Lemma~\ref{p-rates:tech-result-3} implies $\|x^k-\bar{x}\| =    \mathcal{O}\bigl(k^{-1/2(\expo/2-1)+1}\ln(k)\bigr)$, showing the second rate in item (i).  If $\beta \in (\frac{1}{2},1)$, then for all $k$ large enough, $\tilde{\theta}(\alpha_{k}) = \mathcal{O}(\alpha_{k}^{1-\beta}) $, so that from Lemma~\ref{p-rates:tech-result-3}, \begin{equation} \label{eq:abstract-rate}
        \|x^k - \bar{x}\|= \mathcal{O}\bigl(k^{(1-\beta)(1-2\beta\nu)}\ln^{1-\beta}(k)\bigr) + \mathcal{O}\bigl(k^{-\beta(\expo/2-1)+1} \ln(k)\bigr),
    \end{equation}where $\nu = \min\{\frac{\expo}{2}-1,\frac{1}{2\beta-1}\}$. We separate the analysis in two cases.    
    
    \noindent Case 1: $\nu = \frac{\expo}{2}-1$ ($\frac{\expo}{2}-1 \leq \frac{1}{2\beta-1}$). In this case, \[\tilde{\nu} := (1-\beta)(-2\beta\nu+1) +\beta\left(\frac{\expo}{2}-1\right)-1 \leq 0 \implies \frac{k^{\tilde{\nu}}}{\ln^\beta(k)} \to 0, \] then for all $k$ large enough, 
    \[k^{(1-\beta)(-2\beta\nu+1)}\ln^{1-\beta}(k) \leq k^{-\beta(\expo/2-1)+1}\ln(k).\] Hence  the second term in \eqref{eq:abstract-rate} dominates for all $k$ sufficiently large, therefore \[\|x^k - \bar{x}\|=  \mathcal{O}\left(k^{-\beta(\expo/2-1)+1} \ln(k)\right).\]   Case 2: $\nu = \frac{1}{2\beta-1}$ ($\frac{\expo}{2}-1 > \frac{1}{2\beta-1}$). In this case, \[\tilde{\nu}:=(1-\beta)(-2\beta\nu+1)  +\beta\left(\frac{\expo}{2}-1\right)-1 >0 \implies \frac{k^{\tilde{\nu}}}{\ln^\beta(k)} \to +\infty,\] then for all $k$ large enough, \[k^{(1-\beta)(-2\beta\nu+1)}\ln^{1-\beta}(k) \geq k^{-\beta\left(\frac{\expo}{2}-1\right)+1} \ln(k).\] Hence  the first term in \eqref{eq:abstract-rate} dominates for all $k$ sufficiently large, consequently \[\|x^k - \bar{x}\|=  \mathcal{O}\left(k^{(1-\beta)(-2\beta\nu+1)}\ln^{1-\beta}(k)\right), \] from where we can conclude the proof.  

\end{proofof}

\begin{remark} A few comments about the convergence rates are in order.
\begin{enumerate}[label=(\roman*), ref=(\roman*)]
    \item In Theorem~\ref{th:rates}, we skip the case $\beta=0$ for the exponent, as the corresponding desingularizing function does not satisfy the condition in \eqref{desing_condi}. For this exponent, one usually obtains termination of the algorithm in finitely many steps (see, e.g. \cite[page 2]{li2018calculus}). Nonetheless, our conjecture is that such a result might not be possible to retrieve in general for our method due to the average approximation errors.

    \item The rates in Theorem~\ref{th:rates} refer to the \emph{outer iterations} of Algorithm~\ref{alg:1}, without counting how many \emph{inner iterations} the line search requires to produce a candidate point that passes the line search test. However, in the setting of Remark~\ref{r:The01}\ref{r:stabilize}, that is, when the stepsizes are nonincreasing, these rates become global.
    \item The exponents in Theorem~\ref{th:rates} reveal a sharp \emph{transition} at $\beta = \tfrac12$ and an interplay between the \KLshort~exponent $\beta$ and the sample-size growth rate $\expo$. For $\beta\in(0,\tfrac12]$, the function-value exponent $\tfrac{\expo}{2}-1$ grows linearly and unboundedly with $\expo$: a faster enrichment of the sample, which makes the approximation error vanish faster, directly translates into a faster decay of the objective gap. For $\beta\in(\tfrac12,1)$, however, the function-value exponent equals $2\beta\bigl(\tfrac{\expo}{2}-1\bigr)-1$ only while $\tfrac{2+2\beta}{\beta}<\expo\le\tfrac{4\beta}{2\beta-1}$, and \emph{saturates} at the value $\tfrac{1}{2\beta-1}$ once $\expo>\tfrac{4\beta}{2\beta-1}$. In other words, beyond a threshold sample-size growth, the stochastic rate ceases to improve and recovers, up to the logarithmic factor, the classical deterministic rate $\mathcal{O}\bigl(k^{-1/(2\beta-1)}\bigr)$ associated with the exponential \KLshort~condition; see, e.g., \cite[Theorem 2]{AttouchBolte2009}. The same phenomenon holds for the iterates, whose exponent saturates at $\tfrac{1-\beta}{2\beta-1}$, matching the deterministic iterate rate $\mathcal{O}\bigl(k^{-(1-\beta)/(2\beta-1)}\bigr)$. Hence, the deterministic \KLshort~rate is the natural barrier of the method, and it is attained as soon as the sampling error decays sufficiently fast.
    \end{enumerate}
\end{remark}

\vspace*{-0.2in}

\section{Numerical illustrations} \label{s:numerics}

This section provides numerical illustrations of the performance of PSS in two different applications: optimal quantization and  partial-label linear regression. 

\subsection{Optimal quantization}

The {optimal quantization} problem consists of approximating a (continuous) probability distribution by a discrete measure that minimizes the expected distortion. This problem has been extensively studied in multiple fields; see, for instance,~\cite{MR1764176} and the references therein. The problem can be mathematically formulated as follows. Let $\mu$ be a  probability distribution in $\R^d$. The goal of the optimal quantization problem is to find $s\in\N$ points $x_1,x_2,\ldots,x_s \in \R^{ d}$ solving the minimization problem
\[
\min_{x_1,\ldots,x_s\in\R^d} \quad \int_{\R^d}  \min_{t\in\{1,\ldots,s\}} \|x_t - \xi\|_2^2 \; d\mu(\xi).
\] The objective function of the  problem above can be posed as the expectation of the upper-$C^2$ normal integrand given by, for $X:= (x_1,x_2,\ldots,x_s) \in \R^{ d\times s}$,
\[
\varphi:\R^d \times \R^{d\times s} \to \R, \; (\xi,X)\mapsto \min_{t\in\{1,\ldots,s\}} \|x_t - \xi\|^2,
\]
see~\cite[Section~4.2]{aragonartacho2023boosted} for a deterministic, empirical-risk formulation of the problem. 

In our experiments we introduce an additional constraint/penalization on the points $X$. Namely, we are interested in the problem
\begin{equation*}
\min_{X\in\R^{d\times s}}  \quad \Intf{\varphi}(X) + \psi(X),
\end{equation*}
where $\psi$ denotes a penalization term. In the following, we describe two setups that differ in the choice of $\psi$.


\textit{Setup 1: constrained optimal quantization.} In this setting, we take $\psi$ to be the indicator function $\iota_C$ of a nonempty closed set $C$, which is lsc and prox-bounded (with prox-boundedness threshold $+\infty$). 


\textit{Setup 2: regularized optimal quantization.} In this setting, we take \[\psi(X) = \lambda_1 \sum_{j=1}^s\sum_{i<j}P(x_i-x_j),\] where $\lambda_1>0$ is the level of penalization parameter, and $P$ is a penalization term traditionally used to promote sparsity. This formulation is inspired by the models of fusion-based clustering \cite{hocking2011clusterpath}, the discrete version of the optimal quantization problem. The expected effect of $\psi$ on the solution is to induce the merge (fusion) of quantization points that are close enough to minimize the overall cost, reducing the number of effective centroids that approximate the original probability distribution.

For both of the setups above, we construct the original distribution $\mu$ as a uniform mixture of ten bivariate Gaussian distributions whose parameters are generated randomly. The centers of these distributions are  expressed in polar coordinates as $\bigl(r\cos(\theta),r\sin(\theta)\bigr)$, where the angles $\theta$ and the radii $r$ are drawn uniformly in the intervals $[0,2\pi]$ and $[10,11]$, respectively. For each distribution, the covariance matrix is given by $\sigma^2 I$, where $\sigma^2$ is sampled independently and uniformly from  $[1,3]$.  To generate samples $\xi \sim \mu$, one of the ten Gaussian components is selected with equal probability, followed by sampling from such a distribution. We aim to discretize the original distribution with $s=15$ points. Regarding the numerical implementation of Algorithm~\ref{alg:1}, we take $n_0=1000$ and define $n_k=n_{k-1} + k$, for all $k\geq 1$. Further, it is well-known  that $\varphi(\xi,\cdot)$ verifies the stronger property of being $1$-upper-$C^2$; see, e.g.,~\cite{aragonartacho2023boosted}. Hence, one can take $\gamma_k = \tilde{\gamma} \in{(0,1/2)}$. This avoids the computational burden of performing the \emph{Armijo-type line search} at every iteration of the method. In particular, we fixed $\tilde{\gamma}=0.9/2$. In our experiments, Algorithm~\ref{alg:1} stopped and returned the $k$-th iterate if
\begin{equation}\label{eq:relerror}
\frac{\|X^k-X^{k-1}\|}{\max\{1,\|X^{k-1}\|\}} < 10^{-3}.
\end{equation}
For the initialization of the algorithm, we randomly picked the initial points uniformly from the interval $[-1,1]$ for both setups.

Figure~\ref{fig:circ1} illustrates the output of Algorithm~\ref{alg:1} of experimental Setup 1. The original probability distribution $\mu$ is visualized as a gray-scale point cloud, where darker regions correspond to areas of higher probability mass. Figure~\ref{fig:circ1a} shows the optimal quantization results (orange dots) with no constraints (i.e., $C=\R^2$) for $s=15$ target points. The purple dotted lines represent the Voronoi partition associated with the computed quantization points. Observe that a few points try to coalesce toward the origin, suggesting that these points might be redundant for the quantization. Figure~\ref{fig:circ1b} displays the outcome of adding the circumference constraint $C:=\{ x\in\R^2\, : \, \|x\| = 10.5\}$ depicted in dashed blue lines, which induces no trivial quantization points. 

\begin{figure}[htbp]
    \centering
    \begin{subfigure}[b]{0.45\textwidth}
        \centering
\includegraphics[width=\textwidth]{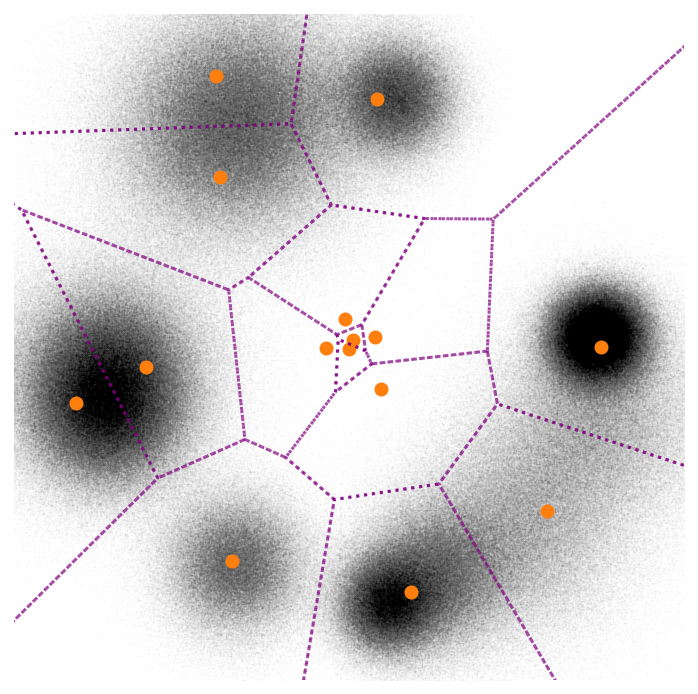} 
        \caption{$\psi = 0$ (unconstrained)}
        \label{fig:circ1a}
    \end{subfigure}
    \hfill 
    \begin{subfigure}[b]{0.45\textwidth}
        \centering
        \includegraphics[width=\textwidth]{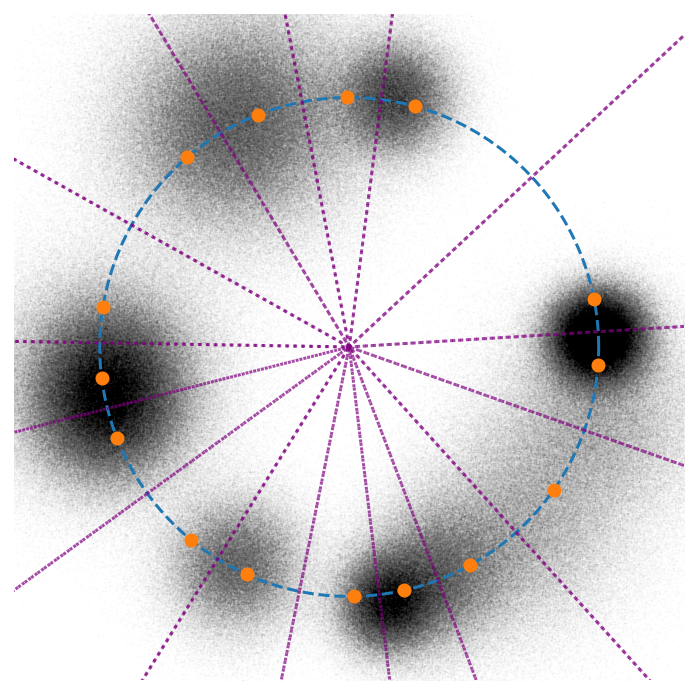} 
        \caption{$\psi = \iota_C$, constrained in the circumference (blue dashed line)}
        \label{fig:circ1b}
    \end{subfigure}
    \caption{Optimal quantization results (orange dots) of Setup 1 and corresponding Voronoi partition (purple dashed lines).}
    \label{fig:circ1}
\end{figure}

To promote the coalescence of quantization points observed in Figure~\ref{fig:circ1a}, we implement Setup 2 with the (lsc, prox-bounded) convex penalty $\ell_1$ and with the nonconvex penalty $\ell_1-\ell_2$ of \cite{esser2013method}. For the latter, we evaluate the proximal operator only with respect to the $\ell_1$-norm, since the $\ell_2$-norm is twice continuously differentiable and is therefore absorbed into the upper-$C^2$ term $\varphi$. Figures~\ref{fig:circ2} and~\ref{fig:circ3} report the results obtained with the penalization parameters $\lambda_1 = 0.01$ and $\lambda_1=0.0025$, respectively. 

For the larger parameter $\lambda_1=0.01$, Figures~\ref{fig:circ2a} and~\ref{fig:circ2b} yield essentially the same quantization (though numerically distinct): the points clustered near the origin in Figure~\ref{fig:circ1a} merge, but the strong penalization also drives the remaining points to underrepresent the data clouds. Decreasing the penalization to $\lambda_1=0.0025$ produces a qualitatively different and more desirable behavior, displayed in Figures~\ref{fig:circ3a} and~\ref{fig:circ3b}. Among all displayed outcomes, the $\ell_1-\ell_2$ penalty in Figure~\ref{fig:circ3b} gives the most faithful quantization, assigning essentially one point to each data cloud, while only the redundant points near the origin are not fully fused. This illustrates the expected advantage of the nonconvex penalty in reducing the shrinking bias of the $\ell_1$ regularization.



\begin{figure}[htbp]
    \centering
    \begin{subfigure}[b]{0.45\textwidth}
        \centering
        \includegraphics[width=\textwidth]{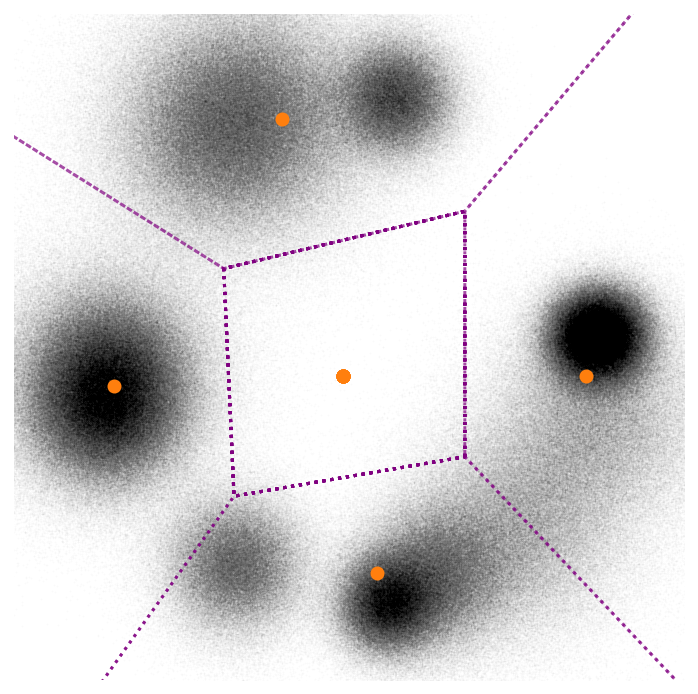} 
        \caption{$\ell_1$ penalization}
        \label{fig:circ2a}
    \end{subfigure}
    \hfill 
    \begin{subfigure}[b]{0.45\textwidth}
        \centering
        \includegraphics[width=\textwidth]{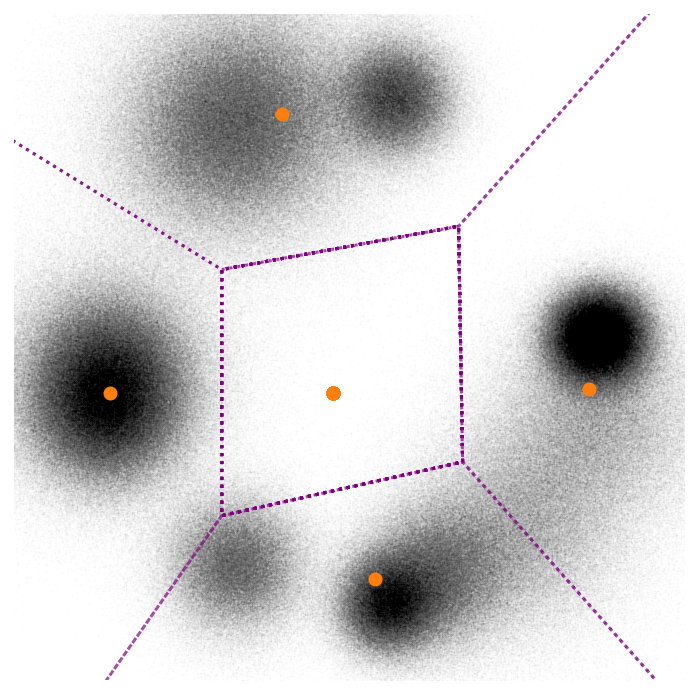} 
        \caption{$\ell_1-\ell_2$ penalization}
        \label{fig:circ2b}
    \end{subfigure}
    \caption{Optimal quantization results of Setup 2 with penalization parameter $\lambda_1 = 0.01$.}
    \label{fig:circ2}
\end{figure}

\begin{figure}[htbp]
    \centering
    \begin{subfigure}[b]{0.45\textwidth}
        \centering
        \includegraphics[width=\textwidth]{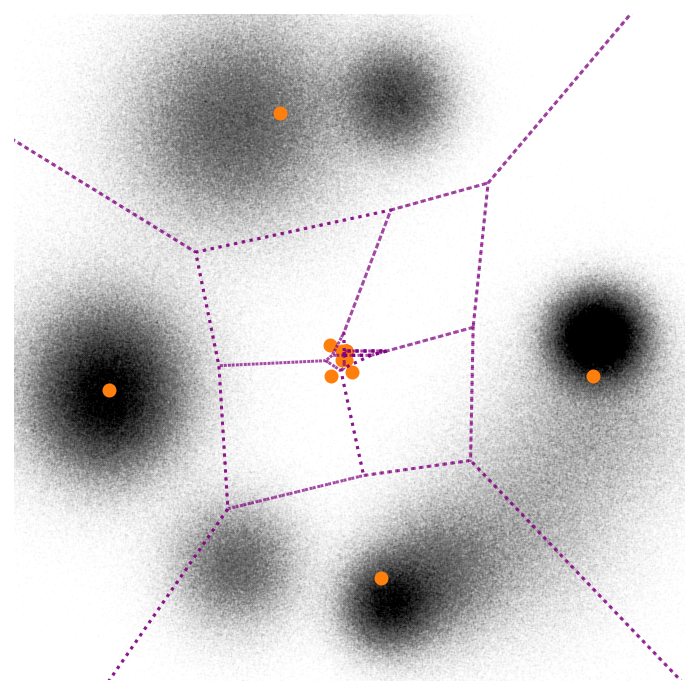} 
        \caption{$\ell_1$ penalization}
        \label{fig:circ3a}
    \end{subfigure}
    \hfill 
    \begin{subfigure}[b]{0.45\textwidth}
        \centering
        \includegraphics[width=\textwidth]{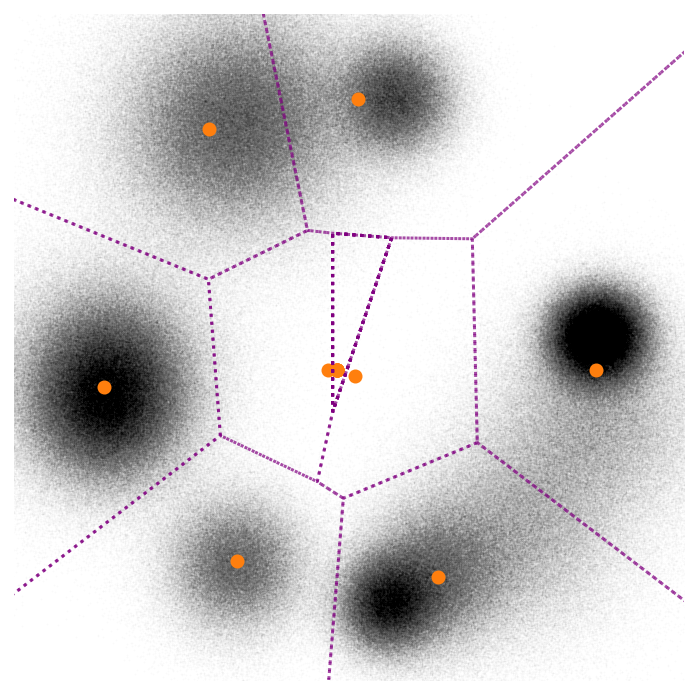} 
        \caption{$\ell_1-\ell_2$ penalization}
        \label{fig:circ3b}
    \end{subfigure}
    \caption{Optimal quantization results of Setup 2 with penalization parameter $\lambda_1 = 0.0025$.}
    \label{fig:circ3}
\end{figure}

\subsection{Partial-label linear regression}

Many real-world applications aim to learn information from regression models that present a set of multiple candidate labels. These labels may be produced from ambiguity and noise in data annotation tasks, and  only  a few of them might correspond to true labels. This scenario has driven  the field of weakly supervised learning known as \emph{partial-label learning}~\cite{JMLR:v12:cour11a}, which has found applications in face recognition systems and multimedia content analysis, among others; see, e.g.,~\cite{NIPS2010_c9e1074f,6618941,7968363}.
For regression tasks that learn with real-valued labels, a new model has recently been proposed in~\cite{Cheng_Wang_Feng_Zhang_An_2023} that minimizes the least loss incurred by candidate labels. In the context of linear regression, this model can be posed as a particular instance of~\eqref{ProblemP}, which also allows including a regularizing penalty to promote sparse solutions.

The problem of interest can be formulated as follows. An agent seeks to find a sparse vector of weights $w\in\mathbb{R}^d$ from a variable environment that, 
given a feature vector $x\in\mathbb{R}^d$, provides multiple  potential target values $y=(y_1,y_2,\ldots,y_s)\in\R^s$. Every $y_t$ represents competing regimes dictated by the same underlying features. The problem can then be tackled by the stochastic program
\begin{equation}\label{eq:pl-lregression}
\min_{w\in\mathbb{R}^d} \mathbb{E}_{(x,y)\sim\mu} \left[ \min \biggl\{ \frac{1}{2}(y_t - x^T w)^2\,:\, t=1,\ldots,s \biggr\} \right] + \psi(w),
\end{equation}
where $\mu$ is a joint probability distribution for $(x,y)$ and $\psi$ is a sparsity-inducing penalty. By using well-known characterizations of upper-$C^2$ functions (see, e.g.,~\cite[Section~3]{aragonartacho2025nonmonotonesubgradientmethodsbased}) it is not difficult to check that the above problem is in the form of~\eqref{ProblemP}.

We present here a preliminary experiment to illustrate the performance of  Algorithm~\ref{alg:1} applied to problems in the form of~\eqref{eq:pl-lregression}. Specifically, we take $s=2$, namely, only two candidate labels are given. To sample $(x,y_1,y_2)$, we first randomly construct two sparse vectors $w_1^*\in\R^{50}$ and $w_2^*\in\R^{50}$ that represent the true weights of the regime.  Then, for a feature vector $x$ sampled from a standard normal distribution, we take $y_t = x^Tw_t^* + e_t$, where the noise $e_t$ is drawn from a normal distribution centered at the origin and with standard deviation $0.05$, for $t=1,2$.  For the regularization, in our experiments we take (1) $\psi = \lambda_1 \|\cdot\|_1$ with regularization parameter $\lambda_1=0.1$, and (2) $\psi =  \mbox{MCP}$ \cite{Zhang2010MCP}. Experiments on supervised learning tasks \cite{atenas2025understanding,atenas2025shadow} indicate that the use of MCP and other nonconvex penalties can improve the quality of the solution compared to the results provided by the $\ell_1$ penalization. This corresponds to reducing the shrinking bias mentioned in the introduction. The  MCP is weakly convex \cite{bohm2021variable}, and thus (lsc) prox-bounded. Its proximity operator is  usually called the \emph{firm threshold}, its expression can be consulted, for instance, in~\cite{MR3548876}. Specifically, we  consider the  MCP regularizer and its proximity operator as given in~\cite{MR3548876} with $\tau=0.1$ and $\rho=2$.


In this setup, the implementation of our PSS method requires conducting a line search to estimate the stepsize parameter in Step~\ref{step:ls}. For it, we set the parameters $\sigma=0.1$, $\rho=0.5$, $\bar{\gamma}_0=2$  and $\bar{\gamma}_k = \max\{\gamma_{k-1},0.01\}$, for $k\geq 1$. Finally, we take $n_k=k+1$, for all $k\in\N$. We stop the algorithm after obtaining a relative error~\eqref{eq:relerror} smaller than $10^{-4}$.

Figure~\ref{fig:plrn} presents a stem plot showcasing the comparison between the true weight vectors ($w_1^*$ and $w_2^*$) and the weights obtained by PSS (red crosses) initialized at the origin for both sparse penalties. As observed, both cases approximate the  weight $w_1^*$. Figure~\ref{fig:plrn-2} shows a second representative instance where PSS initialized at a random vector converges to different weight vectors depending on the regularization strategy.

\begin{figure}[ht!]
\begin{center}
\includegraphics[width=\textwidth]{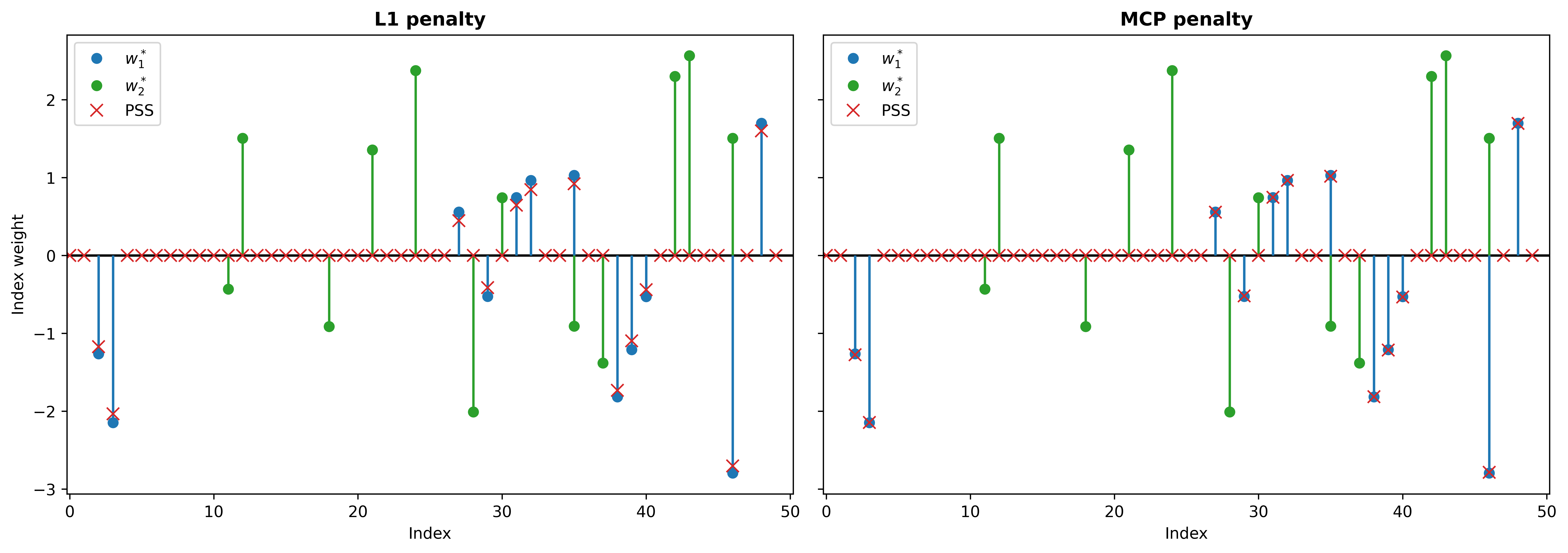}
\end{center}
\caption{
Instance of problem~\eqref{eq:pl-lregression}  where PSS approximates the same true weight vector for the different regularization penalties.
}\label{fig:plrn}
\end{figure}

\begin{figure}[ht!]
\begin{center}
\includegraphics[width=\textwidth]{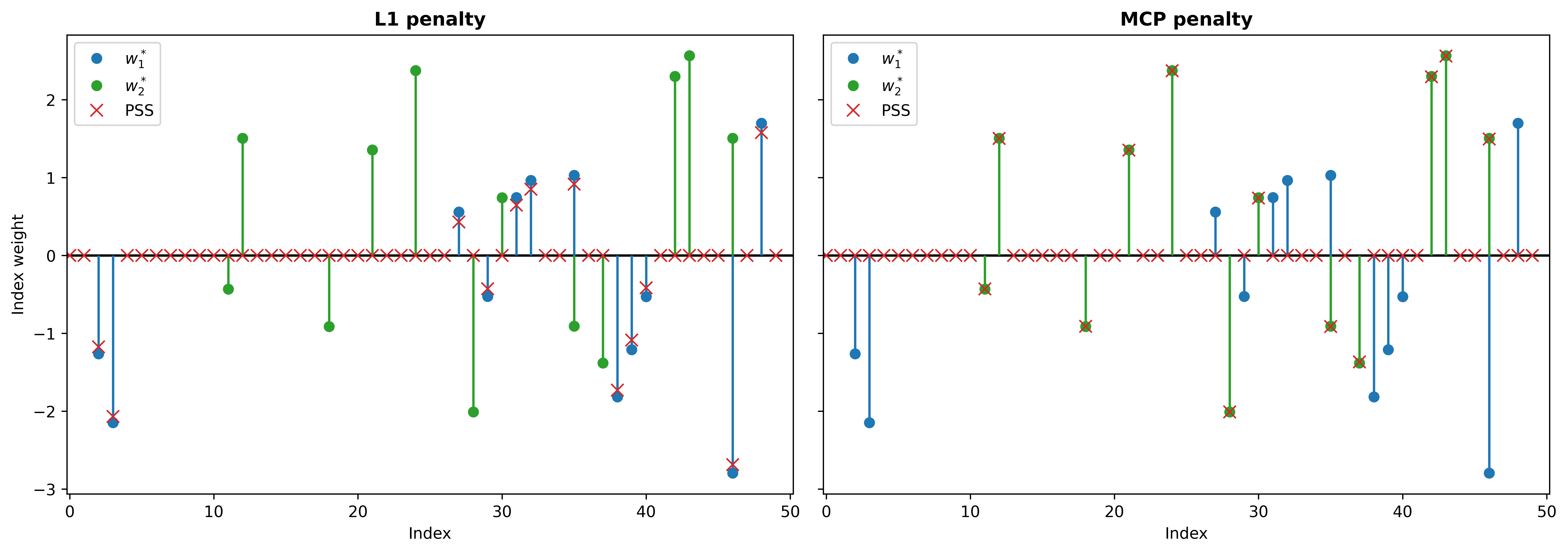}
\end{center}
\caption{Instance of problem~\eqref{eq:pl-lregression}  where PSS approximates different true weight vectors depending on the  regularization penalty.}\label{fig:plrn-2}
\end{figure}

\section{Conclusion} \label{s:conclusion}


In this work, we developed a convergence analysis for a proximal sample-based
subgradient method with an Armijo-type line search for nonsmooth and nonconvex
stochastic optimization problems. The method applies to objectives given by the
sum of an expected upper-$C^2$ normal integrand and a proper lower semicontinuous
prox-bounded function, thereby covering nonsmooth losses, constraints, and
possibly nonconvex regularization terms. The main idea of the analysis is to
interpret the sample-based sufficient decrease condition as an inexact descent
mechanism for the true objective, where the perturbation terms are induced by
the empirical approximation and vanish asymptotically. This viewpoint is inspired
by the deterministic convergence framework, but requires additional probabilistic estimates
to handle the nonmonotonicity produced by stochastic errors.

We established almost sure convergence of both the sample-based and true
objective values, square summability of the increments, and stationarity of every
accumulation point of bounded trajectories. A distinctive feature of the result
is that, for subsequential convergence, no prescribed growth rate is imposed on
the sample-size sequence: it is enough to assume that the sequence is
nondecreasing and unbounded. This flexibility is obtained through a localization
argument based on stopping times, which allows the analysis to be carried out
along bounded sample paths. In particular, the convergence result does not
require an a priori almost sure boundedness assumption on the whole stochastic
process.

Under the Kurdyka--{\L}ojasiewicz condition, we further strengthened the
subsequential convergence result to convergence of the whole trajectory to a
single stationary point, provided that the sample errors satisfy suitable
summability requirements. Moreover, for desingularizing functions of the form
$\theta(t)=Mt^{1-\beta}$ and polynomially increasing sample sizes, we derived
explicit local convergence rates for both the objective values and the iterates.
The proof combines uniform convergence estimates for empirical averages,
inexact descent inequalities, relative-error bounds adapted to sample-based
models, and recursion estimates tailored to stochastic perturbations. These
results show that the \KLshort~methodology, which is classical in deterministic
nonconvex optimization, can also be effectively used in stochastic nonmonotone
settings when the approximation errors are sufficiently controlled.

The numerical experiments on optimal quantization and partial-label learning
illustrate the applicability of the proposed method to relevant nonsmooth
stochastic models. They also suggest that the line search mechanism can provide
a practical way to balance descent and statistical accuracy without enforcing a
rigid sample-size schedule.

Several questions remain open for future research. On the theoretical side, it
would be interesting to refine the stopping-time localization technique in order
to weaken standard assumptions commonly imposed in stochastic optimization, such
as uniform bounded variance or global uniform smoothness. Other natural directions  to investigate are alternative sample-size regimes, adaptive sampling
strategies, and variance-reduction mechanisms within the present nonsmooth nonconvex function setting and the extension of the variational inequality framework developed in previous works by Iusem, Jofre, Oliveira and Thompson ~\cite{iusem2017extragradient, iusem2019variance, iusem2019incremental}{. In addition, Remark~\ref{r:Th4.2}\ref{r:Th4.2-ii} suggests the possibility
of developing an abstract convergence theory for explicit and implicit stochastic
methods satisfying descent-type conditions with controlled errors, in the spirit
of deterministic \KLshort-based frameworks. On the numerical side, further experiments
are needed to assess the practical performance of the method on large-scale
learning problems and to compare different strategies for choosing the sample
sizes and the initial trial stepsizes in the line search procedure.

\section*{Statements and Declarations}

\paragraph{Competing interests} 
The authors declare that they have no competing interests.

 \bibliographystyle{acm}
 \bibliography{references} 

@article{absil2005convergence,
  title={Convergence of the iterates of descent methods for analytic cost functions},
  author={Absil, Pierre-Antoine and Mahony, Robert and Andrews, Ben},
  journal={SIAM Journal on Optimization},
  volume={16},
  number={2},
  pages={531--547},
  year={2005},
  publisher={SIAM}
}

@article{nguyen2025stochastic,
  title={Stochastic {ISTA/FISTA} adaptive step search algorithms for convex composite optimization},
  author={Nguyen, Lam M and Scheinberg, Katya and Tran, Trang H},
  journal={J. Optim. Theory Appl.},
  volume={205},
  number={1},
  pages={10},
  year={2025},
  publisher={Springer}
}

@article{paquette2020stochastic,
  title={A stochastic line search method with expected complexity analysis},
  author={Paquette, Courtney and Scheinberg, Katya},
  journal={SIAM J.  Optim.},
  volume={30},
  number={1},
  pages={349--376},
  year={2020},
  publisher={SIAM}
}

@article{esser2013method,
  title={A method for finding structured sparse solutions to nonnegative least squares problems with applications},
  author={Esser, Ernie and Lou, Yifei and Xin, Jack},
  journal={SIAM J.  Imaging Sci.},
  volume={6},
  number={4},
  pages={2010--2046},
  year={2013},
  publisher={SIAM}
}

@inproceedings{hocking2011clusterpath,
  title={Clusterpath an algorithm for clustering using convex fusion penalties},
  author={Hocking, Toby Dylan and Joulin, Armand and Bach, Francis and Vert, Jean-Philippe},
  booktitle={28th ICML},
  pages={1},
  year={2011}
}

@article{pougkakiotis2023zeroth,
  title={A zeroth-order proximal stochastic gradient method for weakly convex stochastic optimization},
  author={Pougkakiotis, Spyridon and Kalogerias, Dionysis},
  journal={SIAM J.  Sci. Comput.},
  volume={45},
  number={5},
  pages={A2679--A2702},
  year={2023},
  publisher={SIAM}
}

@article{zhang2022stochastic,
  title={Stochastic variance-reduced prox-linear algorithms for nonconvex composite optimization},
  author={Zhang, Junyu and Xiao, Lin},
  journal={Math. Program.},
  volume={195},
  number={1},
  pages={649--691},
  year={2022},
  publisher={Springer}
}

@article{jia2025first,
  title={First-order methods for nonsmooth nonconvex functional constrained optimization with or without slater points},
  author={Jia, Zhichao and Grimmer, Benjamin},
  journal={SIAM J.  Optim.},
  volume={35},
  number={2},
  pages={1300--1329},
  year={2025},
  publisher={SIAM}
}

@article{fatkhullin2025stochastic,
  title={Stochastic optimization under hidden convexity},
  author={Fatkhullin, Ilyas and He, Niao and Hu, Yifan},
  journal={SIAM J.  Optim.},
  volume={35},
  number={4},
  pages={2544--2571},
  year={2025},
  publisher={SIAM}
}

@article{atenas2025shadow,
  title={Shadow splitting methods for nonconvex optimisation: epi-approximation, convergence and saddle point avoidance},
  author={Atenas, Felipe},
  year={2025}, 
  url={https://arxiv.org/abs/2512.20433}, 
      note={Preprint, available at: \href{https://arxiv.org/abs/2512.20433}{arXiv:2512.20433}}
}

@article{atenas2025understanding,
  title={Understanding the {D}ouglas--{R}achford splitting method through the lenses of {M}oreau-type envelopes},
  author={Atenas, Felipe},
  journal={Comput. Optim.  Appl.},
  volume={90},
  number={3},
  pages={881--910},
  year={2025},
  publisher={Springer}
}

@article{bohm2021variable,
  title={Variable Smoothing for Weakly Convex Composite Functions},
  author={B{\"o}hm, Axel and Wright, Stephen J},
  journal={J. Optim. Theory Appl.},
  volume={188},
  number={3},
  pages={628--649},
  year={2021},
  publisher={Springer}
}

@article {MR3902455,
	AUTHOR = {Davis, Damek and Drusvyatskiy, Dmitriy},
	TITLE = {Stochastic model-based minimization of weakly convex
	functions},
	JOURNAL = {SIAM J. Optim.},
	FJOURNAL = {SIAM Journal on Optimization},
	VOLUME = {29},
	YEAR = {2019},
	NUMBER = {1},
	PAGES = {207--239},
	ISSN = {1052-6234,1095-7189},
	MRCLASS = {90C25 (65K05 65K10 90C15)},
	MRNUMBER = {3902455},
	MRREVIEWER = {Wim\ van Ackooij},
	DOI = {10.1137/18M1178244},
	URL = {https://doi.org/10.1137/18M1178244},
}

@article {MR3982682,
	AUTHOR = {Davis, Damek and Grimmer, Benjamin},
	TITLE = {Proximally guided stochastic subgradient method for nonsmooth,
	nonconvex problems},
	JOURNAL = {SIAM J. Optim.},
	FJOURNAL = {SIAM Journal on Optimization},
	VOLUME = {29},
	YEAR = {2019},
	NUMBER = {3},
	PAGES = {1908--1930},
	ISSN = {1052-6234,1095-7189},
	MRCLASS = {90C15 (65K05 65K10 90C26 90C30)},
	MRNUMBER = {3982682},
	MRREVIEWER = {Shouqiang\ Du},
	DOI = {10.1137/17M1151031},
	URL = {https://doi.org/10.1137/17M1151031},
}

@article {MR3439803,
	AUTHOR = {Ghadimi, Saeed and Lan, Guanghui and Zhang, Hongchao},
	TITLE = {Mini-batch stochastic approximation methods for nonconvex
	stochastic composite optimization},
	JOURNAL = {Math. Program.},
	FJOURNAL = {Mathematical Programming},
	VOLUME = {155},
	YEAR = {2016},
	NUMBER = {1-2},
	PAGES = {267--305},
	ISSN = {0025-5610,1436-4646},
	MRCLASS = {90C25 (90C06 90C15 90C22)},
	MRNUMBER = {3439803},
	DOI = {10.1007/s10107-014-0846-1},
	URL = {https://doi.org/10.1007/s10107-014-0846-1},
}

@article {MR3459195,
	AUTHOR = {Ghadimi, Saeed and Lan, Guanghui},
	TITLE = {Accelerated gradient methods for nonconvex nonlinear and
	stochastic programming},
	JOURNAL = {Math. Program.},
	FJOURNAL = {Mathematical Programming},
	VOLUME = {156},
	YEAR = {2016},
	NUMBER = {1-2},
	PAGES = {59--99},
	ISSN = {0025-5610,1436-4646},
	MRCLASS = {62L20 (68Q25 90C15 90C25)},
	MRNUMBER = {3459195},
	DOI = {10.1007/s10107-015-0871-8},
	URL = {https://doi.org/10.1007/s10107-015-0871-8},
}

@article{kanzow2026nonmonotonedescentmethodoptimization,
      title={A Nonmonotone Descent Method for Optimization Problems Defined by Upper-$\mathcal{C}^2 $ Functions over Submanifolds}, 
      author={Christian Kanzow and Leo Lehmann},
      year={2026},
      eprint={2605.26909},
      archivePrefix={arXiv},
      primaryClass={math.OC},
      url={https://arxiv.org/abs/2605.26909}, 
      note={Preprint, available at: \href{https://arxiv.org/abs/2605.26909}{arXiv:2605.26909}}
}

@article {MR4486508,
	AUTHOR = {Lei, Jinlong and Shanbhag, Uday V.},
	TITLE = {Asynchronous variance-reduced block schemes for composite
	non-convex stochastic optimization: block-specific steplengths
	and adapted batch-sizes},
	JOURNAL = {Optim. Methods Softw.},
	FJOURNAL = {Optimization Methods \& Software},
	VOLUME = {37},
	YEAR = {2022},
	NUMBER = {1},
	PAGES = {264--294},
	ISSN = {1055-6788,1029-4937},
	MRCLASS = {90C15 (65K10 90C26)},
	MRNUMBER = {4486508},
	DOI = {10.1080/10556788.2020.1746963},
	URL = {https://doi.org/10.1080/10556788.2020.1746963},
}

@article{iusem2017extragradient,
  title={Extragradient method with variance reduction for stochastic variational inequalities},
  author={Iusem, Alfredo N and Jofr{\'e}, Alejandro and Oliveira, Roberto I and Thompson, Philip},
  journal={SIAM Journal on Optimization},
  volume={27},
  number={2},
  pages={686--724},
  year={2017},
  publisher={SIAM}
}

@article{iusem2019variance,
  title={Variance-based extragradient methods with line search for stochastic variational inequalities},
  author={Iusem, Alfredo N and Jofr{\'e}, Alejandro and Oliveira, Roberto I and Thompson, Philip},
  journal={SIAM Journal on Optimization},
  volume={29},
  number={1},
  pages={175--206},
  year={2019},
  publisher={SIAM},
  doi={10.1137/17M1144799}
}

@article{iusem2019incremental,
  title={Incremental constraint projection methods for monotone stochastic variational inequalities},
  author={Iusem, Alfredo N and Jofré, Alejandro and Thompson, Philip},
  journal={Mathematics of Operations Research},
  volume={44},
  number={1},
  pages={236--263},
  year={2019},
  publisher={INFORMS}
}

@article {MR3935081,
	AUTHOR = {Jofr\'e, Alejandro and Thompson, Philip},
	TITLE = {On variance reduction for stochastic smooth convex
	optimization with multiplicative noise},
	JOURNAL = {Math. Program.},
	FJOURNAL = {Mathematical Programming},
	VOLUME = {174},
	YEAR = {2019},
	NUMBER = {1-2},
	PAGES = {253--292},
	ISSN = {0025-5610,1436-4646},
	MRCLASS = {90C25 (62L20 65K05 68Q25 90C15)},
	MRNUMBER = {3935081},
	MRREVIEWER = {Wim\ van Ackooij},
	DOI = {10.1007/s10107-018-1297-x},
	URL = {https://doi.org/10.1007/s10107-018-1297-x},
}

@article {MR4902793,
    AUTHOR = {Lei, Jinlong and Shanbhag, Uday V.},
     TITLE = {Variance-reduced accelerated first-order methods: central
              limit theorems and confidence statements},
   JOURNAL = {Math. Oper. Res.},
  FJOURNAL = {Mathematics of Operations Research},
    VOLUME = {50},
      YEAR = {2025},
    NUMBER = {2},
     PAGES = {1364--1397},
      ISSN = {0364-765X,1526-5471},
   MRCLASS = {65K15 (60F05 90C15 90C25 90C33)},
  MRNUMBER = {4902793},
       DOI = {10.1287/moor.2021.0068},
       URL = {https://doi.org/10.1287/moor.2021.0068},
}

@article{geiersbach2021stochastic,
  title={Stochastic proximal gradient methods for nonconvex problems in {H}ilbert spaces},
  author={Geiersbach, Caroline and Scarinci, Teresa},
  journal={Comput. Optim. Appl.},
  volume={78},
  number={3},
  pages={705--740},
  year={2021},
  publisher={Springer}
}

@book {MR4703976,
    AUTHOR = {Baldi, Paolo},
     TITLE = {Probability---an introduction through theory and exercises},
    SERIES = {Universitext},
 PUBLISHER = {Springer, Cham},
      YEAR = {[2023] \copyright 2023},
     PAGES = {ix+389},
      ISBN = {978-3-031-38491-2; 978-3-031-38492-9},
   MRCLASS = {60-01},
  MRNUMBER = {4703976},
       DOI = {10.1007/978-3-031-38492-9},
       URL = {https://doi.org/10.1007/978-3-031-38492-9},
}

@article{atenas2023unified,
  title={A unified analysis of descent sequences in weakly convex optimization, including convergence rates for bundle methods},
  author={Atenas, Felipe and Sagastiz{\'a}bal, Claudia and Silva, Paulo JS and Solodov, Mikhail},
  journal={SIAM J. Optim.},
  volume={33},
  number={1},
  pages={89--115},
  year={2023},
  publisher={SIAM}
}

@article {MR1363357,
	AUTHOR = {Artstein, Zvi and Wets, Roger J.-B.},
	TITLE = {Consistency of minimizers and the {SLLN} for stochastic
	programs},
	JOURNAL = {J. Convex Anal.},
	FJOURNAL = {Journal of Convex Analysis},
	VOLUME = {2},
	YEAR = {1995},
	NUMBER = {1-2},
	PAGES = {1--17},
	ISSN = {0944-6532,2363-6394},
	MRCLASS = {90C15 (49J45)},
	MRNUMBER = {1363357},
	MRREVIEWER = {C.\ C. Y. Dorea},
}

@book {MR4362585,
	AUTHOR = {Shapiro, Alexander and Dentcheva, Darinka and Ruszczy\'nski,
	Andrzej},
	TITLE = {Lectures on stochastic programming---modeling and theory},
	SERIES = {MOS-SIAM Series on Optimization},
	VOLUME = {28},
	EDITION = {Third},
	PUBLISHER = {Society for Industrial and Applied Mathematics (SIAM),
	Philadelphia, PA; Mathematical Optimization Society,
	Philadelphia, PA},
	YEAR = {[2021] \copyright 2021},
	PAGES = {xv+525},
	ISBN = {978-1-611976-58-8},
	MRCLASS = {90-01 (90C15)},
	MRNUMBER = {4362585},
	DOI = {10.1137/1.9781611976595},
	URL = {https://doi.org/10.1137/1.9781611976595},
}

@book {MR1058436,
	AUTHOR = {Clarke, F. H.},
	TITLE = {Optimization and nonsmooth analysis},
	SERIES = {Classics in Applied Mathematics},
	VOLUME = {5},
	EDITION = {Second},
	PUBLISHER = {SIAM,
	Philadelphia, PA},
	YEAR = {1990},
	PAGES = {xii+308},
	ISBN = {0-89871-256-4},
	MRCLASS = {49-02 (01A75 49J52 58C20 90C48)},
	MRNUMBER = {1058436},
	DOI = {10.1137/1.9781611971309},
	URL = {https://doi.org/10.1137/1.9781611971309},
}

@book {MR1955649,
	AUTHOR = {Facchinei, F. and Pang, J.-S.},
	TITLE = {Finite-Dimensional Variational Inequalities and
	Complementarity Problems, {V}olume {II}},
	PUBLISHER = {Springer-Verlag, New York},
	YEAR = {2003},
	PAGES = {i-xxxiv, 625--1234 and II1--II57},
	ISBN = {0-387-95581-X},
	MRCLASS = {90-02 (15A39 49J40 65K10 90C33)},
	MRNUMBER = {1955649},
	MRREVIEWER = {Daniel Ralph},
}

@article {aragonartacho2023boosted,
    AUTHOR = {Arag\'{o}n-Artacho, Francisco J. and P\'{e}rez-Aros, Pedro and
              Torregrosa-Bel\'{e}n, David},
     TITLE = {The {B}oosted {D}ouble-proximal {S}ubgradient {A}lgorithm for
              nonconvex optimization},
   JOURNAL = {Math. Program.},
  FJOURNAL = {Mathematical Programming},
    VOLUME = {214},
      YEAR = {2025},
    NUMBER = {1-2, Ser. A},
     PAGES = {491--537},
      ISSN = {0025-5610,1436-4646},
   MRCLASS = {49J53 (65K05 90C26 90C30 90C46)},
  MRNUMBER = {4996113},
       DOI = {10.1007/s10107-024-02190-0},
       URL = {https://doi.org/10.1007/s10107-024-02190-0},
}

@book {MR3289054,
	AUTHOR = {Izmailov, Alexey F. and Solodov, Mikhail V.},
	TITLE = {Newton-Type Methods for Optimization and Variational Problems},
	PUBLISHER = {Springer, Cham},
	YEAR = {2014},
	PAGES = {xx+573},
	ISBN = {978-3-319-04246-6; 978-3-319-04247-3},
	MRCLASS = {49-02 (49M15 65-02 90-02 90C53 90C55)},
	MRNUMBER = {3289054},
	MRREVIEWER = {Christian Kanzow},
	DOI = {10.1007/978-3-319-04247-3},
	URL = {https://doi.org/10.1007/978-3-319-04247-3},
}

@book {MR1491362,
	AUTHOR = {Rockafellar, R. Tyrrell and Wets, Roger J.-B.},
	TITLE = {Variational analysis},
	SERIES = {Grundlehren der mathematischen Wissenschaften [Fundamental
	Principles of Mathematical Sciences]},
	VOLUME = {317},
	PUBLISHER = {Springer-Verlag, Berlin},
	YEAR = {1998},
	PAGES = {xiv+733},
	ISBN = {3-540-62772-3},
	MRCLASS = {49-02 (46N10 47N10 49J52 49K40 90C30)},
	MRNUMBER = {1491362},
	MRREVIEWER = {Francis\ H.\ Clarke},
	DOI = {10.1007/978-3-642-02431-3},
	URL = {https://doi.org/10.1007/978-3-642-02431-3},
}

@article{vanAckooij_Oliveira2025methods,
	title={Methods of Nonsmooth Optimization in Stochastic Programming},
	author={van Ackooij, Wim Stefanus and de Oliveira, Welington Luis},
	journal={International Series in Operations Research and Management Science},
	year={2025},
	publisher={Springer}
}

@book{bach2024learning,
  title={Learning theory from first principles},
  author={Bach, Francis},
  year={2024},
  publisher={MIT press}
}

@book{MR4436019,
	AUTHOR = {Royset, Johannes O. and Wets, Roger J.-B.},
	TITLE = {An optimization primer},
	SERIES = {Springer Series in Operations Research and Financial
	Engineering},
	PUBLISHER = {Springer, Cham},
	YEAR = {[2021] \copyright 2021},
	PAGES = {xviii+676},
	ISBN = {978-3-030-76274-2; 978-3-030-76275-9},
	MRCLASS = {90-01 (90Cxx)},
	MRNUMBER = {4436019},
	MRREVIEWER = {Giorgio\ Giorgi},
	DOI = {10.1007/978-3-030-76275-9},
	URL = {https://doi.org/10.1007/978-3-030-76275-9},
}

@article {MR4726003,
	AUTHOR = {Kr\"atschmer, V.},
	TITLE = {Nonasymptotic upper estimates for errors of the sample average
	approximation method to solve risk-averse stochastic programs},
	JOURNAL = {SIAM J. Optim.},
	FJOURNAL = {SIAM Journal on Optimization},
	VOLUME = {34},
	YEAR = {2024},
	NUMBER = {2},
	PAGES = {1264--1294},
	ISSN = {1052-6234,1095-7189},
	MRCLASS = {90C15 (60B12 60E15 91B05)},
	MRNUMBER = {4726003},
	DOI = {10.1137/22M1535425},
	URL = {https://doi.org/10.1137/22M1535425},
}

@article {MR1363364,
	AUTHOR = {Clarke, F. H. and Stern, R. J. and Wolenski, P. R.},
	TITLE = {Proximal smoothness and the lower-{$C^2$} property},
	JOURNAL = {J. Convex Anal.},
	FJOURNAL = {Journal of Convex Analysis},
	VOLUME = {2},
	YEAR = {1995},
	NUMBER = {1-2},
	PAGES = {117--144},
	ISSN = {0944-6532},
	MRCLASS = {49J52 (26B05 90C26)},
	MRNUMBER = {1363364},
	MRREVIEWER = {Philip D. Loewen},
}

@inproceedings {MR343355,
	AUTHOR = {Robbins, H. and Siegmund, D.},
	TITLE = {A convergence theorem for non negative almost supermartingales
	and some applications},
	BOOKTITLE = {Optimizing methods in statistics ({P}roc. {S}ympos., {O}hio
	{S}tate {U}niv., {C}olumbus, {O}hio, 1971)},
	PAGES = {233--257},
	PUBLISHER = {Academic Press, New York-London},
	YEAR = {1971},
	MRCLASS = {60G45 (62L20)},
	MRNUMBER = {343355},
	MRREVIEWER = {P. R\'{e}v\'{e}sz},
}

@book {bogachev,
	AUTHOR = {Bogachev, V. I.},
	TITLE = {Measure theory. {V}ol. {I}, {II}},
	PUBLISHER = {Springer-Verlag, Berlin},
	YEAR = {2007},
	PAGES = {Vol. I: xviii+500 pp., Vol. II: xiv+575},
	ISBN = {978-3-540-34513-8; 3-540-34513-2},
	MRCLASS = {28-02 (28Axx 28Cxx 46G12 60G42 60G44)},
	MRNUMBER = {2267655},
	MRREVIEWER = {Ren\'{e}\ L.\ Schilling},
	DOI = {10.1007/978-3-540-34514-5},
	URL = {https://doi.org/10.1007/978-3-540-34514-5},
}

@article{aragonartacho2025nonmonotonesubgradientmethodsbased,
	title={Nonmonotone subgradient methods based on a local descent lemma}, 
	author={Francisco J. Aragón-Artacho and Rubén Campoy and Pedro Pérez-Aros and David Torregrosa-Belén},
	year={2025},
	eprint={2510.19341},
	archivePrefix={arXiv},
	primaryClass={math.OC},
	note={Preprint, available at: \href{https://arxiv.org/abs/2510.19341}{arXiv:2510.19341}}, 
}

@article{perezaros2025randomizedblockproximalmethod,
      title={Randomized block proximal method with locally Lipschitz continuous gradient}, 
      author={Pedro Pérez-Aros and David Torregrosa-Belén},
      year={2025},
      eprint={2504.11410},
      archivePrefix={arXiv},
      primaryClass={math.OC},
      note={Preprint, available at: \href{https://arxiv.org/abs/2504.11410}{arXiv:2504.11410}}, 
}

@article{khanh2024fundamental,
	title={Fundamental convergence analysis of sharpness-aware minimization},
	author={Khanh, Pham and Luong, Hoang-Chau and Mordukhovich, Boris and Tran, Dat},
	journal={NeurIPS},
	volume={37},
	pages={13149--13182},
	year={2024}
}

@article {MR4605214,
	AUTHOR = {Li, Xiao and Milzarek, Andre and Qiu, Junwen},
	TITLE = {Convergence of random reshuffling under the {K}urdyka--{{\L}}ojasiewicz inequality},
	JOURNAL = {SIAM J. Optim.},
	FJOURNAL = {SIAM Journal on Optimization},
	VOLUME = {33},
	YEAR = {2023},
	NUMBER = {2},
	PAGES = {1092--1120},
	ISSN = {1052-6234,1095-7189},
	MRCLASS = {90C26 (90C06 90C15)},
	MRNUMBER = {4605214},
	MRREVIEWER = {Shouqiang\ Du},
	DOI = {10.1137/21M1468048},
	URL = {https://doi.org/10.1137/21M1468048},
}

@book {MR1764176,
    AUTHOR = {Graf, Siegfried and Luschgy, Harald},
     TITLE = {Foundations of quantization for probability distributions},
    SERIES = {Lecture Notes in Mathematics},
    VOLUME = {1730},
 PUBLISHER = {Springer-Verlag, Berlin},
      YEAR = {2000},
     PAGES = {x+230},
      ISBN = {3-540-67394-6},
   MRCLASS = {60E99 (60F25 62H05 94A12)},
  MRNUMBER = {1764176},
MRREVIEWER = {Kalev\ P\"{a}rna},
       DOI = {10.1007/BFb0103945},
       URL = {https://doi.org/10.1007/BFb0103945},
}

@book {MR982264,
    AUTHOR = {Dudley, Richard M.},
     TITLE = {Real analysis and probability},
    SERIES = {The Wadsworth \& Brooks/Cole Mathematics Series},
 PUBLISHER = {Wadsworth \& Brooks/Cole Advanced Books \& Software, Pacific
              Grove, CA},
      YEAR = {1989},
     PAGES = {xii+436},
      ISBN = {0-534-10050-3},
   MRCLASS = {60-01 (00A05 28-01 46-01)},
  MRNUMBER = {982264},
MRREVIEWER = {Evarist\ Gin\'{e}},
}

@article{frankel2015splitting,
  title={Splitting methods with variable metric for {K}urdyka--{{\L}}ojasiewicz functions and general convergence rates},
  author={Frankel, Pierre and Garrigos, Guillaume and Peypouquet, Juan},
  journal={J. Optim. Theory Appl.},
  volume={165},
  number={3},
  pages={874--900},
  year={2015},
  publisher={Springer}
}

@book {PolyakBook,
    AUTHOR = {Polyak, Boris T.},
     TITLE = {Introduction to optimization},
    SERIES = {Translations Series in Mathematics and Engineering},
      NOTE = {Translated from the Russian,
              With a foreword by Dimitri P. Bertsekas},
 PUBLISHER = {Optimization Software, Inc., Publications Division, New York},
      YEAR = {1987},
     PAGES = {xxvii+438},
      ISBN = {0-911575-14-6},
   MRCLASS = {49-01 (65Kxx 90Cxx)},
  MRNUMBER = {1099605},
}

@article {MR1644089,
    AUTHOR = {Kurdyka, Krzysztof},
     TITLE = {On gradients of functions definable in o-minimal structures},
   JOURNAL = {Ann. Inst. Fourier (Grenoble)},
  FJOURNAL = {Universit\'{e} de Grenoble. Annales de l'Institut Fourier},
    VOLUME = {48},
      YEAR = {1998},
    NUMBER = {3},
     PAGES = {769--783},
      ISSN = {0373-0956,1777-5310},
   MRCLASS = {03C65 (14P15 26D10 26E05)},
  MRNUMBER = {1644089},
MRREVIEWER = {A.\ J.\ Wilkie},
       URL = {http://www.numdam.org/item?id=AIF_1998__48_3_769_0},
}

@book{lojasiewicz1965EnsemblesS,
  title={Ensembles semi-analytiques},
  author={Stanisław {\L}ojasiewicz},
  year={1965},
  url={https://api.semanticscholar.org/CorpusID:118074635},
  publisher = {Institut des Hautes Etudes Scientifiques, Bures-sur-Yvette (Seine-et-Oise), France},
}

@article{POLYAK1963864,
title = {Gradient methods for the minimisation of functionals},
journal = { USSR Comput. Math. \& Math. Phys.},
volume = {3},
number = {4},
pages = {864-878},
year = {1963},
issn = {0041-5553},
doi = {https://doi.org/10.1016/0041-5553(63)90382-3},
url = {https://www.sciencedirect.com/science/article/pii/0041555363903823},
author = {B.T. Polyak}
}

@article{bento2025convergence,
  title={Convergence of descent optimization algorithms under {P}olyak-{{\L}}ojasiewicz-{K}urdyka conditions},
  author={Bento, Glaydston and Mordukhovich, Boris and Mota, Tiago and Nesterov, Yurii},
  journal={J.  Optim. Theory  Appl.},
  volume={207},
  number={3},
  pages={41},
  year={2025},
  publisher={Springer}
}

@article{bolte2007lojasiewicz,
  title={The {{\L}}ojasiewicz inequality for nonsmooth subanalytic functions with applications to subgradient dynamical systems},
  author={Bolte, J{\'e}r{\^o}me and Daniilidis, Aris and Lewis, Adrian},
  journal={SIAM J. Optim.},
  volume={17},
  number={4},
  pages={1205--1223},
  year={2007},
  publisher={SIAM}
}

@article{bolte2006nonsmooth,
  title={A nonsmooth {M}orse--{S}ard theorem for subanalytic functions},
  author={Bolte, J{\'e}r{\^o}me and Daniilidis, Aris and Lewis, Adrian},
  journal={J. Math. Anal. Appl.},
  volume={321},
  number={2},
  pages={729--740},
  year={2006},
  publisher={Elsevier}
}

@article{bolte2007clarke,
  title={Clarke subgradients of stratifiable functions},
  author={Bolte, J{\'e}r{\^o}me and Daniilidis, Aris and Lewis, Adrian and Shiota, Masahiro},
  journal={SIAM J.  Optim.},
  volume={18},
  number={2},
  pages={556--572},
  year={2007},
  publisher={SIAM}
}

@article{li2018calculus,
  title={Calculus of the exponent of {K}urdyka--{{\L}}ojasiewicz inequality and its applications to linear convergence of first-order methods},
  author={Li, Guoyin and Pong, Ting Kei},
  journal={Found. Comput. Math.},
  volume={18},
  number={5},
  pages={1199--1232},
  year={2018},
  publisher={Springer}
}

@article{aragon2018accelerating,
  title={Accelerating the {DC} algorithm for smooth functions},
  author={Arag{\'o}n Artacho, Francisco J and Fleming, Ronan MT and Vuong, Phan T},
  journal={Math. Program.},
  volume={169},
  number={1},
  pages={95--118},
  year={2018},
  publisher={Springer}
}

@article{RobbinsMonro1951,
	author  = {Robbins, Herbert and Monro, Sutton},
	title   = {A Stochastic Approximation Method},
	journal = {Ann. Math. Statist.},
	volume  = {22},
	number  = {3},
	pages   = {400--407},
	year    = {1951},
	doi     = {10.1214/aoms/1177729586}
}

@article{KieferWolfowitz1952,
	author  = {Kiefer, Jack and Wolfowitz, Jacob},
	title   = {Stochastic Estimation of the Maximum of a Regression Function},
	journal = {Ann. Math. Statist.},
	volume  = {23},
	number  = {3},
	pages   = {462--466},
	year    = {1952},
	doi     = {10.1214/aoms/1177729392}
}

@article{NemirovskiJuditskyLanShapiro2009,
  author  = {Nemirovski, Arkadi and Juditsky, Anatoli and Lan, Guanghui and Shapiro, Alexander},
  title   = {Robust stochastic approximation approach to stochastic programming},
  journal = {SIAM J.  Optim.},
  volume  = {19},
  number  = {4},
  pages   = {1574--1609},
  year    = {2009},
  doi     = {10.1137/070704277}
}

@article{BottouCurtisNocedal2018,
  author  = {Bottou, L{\'e}on and Curtis, Frank E. and Nocedal, Jorge},
  title   = {Optimization methods for large-scale machine learning},
  journal = {SIAM Review},
  volume  = {60},
  number  = {2},
  pages   = {223--311},
  year    = {2018},
  doi     = {10.1137/16M1080173}
}

@article{GhadimiLan2013,
  author  = {Ghadimi, Saeed and Lan, Guanghui},
  title   = {Stochastic first- and zeroth-order methods for nonconvex stochastic programming},
  journal = {SIAM J.  Optim.},
  volume  = {23},
  number  = {4},
  pages   = {2341--2368},
  year    = {2013},
  doi     = {10.1137/120880811}
}

@article{KleywegtShapiroHomemDeMello2002,
  author  = {Kleywegt, Anton J. and Shapiro, Alexander and Homem-de-Mello, Tito},
  title   = {The sample average approximation method for stochastic discrete optimization},
  journal = {SIAM J.  Optim.},
  volume  = {12},
  number  = {2},
  pages   = {479--502},
  year    = {2002},
  doi     = {10.1137/S1052623499363220}
}

@article{KingRockafellar1993,
  author  = {King, Alan J. and Rockafellar, R. Tyrrell},
  title   = {Asymptotic theory for solutions in statistical estimation and stochastic programming},
  journal = {Math. Oper. Res.},
  volume  = {18},
  number  = {1},
  pages   = {148--162},
  year    = {1993},
  doi     = {10.1287/moor.18.1.148}
}

@article{Shapiro1993,
  author  = {Shapiro, Alexander},
  title   = {Asymptotic behavior of optimal solutions in stochastic programming},
  journal = {Math. Oper. Res.},
  volume  = {18},
  number  = {4},
  pages   = {829--845},
  year    = {1993},
  doi     = {10.1287/moor.18.4.829}
}

@article{HomemDeMello2008,
  author  = {Homem-de-Mello, Tito},
  title   = {On rates of convergence for stochastic optimization problems under non-{IID} sampling},
  journal = {SIAM J.  Optim.},
  volume  = {19},
  number  = {2},
  pages   = {524--551},
  year    = {2008},
  doi     = {10.1137/060657418}
}

@article{DuchiRuan2018,
  author  = {Duchi, John C. and Ruan, Feng},
  title   = {Stochastic methods for composite and weakly convex optimization problems},
  journal = {SIAM J.  Optim.},
  volume  = {28},
  number  = {4},
  pages   = {3229--3259},
  year    = {2018},
  doi     = {10.1137/17M1135086}
}

@article{LeThiHuynhPhamDinhLuu2022,
  author  = {Le Thi, Hoai An and Huynh, Van Ngai and Pham Dinh, Tao and Luu, Hoang Phuc Hau},
  title   = {Stochastic difference-of-convex-functions algorithms for nonconvex programming},
  journal = {SIAM J.  Optim.},
  volume  = {32},
  number  = {3},
  pages   = {2263--2293},
  year    = {2022},
  doi     = {10.1137/20M1385706}
}

@article{PhamNguyenPhanTranDinh2020,
  author  = {Pham, Nhan H. and Nguyen, Lam M. and Phan, Dzung T. and Tran-Dinh, Quoc},
  title   = {{ProxSARAH}: An efficient algorithmic framework for stochastic composite nonconvex optimization},
  journal = {J. Mach. Learn. Res.},
  volume  = {21},
  number  = {110},
  pages   = {1--48},
  year    = {2020},
  url     = {https://jmlr.org/papers/v21/19-248.html}
}

@article{TranDinhPhamPhanNguyen2022,
  author  = {Tran-Dinh, Quoc and Pham, Nhan H. and Phan, Dzung T. and Nguyen, Lam M.},
  title   = {A hybrid stochastic optimization framework for composite nonconvex optimization},
  journal = {Math. Program.},
  volume  = {191},
  number  = {2},
  pages   = {1005--1071},
  year    = {2022},
  doi     = {10.1007/s10107-020-01583-1}
}

@article{Li2022,
  author  = {Li, Zhize},
  title   = {Simple and optimal stochastic gradient methods for nonsmooth nonconvex optimization},
  journal = {J. Mach. Learn. Res},
  volume  = {23},
  number  = {239},
  pages   = {1--61},
  year    = {2022},
  url     = {https://jmlr.org/papers/v23/21-0028.html}
}

@article{MetelTakeda2021,
  author  = {Metel, Michael R. and Takeda, Akiko},
  title   = {Stochastic proximal methods for non-smooth non-convex constrained sparse optimization},
  journal = {J. Mach. Learn. Res},
  volume  = {22},
  number  = {115},
  pages   = {1--36},
  year    = {2021},
  url     = {https://jmlr.org/papers/v22/20-287.html}
}

@article{MairalBachPonceSapiro2010,
  author  = {Mairal, Julien and Bach, Francis and Ponce, Jean and Sapiro, Guillermo},
  title   = {Online learning for matrix factorization and sparse coding},
  journal = {J. Mach. Learn. Res},
  volume  = {11},
  pages   = {19--60},
  year    = {2010},
  url     = {https://jmlr.org/papers/v11/mairal10a.html}
}

@article{FanLi2001,
  author  = {Fan, Jianqing and Li, Runze},
  title   = {Variable selection via nonconcave penalized likelihood and its oracle properties},
  journal = {J. Amer. Statist. Assoc.},
  volume  = {96},
  number  = {456},
  pages   = {1348--1360},
  year    = {2001},
  doi     = {10.1198/016214501753382273}
}

@article{Zhang2010MCP,
  author  = {Zhang, Cun-Hui},
  title   = {Nearly unbiased variable selection under minimax concave penalty},
  journal = {Ann. Statist.},
  volume  = {38},
  number  = {2},
  pages   = {894--942},
  year    = {2010},
  doi     = {10.1214/09-AOS729}
}

@article{LohWainwright2015,
  author  = {Loh, Po-Ling and Wainwright, Martin J.},
  title   = {Regularized {$M$}-estimators with nonconvexity: Statistical and algorithmic theory for local optima},
  journal = {J. Mach. Learn. Res},
  volume  = {16},
  pages   = {559--616},
  year    = {2015},
  url     = {https://jmlr.org/papers/v16/loh15a.html}
}

@article{AttouchBolte2009,
  author  = {Attouch, Hedy and Bolte, J{\'e}r{\^o}me},
  title   = {On the convergence of the proximal algorithm for nonsmooth functions involving analytic features},
  journal = {Math. Program.},
  volume  = {116},
  number  = {1--2},
  pages   = {5--16},
  year    = {2009},
  doi     = {10.1007/s10107-007-0133-5}
}

@article{AttouchBolteRedontSoubeyran2010,
  author  = {Attouch, Hedy and Bolte, J{\'e}r{\^o}me and Redont, Patrick and Soubeyran, Antoine},
  title   = {Proximal alternating minimization and projection methods for nonconvex problems: An approach based on the {Kurdyka--{\L}ojasiewicz} inequality},
  journal = {Math. Oper. Res.},
  volume  = {35},
  number  = {2},
  pages   = {438--457},
  year    = {2010},
  doi     = {10.1287/moor.1100.0449}
}

@article{AttouchBolteSvaiter2013,
  author  = {Attouch, Hedy and Bolte, J{\'e}r{\^o}me and Svaiter, Benar Fux},
  title   = {Convergence of descent methods for semi-algebraic and tame problems: proximal algorithms, forward--backward splitting, and regularized {Gauss--Seidel} methods},
  journal = {Math. Program.},
  volume  = {137},
  number  = {1--2},
  pages   = {91--129},
  year    = {2013},
  doi     = {10.1007/s10107-011-0484-9}
}

@article{BolteSabachTeboulle2014,
  author  = {Bolte, J{\'e}r{\^o}me and Sabach, Shoham and Teboulle, Marc},
  title   = {Proximal alternating linearized minimization for nonconvex and nonsmooth problems},
  journal = {Math. Program.},
  volume  = {146},
  number  = {1--2},
  pages   = {459--494},
  year    = {2014},
  doi     = {10.1007/s10107-013-0701-9}
}

@article{JMLR:v12:cour11a,
  author  = {Timothee Cour and Ben Sapp and Ben Taskar},
  title   = {Learning from Partial Labels},
  journal = {J. Mach. Learn. Res.h},
  year    = {2011},
  volume  = {12},
  number  = {42},
  pages   = {1501-1536},
  url     = {http://jmlr.org/papers/v12/cour11a.html}
}

@inproceedings{NIPS2010_c9e1074f,
 author = {Luo, Jie and Orabona, Francesco},
 booktitle = {Adv. Neural Inf. Process. Syst.},
 editor = {J. Lafferty and C. Williams and J. Shawe-Taylor and R. Zemel and A. Culotta},
 pages = {},
 publisher = {Curran Associates, Inc.},
 title = {Learning from Candidate Labeling Sets},
 url = {https://proceedings.neurips.cc/paper_files/paper/2010/file/c9e1074f5b3f9fc8ea15d152add07294-Paper.pdf},
 volume = {23},
 year = {2010}
}

@INPROCEEDINGS{6618941,
  author={Zeng, Zinan and Xiao, Shijie and Jia, Kui and Chan, Tsung-Han and Gao, Shenghua and Xu, Dong and Ma, Yi},
  booktitle={2013 IEEE Conf. Comput. Vis. Pattern Recognit.}, 
  title={Learning by Associating Ambiguously Labeled Images}, 
  year={2013},
  volume={},
  number={},
  pages={708-715},
  doi={10.1109/CVPR.2013.97}}

@ARTICLE{7968363,
  author={Chen, Ching-Hui and Patel, Vishal M. and Chellappa, Rama},
  journal={IEEE Trans. Pattern Anal. Mach. Intell.}, 
  title={Learning from Ambiguously Labeled Face Images}, 
  year={2018},
  volume={40},
  number={7},
  pages={1653-1667},
  doi={10.1109/TPAMI.2017.2723401}}

@article{Cheng_Wang_Feng_Zhang_An_2023, title={Partial-Label Regression}, volume={37}, url={https://ojs.aaai.org/index.php/AAAI/article/view/25871}, DOI={10.1609/aaai.v37i6.25871}, number={6}, journal={Proc. AAAI Conf. Artif. Intell.}, author={Cheng, Xin and Wang, Deng-Bao and Feng, Lei and Zhang, Min-Ling and An, Bo}, year={2023}, month={Jun.}, pages={7140–7147} }

@article {MR1167814,
    AUTHOR = {Polyak, B. T. and Juditsky, A. B.},
     TITLE = {Acceleration of stochastic approximation by averaging},
   JOURNAL = {SIAM J. Control Optim.},
  FJOURNAL = {SIAM Journal on Control and Optimization},
    VOLUME = {30},
      YEAR = {1992},
    NUMBER = {4},
     PAGES = {838--855},
      ISSN = {0363-0129},
   MRCLASS = {62L20 (93E25)},
  MRNUMBER = {1167814},
MRREVIEWER = {George\ Yin},
       DOI = {10.1137/0330046},
       URL = {https://doi.org/10.1137/0330046},
}

@article {MR2921104,
    AUTHOR = {Lan, Guanghui},
     TITLE = {An optimal method for stochastic composite optimization},
   JOURNAL = {Math. Program.},
  FJOURNAL = {Mathematical Programming},
    VOLUME = {133},
      YEAR = {2012},
    NUMBER = {1-2, Ser. A},
     PAGES = {365--397},
      ISSN = {0025-5610,1436-4646},
   MRCLASS = {62L20 (68Q25 90C15)},
  MRNUMBER = {2921104},
       DOI = {10.1007/s10107-010-0434-y},
       URL = {https://doi.org/10.1007/s10107-010-0434-y},
}

@article {MR64365,
    AUTHOR = {Chung, K. L.},
     TITLE = {On a stochastic approximation method},
   JOURNAL = {Ann. Math. Statistics},
  FJOURNAL = {Annals of Mathematical Statistics},
    VOLUME = {25},
      YEAR = {1954},
     PAGES = {463--483},
      ISSN = {0003-4851},
   MRCLASS = {62.0X},
  MRNUMBER = {64365},
MRREVIEWER = {J.\ Wolfowitz},
       DOI = {10.1214/aoms/1177728716},
       URL = {https://doi.org/10.1214/aoms/1177728716},
}

@article {MR4261271,
    AUTHOR = {Correa, Rafael and Hantoute, Abderrahim and P\'erez-Aros,
              Pedro},
     TITLE = {Qualification conditions-free characterizations of the
              {$\varepsilon$}-subdifferential of convex integral functions},
   JOURNAL = {Appl. Math. Optim.},
  FJOURNAL = {Applied Mathematics and Optimization},
    VOLUME = {83},
      YEAR = {2021},
    NUMBER = {3},
     PAGES = {1709--1737},
      ISSN = {0095-4616,1432-0606},
   MRCLASS = {49J52 (49J45)},
  MRNUMBER = {4261271},
MRREVIEWER = {Mira\ Bivas},
       DOI = {10.1007/s00245-019-09604-y},
       URL = {https://doi.org/10.1007/s00245-019-09604-y},
}

@article {MR191071,
    AUTHOR = {Armijo, Larry},
     TITLE = {Minimization of functions having {L}ipschitz continuous first
              partial derivatives},
   JOURNAL = {Pacific J. Math.},
  FJOURNAL = {Pacific Journal of Mathematics},
    VOLUME = {16},
      YEAR = {1966},
     PAGES = {1--3},
      ISSN = {0030-8730,1945-5844},
   MRCLASS = {65.10 (65.30)},
  MRNUMBER = {191071},
MRREVIEWER = {M.\ Lotkin},
       URL = {http://projecteuclid.org/euclid.pjm/1102995080},
}

@article {MR3548876,
    AUTHOR = {Bayram, {\.I}lker},
     TITLE = {On the convergence of the iterative shrinkage/thresholding
              algorithm with a weakly convex penalty},
   JOURNAL = {IEEE Trans. Signal Process.},
  FJOURNAL = {IEEE Transactions on Signal Processing},
    VOLUME = {64},
      YEAR = {2016},
    NUMBER = {6},
     PAGES = {1597--1608},
      ISSN = {1053-587X,1941-0476},
   MRCLASS = {94A12},
  MRNUMBER = {3548876},
MRREVIEWER = {Mohamed\ Ali\ El-Gebeily},
       DOI = {10.1109/TSP.2015.2502551},
       URL = {https://doi.org/10.1109/TSP.2015.2502551},
}

@article {MR1099305,
    AUTHOR = {Ermoliev, Yu.\ M. and Norkin, V. I.},
     TITLE = {Normalized convergence of random variables and its
              applications},
   JOURNAL = {Kibernetika (Kiev)},
  FJOURNAL = {Otdelenie Matematiki, Mekhaniki i Kibernetiki Akademii Nauk
              Ukrainsko\u i\ SSR. Kibernetika},
      YEAR = {1990},
    NUMBER = {6},
     PAGES = {85--89, 134},
      ISSN = {0023-1274},
   MRCLASS = {60F05 (60B10)},
  MRNUMBER = {1099305},
       DOI = {10.1007/BF01069497},
       URL = {https://doi.org/10.1007/BF01069497},
}

@article {MR3129765,
    AUTHOR = {Ermoliev, Yuri M. and Norkin, Vladimir I.},
     TITLE = {Sample average approximation method for compound stochastic
              optimization problems},
   JOURNAL = {SIAM J. Optim.},
  FJOURNAL = {SIAM Journal on Optimization},
    VOLUME = {23},
      YEAR = {2013},
    NUMBER = {4},
     PAGES = {2231--2263},
      ISSN = {1052-6234,1095-7189},
   MRCLASS = {90C15 (60B10 60B12)},
  MRNUMBER = {3129765},
MRREVIEWER = {A.\ H.\ \v Zilinskas},
       DOI = {10.1137/120863277},
       URL = {https://doi.org/10.1137/120863277},
}

 \appendix
 \section{Appendix}\label{s:appendix}

 \subsection{Proof of Lemma~\ref{l:bound_sample01}} \label{proof-l:bound_sample01}


 \begin{proofoflemma}{l:bound_sample01} Since $0 < n_k \leq n_{k+1}$, then $n_k\sqrt{n_{k+1}} \leq \sqrt{n_k}n_{k+1}$, and thus \[\frac{n_{k+1}-n_k}{\sqrt{n_k}n_{k+1}} \geq \frac{n_{k+1}-n_k}{\sqrt{n_k}n_{k+1} + n_k\sqrt{n_{k+1}}} \geq \frac{n_{k+1}-n_k}{2\sqrt{n_k}n_{k+1}}. \]
For all $k\in\N$, let \[p_k = \frac{n_{k+1}-n_k}{\sqrt{n_k}n_{k+1}} \mbox{~and~} q_k = \frac{n_{k+1}-n_k}{\sqrt{n_k}n_{k+1} + n_k\sqrt{n_{k+1}}}.\]The estimate above then yields $q_k \leq p_k \leq 2q_k$. Therefore, it suffices to verify that $\sum_{k\in\N} q_k$ converges for $\sum_{k\in\N} p_k$ to converge. Note that \[q_k = \frac{n_{k+1}-n_k}{\sqrt{n_k}\sqrt{n_{k+1}}(\sqrt{n_{k+1}} + \sqrt{n_k})} = \frac{\sqrt{n_{k+1}} - \sqrt{n_k}}{\sqrt{n_k}\sqrt{n_{k+1}}} = \frac{1}{\sqrt{n_k}} - \frac{1}{\sqrt{n_{k+1}}}. \] Since $n_{k+1} \to +\infty$, then \[\sum_{k\in\N} q_k = \frac{1}{\sqrt{n_1}} , \] and thus $\sum_{k\in\N} p_k < +\infty$. 
\end{proofoflemma}

\subsection{Technical result for Theorem~\ref{th:rates}} \label{proof-l-rates:tech-result-1}


\begin{proofoflemma}{B-ii} We first show that for all $k \geq 3$, \begin{equation*}
     \sum_{t=k+1}^\infty \frac{\ln(n_t)}{ {\sqrt{n_t} }} = \mathcal{O} \left(   \frac{\ln(k)}{k^{\expo/2-1}}\right).
 \end{equation*} By~\eqref{eq:condition_series_conv}, there exists $C_1 >0$ such that for all $k \in \N$, $n_k \geq C_1 k^\expo$. 
Then, the result for $b_k$ follows directly by recalling~\eqref{conv:gradients} and that $t \in (e^{2/\expo},+\infty) \mapsto \frac{\ln(t)}{t^{\expo/2}}$ is nonincreasing.  Using again that the latter function is nonincreasing and the integration by parts formula, it follows that for all $k > e \; (> e^{2/\expo})$,
\begin{align*}
    {\sum_{t=k+1}^\infty \frac{\ln(n_t)}{ {\sqrt{n_t} }} } & \leq  \sum_{t=k+1}^\infty \frac{\ln(C_1) + \gamma\ln(t)}{\sqrt{C_1}t^{\expo/2}} \leq \int_{k}^{\infty}
            \frac{\max\{\ln(C_1),0\} + \expo\ln(t)}{\sqrt{C_1} t^{\expo/2}}dt\\
            & = \frac{1}{\sqrt{C_1}}\left[ \frac{\max\{\ln(C_1),0\}}{(\frac{\expo}{2}-1)k^{\expo/2-1}} + \frac{\expo}{\frac{\expo}{2}-1}\left(\frac{\ln(k)}{k^{\expo/2-1}} + \frac{1}{\frac{\expo}{2}-1}\frac{1}{k^{\expo/2-1}}\right)\right] \\ & \leq   C_\expo\frac{\ln(k)}{k^{\expo/2-1}},
\end{align*} with $C_\expo = \frac{1}{\sqrt{C_1}}\left(\frac{\max\{\ln(C_1),0\}}{\frac{\expo}{2}-1}+\frac{\expo^2}{2(\frac{\expo}{2}-1)^2}\right)$, from where the first claim follows, as well as  \eqref{eq:n_k-series-condition}. 
This same estimate and the fact that  $\theta'$ is nonincreasing imply
\begin{align*}
   \left[ \theta'\left( {\sum_{t=k+1}^\infty \frac{\ln(n_t)}{ {\sqrt{n_t} }}     } \right)\right]^{-1}  \leq  \left[  \theta'\left(C_\expo \frac{\ln(k)}{k^{\expo/2-1}}\right)\right]^{-1}  =  \frac{C_\expo^\beta}{M(1-\beta)} \frac{\ln^\beta(k)}{k^{\beta(\expo/2-1)}},
\end{align*}
for sufficiently large $k$. Combining this last inequality with \eqref{eq:uk} and \eqref{eq01PLK} yields the claim for $\left[ \theta'\left( u_{k+1}\right)\right]^{-1}$.  Next, increasing $p_0$ if necessary, it holds
\begin{align*}
    \sum_{k=p_0}^\infty \left[  \theta'\left( {\sum_{t=k}^\infty \frac{\ln(n_t)}{\sqrt{n_t} }     } \right)\right]^{-1} & \leq 
    \frac{C_\expo^\beta}{M(1-\beta)}\sum_{k=p_0}^\infty \frac{\ln^\beta(k-1)}{(k-1)^{\beta(\expo/2-1)}}.
\end{align*} Since $\beta(\frac{\expo}{2}-1)>1$, then the series converges, and \eqref{desing_condi} holds. 

Finally,  similarly to above, using that $ t \in (e^{\beta^{-1}(\expo/2-1)^{-1}},+\infty) \mapsto \frac{\ln(t)}{t^{\beta(\expo/2-1)}} $ is nonincreasing  and integration by parts to bound the series, \begin{align*}
    \sum_{j=k}^\infty(b_{j} + \theta^\prime(u_{j+1})^{-1}) 
    & = \mathcal{O}\left( \sum_{j=k}^\infty\frac{\ln^{\beta}(j)}{j^{\beta(\expo/2-1)}} \right) 
     =   \mathcal{O}\left( \sum_{j=k}^\infty\frac{\ln(j)}{j^{\beta(\expo/2-1)}} \right) \\
    & =  \mathcal{O}\left( \int_{k-1}^\infty \frac{\ln(t)}{t^{\beta(\expo/2-1)}}dt \right)   =   \mathcal{O}\left( \frac{\ln(k-1)}{(k-1)^{\beta(\expo/2-1)-1}}  \right), 
\end{align*} proving the last claim in (ii), since $\frac{\ln(k-1)}{(k-1)^{\beta(\expo/2-1)-1}} \sim \frac{\ln(k)}{k^{\beta(\expo/2-1)-1}}$ asymptotically.
\end{proofoflemma}

\subsection{Proof of Lemma~\ref{lemma:sequences}} \label{appendix:sequences}

In order to prove Lemma~\ref{lemma:sequences}, we require two preliminary results, namely, Lemmas~\ref{lemma:A2} and~\ref{lemmaA1}.
 
 \begin{lemma}\label{lemma:A2}
Let $c>0$, $p\in\mathbb{R}$, $\expo_1>0$, and $\expo_2<3\expo_1+1$. For $k$ large enough, define
\[
    a_k
    :=
    \left(1+\frac1k\right)^p
    \frac{\ln^{\expo_2}(k)}
    {\ln^{\expo_1}(k+1)\big(c\ln^{\expo_1}(k)-p\big)} .
\]
Then
\[
    \lim_{k\to\infty}
    \frac{k}{c\ln^{\expo_1}(k)-p}
    \left(a_k-a_{k+1}\right)
    =
    0 .
\]
\end{lemma}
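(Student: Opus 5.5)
The plan is to treat $a_k$ as the restriction to the integers of a smooth function and to apply the mean value theorem. For $t$ large enough that $c\ln^{\expo_1}(t)-p>0$, define
\[
f(t):=\Bigl(1+\tfrac1t\Bigr)^p\,\frac{\ln^{\expo_2}(t)}{\ln^{\expo_1}(t+1)\bigl(c\ln^{\expo_1}(t)-p\bigr)} ,
\]
so that $a_k=f(k)$. Then $a_k-a_{k+1}=-f'(t_k)$ for some $t_k\in(k,k+1)$. It therefore suffices to show that $|f'(t)|=\mathcal{O}\bigl(\ln^{\expo_2-2\expo_1-1}(t)/t\bigr)$ as $t\to\infty$, and then to compare this bound with the prefactor $k/(c\ln^{\expo_1}(k)-p)$.

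To estimate $f'$, I will use the logarithmic derivative. For large $t$, $f(t)>0$ and
\[
\frac{f'(t)}{f(t)}=-\frac{p}{t^2(1+1/t)}+\frac{\expo_2}{t\ln t}-\frac{\expo_1}{(t+1)\ln(t+1)}-\frac{c\expo_1\ln^{\expo_1-1}(t)}{t\bigl(c\ln^{\expo_1}(t)-p\bigr)} .
\]
Each of the four terms is $\mathcal{O}\bigl(1/(t\ln t)\bigr)$. For the last term this holds because $c\ln^{\expo_1}(t)-p\sim c\ln^{\expo_1}(t)$, so that term is asymptotically $\expo_1/(t\ln t)$. The leading terms need not cancel, so I only use $|f'(t)/f(t)|=\mathcal{O}(1/(t\ln t))$. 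In addition, $(1+1/t)^p\to1$ and $\ln(t+1)\sim\ln t$ give $f(t)\sim c^{-1}\ln^{\expo_2-2\expo_1}(t)$. Multiplying the two estimates yields $|f'(t)|=\mathcal{O}\bigl(\ln^{\expo_2-2\expo_1-1}(t)/t\bigr)$.

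The final step is to evaluate this at $t=t_k\in(k,k+1)$. Since $t_k/k\to1$ and $\ln t_k/\ln k\to1$, the bound transfers to $k$ up to constants, and also $k/(c\ln^{\expo_1}(k)-p)=\mathcal{O}\bigl(k/\ln^{\expo_1}(k)\bigr)$. Hence
\[
\left|\frac{k}{c\ln^{\expo_1}(k)-p}\,(a_k-a_{k+1})\right|=\mathcal{O}\bigl(\ln^{\expo_2-3\expo_1-1}(k)\bigr),
\]
which tends to $0$ precisely because $\expo_2<3\expo_1+1$.

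The only delicate point is to avoid assuming cancellation in $f'/f$. A crude $\mathcal{O}(1/(t\ln t))$ bound on each term suffices, and this explains exactly why the hypothesis is $\expo_2<3\expo_1+1$. The other care needed is the routine justification that evaluating at $t_k$ instead of $k$ changes only constants.
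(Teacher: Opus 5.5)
Your proposal is correct and follows essentially the same route as the paper: bound the logarithmic derivative by $\mathcal{O}(1/(t\ln t))$, combine this with $f(t)=\mathcal{O}(\ln^{\expo_2-2\expo_1}(t))$, apply the mean value theorem, and multiply by the prefactor to get $\mathcal{O}(\ln^{\expo_2-3\expo_1-1}(k))\to0$. The only cosmetic difference is in passing from the intermediate point back to $k$: the paper uses that $\ln^{\expo_2-2\expo_1-1}(x)/x$ is eventually nonincreasing, while you use $t_k/k\to1$ and $\ln t_k/\ln k\to1$, and both work.
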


\begin{proof}
Define, for $x$ large enough,
\[
    g(x):=
    \left(1+\frac1x\right)^p
    \frac{\ln^{\expo_2}(x)}
    {\ln^{\expo_1}(x+1)\big(c\ln^{\expo_1}(x)-p\big)} .
\]
Then $a_k=g(k)$. Since $\expo_1>0$ and $c>0$, we have
\[
    |g(x)|
    =
    \mathcal{O}\left(\ln^{\expo_2-2\expo_1}(x)\right).
\]
Moreover, differentiating logarithmically gives
\[
    \frac{g'(x)}{g(x)}
    =
    -\frac{p}{x^2\left(1+\frac1x\right)}
    +\frac{\expo_2}{x\ln(x)}
    -\frac{\expo_1}{(x+1)\ln(x+1)}
    -\frac{c\expo_1\ln^{\expo_1-1}(x)}
    {x\big(c\ln^{\expo_1}(x)-p\big)} .
\]
Hence,
\[
    \frac{g'(x)}{g(x)}
    =
    \mathcal{O}\left(\frac1{x\ln(x)}\right),
\]
and consequently
\[
    |g'(x)|
    =
    \mathcal{O}\left(
        \frac{\ln^{\expo_2-2\expo_1-1}(x)}{x}
    \right).
\]
Therefore, there exist constants $c_0>0$ and $x_0>0$ such that
\[
    |g'(x)|
    \leq
    c_0\psi_0(x),
    \qquad x\geq x_0,
\]
where
\[
    \psi_0(x):=
    \frac{\ln^{\expo_2-2\expo_1-1}(x)}{x}.
\]
Since $\psi_0$ is nonincreasing for all sufficiently large $x$, we may enlarge $x_0$ if necessary so that $\psi_0$ is nonincreasing on $[x_0,\infty)$.

By the mean value theorem, for every $k$ large enough, there exists $\xi_k\in(k,k+1)$ such that
\[
    a_k-a_{k+1}
    =
    g(k)-g(k+1)
    =
    -g'(\xi_k).
\]
Thus, since $\xi_k\geq k$ and $\psi_0$ is nonincreasing,
\[
    |a_k-a_{k+1}|
    \leq
    c_0\psi_0(\xi_k)
    \leq
    c_0\psi_0(k)
    =
    c_0
    \frac{\ln^{\expo_2-2\expo_1-1}(k)}{k}.
\]
It follows that
\[
    \left|
    \frac{k}{c\ln^{\expo_1}(k)-p}
    \left(a_k-a_{k+1}\right)
    \right|
    \leq
    \frac{c_0\ln^{\expo_2-2\expo_1-1}(k)}
    {c\ln^{\expo_1}(k)-p}.
\]
Since $c\ln^{\expo_1}(k)-p\sim c\ln^{\expo_1}(k)$, we obtain
\[
    \left|
    \frac{k}{c\ln^{\expo_1}(k)-p}
    \left(a_k-a_{k+1}\right)
    \right|
    =
    \mathcal{O}\left(
        \ln^{\expo_2-3\expo_1-1}(k)
    \right).
\]
Finally, $\expo_2<3\expo_1+1$ implies $\expo_2-3\expo_1-1<0$. Hence
\[
    \ln^{\expo_2-3\expo_1-1}(k)\to 0,
\]
and therefore
\[
    \lim_{k\to\infty}
    \frac{k}{c\ln^{\expo_1}(k)-p}
    \left(a_k-a_{k+1}\right)
    =
    0,
\]
which concludes the proof.
\end{proof}

\begin{lemma}\label{lemmaA1}
Let $(u_k)_{k\geq 2}$ be a nonnegative sequence, let   $\expo_1>0$, and $\expo_2<3\expo_1+1$, and assume that
\begin{equation}\label{eq_lemmaA1_00}
    u_{k+1}
    \leq
    \left(1-c\frac{\ln^{\expo_1}(k)}{k}\right)u_k
    +
    d\frac{\ln^{\expo_2}(k)}{k^{p+1}},
    \qquad d>0,\quad p>0,\quad c>0 .
\end{equation}
Then
\begin{equation}\label{eq_lemmaA1_01}
    u_k
    =
    \mathcal{O}\left(
        \frac{\ln^{\expo_2 - \expo_1}(k)}{k^p}
    \right) + o\left( \frac{\ln^{\expo_1}(k)}{k^p} \right).
\end{equation}
\end{lemma}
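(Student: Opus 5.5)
The plan is to unroll the recursion \eqref{eq_lemmaA1_00} and compare the resulting product and sum with explicit powers of $k$. The comparison sequence will be built from the candidate rate in \eqref{eq_lemmaA1_01}, and Lemma~\ref{lemma:A2} will control the discrete derivative error.

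Set $\lambda_k := c\ln^{\expo_1}(k)/k$. Fix $k_1$ large enough that $\lambda_k\in(0,1)$ for $k\ge k_1$. Iterating \eqref{eq_lemmaA1_00} gives
\[
u_{k}\le \Pi_{k_1,k}\,u_{k_1}+\sum_{j=k_1}^{k-1}\Pi_{j+1,k}\, d\frac{\ln^{\expo_2}(j)}{j^{p+1}},\qquad \Pi_{j,k}:=\prod_{i=j}^{k-1}(1-\lambda_i).
\]
The homogeneous term is easy. We have $\Pi_{k_1,k}\le \exp(-\sum_{i=k_1}^{k-1}\lambda_i)$, and $\sum_{i\le k}\lambda_i\sim \tfrac{c}{\expo_1+1}\ln^{\expo_1+1}(k)$ grows faster than any multiple of $\ln k$. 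Hence $\Pi_{k_1,k}=o(k^{-p}\ln^{\expo_1}(k))$, and in fact it decays faster than any power of $k$. This produces the $o(\cdot)$ term in \eqref{eq_lemmaA1_01}.

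For the inhomogeneous part, avoid estimating the products directly and use a supersolution argument instead. Define the comparison sequence
\[
v_k:=\frac{\ln^{\expo_2}(k)}{k^{p}\,\ln^{\expo_1}(k)\,\bigl(c\ln^{\expo_1}(k)-p\bigr)}\cdot K .
\]
More precisely, take $v_k := K\, a_{k}\,k^{-p}$ with $a_k$ as in Lemma~\ref{lemma:A2} up to the harmless shift $k\mapsto k+1$ in one logarithm; this is the form suggested by the factor $(1+1/k)^p$ there. Then verify that $w_k:=v_k$ satisfies the reverse inequality
\[
w_{k+1}\ge (1-\lambda_k)w_k + d\frac{\ln^{\expo_2}(k)}{k^{p+1}}
\]
for all large $k$ and a suitable constant $K$. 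Expanding $(k+1)^{-p}=k^{-p}(1+1/k)^{-p}$ and multiplying through by $k^{p+1}/(c\ln^{\expo_1}(k)-p)$ reduces this to an inequality involving the leading term $K\,\ln^{\expo_2}(k)/\ln^{\expo_1}(k+1)$ and the error term $\tfrac{k}{c\ln^{\expo_1}(k)-p}(a_k-a_{k+1})$. Lemma~\ref{lemma:A2} says this error term tends to $0$, and that is precisely why that lemma was stated. Choosing $K$ larger than $d$ times a constant then closes the inequality.

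Next, let $e_k:=u_k-w_k$. Subtracting the two recursions gives $e_{k+1}\le(1-\lambda_k)e_k$ for $k\ge k_2$. Therefore $\max\{e_k,0\}\le \Pi_{k_2,k}\max\{e_{k_2},0\}$, which is $o(k^{-p}\ln^{\expo_1}(k))$ by the first step. Since $w_k=\mathcal{O}(\ln^{\expo_2-2\expo_1}(k)k^{-p})\subseteq \mathcal{O}(\ln^{\expo_2-\expo_1}(k)k^{-p})$, combining the two bounds yields \eqref{eq_lemmaA1_01}.

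The main obstacle is the supersolution verification. There, the sign of $p$ relative to $c\ln^{\expo_1}(k)$ and the bookkeeping of the expansion of $(1+1/k)^{-p}$ must match the exact normalization in Lemma~\ref{lemma:A2}. If the direct match with $a_k$ is awkward, the fallback is to take $w_k:=K\ln^{\expo_2-\expo_1}(k)k^{-p}$ and check the inequality asymptotically by hand. The decrement is $w_k-w_{k+1}\approx pw_k/k$, which is dominated by $\lambda_k w_k=cK\ln^{\expo_2}(k)k^{-p-1}$, so $K>2d/c$ should suffice for large $k$.
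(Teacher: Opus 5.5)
Your main line has a concrete error, although your fallback repairs it. The candidate $v_k=K a_k k^{-p}$ is too small by a factor $\ln^{\expo_1}(k)$. It is of order $\ln^{\expo_2-2\expo_1}(k)k^{-p}$, so $\lambda_k v_k\approx K\ln^{\expo_2-\expo_1}(k)k^{-p-1}$, whereas the forcing term is $d\ln^{\expo_2}(k)k^{-p-1}$. In your normalized inequality, the leading term $K\ln^{\expo_2}(k)/\ln^{\expo_1}(k+1)$ has to dominate $d\ln^{\expo_2}(k)$, but their ratio is $K/\bigl(d\ln^{\expo_1}(k+1)\bigr)\to 0$. The mismatch is a diverging factor, not a constant, so no choice of $K$ closes the supersolution inequality. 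Lemma~\ref{lemma:A2} cannot help, since it only controls the lower-order error term.

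The sequence $a_k$ of that lemma lives in the normalized variable $k^p u_k/\ln^{\expo_1}(k)$. The matching comparison sequence would therefore be $K\ln^{\expo_1}(k)a_k k^{-p}\sim (K/c)\ln^{\expo_2-\expo_1}(k)k^{-p}$. A quick sanity check would have flagged this. The heuristic equilibrium $u_k\approx d\ln^{\expo_2}(k)k^{-p-1}/\lambda_k=(d/c)\ln^{\expo_2-\expo_1}(k)k^{-p}$ shows that your claimed bound $w_k=\mathcal{O}(\ln^{\expo_2-2\expo_1}(k)k^{-p})$ would beat the true rate.

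Your fallback $w_k=K\ln^{\expo_2-\expo_1}(k)k^{-p}$ is correct and should be presented as the proof. By the mean value theorem, $|w_k-w_{k+1}|=\mathcal{O}(w_k/k)=\mathcal{O}(K\ln^{\expo_2-\expo_1}(k)k^{-p-1})$. This is $o(\lambda_k w_k)$ because $k\lambda_k=c\ln^{\expo_1}(k)\to\infty$. Hence $w_{k+1}-(1-\lambda_k)w_k\ge \tfrac{c}{2}K\ln^{\expo_2}(k)k^{-p-1}$ for large $k$, and $K\ge 2d/c$ suffices. The comparison $e_{k+1}^+\le(1-\lambda_k)e_k^+$, together with your product bound, then finishes the argument.

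This is a genuinely different route from the paper's. The paper normalizes to $k^pu_k/\ln^{\expo_1}(k)$, subtracts $a_k$, and applies Polyak's lemma. That requires $a_k-a_{k+1}$ to be negligible relative to the contraction rate $(c\ln^{\expo_1}(k)-p)/k$. This is precisely Lemma~\ref{lemma:A2}, and it is the only place where $\expo_2<3\expo_1+1$ is used. Moreover, Polyak's lemma yields only $\limsup_k\bigl(k^pu_k/\ln^{\expo_1}(k)-a_k\bigr)\le 0$, which is where the additive $o(\ln^{\expo_1}(k)/k^p)$ term comes from.

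Your exact supersolution plus comparison dispenses with both auxiliary lemmas and never uses $\expo_2<3\expo_1+1$. Because $\Pi_{k_2,k}$ decays faster than any power of $k$, it also gives the stronger conclusion $u_k=\mathcal{O}(\ln^{\expo_2-\expo_1}(k)k^{-p})$ outright.
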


\begin{proof}
Assume that, \eqref{eq_lemmaA1_00} holds for every $k\geq 1$.  
Define
\[
    w_k:=\frac{k^p}{\ln^{\expo_1}(k)}u_k,
    \qquad k\geq 2 .
\]
Multiplying the recurrence by $(k+1)^p/\ln^{\expo_1}(k+1)$, we obtain
\begin{align*}
    w_{k+1}
    &\leq
    \left(1+\frac1k\right)^p
    \frac{\ln^{\expo_1}(k)}{\ln^{\expo_1}(k+1)}
    \left(
        1-c\frac{\ln^{\expo_1}(k)}{k}
    \right)w_k \\
    &\quad
    +
    d\left(1+\frac1k\right)^p
    \frac{\ln^{\expo_2}(k)}{k\ln^{\expo_1}(k+1)} .
\end{align*}
Since $\ln(k)/\ln(k+1)\leq 1$,  for large enough $k \in \N$ we have
\[
    \left(1+\frac1k\right)^p
    \frac{\ln^{\expo_1}(k)}{\ln^{\expo_1}(k+1)}
    \left(
        1-c\frac{\ln^{\expo_1}(k)}{k}
    \right)
    \leq
    \left(1+\frac1k\right)^p
    \left(
        1-c\frac{\ln^{\expo_1}(k)}{k}
    \right).
\] 
Moreover,
\[
    \left(1+\frac1k\right)^p
    =
    1+\frac{p}{k}
    +
    \mathcal{O}\left(\frac1{k^2}\right).
\]
Thus,
\[
    w_{k+1}
    \leq
    \left(
        1-\frac{c\ln^{\expo_1}(k)-p}{k}
        +
        \mathcal{O}\left(\frac{\ln^{\expo_1}(k)}{k^2}\right)
    \right)w_k
    +
    d\left(1+\frac1k\right)^p
    \frac{\ln^{\expo_2}(k)}{k\ln^{\expo_1}(k+1)} .
\]

Now define
\[
    v_k:=w_k-a_k,
\]
where
\[
    a_k
    :=
    d\left(1+\frac1k\right)^p
    \frac{\ln^{\expo_2}(k)}
    {\ln^{\expo_1}(k+1)\big(c\ln^{\expo_1}(k)-p\big)} .
\]
For $k$ large enough, $c\ln^{\expo_1}(k)-p>0$, so $a_k$ is well defined. This choice gives
\[
    \frac{c\ln^{\expo_1}(k)-p}{k}a_k
    =
    d\left(1+\frac1k\right)^p
    \frac{\ln^{\expo_2}(k)}{k\ln^{\expo_1}(k+1)} .
\]
Using $w_k=v_k+a_k$, we get
\begin{align*}
    v_{k+1}
    &=w_{k+1}-a_{k+1} \\
    &\leq
    \left(
        1-\frac{c\ln^{\expo_1}(k)-p}{k}
        +
        \mathcal{O}\left(\frac{\ln^{\expo_1}(k)}{k^2}\right)
    \right)(v_k+a_k) \\
    &\quad
    +
    \frac{c\ln^{\expo_1}(k)-p}{k}a_k
    -a_{k+1} \\
    &=
    \left(
        1-\frac{c\ln^{\expo_1}(k)-p}{k}
        +
        \mathcal{O}\left(\frac{\ln^{\expo_1}(k)}{k^2}\right)
    \right)v_k
    +a_k-a_{k+1} \\
    &\quad
    +
    \mathcal{O}\left(\frac{\ln^{\expo_1}(k)}{k^2}\right)a_k .
\end{align*}
Since
\[
    a_k
    =
    \mathcal{O}\left(\ln^{\expo_2-2\expo_1}(k)\right),
\]
we have
\[
    \mathcal{O}\left(\frac{\ln^{\expo_1}(k)}{k^2}\right)a_k
    =
    \mathcal{O}\left(
        \frac{\ln^{\expo_2-\expo_1}(k)}{k^2}
    \right).
\]
Therefore,
\[
    v_{k+1}
    \leq
    \left(
        1-\frac{c\ln^{\expo_1}(k)-p}{k}
        +
        \mathcal{O}\left(\frac{\ln^{\expo_1}(k)}{k^2}\right)
    \right)v_k
    +a_k-a_{k+1}
    +
    \mathcal{O}\left(
        \frac{\ln^{\expo_2-\expo_1}(k)}{k^2}
    \right).
\] 
Finally, by Lemma~\ref{lemma:A2} we have
\[
    \lim_{k\to\infty}
    \frac{k}{c\ln^{\expo_1}(k)-p}
    \left(a_k-a_{k+1}\right)
    =
    0,
\]
 so we can apply  \cite[Lemma 3, p. 45]{PolyakBook} to conclude that 
$$\limsup_{  k \to \infty} v_k \leq 0,  $$
which shows that  \eqref{eq_lemmaA1_01} holds.
\end{proof}

 We now use Lemmas~\ref{lemma:A2} and~\ref{lemmaA1} to prove Lemma~\ref{lemma:sequences}.
 
 \begin{proofoflemma}{lemma:sequences} 
    First, let us simply assume that 
    $$ w_k \leq c_0  \frac{\ln^{p_1}(k)}{k^{p_2}}, \quad \text{ for all }k \geq 2.$$ We prove (i) first. Let $b\in(0,1)$ be arbitrary. Since $q\in(0,1)$ and $\alpha_k \to 0^+$  we have
\[
\alpha_k^{\,q-1}\to +\infty .
\]
Therefore, there exists $k_0\in\mathbb N$ such that for all $k\ge k_0$,
\[
\alpha_{k+1}\le b\left(\alpha_k+\frac{\ln^{p_1}(k)}{k^{p_2}}\right).
\] Fix $k\ge k_0$. Iterating the previous inequality from $k_0$ up to $k-1$, we obtain\begin{align*}
    \alpha_k
&\le
b^{k-k_0}\alpha_{k_0}
+\sum_{j=k_0}^{k-1} b^{k-j}\frac{\ln^{p_1}(j)}{j^p}  = b^{k-k_0}\alpha_{k_0}
+\sum_{i=1}^{k-k_0} b^i \frac{\ln^{p_1}(k-i)}{(k-i)^{p_2}}\\
& \le b^{k-k_0}\alpha_{k_0}
+\sum_{i=1}^{k-1} b^i \frac{\ln^{p_1}(k-i)}{(k-i)^{p_2}},
\end{align*} where in the first equality we set $i=k-j$, and in the second inequality we extend the sum up to $k-1$, since the summands are nonnegative. Now, assume $k\ge 2$ and split the sum as
\begin{equation}\label{sumbi}
\sum_{i=1}^{k-1} b^i \frac{\ln^{p_1}(k-i)}{(k-i)^{p_2}}
=
\sum_{i=1}^{\lfloor k/2\rfloor} b^i \frac{\ln^{p_1}(k-i)}{(k-i)^{p_2}}
+
\sum_{i=\lfloor k/2\rfloor+1}^{k-1}b^i \frac{\ln^{p_1}(k-i)}{(k-i)^{p_2}} ,
\end{equation}
where $\lfloor k/2\rfloor$ stands for the largest integer smaller than $k/2$. For the first term in the right-hand side of~\eqref{sumbi}, since $k-i\ge k/2$ whenever $1\le i\le \lfloor k/2\rfloor$ and the logarithm is strictly increasing, we get
\[
\sum_{i=1}^{\lfloor k/2\rfloor} b^i \frac{\ln^{p_1}(k-i)}{(k-i)^{p_2}}
\le
2^{p_2}  \frac{\ln^{p_1}(k)}{k^{p_2}} \sum_{i=1}^{\infty} b^i
=
\left(\frac{2^{p_2} b}{1-b}\right)\frac{\ln^{p_1}(k)}{k^{p_2}}.
\]
For the second term in the right-hand side of~\eqref{sumbi}, since $(k-i)^{-p_2}\le 1$, we have
\[
\sum_{i=\lfloor k/2\rfloor+1}^{k-1}  b^i \frac{\ln^{p_1}(k-i)}{(k-i)^{p_2}}
\le
\ln^{p_1}(k) \sum_{i=\lfloor k/2\rfloor+1}^{\infty} b^i
=
\frac{b^{\lfloor k/2\rfloor+1}}{1-b} \ln^{p_1}(k).
\]
Consequently,
\[
\alpha_k
\le
b^{k-k_0}\alpha_{k_0}
+
\left(\frac{2^{p_2} b}{1-b}\right)\frac{\ln^{p_1}(k)}{k^{p_2}}
+
\frac{b^{\lfloor k/2\rfloor+1}}{1-b} \ln^{p_1}(k).
\]
Multiplying the above inequality  by $k^{p_2} \ln^{-p_1}(k) $, we obtain
\[
\alpha_k \frac{k^{p_2}}{\ln^{p_1}{(k)}}
\le
\alpha_{k_0} b^{k-k_0}  \frac{k^{p_2}}{\ln^{p_1}{(k)}}
+
\frac{2^{p_2} b}{1-b}
+
\frac{ b^{\lfloor k/2\rfloor+1}}{1-b} k^{p_2}.
\]
Since exponential decay dominates any power, both
\[
 b^{k-k_0}  \frac{k^{p_2}}{\ln^{p_1}{(k)}}\to 0
\qquad\text{and}\qquad
b^{\lfloor k/2\rfloor+1} k^{p_2} \to 0.
\]
Hence
\[
\limsup_{k\to\infty} \alpha_k \frac{k^{p_2}}{\ln^{p_1}{(k)}}
\le
\frac{2^{p_2} b}{1-b}.
\]
Because $b\in(0,1)$ was arbitrary, letting $b\downarrow 0$ yields
\[
\limsup_{k\to\infty}\, \alpha_k \frac{k^{p_2}}{\ln^{p_1}{(k)}} =0,
\]
that is,
\[
\alpha_k=o\left(\frac{\ln^{p_1}(k)}{k^{p_2}}\right),
\]
which proves (i). 

The proof of (ii) is identical, except that in this case the recurrence becomes
\[
\alpha_{k+1}\le \frac{c}{1+c}  \alpha_k+  \left(\frac{c_0}{1+c} \right) \frac{\ln^{p_1}(k)}{k^{p_2}}.
\]
Repeating the same argument gives
\[
\limsup_{k\to\infty}\, \alpha_k \frac{k^{p_2}}{\ln^{p_1}{(k)}} 
\le   c_0 2^{p_2},
\]
which concludes 
$\alpha_k=\mathcal O\bigl(k^{-p_2}\, \ln^{p_1}(k)\bigr)$, as claimed.

For item (iii), since $q>1$, the function $h_q(t)=t^q$ is convex on $[0,+\infty)$. Hence, for every $\epsilon>0$,
\[
\alpha_{k+1}^q
\geq
\epsilon^q+q\epsilon^{q-1}(\alpha_{k+1}-\epsilon),
\]
that is,
\[
\alpha_{k+1}^q-\epsilon^q
\geq
q\epsilon^{q-1}(\alpha_{k+1}-\epsilon).
\]
Combining this estimate with \eqref{eq:recursion-lemma}, we obtain
\begin{align}\label{lemma:sequences:eq00}
(c+q\epsilon^{q-1})\alpha_{k+1}
\leq
c\alpha_k+q\epsilon^q+ c_0\frac{\ln^{p_1} (k) }{k^{p_2}}.
\end{align}Now, take  
\[
\epsilon:= \frac{\ln^{\frac{\expo_1}{q-1}}(k) }{ k^{\frac{1}{q-1}}}, \text{ with } \expo_1 \in (0,1).
\]
Then $\epsilon^q=  k^{-\frac{q}{q-1}}\ln^{\expo_1 \frac{q}{q-1} } (k)$ and $\epsilon^{q-1}= k^{-1}\ln^{\expo_1} (k)$, so  inequality \eqref{lemma:sequences:eq00} becomes
\[
\alpha_{k+1}
\leq
\frac{c}{c+q  k^{-1}\ln^{\expo_1}(k)  }\,\alpha_k
+  \frac{q \ln^{\expo_1 \frac{q}{q-1}}(k)  }{\bigl(c+q  k^{-1}\ln^{\expo_1}(k)  \bigr)k^\frac{q}{q-1}}  + \frac{c_0 \ln^{p_1}(k) }{ \bigl(c+q  k^{-1}\ln^{\expo_1}(k) \bigr)k^{p_2}}.
\]
Since $c+q  k^{-1}\ln^{\expo_1}(k)  \geq c$, we deduce that
\[
\alpha_{k+1}
\leq
\frac{c}{ c+q  k^{-1}\ln^{\expo_1}(k)  }\,\alpha_k
+
\left(\frac{q+c_0}{c} \right)\frac{\bigl(\ln(k)\bigr)^{ \max\{ p_1, \expo_1 \frac{q}{q-1}   \}  } }{k^{\min\{ p_2, \frac{q}{q-1} \}}}.
\]
Moreover,
\[
\frac{c}{c+q  k^{-1}\ln^{\expo_1}(k) }
=
1-q\frac{1 }{\bigl(c+q k^{-1}\ln^{\expo_1}(k) \bigr)} \frac{\ln^{\expo_1}(k)}{ k} .
\]
Since
\[
q\frac{1 }{(c+q \ln^{\expo_1}(k) k^{-1})}   \to \frac{q}{c}, \text{ as } k \to\infty,
\]
there exists $k_0\in\mathbb N$ such that for all $k\geq k_0$,
\[
q\frac{1 }{(c+q \ln^{\expo_1}(k) k^{-1})}  \geq \frac{q  }{2c}.
\] 
Therefore, for all $k\geq k_0$,
\[
\alpha_{k+1}
\leq
\left(1- \frac{q  }{2c} \frac{\ln^{\expo_1}(k)}{k}  \right)\alpha_k
+
\left(\frac{q+c_0}{c} \right)\frac{\bigl(\ln(k)\bigr)^{ \max\{ p_1, \expo_1 \frac{q}{q-1}   \}  } }{k^{\min\{ p_2, \frac{q}{q-1} \}}}.
\]  

Now, considering the particular case where $p_1=q$,   $\expo_1^\ast = q-1$ and $\expo_2^\ast := \max\{ p_1, \expo_1 \frac{q}{q-1}\}=q $, we simply get  the inequality 
\[
\alpha_{k+1}
\leq
\left(1- \frac{q  }{2c} \frac{\ln^{\expo_1^\ast }(k)}{k}  \right)\alpha_k
+
\left(\frac{q+c_0}{c} \right)\frac{\ln^{ \expo_2^\ast      }(k) }{k^{\min\{ p_2, \frac{q}{q-1} \}}}.
\]
Now, noticing that $\expo_2^\ast < 3 \expo_1^\ast +1 \Leftrightarrow q > 1 $, Lemma~\ref{lemmaA1} implies that \[ \alpha_k = \mathcal{ O  } \left( \frac{\ln(k)}{  k^{\min\{ p_2, \frac{q}{q-1} \} -1} }   \right). \] 
This proves item (iii).
    \end{proofoflemma}

\end{document}